\documentclass[reqno,10pt]{amsart}

\usepackage{amsmath, amsfonts, amsthm, amssymb,mathrsfs, stmaryrd, color, cite}

\textwidth=15.0cm \textheight=21.0cm \hoffset=-1.1cm \voffset=-0.5cm
\usepackage{hyperref}
\usepackage{bm}
\usepackage{graphicx}
\usepackage{epstopdf}%插入图片需要的包
\usepackage{subfigure}%并排插入图片
\usepackage{indentfirst}
\usepackage{verbatim}
\usepackage{slashbox}
\usepackage{tikz}
\usepackage{cases}
\newtheorem{theorem}{Theorem}[section]
\newtheorem{lemma}{Lemma}[section]
\newtheorem{proposition}{Proposition}[section]
\newtheorem{assumption}{Assumption}[section]

\theoremstyle{definition}
\newtheorem{definition}{Definition}[section]

\theoremstyle{remark}

\numberwithin{equation}{section}
\allowdisplaybreaks

\title[Martingale solutions to the SPLLG equations]
{Weak Martingale Solutions of the Stochastic Schr\"{o}dinger-Poisson-Landau-Lifshitz-Gilbert System}

\author[Y. Wei]{Yurong Wei}
\address{College of Mathematics and Statistics, Chongqing University, Chongqing, 401331, China.}
\email{yurongw@163.com}

\author[H. Wang]{Huaqiao Wang}
\address{College of Mathematics and Statistics, Chongqing University, Chongqing, 401331, China.}
\email{wanghuaqiao@cqu.edu.cn}

\keywords{Stochastic Schr\"{o}dinger-Poisson-Landau-Lifshitz-Gilbert System, Faedo-Galerkin approximation, weak solutions, stochastic compactness, the penalized function}
\subjclass[2010]{35Q35, 76D05, 35R60, 35A01}

\date{\today}

\begin{document}
\begin{abstract}
The Schr\"{o}dinger-Poisson-Landau-Lifshitz-Gilbert (SPLLG) system can characterize the spin transfer torque mechanism transferring the spin angular momentum to the magnetization dynamics through spin-magnetization coupling. We study the three-dimensional stochastic SPLLG system driven by a multiplicative stochastic force containing a continuous noise and a small jump noise. We establish the existence of weak martingale solutions based on the penalized functional technique, the Faedo-Galerkin approximation, stochastic compactness method, and a careful identification of the limit. Due to the strong coupling and strong nonlinearity caused by the SPLLG system and stochastic effects, some crucial difficulties have been encountered in obtaining energy estimates and avoiding non-negativity of the test function.  We mainly utilize the structure of equations and the property of martingales developing the new energy estimates, and apply the three-layer approximation to overcome these difficulties. In particular, we extend the results by Z. Brze\'{z}niak and U. Manna (Comm. Math. Phys., 2019) and by  L.H. Chai, C.J. Garc\'{\i}a-Cervera and X. Yang (Arch. Ration. Mech. Anal., 2018)  to both the stochastic case and the coupling case.
\end{abstract}

\maketitle

\section{Introduction}
The ferromagnetic chain equation is an important class of magnetization dynamics equations that frequently appear in research in condensed matter physics. As theoretical studies delve deeper, physicists have proposed more refined models, such as equations describing the magnetization dynamics of ferromagnetic materials. In magnetization dynamics, spin and magnetization are inherently linked, hence it is necessary to consider spin-polarized transport effects in the study of ferromagnetic materials. The Schr\"{o}dinger-Poisson-Landau-Lifshitz-Gilbert (SPLLG) system \cite{CGY1} can be used to describe the spin transfer torque mechanism, transferring spin angular momentum to magnetization dynamics through spin-magnetization coupling (see \cite{Ber1,Ber2,Slon}). The SPLLG system is an effective microscopic model that characterizes the coupling between the spin of conduction electrons and magnetization in ferromagnetic materials. The system combines two different models: one describes the spin of conduction electrons using the Schr\"{o}dinger-Poisson (SP) equation, and the other describes the magnetization dynamics behavior using the Landau-Lifschitz-Gilbert (LLG) equation.

The Schr\"{o}dinger-Poisson equation (see \cite{CGY, CLZ, PT}) can be written as
\begin{align}\label{S0}
i\varepsilon\partial_t\bm{\psi}_j^{\varepsilon}(\bm{x},t)
=-\frac{{\varepsilon}^2}{2}\Delta\bm{\psi}_j^{\varepsilon}(\bm{x},t)
+V^{\varepsilon}\bm{\psi}_j^{\varepsilon}(\bm{x},t)
-\frac{\varepsilon}{2}\bm{m}^{\varepsilon}\cdot\hat{\bm{\sigma}}\bm{\psi}_j^{\varepsilon}(\bm{x},t),
\end{align}
where $\{\bm{\psi}_{j}^{\varepsilon}\}_{j=1}^{\infty}$ are the wave functions, $0<\varepsilon\ll1$ is the renormalized Planck constant in a semiclassical system, and $\bm{\psi}_{j}^{\varepsilon}=(\psi_{j,+}^{\varepsilon}$, $\psi_{j,-}^{\varepsilon})^{\rm{T}}$  represents the jth spinor, here $\pm$ indicate up and down spins respectively. The Pauli matrix $\hat{\bm{\sigma}}=(\sigma_1,\sigma_2,\sigma_3)^{\rm{T}}$ is defined by
\begin{align*}
\sigma_1=\begin{pmatrix}0 & 1 \\ 1 & 0\end{pmatrix},
\sigma_2=\begin{pmatrix}0 & -i \\ i & 0\end{pmatrix},
\sigma_3=\begin{pmatrix}1 & 0 \\ 0 & -1\end{pmatrix}.
\end{align*}
The position density $\rho^{\varepsilon}$, the current density $\bm{j}^{\varepsilon}$, the spin density $\bm{s}^{\varepsilon}$, and the spin current $\bm{J}_{s}^{\varepsilon}$ are given by:
\begin{align*}
&\rho^{\varepsilon}(\bm{x},t)
=\sum_{j=1}^{\infty}\lambda_{j}^{\varepsilon}|\bm{\psi}_{j}^{\varepsilon}(\bm{x},t)|^2,\quad
\bm{j}^{\varepsilon}(\bm{x},t)
=\varepsilon\sum_{j=1}^{\infty}\lambda_{j}^{\varepsilon}
{\rm{Im}}\big({\bm{\psi}_{j}^{\varepsilon}}^{\dag}(\bm{x},t)
\nabla_{\bm{x}}\bm{\psi}_{j}^{\varepsilon}(\bm{x},t)\big),\\
&\bm{s}^{\varepsilon}(\bm{x},t)
=\sum_{j=1}^{\infty}\lambda_{j}^{\varepsilon}
{\rm{Tr}}_{{\mathbb{C}}^2}\left(\hat{\bm{\sigma}}\big({\bm{\psi}_{j}^{\varepsilon}}(\bm{x},t)
{\bm{\psi}_{j}^{\varepsilon}}^{\dag}(\bm{x},t)\big)\right),\\
&\bm{J}_{s}^{\varepsilon}(\bm{x},t)
=\varepsilon\sum_{j=1}^{\infty}\lambda_{j}^{\varepsilon}
{\rm{Im}}\left({\rm{Tr}}_{{\mathbb{C}}^2}\left(\hat{\bm{\sigma}}
\otimes\nabla_{\bm{x}}\bm{\psi}_{j}^{\varepsilon}(\bm{x},t)
{\bm{\psi}_{j}^{\varepsilon}}^{\dag}(\bm{x},t)\right)\right).
\end{align*}
Here the coefficient $\lambda_{j}^{\varepsilon}\geq 0$ represents the occupation probability of the  $\mathbb{L}^2(\mathbb{R}^3)$-orthonormal initial states $\{\bm{\varphi}_{j}^{\varepsilon}\}_{j=1}^{\infty}$, Im denotes the imaginary part of a complex-valued function, ${\bm{\psi}_{j}^{\varepsilon}}^{\dag}$ is the complex conjugate transpose of $\bm{\psi}_{j}^{\varepsilon}$, ${\rm{Tr}}_{\mathbb{C}^2}$ is the trace operator of a $2\times 2$ complex matrix, and $\otimes$ represents a tensor product of two three-dimensional vectors. Thus $\bm{s}^{\varepsilon}$ is the three-dimensional vector, and $\bm{J}_{s}^{\varepsilon}$ is a $3\times 3$ matrix. The self-consistent Coulomb potential $V^{\varepsilon}$ in \eqref{S0} is defined as $V^{\varepsilon}=-N\ast \rho^{\varepsilon}$, with the kernel function $N(\bm{x})=-\frac{1}{4\pi|\bm{x}|}$, and $\ast$ denotes the convolution operator. The last term in \eqref{S0} describes the ``s-d'' Hamiltonian term for spin-magnetic interactions.

The Landau-Lifschitz-Gilbert equation \cite{CGY, G, LL} is expressed in the following form:
\begin{eqnarray}\label{LLG}
\left\{\begin{array}{ll}
\!\!\!\partial_t\bm{m}^{\varepsilon}=-\bm{m}^{\varepsilon}\times\bm{H}_{\rm{eff}}^{\varepsilon}
+\alpha \bm{m}^{\varepsilon}\times\partial_t\bm{m}^{\varepsilon},\, |\bm{m}^{\varepsilon}(\bm{x},t)|=1, \,\bm{x}\in D,\\
\!\!\!\partial_\nu \bm{m}^{\varepsilon}=0,\,\bm{x}\in \partial D,
\end{array}\right.
\end{eqnarray}
where $\bm{m}^{\varepsilon}$ represents the magnetization, $\alpha$ is the dimensionless damping constant, $D$ is a bounded region with smooth boundary, and $\nu$ is the unit outward normal vector on $\partial D$.
The effective field $\bm{H}_{\rm{eff}}^{\varepsilon}$ is the variational derivative with respect to $\bm{m}^{\varepsilon}$ of the Landau-Lifschitz energy $F_{\rm{LL}}=\int_{D}\left(\frac{1}{2}|\nabla\bm{m}^{\varepsilon}|^2+w(\bm{m}^{\varepsilon})
-\frac{1}{2}\bm{H}_{s}^{\varepsilon}\cdot\bm{m}^{\varepsilon}
-\frac{\varepsilon}{2}\bm{s}^{\varepsilon}\cdot\bm{m}^{\varepsilon}\right)d\bm{x}$, that is,
\begin{align*}
\bm{H}_{\rm{eff}}^{\varepsilon}
=-\frac{\delta F_{\rm{LL}}}{\delta\bm{m}^{\varepsilon}}
=\Delta\bm{m}^{\varepsilon}-w'(\bm{m}^{\varepsilon})+\bm{H}_{s}^{\varepsilon}
+\frac{\varepsilon}{2}\bm{s}^{\varepsilon}.
\end{align*}
The terms on the right-hand side of the above equation represent the exchange field, the anisotropy field, the stray field, and additional torque, respectively. In the Landau-Lifschitz energy,
$w(\bm{m}^{\varepsilon})$ represents the anisotropy energy, usually $w\geq 0$ is a polynomial up to degree 4.
Specifically, for a given uniaxial anisotropy, $w(\bm{m}^{\varepsilon})=({m_{2}^{\varepsilon}})^2+({m_{3}^{\varepsilon}})^2$
satisfies this assumption. Let $w'(\bm{m}^{\varepsilon}):=\nabla_{\bm {m}}w(\bm{m}^{\varepsilon})$ denote the variational derivative.
The coupling term $\bm{s}^{\varepsilon}\cdot\bm{m}^{\varepsilon}$ generates the spin-transfer torque, converting spin angular momentum
to magnetization dynamics, with $\frac{\varepsilon}{2}$ representing the corresponding coupling strength.
Let $\bm{H}_{s}^{\varepsilon}=-\nabla u$, where the magnetostatic potential is given
by $u=\nabla N\ast\cdot\bm{m}^{\varepsilon}$. Therefore, $\bm{H}_{s}^{\varepsilon}(\bm{x})=-\nabla(\nabla N\ast\cdot\bm{m}^{\varepsilon})$.

The LLG equation and the SPLLG equation are widely used in the field of materials science, magnetism,
and electromagnetism. For example, the LLG equation has been applied to the study of energy efficiency, stability,
and storage density of magnetic storage devices. The SPLLG system has been applied to the study of spin transport and
magnetization reversal in ferromagnetic materials, playing important roles in domain wall motion control and magnetization reversal
in magnetic multilayers. In recent years, numerical simulations of the SPLLG system have been conducted
by Chen \textit{et al.} \cite{CGY1,CGY2,CLZ}. Chai, Garc\'{\i}a-Cervera and Yang \cite{CGY} proved the existence of weak solutions for the SPLLG system
and obtained the semiclassical limit equation when $\varepsilon$ tends toward 0.
In the SPLLG system, if only electron spins are considered, the SP system can be used to describe the motion of conducting electrons. The existence and uniqueness of global classical solutions and $L^2$ solutions for the three-dimensional SP system can be found in \cite{BM, C}. The existence of radially symmetric solutions and ground state solutions for the SP system is presented in \cite{Am, CV, LG, SM, ZZ}.

In the SPLLG system, if only the magnetization of ferromagnetic materials is considered, the LLG equation can be used to describe the magnetization dynamics. The research on ferromagnetic theory can be found in \cite{B,W} and the references therein. Landau and Lifshitz \cite{LL} and Gilbert \cite{G0} further developed the theory. Additionally, many scholars discussed the mathematical theory of the LLG equation. Alouges and Soyuer \cite{AS} proved the existence and non-uniqueness of global weak solutions of the Cauchy problem of the three-dimensional LLG equation. Carbou and Fabrie \cite{CF} showed the local existence and uniqueness of regular solutions to the initial-boundary value problem for the three-dimensional LLG equation; for the two-dimensional case, they established the global existence of regular solutions with sufficiently small initial data. Garc\'{\i}a-Cervera and Wang \cite{GW} coupled the LLG system with the spin transport equation for studying the spin polarization dynamics and demonstrated the existence and non-uniqueness of weak solutions in three-dimensional space. Later, Guo and Pu \cite{GP, PG} obtained the existence of global smooth solutions for one and two-dimensional spin-polarized transport system. When the dimension $n\geq3$ and the norm of the initial gradient i.e. $|\nabla\bm{m}_0|_{L^n}$ is sufficiently small, Melcher \cite{Me} obtained the  existence, uniqueness and asymptotic behavior of global smooth solutions to the Cauchy problem of the LLG equation. The existence of any regular solution to the three-dimensional LLG equation with homogeneous Neumann boundary conditions was considered by Feischl and  Tran \cite{FT} when the initial data is a small perturbation near constant states. Recently, in the case of dimension $n\geq2$, Wang and Guo \cite{WG} studied the regularity and established blow-up criteria of strong solutions to the LLG equation with Neumann boundary conditions.

Considering random disturbances caused by phonons, nuclear spins, and conduction electrons, an important issue in ferromagnetic theory is the description of phase transitions between different equilibrium states caused by thermal fluctuations of the effective field. To incorporate random fluctuations of the effective field into magnetization dynamics and describe noise-induced phase transitions between equilibrium states of ferromagnetic materials, the LLG equation needs to be modified. N\'{e}el\cite{N}  first proposed the analysis of noise-induced phase transitions, and Brown \cite{B0}, Kamppeter \textit{et al.} \cite{KMMSB} further extended this work. Recently, Brze\'{z}niak, Goldys and Jegaraj \cite{BGJ, BGJ1} considered the well-posedness of solutions to the stochastic LLG equation. A simple approach for introducing noise into the LLG equation is to use Gaussian noise to perturb the effective field, for example, the simplified stochastic LLG equation is given by:
\begin{align*}
\frac{\partial\bm{m}}{\partial t}=\lambda_1\bm{m}\times(\Delta\bm{m}+dW)-\lambda_2\bm{m}\times(\bm{m}\times(\Delta\bm{m}+dW)),
\end{align*}
where $\lambda_1\in\mathbb{R}$, $\lambda_2>0$. The effective field is given by $\bm{H}_{\rm{eff}}=\Delta\bm{m}+dW$.

The references \cite{B0, GL, KMMSB} indicate that in the case of coordinate transformation, the addition of noise to the LLG equation should remain invariant. In physics \cite{GL, KRV}, if $\lambda_2$ is small, the noise term on the right-hand side of the above equation can be neglected. Based on this perspective, Brze\'{z}niak, Goldys and Jegaraj \cite{BGJ} considered the following stochastic LLG equation:
\begin{align*}
d\bm{m}=(\lambda_1\bm{m}\times\Delta\bm{m}-\lambda_2\bm{m}\times(\bm{m}\times\Delta\bm{m}))dt+\bm{m}\circ dW,
\end{align*}
where $\circ dW$ represents the Stratonovitch differential (for more details, see \cite{KPP}). Alouges, Bouard and Hocquet \cite{ABH} conducted numerical simulations of the stochastic LLG equation with continuous noise under the Stratonovitch integration. More numerical results can be found in \cite{BBP, BBNP, BBNP1}. Goldys, Le and Tran \cite{GLT} proved the existence of weak martingale solutions for the three-dimensional stochastic LLG equation. Guo and Pu \cite{GP1} established the existence of global weak solutions of the Cauchy problem to the $d$ ($d\geq 2$)-dimensional stochastic LL equations. Recently, Brze\'{z}niak \textit{et al.} \cite{BGJ, BL} proved the existence of weak martingale solutions for the three-dimensional stochastic LLG equation; They \cite{BGJ1} demonstrated the existence and uniqueness of pathwise solutions for the stochastic LLG equation on a bounded interval and also established the large deviations of the solutions. In the case of the stochastic LLG equation driven by  L\'{e}vy noise, in order to maintain the constraint condition, it is necessary for the noise to remain invariant under coordinate transformation. Applebaum \textit{et al.} \cite{A,CP,K,Mar} provided a framework for addressing this technical issue. Very recently, Brze\'{z}niak and Manna \cite{BM1} considered the following stochastic LLG equation:
\begin{align*}
d\bm{m}=(\lambda_1\bm{m}\times\Delta\bm{m}-\lambda_2\bm{m}\times(\bm{m}\times\Delta\bm{m}))dt
+\bm{m}\times\bigg(\sum_{i=1}^{N}e_i\diamond dL_i\bigg),
\end{align*}
where $L:=(L_1, L_2, \cdots, L_N)\in\mathbb{R}^N $ represents a pure L\'{e}vy process, and $\diamond dL_i$ denotes the Marcus differential (see \cite{KPP}), $e_i$ is a given bounded function. Brze\'{z}niak, Manna and Mukherjee \cite{BMM} obtained the existence and regularity of pathwise solutions to the one-dimensional stochastic LLG equation containing only non-zero exchange energy. The authors also demonstrated the existence of weak martingale solutions for the initial-boundary value problem of the three-dimensional stochastic LLG equation with pure jump noise in \cite{BM1, BM22}. Specifically, when verifying the constraint condition $|\bm{m}(t,\bm{x})|_{\mathbb{R}^3}=1$, a non-negative test function is required.

Due to the complexity of the real world, both continuous and jump models exist, and both large and small jumps are permissible. For example, random effects caused by phonons, charge transport in electrons, and nuclear spins are continuous under favorable conditions; But when they are suddenly disturbed by other media, adsorbed escape, vibration or other extreme activities, the random effects may be jumps. Considering the combined impact of continuous  and discontinuous random effects to explain the thermal fluctuations of the magnetic moment direction, it is reasonable to study the stochastic LLG equation driven by the multiplicative noise. Therefore, based on the SPLLG system \cite{CGY, CLZ}, we investigate the following stochastic SPLLG system:
\begin{eqnarray}\label{S1}
\left\{\begin{array}{ll}
\!\!\!i\partial_t\bm{\psi}_j(\bm{x},t)
=-\frac{1}{2}\Delta\bm{\psi}_j(\bm{x},t)+V\bm{\psi}_j(\bm{x},t)-\frac{1}{2}\bm{m}\cdot\hat{\bm{\sigma}}\bm{\psi}_j(\bm{x},t),
~ j\in \mathbb{N},\\
\!\!\!\partial_t\bm{m}=-\bm{m}\times\bm{H}+\alpha\bm{m}\times\partial_t\bm{m}+(\bm{m}\times G(\bm{m}))\circ\frac{dW}{dt}
+(\bm{m}\times F(\bm{m},l))\frac{dL(t)}{dt},\\
\!\!\!V=-N\ast\rho[\bm{\Psi}]\\
\end{array}\right.
\end{eqnarray}
with $\bm{m}(\bm{x},0)=\bm{m}_{0}(\bm{x})$, $\bm{x}\in D$; $\bm{\psi}_j(\bm{x},0)=\bm{\varphi}_{j}(\bm{x})$, $\bm{x}\in\mathbb{R}^3$; $\partial_\nu \bm{m}(\bm{x},t)=0$, $(\bm{x},t)\in \partial D\times\mathbb{R}^{+}$. where $D$ is an open bounded domain.
The effective field $\bm{H}$ is given by:
\begin{align*}
\bm{H}=\Delta\bm{m}-w'(\bm{m})+\bm{H}_{s}+\frac{1}{2}\bm{s}[\bm{\Psi}].
\end{align*}
$\rho[\bm{\Psi}]$, $\bm{s}[\bm{\Psi}]$ and $\bm{H}_s$ are respectively defined as:
\begin{align}\label{ss}
\rho[\bm{\Psi}]=\sum_{j=1}^{\infty}\lambda_{j}|\bm{\psi}_{j}|^2,~
\bm{s}[\bm{\Psi}]=\sum_{j=1}^{\infty}\lambda_{j}{\rm{Tr}}_{{\mathbb{C}}^2}\big(\hat{\bm{\sigma}}\big({\bm{\psi}_{j}}
{\bm{\psi}^{\dag}_{j}}\big)\big),~
\bm{H}_{s}=-\nabla(\nabla N\ast\cdot\bm{m}).
\end{align}
Moreover, denote $\bm{\Psi}:=\{\bm{\psi}_j\}_{j\in \mathbb{N}}$, $\bm{\Psi}_0:=\{\bm{\varphi}_j\}_{j\in \mathbb{N}}$.
$W(t)=(W_1(t), W_2(t), \cdots, W_N(t))$ is an $\mathbb{R}^{N}$-valued Wiener process, $\frac{dW}{dt}$ formally denotes ``white noise'' in physics, $\circ$ stands for the Stratonovich differential. 
$L(t)=(L_1(t), L_2(t), \cdots, L_N(t))$ is an $\mathbb{R}^{N}$-valued L\'{e}vy process with pure jump, i.e.
\begin{align*}
L(t)=\int_{0}^{t}\int_{B}l\tilde{\eta}(ds,dl)+\int_{0}^{t}\int_{B^c}l\eta(ds,dl), \quad t\geq0,
\end{align*}
where $B:=\mathbb{B}(0,1)\subset\mathbb{R}^{N}$, $\eta$ and $\tilde{\eta}$  represent a time homogeneous Poisson random measure and a time homogeneous compensated Poisson random measure with a compensation ${\rm{Leb}}\otimes\mu$, i.e. $\tilde{\eta}:=\eta-{\rm{Leb}}\otimes\mu$, respectively. $\frac{dL(t)}{dt}$ is a formal notation. 
It is worth noting that in order to close the energy estimates, we need to express the stochastic SPLLG system  in the form \eqref{S1}.

In this paper, we consider the existence of weak martingale solutions to the three-dimensional stochastic SPLLG system with the continuous noise and small jump noise. Compared with the deterministic SPLLG system \cite{CGY} studied by Chai, Garc\'{i}a-Cervera and Yang, the sample space $\Omega$ in the stochastic system \eqref{S1} does not have a topological structure, and no compactness of the sample points $\omega$ can be obtained. To overcome the obstacle, we employ the energy method and the stochastic compactness method. For the stochastic LLG equation, due to the magnetization $\bm{m}_n$ satisfies Neumann boundary conditions, the estimate $|\bm{m}_n|_{\mathbb{L}^2}\leq|\nabla\bm{m}_n|_{\mathbb{L}^2}$ cannot be utilized, leading to the inability to close the energy estimate. To this end, we establish the $\mathbb{H}^1$-estimate of $\bm{m}_n$ and the $\mathbb{L}^2$-estimate of $G_n(\bm{m}_n)$ generated by the stochastic terms using the H\"{o}lder inequality, the structure of equations, and the properties of martingales. Then, we can get the uniform energy estimate of each term in the stochastic SPLLG equation in the sense of expectation. Compared with the stochastic LLG equation \cite{BGJ,BM1} studied by Brze\'{z}niak \textit{et al.}, the system we investigate exhibits strong coupling and strong nonlinearity, which renders the stochastic term meaningless in the sense of Marcus, and the closure of the energy estimate is more complex than that of the individual stochastic LLG equation and the SPLLG equation. Firstly, we look for suitable function spaces to ensure that the stochastic perturbation is well-defined. Notice that in utilizing the Faedo-Galerkin approximation and the stochastic compactness method to establish the existence of solutions, Brze\'{z}niak \textit{et al.} \cite{BGJ,BM1} needed the non-negativity of the test function when verifying the saturation constraint conditions for $\bm{m}$. To avoid this, we introduce the penalized function with a penalization coefficient $k$ to transform the stochastic LLG equation into a penalized problem, and then use the three-layer approximation and the stochastic compactness method to prove the global existence of weak martingale solutions. Specifically, we first consider the case of a bounded domain $K=\{\bm{x}\in\mathbb{R}^3, |\bm{x}|<R\}$ for the SP system. In the first-layer approximation ($n\rightarrow\infty$), by applying the Faedo-Galerkin approximation method, we establish uniform estimates of $\bm{m}_n$ and $\bm{\psi}_{jn}$ with respect to $n$, and obtain the existence of weak martingale solutions to the penalized problem through the stochastic compactness method. In the second-layer approximation ($k\rightarrow\infty$), we use the established uniform energy estimates, the stochastic compactness method and a careful identification of the limit procedure to obtain the existence of weak martingale solutions for the original problem in a bounded domain. Noted that proving the tightness of the approximation sequence, it is difficult to directly prove that $\Delta\bm{m}^k$ and $\Delta\bm{\psi}_j^k$ satisfy the Aldous condition. To overcome this difficulty, we mainly utilize the definition of the fractional Laplacian operator and the extension theorem. In particular, we can prove that $\bm{m}$ satisfies the saturation constraint condition as $k\rightarrow\infty$, and we do not need the non-negativity of the test function. In the third-layer approximation ($R\rightarrow\infty$), we use the domain extension method, uniform energy estimates, and the stochastic compactness method to obtain the existence of weak martingale solutions to the SP system in the whole space $\mathbb{R}^3$. Therefore, we establish the global existence of the weak martingale solution of the system \eqref{S1}.

The structure of this paper is arranged as follows. In Section 2, we introduce some notations and assumptions, the definition of weak martingale solutions to the stochastic SPLLG system \eqref{S1}, and then state the main results. In Section 3, we establish energy estimates for the approximation sequences corresponding to the wave function and magnetization intensity. In Sections 4 and 5, we demonstrate the tightness of the approximation sequence $(\bm{m}_n,\bm{\psi}_{jn})$, and prove the existence of weak martingale solutions to the stochastic system \eqref{SLLG3}. In Sections 6 and 7, we establish the tightness of the approximation sequence $(\bm{m}^k,\bm{\psi}_j^k)$, and prove the existence of weak martingale solutions to the stochastic SPLLG system \eqref{S2} and \eqref{S1}. For convenience, some important lemmas are listed in Section 8.
\section{Preliminaries and main results}
\subsection{Notations and assumptions}
The function space $\mathbb{H}^1(D,\mathbb{R}^3)=:\mathbb{H}^1$ is defined as
\begin{align*}
\mathbb{H}^1(D,\mathbb{R}^3)=\big\{\bm{m}\in\mathbb{L}^2(D,\mathbb{R}^3):
\frac{\partial \bm{m}}{\partial x_i}\in\mathbb{L}^2(D,\mathbb{R}^3),~ i=1,2,3\big\}.
\end{align*}
We denote $\mathbb{L}^p(D,\mathbb{R}^3)=:\mathbb{L}^p~(p>0)$, $\mathbb{W}^{k,p}(D,\mathbb{R}^3)=:\mathbb{W}^{k,p}~(p>0)$ and we use $V$ to represent the Hilbert space $\mathbb{H}^1$. The dual space of $V$ is denoted by $V^{\ast}$. We use the notation $\langle\cdot,\cdot\rangle$ and $|\cdot|$ to represent the inner product and the norm of the space $\mathbb{L}^2$, $_{V}\langle\cdot,\cdot\rangle_{V^{\ast}}$ represents the duality product of the space $V$ and its dual space $V^{\ast}$. The $-\Delta$ operator A with Neumann boundary conditions is defined as follows:
\begin{eqnarray*}
\left\{\begin{array}{ll}
\!\!\!D(A):=\{\bm{m}\in\mathbb{H}^2: \frac{\partial\bm{m}}{\partial \nu}=0~on~\partial D\},\\
\!\!\!A\bm{m}:=-\Delta\bm{m},\quad\bm{m}\in D(A),
\end{array}\right.
\end{eqnarray*}
where $\nu=(\nu_1,\nu_2,\nu_3)$ is the unit outward normal vector on $\partial D$.
\begin{definition}\label{def}
For any non-negative real number $\beta$, the Hilbert space $X^{\beta}:=D(A_{1}^{\beta})$ is defined by the domain of the fractional power operator $A_{1}^{\beta}$, and $|\cdot|_{X^{\beta}}:=|A_{1}^{\beta}\cdot|_{\mathbb{L}^2}$. For positive real number $\beta$, $X^{-\beta}$ denotes the dual of $X^{\beta}$, and when $x\in\mathbb{L}^2$, we have $|x|_{X^{-\beta}}:=|A_{1}^{-\beta}x|_{\mathbb{L}^2}$, $X^0=\mathbb{L}^2$ is consistent with its dual.
\end{definition}
\begin{proposition}[\!\!\cite{BM1,T}]
\begin{eqnarray*}
X^{\beta}=D(A_1^{\beta})\!=\!\left\{\begin{array}{ll}
\!\!\!\{\bm{m}\in\mathbb{H}^{2\beta}: \frac{\partial\bm{m}}{\partial \nu}=0\},\quad &~2\beta>\frac{3}{2},\\
\!\!\!\mathbb{H}^{2\beta},\quad &~2\beta<\frac{3}{2}.
\end{array}\right.
\end{eqnarray*}
\end{proposition}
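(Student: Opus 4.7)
The plan is to prove the proposition via the theory of complex interpolation between $\mathbb{L}^2$ and $D(A_1)$. First, I would observe that $A_1=I+A$ (or equivalently the Neumann Laplacian shifted so as to be strictly positive) is self-adjoint, positive, and has compact resolvent on $\mathbb{L}^2(D,\mathbb{R}^3)$, with explicit domain $D(A_1)=\{\bm{m}\in\mathbb{H}^2:\partial_{\nu}\bm{m}=0\text{ on }\partial D\}$ by standard elliptic regularity for the Neumann problem. By the Heinz--Kato spectral calculus, for $\beta\in(0,1)$ one has the identification
\begin{equation*}
D(A_{1}^{\beta})=[\mathbb{L}^{2},D(A_{1})]_{\beta},
\end{equation*}
with equivalent norms, where $[\cdot,\cdot]_{\beta}$ denotes complex interpolation; this reduces the task to computing an interpolation space between a Sobolev space and a Sobolev space constrained by a homogeneous Neumann boundary condition.

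Next, I would invoke Grisvard's/Triebel's interpolation theorem for Sobolev spaces with boundary conditions. The trace operator $\bm{m}\mapsto\partial_{\nu}\bm{m}|_{\partial D}$ is continuous from $\mathbb{H}^{s}(D,\mathbb{R}^3)$ into $\mathbb{H}^{s-3/2}(\partial D,\mathbb{R}^3)$ precisely when $s>3/2$, so the boundary constraint disappears from the interpolation space below the threshold $2\beta=3/2$. Concretely, one obtains
\begin{equation*}
[\mathbb{L}^{2},D(A_{1})]_{\beta}=\begin{cases}\mathbb{H}^{2\beta}(D,\mathbb{R}^3),& 2\beta<\tfrac{3}{2},\\[2pt]\{\bm{m}\in\mathbb{H}^{2\beta}(D,\mathbb{R}^3):\partial_{\nu}\bm{m}=0\text{ on }\partial D\},& 2\beta>\tfrac{3}{2},\end{cases}
\end{equation*}
which is exactly the claim of the proposition in the range $\beta\in(0,1)$. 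For $\beta\geq 1$, one writes $\beta=k+\gamma$ with $k\in\mathbb{N}$ and $\gamma\in[0,1)$ and uses the composition $A_{1}^{\beta}=A_{1}^{\gamma}A_{1}^{k}$, combined with the integer-order characterization $D(A_{1}^{k})=\{\bm{m}\in\mathbb{H}^{2k}:A_{1}^{j}\bm{m}\in D(A_{1})\text{ for }0\leq j\leq k-1\}$; elliptic regularity on the Neumann problem iterated $k$ times then yields the claim for all $\beta\geq 0$ (save the excluded critical value $2\beta=3/2$).

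The main obstacle is the accurate identification of the interpolation space through the trace threshold. Below $2\beta=3/2$ one must justify that the Neumann constraint is actually lost under interpolation, which requires a density argument showing that smooth functions without any boundary constraint are dense in $[\mathbb{L}^{2},D(A_{1})]_{\beta}$ in the $\mathbb{H}^{2\beta}$-norm; above the threshold one needs the converse, namely that the constraint survives under interpolation because $\partial_{\nu}$ remains a well-defined bounded operator on $\mathbb{H}^{2\beta}$. Both statements are classical but delicate, and the critical exponent $2\beta=3/2$ must be excluded because the interpolated space there is a Lions--Magenes-type space in which $\partial_{\nu}\bm{m}$ is only defined in a weaker sense. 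I would therefore appeal directly to the corresponding theorems in Triebel's monograph (cited as \cite{T}) and the companion presentation in \cite{BM1} to close the argument, rather than reproducing the delicate trace and density estimates from scratch.
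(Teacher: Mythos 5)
The paper does not prove this proposition; it is quoted as a cited fact from \cite{BM1,T}. Your reconstruction is the standard argument behind that citation: identify $D(A_1^{\beta})$ with the complex interpolation space $[\mathbb{L}^2,D(A_1)]_{\beta}$ for a positive self-adjoint operator, then invoke the Grisvard--Triebel characterization of interpolation spaces between $\mathbb{L}^2$ and a Sobolev space subject to a Neumann condition, with the $s>\tfrac{3}{2}$ trace threshold deciding whether the boundary constraint persists, and reduce $\beta\geq1$ to $\beta\in[0,1)$ by composition with integer powers and elliptic regularity. This is correct and is essentially the route taken in the references the paper cites, so there is no substantive divergence. One small remark worth flagging: the paper never actually defines $A_1$ (only $A=-\Delta$ with Neumann boundary conditions appears), and your inference that $A_1=I+A$ is the intended shift — needed because $A$ itself is not injective — is the reading consistent with \cite{BM1}. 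You might also note that your argument implicitly requires the domain $D$ to have sufficiently regular boundary for the elliptic regularity and trace theorems to apply, which the paper assumes when it takes $D$ to have smooth boundary.
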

For each $\bm{\lambda}=\{\lambda_j\}_{j=1}^{\infty}$ and $s\in\mathbb{R}$, the Hilbert norm of $\bm{\Psi}$ on the measurable set $K\subset\mathbb{R}^3$ is defined as:
\begin{align*}
|\bm{\Psi}|_{\mathcal{H}_{\lambda}^{s}(K)}^{2}
:=\sum_{j=1}^{\infty}\lambda_{j}|\bm{\psi}_{j}|_{\mathbb{H}^{s}(K)}^2.
\end{align*}
If $|\bm{\Psi}|_{\mathcal{H}_{\lambda}^{s}(K)}<\infty$, then $\bm{\Psi}\in \mathcal{H}_{\lambda}^{s}(K)$, and we denote $\mathcal{L}_{\lambda}^{2}(K):=\mathcal{H}_{\lambda}^{0}(K)$.
\begin{assumption}
{\rm (a)} Let $(\Omega, \mathscr{F}, (\mathscr{F}_t)_{t\geq 0}, \mathbb{P})$ be a probability space with a filtration satisfying the usual assumptions, i.e., satisfying {\rm (i)} $\mathbb{P}$ is complete on $(\Omega, \mathscr{F})$, {\rm (ii)} for each $t\geq 0$, $\mathscr{F}_t$ contains all $(\mathscr{F}, \mathbb{P})$-null sets, and {\rm (iii)} $\mathscr{F}_t$ filtration is right-continuous;

{\rm (b)} Let $W(t)$ be an $\mathbb{R}^N$-valued, $\mathscr{F}_t$-adapted Wiener process defined on the above probability space;

{\rm (c)} Let $L(t)$ be an $\mathbb{R}^N$-valued, $\mathscr{F}_t$-adapted L\'{e}vy process with a pure jump defined on the above probability space, corresponding to a time homogenous Poisson random measure $\eta$;

{\rm (d)} $\bm{m}\times G(\bm{m})\in L^2(0,T;\mathbb{W}^{-1,2})$, and $\bm{m}\times F(\bm{m},l): \mathbb{H}^1\times\mathbb{H}^1\times B\rightarrow\mathbb{W}^{-2,2}$ is a measurable mapping.
\end{assumption}
\begin{assumption}\label{ass1}
Assume that there exist two positive constants $K_1$ and $K_2$, such that for all $\bm{m}_1, \bm{m}_2$ and $t\in[0,T]$, it holds that
\begin{align*}
&\bigg(\sum_{i\geq1}|G_i(\bm{m}_{1})-G_i(\bm{m}_{2})|\bigg)^2
+|G'(\bm{m}_{1})[G(\bm{m}_{1})]-G'(\bm{m}_{2})[G(\bm{m}_{2})]|^2\\
&+\int_B|F(\bm{m}_{1},l)-F(\bm{m}_{2},l)|^2\mu(dl)
\leq K_1|\bm{m}_1-\bm{m}_2|^2;
\end{align*}
For all $\bm{m}$, it holds that
\begin{align*}
\bigg(\sum_{i\geq1}|G_i(\bm{m})|\bigg)^2+|G'(\bm{m})[G(\bm{m})]|^2
+\int_B|F(\bm{m},l)|^2\mu(dl)\leq K_2(1+|\bm{m}|^2).
\end{align*}
\end{assumption}
\subsection{Main results} First, we introduce the definition of weak martingale solutions to the stochastic SPLLG system.
\begin{definition}
Let $\bar{\bm{\Psi}}_0\in\mathcal{H}_{\lambda}^{1}(\mathbb{R}^3)$, $\bar{\bm{m}}_0\in\mathbb{H}^{1}(D)$, $|\bar{\bm{m}}_0|=1$, a.e. in $D$. For any given $T\in(0,\infty)$, then $(\bar{\Omega}, \bar{\mathscr{F}}, (\bar{\mathscr{F}}_t)_{t\geq0}, \bar{\mathbb{P}}, \bar{\bm{\Psi}}, \bar{\bm{m}}, \bar{W}, \bar{\eta})$ is a weak martingale solution of the SPLLG system provided that the following hold:\\
(a) $(\bar{\Omega}, \bar{\mathscr{F}}, (\bar{\mathscr{F}}_t)_{t\geq 0}, \bar{\mathbb{P}})$ is a filtered probability space with a filtration $(\bar{\mathscr{F}}_t)_{t\geq 0}$;\\
(b) $\bar{W}$ is an $\bar{\mathscr{F}}_t$-adapted Wiener process; $\bar{\eta}$ is a time homogeneous Poisson random measure from the
measurable space $(B,\mathscr{B}(B))$ to $(\bar{\Omega}, \bar{\mathscr{F}}, (\bar{\mathscr{F}}_t)_{t\geq 0}, \bar{\mathbb{P}})$ with intensity ${\rm{Leb}}\otimes\mu$;\\
(c) $\bar{\bm{m}}:[0,T]\times\bar{\Omega}\rightarrow\mathbb{H}^1$ is $(\bar{\mathscr{F}_t)}_{t\geq 0}$ progressively measurable and weakly c\`{a}dl\`{a}g process, and for $\beta=\frac{1}{2}$, $\bar{\mathbb{P}}$-a.s. $\bar{\omega}\in\bar{\Omega}$,
\begin{align*}
\bar{\bm{m}}(\cdot,\bar{\omega})\in \mathbb{D}([0,T];X^{-\beta}),
\text{and} \sup_{t\in[0,T]}|\bar{\bm{m}}(t,\bar{\omega})|_{\mathbb{H}^1}<\infty,
\end{align*}
$\bar{\bm{\Psi}}:[0,T]\times\bar{\Omega}\rightarrow\mathcal{H}_{\lambda}^{1}$ is $(\bar{\mathscr{F}_t)}_{t\geq 0}$ progressively measurable and weakly c\`{a}dl\`{a}g process, and for $\beta=\frac{1}{2}$, $\bar{\mathbb{P}}$-a.s. $\bar{\omega}\in\bar{\Omega}$,
\begin{align*}
\bar{\bm{\Psi}}(\cdot,\bar{\omega})\in \mathbb{D}([0,T];X^{-\beta}),
\text{and} \sup_{t\in[0,T]}|\bar{\bm{\Psi}}(t,\bar{\omega})|_{\mathcal{H}_{\lambda}^{1}}<\infty,
\end{align*}
where $\mathbb{D}([0,T],X^{-\beta})$ represents Skorokhod space;\\
(d) $\bar{\bm{m}}(t,\bm{x})$ satisfies the constraint condition, i.e.
\begin{align*}
|\bar{\bm{m}}(t,\bm{x})|_{\mathbb{R}^3}=1,~\text{for Lebesgue}~a.e.~\bm{x}\in D, \text{for all}~t\in[0,T],~\bar{\mathbb{P}}\text{-}a.s.;
\end{align*}
(e) For all $\vartheta\in L^4(\bar{\Omega};L^4(0,T;\mathbb{H}^1(\mathbb{R}^3)))$ and $\chi\in L^4(\bar{\Omega};L^4(0,T;\mathbb{H}^1))$, we have
\begin{align*}
i\int_{0}^{T}\int_{\mathbb{R}^3}\partial_t\bar{\bm{\psi}}_j\vartheta d\bm{x}dt
=&\frac{1}{2}\int_{0}^{T}\int_{\mathbb{R}^3}\nabla\bar{\bm{\psi}}_j\cdot\nabla\vartheta d\bm{x}dt
+\int_{0}^{T}\int_{\mathbb{R}^3}\bar{V}\bar{\bm{\psi}}_j\vartheta d\bm{x}dt\\
&-\frac{1}{2}\int_{0}^{T}\int_{\mathbb{R}^3}\bar{\bm{m}}\cdot\hat{\bm{\sigma}}\bar{\bm{\psi}}_j\vartheta d\bm{x}dt,
~\bar{\mathbb{P}}\text{-}a.s.,
\end{align*}
and
\begin{align*}
&\int_{0}^{T}\int_{D}\partial_t\bar{\bm{m}}\cdot\chi d\bm{x}dt
=\int_{0}^{T}\int_{D}\alpha(\bar{\bm{m}}\times\partial_t\bar{\bm{m}})\cdot\chi d\bm{x}dt
-\int_{0}^{t}\int_{D}(\bar{\bm{m}}\times\bar{\bm{H}})\cdot\chi d\bm{x}dt\\
&+\frac{1}{2}\int_{0}^{T}\int_{D}(\bar{\bm{m}}\times G'(\bar{\bm{m}})[G(\bar{\bm{m}})])\cdot\chi d\bm{x}dt
+\int_{0}^{T}\int_{D}(\bar{\bm{m}}\times G(\bar{\bm{m}}))\cdot\chi d\bm{x}d\bar{W}(t)\\
&+\int_{0}^{T}\int_B\int_{D}(\bar{\bm{m}}\times F(\bar{\bm{m}},l))\cdot\chi d\bm{x}\tilde{\bar{\eta}}(dt,dl),~\bar{\mathbb{P}}\text{-}a.s.,
\end{align*}
where
\begin{align*}
\int_{0}^{T}\int_{D}(\bar{\bm{m}}\times \bar{\bm{H}})\cdot\chi d\bm{x}dt
&=\int_{0}^{T}\int_{D}\bar{\bm{m}}\times \big(\bar{\bm{H}}_s+\frac{1}{2}\bar{\bm{s}}-w'(\bar{\bm{m}})\big)\cdot\chi d\bm{x}dt\\
&\quad -\int_{0}^{T}\int_{D}(\bar{\bm{m}}\times \nabla\bar{\bm{m}})\cdot\nabla\chi d\bm{x}dt,
\end{align*}
and $\bar{V}, \bar{\rho}, \bar{\bm{s}}$ and $\bar{\bm{H}}_s$ are defined in \eqref{ss}.
\end{definition}

Now, we are ready to state the main results of this paper.
\begin{theorem}\label{them1}
Assume that $D$ is a bounded domain with smooth boundary. Let $\bar{\bm{\Psi}}_0\in\mathcal{H}_{\lambda}^{1}(\mathbb{R}^3)$ and $\bar{\bm{m}}_0\in \mathbb{H}^{1}(D)$. For any given $0<T<\infty$,  then the stochastic SPLLG system \eqref{S1} admits a weak martingale solution $(\bar{\Omega}, \bar{\mathscr{F}}, (\bar{\mathscr{F}}_t)_{t\geq0}, \bar{\mathbb{P}},\bar{\bm{\Psi}}, \bar{\bm{m}}, \bar{W}, \bar{\eta})$ on $[0,T]$.
\end{theorem}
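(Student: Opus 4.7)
The plan is to carry out a three-layer approximation scheme, combining the penalized-functional trick, a Faedo-Galerkin scheme, and the stochastic compactness (Skorokhod/Jakubowski) method, and then to identify the limit carefully. The penalization is introduced from the outset so that the constraint $|\bar{\bm{m}}|=1$ never has to be verified by testing against a non-negative function (which is the obstacle in \cite{BM1}); instead it is recovered in the passage $k\to\infty$ by using energy control of the penalization term. The SP part is first solved on a ball $K_R=\{|\bm{x}|<R\}\subset\mathbb{R}^3$ and then extended to $\mathbb{R}^3$.

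First I would fix $R$ and $k$ and set up the Galerkin system: project $\bm{m}$ onto the eigenbasis of $A=-\Delta_{\mathrm{Neu}}$ on $D$ and $\bm{\psi}_j$ onto an eigenbasis of $-\Delta$ on $K_R$ with appropriate boundary conditions. The Galerkin ODE-SDE system is solvable by standard existence theory (local existence from Lipschitz/linear growth in Assumption 2.2, global from a priori bounds). The heart of this step is the uniform-in-$n$ energy estimate. Here one cannot use $|\bm{m}_n|_{\mathbb{L}^2}\le|\nabla\bm{m}_n|_{\mathbb{L}^2}$ because of Neumann boundary conditions, so one tests the LLG equation against $\bm{H}_n+\tfrac1k(|\bm{m}_n|^2-1)\bm{m}_n$, exploits the orthogonality $\bm{m}_n\cdot(\bm{m}_n\times \cdot)=0$, and uses It\^o's formula in Stratonovich form plus the quadratic variation correction $\tfrac12 G'(\bm{m})[G(\bm{m})]$ to absorb the noise. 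Combined with the standard Schr\"odinger identity $\tfrac{d}{dt}|\bm{\psi}_{jn}|_{\mathbb{L}^2}^2=0$ (since $V$ is real and $\bm{m}\cdot\hat{\bm{\sigma}}$ is Hermitian) and a Gronwall step in expectation, this gives uniform bounds on $\bm{m}_n$ in $L^\infty(0,T;\mathbb{H}^1)$, on $\partial_t\bm{m}_n$ in $L^2(0,T;V^\ast)$, on $\bm{\psi}_{jn}$ in $L^\infty(0,T;\mathbb{H}^1(K_R))$ weighted by $\lambda_j$, and on the penalization $\tfrac1k\int|\bm{m}_n|^2-1)^2$.

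Next I would prove tightness of the laws of $(\bm{m}_n,\bm{\psi}_{jn})$ on a product of Skorokhod-type spaces. Because $\mathbb{H}^1\hookrightarrow\mathbb{L}^2$ is compact, the spatial compactness comes from the $\mathbb{H}^1$ bound, and the temporal compactness from the Aldous condition applied term-by-term to the martingale decomposition. Then Jakubowski's version of Skorokhod's theorem yields a new probability space $(\bar\Omega,\bar{\mathscr{F}},\bar{\mathbb{P}})$ and an a.s.-convergent subsequence $(\bar{\bm{m}}_n,\bar{\bm{\psi}}_{jn},\bar W_n,\bar\eta_n)$, together with limit objects $(\bar{\bm{m}}^k,\bar{\bm{\psi}}_j^k,\bar W,\bar\eta)$. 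Identifying the limit equations uses Vitali/Lebesgue with the uniform moment bounds; the cubic terms $\bm{m}\times\Delta\bm{m}$ and $V\bm{\psi}_j$ are handled by strong $\mathbb{L}^2$-convergence and weak $\mathbb{H}^1$-convergence, while the stochastic integrals are identified by the standard martingale-representation argument on the new space. This completes the $n\to\infty$ layer and yields a martingale solution to the penalized problem on $K_R$.

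For the $k\to\infty$ layer, the key new estimate is $\bar{\mathbb{E}}\int_0^T\int_D \bigl||\bar{\bm{m}}^k|^2-1\bigr|^2d\bm{x}\,dt\le C k^{-1}$, which forces $|\bar{\bm{m}}|=1$ in the limit. The main obstacle I anticipate is the Aldous condition for $\Delta\bar{\bm{m}}^k$ and $\Delta\bar{\bm{\psi}}_j^k$, which do not live in a convenient Hilbert space uniformly in $k$; following the hint in the introduction, I would bound the relevant increments in $X^{-\beta}$ for $\beta=\tfrac12$ using the fractional-Laplacian definition of $X^{-\beta}$ and the Stein extension theorem, so that the whole equation (and its integrated form) is controlled in a negative-order Sobolev space and the Aldous criterion in $\mathbb{D}([0,T];X^{-\beta})$ applies. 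Once tightness and Skorokhod give the limit $\bar{\bm{m}}^k\to\bar{\bm{m}}^R$, $\bar{\bm{\psi}}_j^k\to\bar{\bm{\psi}}_j^R$, one again identifies the SPLLG equations; the penalization term drops out thanks to the $k^{-1}$ bound and because its test functional does not require $\chi\ge 0$.

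Finally, for the $R\to\infty$ layer, I would extend $\bar{\bm{\psi}}_j^R$ by zero outside $K_R$, use the uniform $\mathcal{H}_\lambda^1(\mathbb{R}^3)$ bounds together with the locally compact embeddings to get tightness on every compact subset, apply a diagonal Skorokhod argument, and pass to the limit in the Poisson equation $V=-N\ast\rho[\bm{\Psi}]$ using the convolution estimates for the Newtonian kernel (this requires care because $N$ is not integrable at infinity, but $\rho$ has uniform $L^1\cap L^3$ bounds from $\sum\lambda_j|\bm{\psi}_j|_{\mathbb{H}^1}^2<\infty$). Combining the three limits yields the required weak martingale solution, proving Theorem \ref{them1}. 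The most delicate point will be the second layer: simultaneously recovering $|\bar{\bm{m}}|=1$ without sign constraints on test functions and establishing the Aldous condition for the Laplacian terms in the right negative-order space.
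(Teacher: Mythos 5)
Your high-level roadmap matches the paper's: three layers (Galerkin $n\to\infty$, penalization $k\to\infty$, domain extension $R\to\infty$), a penalized LLG to sidestep the non-negativity requirement on test functions, and the Skorokhod--Jakubowski theorem with term-by-term Aldous estimates for tightness, including the observation that the Laplacian increments should be controlled in $X^{-1/2}$ via the fractional-Laplacian definition and an extension theorem. The differences are in the crux energy estimate, where your description does not produce the cancellations the argument needs.

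You propose testing the penalized LLG against $\bm{H}_n+\tfrac1k(|\bm{m}_n|^2-1)\bm{m}_n$ and ``exploiting the orthogonality $\bm{m}_n\cdot(\bm{m}_n\times\cdot)=0$,'' but these two steps are incompatible: the identity $\langle a,a\times b\rangle=0$ kills the $\bm{m}_n\times\partial_t\bm{m}_n$ term only when the test function is $a$, i.e.\ $\partial_t\bm{m}_n$ or $\bm{m}_n$, not $\bm{H}_n+\cdots$. What actually closes the estimate is pairing the penalized equation $\alpha\partial_t\bm{m}_n=-\bm{m}_n\times\partial_t\bm{m}_n+\bm{H}_n-k(|\bm{m}_n|^2-1)\bm{m}_n-\cdots$ with $\partial_t\bm{m}_n$ (so the cross term vanishes, $\langle\partial_t\bm{m}_n,\bm{H}_n\rangle$ produces the energy derivative, and $-k\langle\partial_t\bm{m}_n,(|\bm{m}_n|^2-1)\bm{m}_n\rangle=-\tfrac{k}{4}\tfrac{d}{dt}\int_D(|\bm{m}_n|^2-1)^2$), and separately with $\bm{m}_n$ (for the $\mathbb{L}^2$-bound and to control the cross-coupling with $\bm{s}_n$). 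In particular the penalization coefficient in the a priori estimate is $k$, not $\tfrac1k$: one gets $\mathbb{E}\big[\sup_t\,\tfrac{k}{2}\int_D(|\bm{m}_n|^2-1)^2\,d\bm{x}\big]\le C$, which is what yields the $O(k^{-1})$ decay you correctly invoke for recovering $|\bar{\bm{m}}|=1$. Relatedly, testing against $\partial_t\bm{m}_n$ gives $\partial_t\bm{m}_n\in L^{2r}(\Omega;L^2(0,T;\mathbb{L}^2))$, not merely $L^2(0,T;V^\ast)$; that stronger bound is used essentially when identifying the limit of $\bm{m}_n\times\partial_t\bm{m}_n$ (weak $L^2$-$\mathbb{L}^2$ convergence of $\partial_t\bm{m}_n$ paired with strong $L^4(\mathbb{L}^4)$ convergence of $\bm{m}_n$), and would not be available from a dual-space bound alone.
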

In the coupled stochastic SPLLG system, we first consider the Schr\"{o}dinger equation in $K=\{\bm{x}\in\mathbb{R}^3, |\bm{x}|<R\}$. And then one can obtain the existence of solutions in $\mathbb{R}^3$ using the domain extension method. Thus, for each $\bm{\lambda}=\{\lambda_j\}_{j=1}^{\infty}$, we study the following system:
\begin{eqnarray}\label{S2}
\left\{\begin{array}{ll}
\!\!\!i\partial_t\bm{\psi}_j(\bm{x},t)=-\frac{1}{2}\Delta\bm{\psi}_j(\bm{x},t)+V\bm{\psi}_j(\bm{x},t)
-\frac{1}{2}\bm{m}\cdot\hat{\bm{\sigma}}\bm{\psi}_j(\bm{x},t),~ j\in \mathbb{N},(\bm{x},t)\in K\times\mathbb{R}^+,\\
\!\!\!\partial_t\bm{m}=-\bm{m}\times\bm{H}+\alpha \bm{m}\times\partial_t\bm{m}
+(\bm{m}\times G(\bm{m}))\circ \frac{dW}{dt}+(\bm{m}\times F(\bm{m},l))\frac{dL(t)}{dt},(\bm{x},t)\in D\times\mathbb{R}^+,\\
\!\!\!-\Delta V=\rho[\bm \Psi],~(\bm{x},t)\in K\times\mathbb{R}^{+},\\
\!\!\!\partial_\nu\bm{m}=0,~(\bm{x},t)\in \partial D\times\mathbb{R}^{+}
\end{array}\right.
\end{eqnarray}
with $\bm{m}(\bm{x},0)=\bm{m}_{0}(\bm{x})$, $\bm{x}\in D$; $\bm{\psi}_j(\bm{x},0)=\bm{\varphi}_{j}(\bm{x})$, $\bm{x}\in K$; $\bm{\psi}_j(\bm{x},t)=0$, $V(\bm{x},t)=0$, $(\bm{x},t)\in\partial K\times\mathbb{R}^{+}$. Here $G:\mathbb{H}^1\rightarrow\mathbb{H}^1$, $F:\mathbb{H}^1\times B\rightarrow\mathbb{H}^1$ are measurable mappings. $\int_{0}^{t}\bm{m}\times G(\bm{m})dW(s)$ is a well-defined $(\mathscr{F}_t)$-martingale, taking values in the space $\mathbb{W}^{-1,2}$. In fact, let $W(t)=\sum_{i\geq1}W_i(t)\tilde{e}_i$, where $\{W_i\}_{i\geq1}$ are real-valued standard independent Wiener processes, $\{\tilde{e}_i\}_{i\geq1}$ are complete orthonormal functions in a separable Hilbert space $\mathbb{U}$. If $\bm{m}\in L^4(\Omega;L^\infty(0,T;\mathbb{H}^1))$, according to the embedding $\mathbb{L}^2\hookrightarrow\mathbb{W}^{-1,2}$ and Assumption \ref{ass1}, we can infer that
\begin{align*}
&\big|(\bm{m}\times G(\bm{m}))\big|_{L^2(\mathbb{U},\mathbb{W}^{-1,2})}^2
=\sum\limits_{i\geq1}|(\bm{m}\times G(\bm{m}))\tilde{e}_i|_{\mathbb{W}^{-1,2}}^2\\
&\leq\sum\limits_{i\geq1}|(\bm{m}\times G(\bm{m}))\tilde{e}_i|_{\mathbb{L}^2}^2
\leq C(1+|\bm{m}|_{\mathbb{L}^4}^4),
\end{align*}
that is the mapping $\bm{m}\times G(\bm{m})$ belongs to $L^2(\mathbb{U},\mathbb{W}^{-1,2})$, where $L^2(\mathbb{U},\mathbb{W}^{-1,2})$ represents the space of Hilbert-Schmidt operators from $\mathbb{U}$ to $\mathbb{W}^{-1,2}$; thus we get
\begin{align*}
&\sup\limits_{t\in[0,T]}\mathbb{E}\bigg[\bigg|\int_{0}^{t}\bm{m}\times G(\bm{m})dW(s)\bigg|_{\mathbb{W}^{-1,2}}^2\bigg]\\
&\leq\mathbb{E}\bigg[\sup\limits_{t\in[0,T]}\bigg|\sum\limits_{i\geq1}\int_{0}^{t}(\bm{m}\times G(\bm{m}))\tilde{e}_idW_i(s)\bigg|_{\mathbb{W}^{-1,2}}^2\bigg]\\
&\leq C\mathbb{E}\bigg[\int_0^T\big|(\bm{m}\times G(\bm{m}))\big|_{L^2(\mathbb{U},\mathbb{W}^{-1,2})}^2dt\bigg]
\leq C\mathbb{E}\bigg[\int_0^T\big(1+|\bm{m}|_{\mathbb{L}^4}^4\big)dt\bigg]
<\infty.
\end{align*}
In addition, we can deduce that $\bm{m}\times F(\bm{m},l): \mathbb{H}^1\times\mathbb{H}^1\times B\rightarrow\mathbb{W}^{-2,2}$ is a measurable mapping, $\int_{0}^{t}\bm{m}\times F(\bm{m},l)dL(s)$ thus is well-defined. Indeed, by Assumption \ref{ass1}, we have
\begin{align*}
&\sup\limits_{t\in[0,T]}\mathbb{E}\bigg[\bigg|\int_{0}^{t}\bm{m}\times F(\bm{m},l)dL(s)\bigg|_{\mathbb{W}^{-2,2}}^2\bigg]
\leq\mathbb{E}\bigg[\sup\limits_{t\in[0,T]}\bigg|\int_{0}^{t}\bm{m}\times F(\bm{m},l)dL(s)\bigg|_{\mathbb{W}^{-2,2}}^2\bigg]\\
&\leq C\mathbb{E}\bigg[\int_0^T\int_B|\bm{m}\times F(\bm{m},l)|_{\mathbb{W}^{-2,2}}^2\mu(dl)dt\bigg]
\leq C\mathbb{E}\bigg[\int_0^T\big(1+|\bm{m}|_{\mathbb{L}^2}^4\big)dt\bigg]
<\infty.
\end{align*}
Suppose that the initial data satisfies $|\bm{m}_0(\bm{x})|\equiv1$, a.e. $D$, $\mathbb{P}$-a.s., and $\bm{\Psi}\equiv\bm{0}$ on $({\mathbb{R}^3}\setminus{K})\times\mathbb{R}^+$; $\bm{m}\equiv\bm{0}$ on $({\mathbb{R}^3}\setminus{\bar{D}})\times\mathbb{R}^+$.
\begin{theorem}\label{youjieyudejie}
Let $D$ be a bounded domain with smooth boundary, and let $K\subset\mathbb{R}^3$ be a sufficiently large ball such that $D\subset K$. Suppose that $\bar{\bm{\Psi}}_0\in\mathcal{H}_{\lambda}^{1}(K)$ and $\bar{\bm{m}}_0\in \mathbb{H}_{1}(D)$, then the stochastic SPLLG system \eqref{S2} admits a weak martingale solution $(\bar{\Omega}, \bar{\mathscr{F}}, (\bar{\mathscr{F}}_t)_{t\geq 0}, \bar{\mathbb{P}}, \bar{\bm{\Psi}}, \bar{\bm{m}}, \bar{W}, \bar{\eta})$ on $[0,T]$ for any given $0<T<\infty$.
\end{theorem}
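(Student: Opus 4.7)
The plan is to follow the two-layer scheme outlined in the introduction: for the bounded ball $K$ I would first construct weak martingale solutions of a penalized version of \eqref{S2} via Faedo--Galerkin, and then send the penalization parameter $k\to\infty$ to recover the saturation constraint $|\bar{\bm{m}}|=1$ and hence Theorem \ref{youjieyudejie}. Specifically, for each $k\in\mathbb{N}$ I would relax the geometric constraint by adding a Ginzburg--Landau type penalty (e.g.\ $-\tfrac{1}{k}(|\bm{m}|^{2}-1)\bm{m}$) to the effective field, expand $\bm{m}_{n}^{k}$ in eigenfunctions of the Neumann Laplacian $A$ on $D$ and $\bm{\psi}_{jn}^{k}$ in eigenfunctions of the Dirichlet Laplacian on $K$, and solve the resulting finite-dimensional SDE driven by $W$ and $\eta$ by standard Lévy-SDE theory, with global existence supplied by the a priori energy estimates of Section 3.

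The core estimates, taken in expectation and uniformly in $n$ and $k$, combine: (i) conservation of the $\mathcal{L}_{\lambda}^{2}$-mass of $\bm{\Psi}$ from the Schrödinger equation; (ii) control of $|\nabla\bm{\psi}_{j}|_{\mathbb{L}^{2}}$ via the Poisson identity $-\Delta V=\rho[\bm{\Psi}]$ and Sobolev embedding to absorb the coupling $\bm{m}\cdot\hat{\bm{\sigma}}\bm{\psi}_{j}$; (iii) Itô's formula applied to the Landau--Lifshitz energy $F_{\mathrm{LL}}$ plus the penalty, where the Stratonovich correction $\tfrac{1}{2}G'(\bm{m})[G(\bm{m})]$ cancels the Itô drift produced by the continuous noise, and the jump part is handled by the Lipschitz/linear-growth bounds of Assumption \ref{ass1}; and (iv) Burkholder--Davis--Gundy for the stochastic integrals. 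These give uniform bounds in $L^{p}(\Omega;L^{\infty}(0,T;\mathbb{H}^{1}))$ for $\bm{m}_{n}^{k}$ and in $L^{p}(\Omega;L^{\infty}(0,T;\mathcal{H}_{\lambda}^{1}))$ for $\bm{\Psi}_{n}^{k}$, together with integrability of the time derivatives in a suitable negative-order space $X^{-\beta}$.

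I would then prove tightness of the laws of $(\bm{m}_{n}^{k},\bm{\Psi}_{n}^{k})$ on the product of $\mathbb{D}([0,T];X^{-\beta})$ with the $\mathbb{H}^{1}$-valued weak topology. Spatial compactness comes from the uniform $\mathbb{H}^{1}$/$\mathcal{H}_{\lambda}^{1}$ bounds, while time regularity is checked through the Aldous condition; as noted in the introduction, directly verifying Aldous for $\Delta\bm{m}^{k}$ and $\Delta\bm{\psi}_{j}^{k}$ is awkward, and I would bypass this by pairing against fractional negative powers of $A$ and using extension-by-zero to a larger domain, where the fractional Laplacian action is well behaved. Jakubowski's version of Skorokhod's theorem then yields a new probability space on which a subsequence converges almost surely together with limit processes $\bar{W}$ and $\bar{\eta}$, and the stochastic integrals against $W$ and $\tilde{\eta}$ are identified by the usual martingale-representation argument after checking that $\bar{W}$ is a Wiener process and $\bar{\eta}$ a Poisson measure with the correct intensity on $(\bar{\Omega},\bar{\mathscr{F}},(\bar{\mathscr{F}}_{t}),\bar{\mathbb{P}})$. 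Passing $n\to\infty$ at fixed $k$ produces a martingale solution of the penalized system.

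The principal obstacle is the limit $k\to\infty$: one must both verify $|\bar{\bm{m}}|=1$ a.e.\ and identify all nonlinear terms. The energy identity provides an estimate of the form $\tfrac{1}{k}\bar{\mathbb{E}}\int_{0}^{T}\!\int_{D}(|\bar{\bm{m}}^{k}|^{2}-1)^{2}\,d\bm{x}\,dt\le C$, so once tightness furnishes strong convergence $\bar{\bm{m}}^{k}\to\bar{\bm{m}}$ in $L^{2}(0,T;\mathbb{L}^{2})$ the saturation condition follows directly, \emph{without} inserting a non-negative test function as in \cite{BGJ,BM1}. The remaining nonlinearities --- $\bm{m}\cdot\hat{\bm{\sigma}}\bm{\psi}_{j}$, $\bm{H}_{s}$, $\bm{s}[\bm{\Psi}]$, and the quasilinear gyroscopic term $\bm{m}\times\nabla\bm{m}$ --- are passed to the limit using strong $\mathbb{L}^{2}$-convergence of $\bar{\bm{m}}^{k}$, compactness of $\bar{\bm{\Psi}}^{k}$ in $\mathcal{L}_{\lambda}^{2}(K)$, and continuity of the Poisson solver, which together identify $\bar{V}$ and $\bar{\rho}[\bar{\bm{\Psi}}]$. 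Assembling these ingredients shows that the tuple $(\bar{\Omega},\bar{\mathscr{F}},(\bar{\mathscr{F}}_{t}),\bar{\mathbb{P}},\bar{\bm{\Psi}},\bar{\bm{m}},\bar{W},\bar{\eta})$ satisfies every clause of Definition 2.1, which is Theorem \ref{youjieyudejie}.
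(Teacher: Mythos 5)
Your outline follows the paper's two-layer strategy for the bounded-ball case (Galerkin in $n$ with a penalty in $k$, uniform energy bounds, Aldous condition checked via fractional powers of the Neumann Laplacian and the extension theorem, Skorokhod--Jakubowski, and then $k\to\infty$), and the mechanism you describe for recovering the constraint --- namely that the energy identity forces $(|\bar{\bm{m}}^k|^2-1)^2\to0$ strongly --- is exactly what the paper uses to avoid the non-negative test functions of \cite{BGJ,BM1}.

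There is, however, one step that fails as literally written: the scaling of the penalty. You take the penalty to be $-\tfrac{1}{k}(|\bm{m}|^{2}-1)\bm{m}$ and send $k\to\infty$, which makes the penalty \emph{vanish} rather than strengthen, so in the limit there is no mechanism enforcing $|\bm{m}|=1$. The consequent bound you state,
\begin{align*}
\tfrac{1}{k}\,\bar{\mathbb{E}}\int_{0}^{T}\!\!\int_{D}(|\bar{\bm{m}}^{k}|^{2}-1)^{2}\,d\bm{x}\,dt\leq C,
\end{align*}
only says $\bar{\mathbb{E}}\int_0^T\int_D(|\bar{\bm{m}}^k|^2-1)^2\,d\bm{x}\,dt\leq Ck$, which diverges and gives no control whatsoever on the constraint violation. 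The paper's penalized system \eqref{SLLG3} uses $-k(|\bm{m}|^2-1)\bm{m}$ with $k\to\infty$, so that the Lyapunov functional picks up $\tfrac{k}{4}(|\bm{m}|^2-1)^2$ and Lemma \ref{PP} (and its consequence \eqref{kdgj}) yield
\begin{align*}
\bar{\mathbb{E}}\Big[\sup_{0\leq t\leq T}\int_D(|\bar{\bm{m}}^k|^2-1)^2\,d\bm{x}\Big]\leq\frac{C}{k}\longrightarrow 0,
\end{align*}
from which $|\bar{\bm{m}}|=1$ a.e.\ follows after passing to the strong limit. Replace $\tfrac{1}{k}$ by $k$ in the penalty (and correspondingly in the claimed estimate) and your argument matches the paper's. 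One further minor point: for the identification of the stochastic integrals the paper does not invoke a martingale representation theorem; it exploits that the Skorokhod--Jakubowski construction keeps the driving Wiener process and Poisson measure fixed (see the equality of the third components in the statement of Theorem \ref{2SKO}), and then passes to the limit term-by-term using the It\^{o}--L\'{e}vy isometry, Lemma \ref{gdsl1} and the Vitali convergence theorem. Your martingale-representation route is an admissible alternative, but the term-by-term approach is more elementary here.
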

Motivated by \cite{AS, CGY}, we consider the following stochastic system with the penalty function term:
\begin{eqnarray}\label{SLLG3}
\left\{\begin{array}{ll}
\!\!\!i\partial_t\bm{\psi}_j(\bm{x},t)=-\frac{1}{2}\Delta\bm{\psi}_j(\bm{x},t)+V\bm{\psi}_j(\bm{x},t)
-\frac{1}{2}\bm{m}\cdot\hat{\bm{\sigma}}\bm{\psi}_j(\bm{x},t),~ j\in \mathbb{N},\\
\!\!\!\alpha\partial_t\bm{m}=-\bm{m}\times\partial_t\bm{m}+\bm{H}-k(|\bm{m}|^2-1)\bm{m}-G(\bm{m})\circ
\frac{dW}{dt}-F(\bm{m},l)\frac{dL(t)}{dt},\\
\!\!\!-\Delta V=\rho[\bm \Psi],
\end{array}\right.
\end{eqnarray}
where $k>0$ is the penalization coefficient.

Given the $\mathscr{F}_0$-measurable random variable $\bm{m}_0$, we consider the case with $\mu=0$ and $\eta=0$ on $B^c$, i.e., neglecting the large jumps and only studying the case of the small jumps. Then, the integral form of the second equation of system \eqref{SLLG3} is:
\begin{align*}
\alpha\bm{m}(t)=&\alpha\bm{m}(0)-\int_{0}^{t}(\bm{m}(s)\times\partial_s\bm{m}(s))ds
+\int_{0}^{t}\bm{H}(s)ds-\int_{0}^{t}k(|\bm{m}(s)|^2-1)\bm{m}(s)ds\\
&-\int_{0}^{t}G(\bm{m}(s))\circ dW(s)-\int_{0}^{t}F(\bm{m}(s),l)dL(s)\\
=&\alpha\bm{m}(0)-\int_{0}^{t}(\bm{m}(s)\times\partial_s\bm{m}(s))ds
+\int_{0}^{t}\bm{H}(s)ds-\int_{0}^{t}k(|\bm{m}(s)|^2-1)\bm{m}(s)ds\nonumber\\
&-\int_{0}^{t}G(\bm{m}(s))dW(s)-\frac{1}{2}\int_{0}^{t}G'(\bm{m}(s))[G(\bm{m}(s))]ds
-\int_{0}^{t}\int_BF(\bm{m}(s-),l)\tilde{\eta}(ds,dl).
\end{align*}
\section{Faedo-Galerkin approximation and energy estimates}
We construct an approximate solution sequence using the Faedo-Galerkin approximation method, transforming it from an infinite-dimensional space to a finite-dimensional space.
\subsection{The Faedo-Galerkin approximation}
Let $\{\theta_h\}_{h\in\mathbb{N}}$ be the normalized characteristic functions of the equation
\begin{eqnarray}\label{bzh1}
\left\{\begin{array}{ll}
\!\!\!-\Delta\theta=\tilde{\lambda}\theta,~&x\in K,\\
\!\!\!\theta=0,~&x\in\partial K.
\end{array}\right.
\end{eqnarray}
Let $\{e_h\}_{h\in\mathbb{N}}$ be the normalized characteristic functions of the equation
\begin{eqnarray}\label{bzh2}
\left\{\begin{array}{ll}
\!\!\!-\Delta e=\tilde{\lambda}e, ~&x\in D, \\
\!\!\!\partial_{\nu}e=0,~&x\in\partial D.
\end{array}\right.
\end{eqnarray}
where $\mathbb{H}_n^{K}:=\text{linspan}\{\theta_1, \theta_2, \cdots, \theta_n\}$, $\mathbb{H}_n^{D}:=\text{linspan}\{e_1, e_2, \cdots, e_n\}$,
and $\mathbb{H}_n=\mathbb{H}_n^{K}\times\mathbb{H}_n^{D}$.
Define orthogonal projections $\pi_{n}^{K}$ and $\pi_{n}^{D}$ respectively by
\begin{align*}
\pi_{n}^{K}\bm{u}=\sum_{h=1}^{n}(\bm{u}, \theta_h)_{\mathbb{L}^2(K)}\theta_h,~
\pi_{n}^{D}\bm{v}=\sum_{h=1}^{n}(\bm{v}, e_h)_{\mathbb{L}^2(D)}e_h,~\forall\bm{u}\in \mathbb{H}^1(K),\bm{v}\in \mathbb{H}^1(D)
\end{align*}
for all $h\in\mathbb{N}$, here $\theta_h\in C^{\infty}(\bar{K})$, $e_h\in C^{\infty}(\bar{D})$.
Considering the approximation solutions $\bm{\Psi}_{n}^{k}:=\{\bm{\psi}_{jn}^k\}_{j\in \mathbb{N}}$ and $\bm{m}_n^k$,
to further simplify the notation, we also denote $\bm{\Psi}_{n}:=\bm{\Psi}_{n}^{k}$ and $\bm{m}_n:=\bm{m}_n^k$. Thus,
\begin{align}
\bm{\psi}_{jn}(\bm{x},t)=\pi_{n}^{K}\bm{\psi}_{j}=\sum_{h=1}^{n}(\bm{\psi}_{j}, \theta_h)_{\mathbb{L}^2(K)}\theta_h
=\sum_{h=1}^{n}\bm{\alpha}_{jh}(t)\theta_h(\bm{x}),\label{jn}\\
\bm{m}_{n}(\bm{x},t)=\pi_{n}^{D}\bm{m}=\sum_{h=1}^{n}(\bm{m}, e_h)_{\mathbb{L}^2(D)}e_h
=\sum_{h=1}^{n}\bm{\beta}_{h}(t)e_h(\bm{x}),\label{n}
\end{align}
where $\bm{\alpha}_{jh}$ and $\bm{\beta}_{h}$ are two-dimensional and three-dimensional vector-valued functions. Notice that $\bm{\Psi}_{n}$ and $\bm{m}_n$ satisfy the following equations, respectively:
\begin{eqnarray}\label{S3}
\left\{\begin{array}{ll}
\!\!\!(i\partial_t\bm{\psi}_{jn}+\frac{1}{2}\Delta\bm{\psi}_{jn}-V_n\bm{\psi}_{jn}
+\frac{1}{2}\bm{m}_n\cdot\hat{\bm{\sigma}}\bm{\psi}_{jn},\theta_h)=0,\\
\!\!\!\bm{\psi}_{jn}(\bm{x},0)=\Pi_{n}^{K}\bm{\varphi}_{j}(\bm{x}),
\end{array}\right.
\end{eqnarray}
and
{\small
\begin{eqnarray}\label{SLLG4}
\left\{\begin{array}{ll}
\!\!\!\big(\alpha\partial_t\bm{m}_n+\bm{m}_n\times\partial_t\bm{m}_n-\bm{H}_n+k(|\bm{m}_n|^2-1)\bm{m}_n
+G_n(\bm{m}_n)\circ \frac{dW}{dt}+F_n(\bm{m}_n,l)\frac{dL(t)}{dt},e_h\big)=0,\\
\!\!\!\bm{m}_n(\bm{x},0)=\pi_{n}^{D}\bm{m}_{0}(\bm{x}),
\end{array}\right.
\end{eqnarray}}
where $V_n$ satisfies
\begin{eqnarray*}
\left\{\begin{array}{ll}
\!\!\!-\Delta V_n=\rho_n,\\
\!\!\!V_n|_{\partial K}=0,
\end{array}\right.
\end{eqnarray*}
\begin{align*}
&\bm{H}_n=\Delta\bm{m}_n-w'(\bm{m}_n)+\bm{H}_{sn}+\frac{1}{2}\bm{s}_n,~
\bm{H}_{sn}=-\nabla(\nabla N\ast\cdot\bm{m}_n),\\
&\rho_n=\sum_{j=1}^{\infty}\lambda_{j}|\bm{\psi}_{jn}|^2,~
\bm{s}_n=\sum_{j=1}^{\infty}\lambda_{j}{\rm{Tr}}_{{\mathbb{C}}^2}(\hat{\bm{\sigma}}({\bm{\psi}_{jn}}
{\bm{\psi}^{\dag}_{jn}})).
\end{align*}
We know by transformation that the equivalent ordinary differential equation of \eqref{SLLG4} has the following form:
\begin{align*}
\partial_t\bm{m}_n
=\left[\alpha \bm{I}+\begin{pmatrix}0 & -m_{n,3} & m_{n,2} \\ m_{n,3} & 0 & -m_{n,1} \\ -m_{n,2} & m_{n,1} & 0\end{pmatrix}\right]^{-1}
f(\bm{m}_n,t)
=:[\alpha \bm{I}+\bm{M}]^{-1}f(\bm{m}_n,t),
\end{align*}
where
\begin{align*}
f(\bm{m}_n,t)=\bm{H}_n-k(|\bm{m}_n|^2-1)\bm{m}_n
-G_n(\bm{m}_n)\circ \frac{dW}{dt}-F_n(\bm{m}_n,l)\frac{dL(t)}{dt}.
\end{align*}
and according to the norm of a matrix, we infer that $[\alpha I+M]^{-1}$ is bounded. In fact,
\begin{align*}
[\alpha \bm{I}+\bm{M}]^{-1}=
\big(\alpha^3+\alpha|\bm{m}|^2\big)^{-1}
\begin{pmatrix}\alpha^2+m_{n,1}^2 & m_{n,1}m_{n,2}-\alpha m_{n,3} & m_{n,1}m_{n,3}-\alpha m_{n,2} \\
m_{n,1}m_{n,2}+\alpha m_{n,3} & \alpha^2+m_{n,2}^2 & m_{n,2}m_{n,3}+\alpha m_{n,1} \\
m_{n,1}m_{n,3}+\alpha m_{n,2} & m_{n,2}m_{n,3}-\alpha m_{n,1} & \alpha^2+m_{n,3}^2\end{pmatrix},
\end{align*}
then applying $|A|_{\mathbb{L}^{\infty}}=\max\limits_{1\leq i\leq n}\sum\limits_j^n|a_{ij}|$, where $a_{ij}$  represents the element in the ith row and jth column of the matrix, we deduce that
\begin{align*}
\big|[\alpha \bm{I}+\bm{M}]^{-1}\big|_{\mathbb{L}^{\infty}}\leq\big(\alpha^3+\alpha|\bm{m}|^2\big)^{-1}
\cdot2(\alpha^2+|\bm{m}|^2\big)\leq\frac{2}{\alpha}.
\end{align*}
Define the mapping:
\begin{align*}
F_n^1:&~\mathbb{H}_n^{K}\ni\bm{\psi}_j\mapsto\Delta\bm{\psi}_j\in\mathbb{H}_n^{K},~~
F_n^2:~\mathbb{H}_n^{K}\ni\bm{\psi}_j\mapsto V\bm{\psi}_j\in\mathbb{H}_n^{K},\\
F_n^3:&~\mathbb{H}_n^{K}\times\mathbb{H}_n^{D}\ni(\bm{\psi}_j,\bm{m})\mapsto
\bm{m}\cdot\hat{\bm{\sigma}}\bm{\psi}_j\in\mathbb{H}_n,~~
G_n^1:~\mathbb{H}_n^{D}\times\mathbb{H}_n^{K}\ni(\bm{m},\bm{\psi}_j)\mapsto\bm{H}\in\mathbb{H}_n,\\
G_n^2:&~\mathbb{H}_n^{D}\ni\bm{m}\mapsto(|\bm{m}|^2-1)\bm{m}\in\mathbb{H}_n^{D}.
\end{align*}
Consider the following equations:
\begin{align}\label{jS3}
i\bm{\psi}_{jn}(t)
=i\bm{\psi}_{jn}(0)-\int_{0}^{t}\bigg(\frac{1}{2}F_n^1(\bm{\psi}_{jn}(s))-F_n^2(\bm{\psi}_{jn}(s))
+\frac{1}{2}F_n^3(\bm{\psi}_{jn}(s),\bm{\bm{m}_n}(s))\bigg)ds,
\end{align}
\begin{align}\label{jSLLG4}
\bm{m}_n(t)&=\bm{m}_n(0)+\int_{0}^{t}[\alpha \bm{I}+\bm{M}]^{-1}\left(G_n^1(\bm{m}_n(s),\bm{\psi}_{jn}(s))
-G_n^2(\bm{m}_n(s))\right)ds\nonumber\\
&\quad -\frac{1}{2}\int_{0}^{t}[\alpha \bm{I}+\bm{M}]^{-1}G'_{n}(\bm{m}_n(s))[G_{n}(\bm{m}_n(s))]ds
-\int_{0}^{t}[\alpha \bm{I}+\bm{M}]^{-1}G_{n}(\bm{m}_n(s))dW(s)\nonumber\\
&\quad -\int_{0}^{t}\int_B[\alpha \bm{I}+\bm{M}]^{-1}F_{n}(\bm{m}_n(s-),l)\tilde{\eta}(ds,dl)\\
&=\alpha\bm{m}_n(0)-\int_{0}^{t}[\alpha \bm{I}+\bm{M}]^{-1}\left(G_n^1(\bm{m}_n(s),\bm{\psi}_{jn}(s))
-G_n^2(\bm{m}_n(s))\right)ds\nonumber\\
&\quad -\frac{1}{2}\int_{0}^{t}[\alpha \bm{I}+\bm{M}]^{-1}G'_{n}(\bm{m}_n(s))[G_{n}(\bm{m}_n(s))]ds
-\int_{0}^{t}[\alpha \bm{I}+\bm{M}]^{-1}\sum\limits_{i\geq1}G_{n}(\bm{m}_{n}(s))\tilde{e}_idW_i(s)\nonumber\\
&\quad -\int_{0}^{t}\int_B[\alpha \bm{I}+\bm{M}]^{-1}F_{n}(\bm{m}_n(s-),l)\tilde{\eta}(ds,dl),\quad t\geq0.\nonumber
\end{align}
For proving the existence of solutions of \eqref{S3} and \eqref{SLLG4}, we will show that $F_n^1$-$F_n^3$, $G_n^1$-$G_n^2$, $G_n$, $G'_n[G_n]$ and $F_n$ are Lipschitz.
\begin{lemma}\label{1}
For each $n\in\mathbb{N}$, the mapping $F_n^1$ is global Lipschitz, as well as $F_n^2$ and $F_n^3$ are local Lipschitz.
\end{lemma}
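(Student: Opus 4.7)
The strategy is to exploit three structural facts: on the finite-dimensional subspaces $\mathbb{H}_n^K$ and $\mathbb{H}_n^D$ all reasonable norms are equivalent (with constants depending on $n$); the basis $\{\theta_h\}$ diagonalizes $-\Delta$; and the Dirichlet Poisson problem on $K$ enjoys standard elliptic regularity. Because $F_n^1$ is linear on the eigenspace and hence bounded, it is globally Lipschitz, whereas $F_n^2$ and $F_n^3$ contain a bilinear or cubic nonlinearity, so only local Lipschitz estimates are expected.

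For $F_n^1$, I would expand $\bm{\psi}_j = \sum_{h=1}^n \bm{\alpha}_h \theta_h \in \mathbb{H}_n^K$ and use $\Delta\theta_h = -\tilde{\lambda}_h \theta_h$ to obtain $F_n^1(\bm{\psi}_j) = -\sum_{h=1}^n \tilde{\lambda}_h \bm{\alpha}_h \theta_h$. Orthonormality of $\{\theta_h\}$ then yields
\[
|F_n^1(\bm{\psi}_j^{(1)}) - F_n^1(\bm{\psi}_j^{(2)})|_{\mathbb{L}^2(K)} \leq \tilde{\lambda}_n |\bm{\psi}_j^{(1)} - \bm{\psi}_j^{(2)}|_{\mathbb{L}^2(K)},
\]
the desired global Lipschitz bound, with constant $\tilde{\lambda}_n$ (the largest eigenvalue on $\mathbb{H}_n^K$).

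For $F_n^2$, I would use the standard product decomposition
\[
V^{(1)}\bm{\psi}_j^{(1)} - V^{(2)}\bm{\psi}_j^{(2)} = (V^{(1)} - V^{(2)})\bm{\psi}_j^{(1)} + V^{(2)}(\bm{\psi}_j^{(1)} - \bm{\psi}_j^{(2)}),
\]
where $V^{(i)}$ solves $-\Delta V = \rho_n^{(i)}$ on $K$ with zero Dirichlet data. Since $\rho_n^{(1)} - \rho_n^{(2)}$ factors as a bilinear expression involving $\bm{\psi}_{jn}^{(1)} + \bm{\psi}_{jn}^{(2)}$ and $\bm{\psi}_{jn}^{(1)} - \bm{\psi}_{jn}^{(2)}$, elliptic regularity (e.g.\ $W^{2,p}$ estimates for $p>\frac{3}{2}$ giving an $\mathbb{L}^\infty$ bound on $V$) combined with finite-dimensional norm equivalence yields $|V^{(1)} - V^{(2)}|_{\mathbb{L}^\infty(K)} \leq C_n R \,|\bm{\psi}_j^{(1)} - \bm{\psi}_j^{(2)}|_{\mathbb{L}^2(K)}$ on the ball $\{|\bm{\psi}_j|_{\mathbb{L}^2(K)} \leq R\}$, with a symmetric $\mathbb{L}^\infty$ bound on $V^{(2)}$. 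Collecting these bounds closes the local Lipschitz estimate.

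Finally, $F_n^3$ is bilinear in $(\bm{\psi}_j,\bm{m})$, with the Pauli matrices having uniformly bounded entries. The standard bilinear splitting
\[
\bm{m}^{(1)}\cdot\hat{\bm{\sigma}}\bm{\psi}_j^{(1)} - \bm{m}^{(2)}\cdot\hat{\bm{\sigma}}\bm{\psi}_j^{(2)} = (\bm{m}^{(1)} - \bm{m}^{(2)})\cdot\hat{\bm{\sigma}}\bm{\psi}_j^{(1)} + \bm{m}^{(2)}\cdot\hat{\bm{\sigma}}(\bm{\psi}_j^{(1)} - \bm{\psi}_j^{(2)}),
\]
together with finite-dimensional norm equivalence (to pass from $\mathbb{L}^\infty$ control of $\bm{m}$ and $\bm{\psi}_j$ to their $\mathbb{L}^2$ norms), delivers local Lipschitz continuity on bounded sets. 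The main obstacle is the $F_n^2$ step, since $V$ depends nonlinearly on $\bm{\psi}_j$ through the Poisson equation and the chain $\bm{\psi}_j \to \rho_n \to V \to V\bm{\psi}_j$ must be tracked with the correct elliptic regularity; once one permits a constant depending on $n$, the finite-dimensional structure makes the argument routine.
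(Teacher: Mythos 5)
Your proposal is correct and takes essentially the same route as the paper: diagonalize $-\Delta$ on $\mathbb{H}_n^K$ for $F_n^1$, use the bilinear product splitting for $F_n^2$ and $F_n^3$, track the $\bm{\psi}\mapsto\rho\mapsto V$ chain via elliptic estimates, and invoke equivalence of norms on the finite-dimensional Galerkin spaces to absorb $\mathbb{L}^\infty$ factors. The only cosmetic differences are that your $F_n^1$ constant is the sharper $\tilde{\lambda}_n$ (largest eigenvalue) rather than the paper's cruder $\sum_{h\le n}\tilde{\lambda}_h$, and for $F_n^2$ you bound $|V^{(1)}-V^{(2)}|_{\mathbb{L}^\infty}$ via $W^{2,p}$ regularity and Sobolev embedding whereas the paper bounds $|V^{(1)}-V^{(2)}|_{\mathbb{L}^2}$ via Poincar\'{e} and Calder\'{o}n--Zygmund and pushes the $\mathbb{L}^\infty$ control entirely onto the finite-dimensional $\bm{\psi}_j$ factor; both variants close the local Lipschitz estimate.
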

\begin{proof}
Fixed $\bm{\psi}_j\in\mathbb{H}_n^{K}$, then $\Delta\bm{\psi}_j\in\mathbb{H}_n^{K}$. According to \eqref{bzh1},
for any $\bm{\psi}_{j1}, \bm{\psi}_{j2}\in\mathbb{H}_n^{K}$, we obtain
\begin{align*}
|F_n^1(\bm{\psi}_{j1})-F_n^1(\bm{\psi}_{j2})|_{\mathbb{L}^2}
=|\Delta\bm{\psi}_{j1}-\Delta\bm{\psi}_{j2}|_{\mathbb{L}^2}
\leq\sum\limits_{h=1}^{n}\tilde{\lambda}_h|\bm{\psi}_{j1}-\bm{\psi}_{j2}|_{\mathbb{L}^2}.
\end{align*}
Hence, $F_n^1$ is global Lipschitz. Furthermore, employing the Poincar\'{e} inequality,
Young inequality for convolution, and the Calder\'{o}n-Zygmund inequality (see \cite{CW}), we get
\begin{align*}
|V|_{\mathbb{L}^2}
\leq C|{\nabla}^2 V|_{\mathbb{L}^2}
\leq C|{\nabla}^2 N|_{\mathbb{L}^2}|\rho[\bm{\Psi}]|_{\mathbb{L}^1}
\leq C\sum\limits_{j=1}^{\infty}\lambda_j|\bm{\psi}_j|^2_{\mathbb{L}^2}.
\end{align*}
Using the equivalence of norms on finite-dimensional spaces, we compute
\begin{align*}
&|F_n^2(\bm{\psi}_{j1})-F_n^2(\bm{\psi}_{j2})|_{\mathbb{L}^2}
=|V_1\bm{\psi}_{j1}-V_2\bm{\psi}_{j2}|_{\mathbb{L}^2}\\
&\leq|V_1-V_2|_{\mathbb{L}^2}|\bm{\psi}_{j1}|_{\mathbb{L}^{\infty}}
+|\bm{\psi}_{j1}-\bm{\psi}_{j2}|_{\mathbb{L}^2}|V_2|_{\mathbb{L}^{\infty}}\\
&\leq C\sum\limits_{j=1}^{\infty}\lambda_j
(|\bm{\psi}_{j1}-\bm{\psi}_{j2}|_{\mathbb{L}^2}|\bm{\psi}_{j1}|_{\mathbb{L}^{\infty}}
+|\bm{\psi}_{j2}|^2_{\mathbb{L}^{\infty}})|\bm{\psi}_{j1}-\bm{\psi}_{j2}|_{\mathbb{L}^2}\\
&\leq C\sum\limits_{j=1}^{\infty}\lambda_j|\bm{\psi}_{j1}-\bm{\psi}_{j2}|_{\mathbb{L}^2}
\end{align*}
for $|\bm{\psi}_{j1}|, |\bm{\psi}_{j2}|\leq R$, which implies that $F_n^2$ is local Lipschitz. Moreover, for any $\bm{m}\in\mathbb{H}_n^{D}$ and $|\bm{m}|\leq R$, we have
\begin{align*}
&|F_n^3(\bm{\psi}_{j1},\bm{m})-F_n^3(\bm{\psi}_{j2},\bm{m})|_{\mathbb{L}^2}
=|\bm{m}\cdot\hat{\bm{\sigma}}\bm{\psi}_{j1}-\bm{m}\cdot\hat{\bm{\sigma}}\bm{\psi}_{j2}|_{\mathbb{L}^2}\\
&\leq C|\bm{m}|_{\mathbb{L}^{\infty}}|\bm{\psi}_{j1}-\bm{\psi}_{j2}|_{\mathbb{L}^2}
\leq C|\bm{\psi}_{j1}-\bm{\psi}_{j2}|_{\mathbb{L}^2}.
\end{align*}
Thus, $F_n^3$ is Lipschitz with respect to $\bm{\psi}_j$.
\end{proof}
\begin{lemma}\label{2}
For $n\in\mathbb{N}$, $G_n^2$ is local Lipschitz, as well as $G_n^1$, $G_n$, $G'_n[G_n]$, and $F_n$ are global Lipschitz.
\end{lemma}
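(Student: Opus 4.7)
My plan is to establish each Lipschitz claim by mirroring the structure of Lemma \ref{1}: decomposing $G_n^1$ into its four contributions, treating $G_n^2$ as an explicit cubic polynomial on the finite-dimensional subspace $\mathbb{H}_n^D$, and for the stochastic coefficients $G_n$, $G'_n[G_n]$, $F_n$ appealing directly to Assumption \ref{ass1}. The key finite-dimensional ingredient is that on $\mathbb{H}_n^D = \text{linspan}\{e_1,\ldots,e_n\}$ all $\mathbb{L}^p$ norms are equivalent and the Laplacian has bounded spectrum $\{\tilde\lambda_1,\ldots,\tilde\lambda_n\}$ from \eqref{bzh2}.

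For $G_n^1(\bm{m},\bm{\psi}_j) = \Delta\bm{m} - w'(\bm{m}) + \bm{H}_{sn} + \frac{1}{2}\bm{s}_n$, I would treat each summand in turn. The exchange piece is linear, yielding $|\Delta\bm{m}_1 - \Delta\bm{m}_2|_{\mathbb{L}^2} \leq \tilde\lambda_n |\bm{m}_1 - \bm{m}_2|_{\mathbb{L}^2}$ via the eigenvalue bound. The stray field $\bm{H}_{sn} = -\nabla(\nabla N*\bm{m})$ is linear in $\bm{m}$, and its Lipschitz bound follows from the Calder\'on-Zygmund estimate used for $V$ in the proof of Lemma \ref{1}. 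For the anisotropy derivative $w'(\bm{m})$ (a polynomial of degree at most three), I would combine the mean value theorem with norm equivalence on the finite-dimensional $\mathbb{H}_n^D$ to transfer a pointwise bound into an $\mathbb{L}^2$ Lipschitz estimate. Finally, for the spin density $\bm{s}_n = \sum_j \lambda_j \operatorname{Tr}_{\mathbb{C}^2}(\hat{\bm{\sigma}}\bm{\psi}_{jn}\bm{\psi}_{jn}^\dag)$, the quadratic factorization
\begin{align*}
\bm{\psi}_{j1}\bm{\psi}_{j1}^\dag - \bm{\psi}_{j2}\bm{\psi}_{j2}^\dag = (\bm{\psi}_{j1} - \bm{\psi}_{j2})\bm{\psi}_{j1}^\dag + \bm{\psi}_{j2}(\bm{\psi}_{j1} - \bm{\psi}_{j2})^\dag
\end{align*}
combined with $\mathbb{L}^\infty$ bounds on $\bm{\psi}_{jn}$ yields Lipschitz continuity in $\bm{\psi}_{jn}$, exactly as in the treatment of $F_n^3$.

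For $G_n^2(\bm{m}) = (|\bm{m}|^2 - 1)\bm{m}$ I would write
\begin{align*}
G_n^2(\bm{m}_1) - G_n^2(\bm{m}_2) = \big[(\bm{m}_1 + \bm{m}_2)\cdot(\bm{m}_1 - \bm{m}_2)\big]\bm{m}_1 + (|\bm{m}_2|^2 - 1)(\bm{m}_1 - \bm{m}_2),
\end{align*}
bounding the right-hand side by $C(1 + |\bm{m}_1|_{\mathbb{L}^\infty}^2 + |\bm{m}_2|_{\mathbb{L}^\infty}^2)|\bm{m}_1 - \bm{m}_2|_{\mathbb{L}^2}$; the dependence on $|\bm{m}_i|$ is the reason this piece is only local Lipschitz. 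For $G_n$, $G'_n[G_n]$, and $F_n$, the global Lipschitz property is inherited immediately from Assumption \ref{ass1}: the projection $\pi_n^D$ is an orthogonal contraction on $\mathbb{L}^2$, so the constant $\sqrt{K_1}$ transfers verbatim to $G_n$ and $G'_n[G_n]$, while the integrated estimate $\int_B |F(\bm{m}_1,l) - F(\bm{m}_2,l)|^2\mu(dl) \leq K_1 |\bm{m}_1 - \bm{m}_2|^2$ directly covers $F_n$.

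The main obstacle is reconciling the polynomial growth of the anisotropy term $w'(\bm{m})$ with the Lipschitz assertion for $G_n^1$: in infinite dimensions a cubic polynomial is only locally Lipschitz, and the constant obtained here depends on $n$ (through the norm-equivalence constants) and on any a priori bound on $|\bm{m}|$. This is nevertheless enough for the classical Lipschitz well-posedness theory to be applied to the ODE system \eqref{jS3}-\eqref{jSLLG4}, since the Faedo-Galerkin scheme will produce a priori energy estimates controlling $|\bm{m}_n|$ locally in time before the passage to the limit $n\to\infty$.
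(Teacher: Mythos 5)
You have the right overall skeleton, and your treatment of $G_n^2$, $G_n$, $G'_n[G_n]$, and $F_n$ matches the paper: the decomposition of $G_n^2$ into $\big[(\bm{m}_1+\bm{m}_2)\cdot(\bm{m}_1-\bm{m}_2)\big]\bm{m}_1 + (|\bm{m}_2|^2-1)(\bm{m}_1-\bm{m}_2)$ with $\mathbb{L}^\infty$-control is precisely the paper's argument for local Lipschitz, and the global Lipschitz claims for the stochastic coefficients do indeed transfer verbatim from Assumption \ref{ass1} as you observe. For the $\Delta$ and $\bm{H}_{sn}$ pieces of $G_n^1$ you also reproduce the paper's estimates.

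Your treatment of $G_n^1$, however, contains a genuine gap on the other two pieces. The Lemma asserts that $G_n^1$ is \emph{global} Lipschitz, and the paper achieves this by two structural observations that you bypass. First, the anisotropy is the uniaxial one fixed in the introduction, so $w'(\bm{m}) = (0,\,2m_2,\,2m_3)$ is \emph{linear}, hence $|w'(\bm{m}_1)-w'(\bm{m}_2)|_{\mathbb{L}^2}\leq 2|\bm{m}_1-\bm{m}_2|_{\mathbb{L}^2}$ globally with no $\mathbb{L}^\infty$-bound at all. You instead treat $w'$ as a general cubic polynomial and invoke the mean value theorem together with $\mathbb{L}^\infty$-norm equivalence — which yields only a local Lipschitz bound whose constant depends on $\sup|\bm{m}_i|$. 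You even flag this tension in your closing paragraph, but you resolve it in the wrong direction (by arguing it "is enough anyway") rather than noticing that the paper's $w'$ is linear and the difficulty does not arise. Second, the spin density $\bm{s}_n$ depends only on $\bm{\psi}_{jn}$ and not on $\bm{m}$; the paper's Lipschitz estimate for $G_n^1$ is taken \emph{with $\bm{\psi}_j$ fixed and $\bm{m}$ varying}, so the $\bm{s}$-term cancels identically and no estimate is needed. Your quadratic-factorization bound on $\bm{s}_n$ in $\bm{\psi}_{jn}$ is not wrong, but it is extraneous to what the Lemma claims, and worse, since it requires $\mathbb{L}^\infty$-bounds on $\bm{\psi}_{jn}$, it would again degrade the claim to local Lipschitz if it were actually needed. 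The upshot is that your route establishes a weaker (local) Lipschitz statement for $G_n^1$ than what the Lemma asserts; the global statement requires exploiting the linearity of $w'$ and the $\bm{m}$-independence of $\bm{s}_n$, which you missed.
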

\begin{proof}
For any $\bm{m}_1, \bm{m}_2\in\mathbb{H}_n^{D}$, Since
\begin{align*}
&|G_n^1(\bm{m}_1,\bm{\psi}_j)-G_n^1(\bm{m}_2,\bm{\psi}_j)|_{\mathbb{L}^2}
=|\bm{H}_1-\bm{H}_2|_{\mathbb{L}^2}\\
&\leq|\Delta(\bm{m}_1-\bm{m}_2)|_{\mathbb{L}^2}+|w'(\bm{m}_1)-w'(\bm{m}_2)|_{\mathbb{L}^2}
+\frac{1}{2}|\bm{s}[\bm{\Psi}]-\bm{s}[\bm{\Psi}]|_{\mathbb{L}^2}\\
&\quad +|\nabla(\nabla N\ast\cdot\bm{m}_1)-\nabla(\nabla N\ast\cdot\bm{m}_2)|_{\mathbb{L}^2},
\end{align*}
then using \eqref{bzh2}, we get $|\Delta(\bm{m}_1-\bm{m}_2)|_{\mathbb{L}^2}\leq C|\bm{m}_1-\bm{m}_2|_{\mathbb{L}^2}$.
Furthermore, by the definition of $w'(\bm{m})$, we have
\begin{align*}
|w'(\bm{m}_1)-w'(\bm{m}_2)|_{\mathbb{L}^2}=&|(0,2m_{1,2},2m_{1,3})-(0,2m_{2,2},2m_{2,3})|_{\mathbb{L}^2}
\leq2|\bm{m}_1-\bm{m}_2|_{\mathbb{L}^2}.
\end{align*}
Next, applying the differential property of convolution, we deduce
\begin{align*}
|\nabla(\nabla N\ast\cdot\bm{m}_1)-\nabla(\nabla N\ast\cdot\bm{m}_2)|_{\mathbb{L}^2}
=|\nabla^2 N\ast\cdot(\bm{m}_1-\bm{m}_2)|_{\mathbb{L}^2}
\leq C|\bm{m}_1-\bm{m}_2|_{\mathbb{L}^2}.
\end{align*}
Thus, $|G_n^1(\bm{m}_1,\bm{\psi}_{j})-G_n^1(\bm{m}_2,\bm{\psi}_{j})|_{\mathbb{L}^2}\leq C|\bm{m}_1-\bm{m}_2|_{\mathbb{L}^2}$. Therefore, $G_n^1$ is global Lipschitz with respect to $\bm{m}$. Notice that for $|\bm{m}_1|, |\bm{m}_2|\leq R$, we have
\begin{align*}
&|G_n^2(\bm{m}_1)-G_n^2(\bm{m}_2)|_{\mathbb{L}^2}
=|k(|\bm{m}_1|^2-1)\bm{m}_1-k(|\bm{m}_2|^2-1)\bm{m}_2|_{\mathbb{L}^2}\\
&\leq k|(|\bm{m}_1|^2-|\bm{m}_2|^2)\bm{m}_1|_{\mathbb{L}^2}+k|(|\bm{m}_2|^2-1)(\bm{m}_1-\bm{m}_2)|_{\mathbb{L}^2}\\
&\leq k|\bm{m}_1-\bm{m}_2|_{\mathbb{L}^2}^2|\bm{m}_1|_{\mathbb{L}^{\infty}}
+k||\bm{m}_2|^2-1|_{\mathbb{L}^{\infty}}|\bm{m}_1-\bm{m}_2|_{\mathbb{L}^2},
\end{align*}
hence according to the equivalence of norms on finite-dimensional spaces, we infer that $G_n^2$ is local Lipschitz. By Assumption \ref{ass1}, we have
\begin{align*}
&\bigg|\sum\limits_{i\geq1}\big(G_{n}(\bm{m}_{1}(s))\tilde{e}_i-G_{n}(\bm{m}_{2}(s))\tilde{e}_i\big)\bigg|_{\mathbb{L}^2}^2
+|G'_n(\bm{m}_1)[G_n(\bm{m}_1)]-G'_n(\bm{m}_2)[G_n(\bm{m}_2)]|_{\mathbb{L}^2}^2\\
&\quad +\int_B|F_n(\bm{m}_1,l)-F_n(\bm{m}_2,l)|_{\mathbb{L}^2}^2\mu(dl)
\leq K_1|\bm{m}_1-\bm{m}_2|_{\mathbb{L}^2}^2.
\end{align*}
Therefore, $G_n$, $G'_n[G_n]$, and $F_n$ are global Lipschitz.
\end{proof}
Since \eqref{S3} and \eqref{SLLG4} are equivalent to ordinary differential equations in $\mathbb{R}^n$.
Lemma \ref{1} and Lemma \ref{2} imply that the existence of local solutions $\bm{\Psi}_n=\{\bm{\psi}_{jn}\}_{j\in\mathbb{N}}$ and $\bm{m}_n$.
\subsection{A priori estimates}
\begin{lemma}\label{P}
Let $T\in(0,\infty)$, $2\leq p<\infty$, then for each $n=1,2,\cdots$, $t\in[0,T]$, we have
\begin{align*}
\mathbb{E}\big[|\bm{\psi}_{jn}(t,\cdot)|_{\mathbb{L}^2}^p\big]
=\mathbb{E}\big[|\bm{\psi}_{jn}(0,\cdot)|_{\mathbb{L}^2}^p\big]
=\mathbb{E}\big[|\pi_{n}^{K}\bm{\varphi}_{j}|_{\mathbb{L}^2}^p\big].
\end{align*}
\end{lemma}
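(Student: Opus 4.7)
The plan is to exploit the fact that the Galerkin Schr\"{o}dinger block \eqref{S3} is noise-free and generated by a (pathwise) self-adjoint operator, so that the finite-dimensional ODE it becomes preserves the $\mathbb{L}^2$ norm of $\bm{\psi}_{jn}$ for every fixed $\omega$. First I would test the identity \eqref{S3} against the componentwise complex conjugate $\bar{\bm{\psi}}_{jn}$. Since $\bm{\psi}_{jn}=\sum_{h=1}^{n}\bm{\alpha}_{jh}(t)\theta_h$ with $\bm{\alpha}_{jh}\in\mathbb{C}^2$, and \eqref{S3} holds for every basis index $h$, the $\mathbb{C}^2$-linear combination with coefficients $\bar{\bm{\alpha}}_{jh}$ is admissible and yields
\begin{align*}
i\int_{K}\partial_t\bm{\psi}_{jn}\cdot\bar{\bm{\psi}}_{jn}\,d\bm{x}+\frac{1}{2}\int_{K}\Delta\bm{\psi}_{jn}\cdot\bar{\bm{\psi}}_{jn}\,d\bm{x}-\int_{K}V_n|\bm{\psi}_{jn}|^2\,d\bm{x}+\frac{1}{2}\int_{K}\bm{\psi}_{jn}^{\dag}(\bm{m}_n\cdot\hat{\bm{\sigma}})\bm{\psi}_{jn}\,d\bm{x}=0.
\end{align*}

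Next I would take the imaginary part of this identity. The first term contributes $\tfrac{1}{2}\frac{d}{dt}|\bm{\psi}_{jn}|_{\mathbb{L}^2}^2$ via $\mathrm{Im}\bigl(i\int\partial_t\bm{\psi}_{jn}\cdot\bar{\bm{\psi}}_{jn}\bigr)=\mathrm{Re}\int\partial_t\bm{\psi}_{jn}\cdot\bar{\bm{\psi}}_{jn}$. The Laplacian term, after integration by parts using the Dirichlet condition $\theta_h|_{\partial K}=0$, rewrites as $-\tfrac{1}{2}\int_{K}|\nabla\bm{\psi}_{jn}|^2\,d\bm{x}$, which is real and thus drops. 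The Coulomb term is real because $V_n$ solves $-\Delta V_n=\rho_n$ with real source and vanishing boundary data, hence is real-valued. Finally, the spin--magnetization term is a pointwise quadratic form of the Hermitian $2\times 2$ matrix $\bm{m}_n\cdot\hat{\bm{\sigma}}$, so $\bm{\psi}_{jn}^{\dag}(\bm{m}_n\cdot\hat{\bm{\sigma}})\bm{\psi}_{jn}$ is pointwise real. These three cancellations produce $\frac{d}{dt}|\bm{\psi}_{jn}(t,\cdot)|_{\mathbb{L}^2}^2=0$ for every $\omega$ on the interval of existence supplied by Lemmas \ref{1}--\ref{2}; in particular the norm conservation prevents blow-up, so the local solution extends to all of $[0,T]$.

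Integrating in time then gives $|\bm{\psi}_{jn}(t,\omega)|_{\mathbb{L}^2}=|\bm{\psi}_{jn}(0,\omega)|_{\mathbb{L}^2}=|\pi_{n}^{K}\bm{\varphi}_j|_{\mathbb{L}^2}$ $\mathbb{P}$-a.s.\ and for every $t\in[0,T]$; raising to the $p$-th power and taking expectation delivers the claim for all $2\leq p<\infty$. I do not anticipate a serious obstacle, since the Schr\"{o}dinger subsystem carries no stochastic integrals and the argument reduces to a pathwise unitarity computation; the only point requiring care is the verification that each of the three potential/coupling pieces on the right-hand side contributes a purely real quantity when tested against $\bar{\bm{\psi}}_{jn}$, and all three follow from standard self-adjointness considerations together with the Dirichlet boundary condition on $\partial K$.
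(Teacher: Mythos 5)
Your proposal is correct and follows essentially the same route as the paper: pair the Galerkin Schr\"{o}dinger equation with $\bar{\bm{\psi}}_{jn}$, observe that the Laplacian (after integration by parts with the Dirichlet condition), the Coulomb potential, and the spin--magnetization term each contribute a purely real quantity, so that taking the imaginary part yields the pathwise conservation $\frac{d}{dt}|\bm{\psi}_{jn}(t)|_{\mathbb{L}^2}^2=0$, from which the $p$-th moment identity follows by taking expectations. The only cosmetic difference is that the paper verifies reality of the $\bm{m}_n\cdot\hat{\bm{\sigma}}$ term by expanding component-wise in the Pauli basis, whereas you invoke the Hermitian structure of $\bm{m}_n\cdot\hat{\bm{\sigma}}$ directly, which is a tidier way of saying the same thing.
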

\begin{proof}
Multiplying the both sides of \eqref{S3} by $\sum_{h=1}^{n}\bm{\alpha}_{jh}^{\dag}(t)$, we get
\begin{align*}
&\sum\limits_{h=1}^{n}\bm{\alpha}_{jh}^{\dag}\bigg(\int_{K}i\partial_t\bm{\psi}_{jn}\theta_h
+\frac{1}{2}\Delta\bm{\psi}_{jn}\theta_h-V_n\bm{\psi}_{jn}\theta_h
+\frac{1}{2}\bm{m}_n\cdot\hat{\bm{\sigma}}\bm{\psi}_{jn}\theta_hd\bm{x}\bigg)=0.
\end{align*}
Since
\begin{align}\label{P2}
&\partial_t(\bm{\alpha}^{\dag}_{jh}\cdot\bm{\alpha}_{jh})
=2{\rm Re}(\bm{\alpha}^{\dag}_{jh}\cdot\partial_t\bm{\alpha}_{jh})
=2{\rm Re}(\partial_t\bm{\alpha}^{\dag}_{jh}\cdot\bm{\alpha}_{jh}),\nonumber\\
&{\rm Im}(\bm{\alpha}^{\dag}_{jh}\cdot\partial_t\bm{\alpha}_{jh})
=-{\rm Im}(\partial_t\bm{\alpha}^{\dag}_{jh}\cdot\bm{\alpha}_{jh}),
\end{align}
it follows from \eqref{jn} that
\begin{align*}
&i\int_0^t\int_{K}\sum\limits_{h=1}^{n}\bm{\alpha}_{jh}^{\dag}\partial_s\bm{\psi}_{jn}\theta_hd\bm{x}ds
=i\int_0^t\int_{K}\sum\limits_{h=1}^{n}\bm{\alpha}_{jh}^{\dag}\partial_s\bm{\alpha}_{jh}\theta_h^2d\bm{x}ds\\
&=\frac{i}{2}\int_0^t\frac{d}{ds}\int_{K}\sum\limits_{h=1}^{n}|\bm{\alpha}_{jh}|^2\theta_h^2d\bm{x}ds
-\int_0^t\int_{K}\sum\limits_{h=1}^{n}{\rm Im}
(\bm{\alpha}^{\dag}_{jh}\cdot\partial_s\bm{\alpha}_{jh})\theta_h^2d\bm{x}ds\\
&=\frac{i}{2}\int_0^t\frac{d}{ds}\int_{K}|\bm{\psi}_{jn}|^2d\bm{x}ds
-\int_0^t\int_{K}\sum\limits_{h=1}^{n}{\rm Im}
(\bm{\alpha}^{\dag}_{jh}\cdot\partial_s\bm{\alpha}_{jh})\theta_h^2d\bm{x}ds.
\end{align*}
Using the integration by parts, we get
\begin{align*}
&\frac{1}{2}\int_0^t\int_{K}\sum\limits_{h=1}^{n}\bm{\alpha}_{jh}^{\dag}\Delta\bm{\psi}_{jn}\theta_hd\bm{x}ds
=-\frac{1}{2}\int_0^t\int_{K}\sum\limits_{h=1}^{n}
\bm{\alpha}_{jh}^{\dag}\bm{\alpha}_{jh}|\nabla\theta_{h}|^2d\bm{x}ds\\
&=-\frac{1}{2}\int_0^t\int_{K}\nabla\bm{\psi}^{\dag}_{jn}\nabla\bm{\psi}_{jn}d\bm{x}ds
=-\frac{1}{2}\int_0^t\int_{K}|\nabla\bm{\psi}_{jn}|^2d\bm{x}ds.
\end{align*}
In view of \eqref{jn}, we deduce
\begin{align*}
&-\int_0^t\int_{K}\sum\limits_{h=1}^{n}\bm{\alpha}_{jh}^{\dag}V_n\bm{\psi}_{jn}\theta_hd\bm{x}ds
=-\int_0^t\int_{K}\sum\limits_{h=1}^{n}\bm{\alpha}_{jh}^{\dag}\theta_h
V_n\sum\limits_{h=1}^{n}\bm{\alpha}_{jh}\theta_hd\bm{x}ds\\
&=-\int_0^t\int_{K}\sum\limits_{h=1}^{n}|\bm{\alpha}_{jh}|^2\theta_h^2V_nd\bm{x}ds
=-\int_0^t\int_{K}V_n|\bm{\psi}_{jn}|^2d\bm{x}ds,
\end{align*}
and
\begin{align*}
&\frac{1}{2}\int_0^t\int_{K}\sum\limits_{h=1}^{n}\bm{\alpha}_{jh}^{\dag}
\bm{m}_n\cdot\hat{\bm{\sigma}}\bm{\psi}_{jn}\theta_hd\bm{x}ds
=\frac{1}{2}\int_0^t\int_{K}\sum\limits_{h=1}^{n}\bm{\alpha}_{jh}^{\dag}\theta_h
\bm{m}_n\cdot\hat{\bm{\sigma}}\sum\limits_{h=1}^{n}\bm{\alpha}_{jh}\theta_hd\bm{x}ds\\
&=\frac{1}{2}\int_0^t\int_{K}\sum\limits_{h=1}^{n}\bm{\alpha}_{jh}^{\dag}\theta_h
(m_{n,1}\sigma_1+m_{n,2}\sigma_2+m_{n,3}\sigma_3)\sum\limits_{h=1}^{n}\bm{\alpha}_{jh}\theta_hd\bm{x}ds\nonumber\\
&=\frac{1}{2}\int_0^t\int_{K}\sum\limits_{h=1}^{n}
\big(\bar{\bm{\alpha}}_{jh,-}\bm{\alpha}_{jh,+}+\bar{\bm{\alpha}}_{jh,+}\bm{\alpha}_{jh,-}\big)\theta_h^2m_{n,1}d\bm{x}ds\\
&\quad +\frac{i}{2}\int_0^t\int_{K}\sum\limits_{h=1}^{n}
\big(\bar{\bm{\alpha}}_{jh,-}\bm{\alpha}_{jh,+}-\bar{\bm{\alpha}}_{jh,+}\bm{\alpha}_{jh,-}\big)\theta_h^2m_{n,2}d\bm{x}ds\\
&\quad +\frac{1}{2}\int_0^t\int_{K}\sum\limits_{h=1}^{n}
\big(\bar{\bm{\alpha}}_{jh,+}\bm{\alpha}_{jh,+}-\bar{\bm{\alpha}}_{jh,-}\bm{\alpha}_{jh,-}\big)\theta_h^2m_{n,3}d\bm{x}ds.
\end{align*}
Using the fact that the imaginary part is 0, we get $|\bm{\psi}_{jn}(t)|_{\mathbb{L}^2}^2=|\bm{\psi}_{jn}(0)|_{\mathbb{L}^2}^2$, then
$\mathbb{E}\big[|\bm{\psi}_{jn}(t,\cdot)|_{\mathbb{L}^2}^p\big]=\mathbb{E}\big[|\bm{\psi}_{jn}(0,\cdot)|_{\mathbb{L}^2}^p\big]$.
\end{proof}
\begin{lemma}\label{PP}
Let $T\in(0,\infty)$, then for each $n=1,2,\cdots$, $t\in[0,T]$, it holds that:
\begin{align}\label{PP1}
({\rm{i}})~~
&\mathbb{E}\bigg[\int_{K}\sum\limits_{j=1}^{\infty}\lambda_j|\nabla\bm{\psi}_{jn}(t)|^2d\bm{x}\bigg]
+\mathbb{E}\bigg[\int_{K}|\nabla V_n(t)|^2d\bm{x}\bigg]
=\mathbb{E}\bigg[\int_0^t\int_{K}\partial_s\bm{s}_{n}\cdot\bm{m}_nd\bm{x}ds\bigg]\nonumber\\
&\quad +\mathbb{E}\bigg[\int_{K}\sum\limits_{j=1}^{\infty}\lambda_j|\nabla\bm{\psi}_{jn}(0)|^2d\bm{x}\bigg]
+\mathbb{E}\bigg[\int_{K}|\nabla V_n(0)|^2d\bm{x}\bigg],
\end{align}
\begin{align}\label{M01}
({\rm{ii}})~~
&2\alpha\mathbb{E}\bigg[\int_0^t\int_{D}|\partial_s\bm{m}_n(s)|^2d\bm{x}ds\bigg]
+\mathbb{E}\bigg[\int_{D}|\nabla\bm{m}_n(t)|^2d\bm{x}\bigg]
+2\mathbb{E}\bigg[\int_{D}w(\bm{m}_n(t))d\bm{x}\bigg]\nonumber\\
&\quad +\mathbb{E}\bigg[\int_{\mathbb{R}^3}|\bm{H}_{sn}(t)|^2d\bm{x}\bigg]
+\frac{k}{2}\mathbb{E}\bigg[\int_{D}(|\bm{m}_n(t)|^2-1)^2d\bm{x}\bigg]
+\mathbb{E}\bigg[\int_D|G_n(\bm{m}_n(t))|^2d\bm{x}\bigg]\nonumber\\
&=\mathbb{E}\bigg[\int_0^t\int_{D}\partial_s\bm{m}_n\cdot\bm{s}_nd\bm{x}ds\bigg]
+\mathbb{E}\bigg[\int_{D}|\nabla\bm{m}_n(0)|^2d\bm{x}\bigg]
+2\mathbb{E}\bigg[\int_{D}w(\bm{m}_n(0))d\bm{x}\bigg]\nonumber\\
&\quad +\frac{k}{2}\mathbb{E}\bigg[\int_{D}(|\bm{m}_n(0)|^2-1)^2d\bm{x}\bigg]
+\mathbb{E}\bigg[\int_{\mathbb{R}^3}|\bm{H}_{sn}(0)|^2d\bm{x}\bigg]
+\mathbb{E}\bigg[\int_D|G_n(\bm{m}_n(0))|^2d\bm{x}\bigg].
\end{align}
\end{lemma}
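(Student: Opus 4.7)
The strategy is to establish (i) and (ii) by testing the Galerkin equations against natural quantities, exploiting the Hamiltonian structure of the Schr\"odinger piece and the variational Landau--Lifshitz structure of the magnetisation equation, and then taking expectations to eliminate the martingale contributions.

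For part (i), which is purely deterministic, I will multiply \eqref{S3} by $\lambda_j\partial_t\bm{\psi}_{jn}^{\dag}$, sum over $j$, integrate over $K$, and take the real part; the resulting $\mathrm{Re}(i\lambda_j|\partial_t\bm{\psi}_{jn}|^2)$ vanishes. Integration by parts against the Laplacian using the Dirichlet condition from \eqref{bzh1} converts the kinetic term into $\tfrac14\tfrac{d}{dt}\sum_j\lambda_j\int|\nabla\bm{\psi}_{jn}|^2$. For the Poisson contribution I will use $-\Delta V_n=\rho_n$ with $V_n|_{\partial K}=0$ to rewrite $\tfrac12\int V_n\partial_s\rho_n=-\tfrac12\int V_n\Delta\partial_sV_n=\tfrac14\tfrac{d}{ds}\int|\nabla V_n|^2$. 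For the spin--magnetisation coupling I will use the Hermiticity of the Pauli matrices in the form $2\,\mathrm{Re}(\partial_s\bm{\psi}_{jn}^{\dag}A\bm{\psi}_{jn})=\partial_s(\bm{\psi}_{jn}^{\dag}A\bm{\psi}_{jn})-\bm{\psi}_{jn}^{\dag}(\partial_sA)\bm{\psi}_{jn}$ with $A=\bm{m}_n\!\cdot\!\hat{\bm{\sigma}}$; after summation in $j$, the identity $\sum_j\lambda_j\bm{\psi}_{jn}^{\dag}(\bm{m}_n\!\cdot\!\hat{\bm{\sigma}})\bm{\psi}_{jn}=\bm{s}_n\!\cdot\!\bm{m}_n$ collapses the coupling into $\tfrac12\partial_s(\bm{s}_n\!\cdot\!\bm{m}_n)-\tfrac12\partial_s\bm{m}_n\!\cdot\!\bm{s}_n=\tfrac12\bm{m}_n\!\cdot\!\partial_s\bm{s}_n$. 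Multiplying the combined identity by four, integrating in time, and taking expectation yields (i).

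For part (ii), I will test the LLG Galerkin equation \eqref{SLLG4} against $\partial_t\bm{m}_n$ in $\mathbb{L}^2(D)$ and multiply the result by two to match the normalisation in the statement. The gyromagnetic term is annihilated since $(\bm{m}_n\times\partial_t\bm{m}_n)\cdot\partial_t\bm{m}_n\equiv 0$, leaving $2\alpha|\partial_t\bm{m}_n|^2$ on the left. The deterministic pieces from $\bm{H}_n=\Delta\bm{m}_n-w'(\bm{m}_n)+\bm{H}_{sn}+\tfrac12\bm{s}_n$, after time integration and integration by parts using the Neumann condition from \eqref{bzh2}, produce $|\nabla\bm{m}_n|^2$, $2\int w(\bm{m}_n)$, $\int_{\mathbb{R}^3}|\bm{H}_{sn}|^2$ (via the standard magnetostatic identity expressing $\bm{H}_{sn}$ through the Newtonian kernel) and $\tfrac k2\int(|\bm{m}_n|^2-1)^2$; the spin contribution leaves $\int\partial_s\bm{m}_n\cdot\bm{s}_n$, which pairs with the spin term of (i). For the stochastic part I will first convert the Stratonovich integral into its It\^o form, making the $\tfrac12 G'_n[G_n]\,dt$ correction explicit, and then apply the It\^o/Kunita formula to $\tfrac12|\bm{m}_n|_{\mathbb{L}^2}^2$ (or the relevant energy functional); the quadratic variation of the Brownian martingale and the compensator of the Poisson measure combine with the Stratonovich correction to produce the $|G_n(\bm{m}_n)|^2$ contribution, and taking expectation annihilates the remaining It\^o martingale and compensated-Poisson integral.

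The main obstacle will be the rigorous justification of the stochastic chain rule at this stage: one needs to apply the Stratonovich-to-It\^o conversion together with the It\^o/Kunita formula on the finite-dimensional space $\mathbb{H}_n^D$ where $\bm{m}_n$ lives, relying on the Lipschitz estimates of Lemmas \ref{1}--\ref{2} to obtain enough a priori regularity of $\partial_t\bm{m}_n$; to verify that the stochastic integrals against $dW$ and $\tilde{\eta}$ are genuine martingales so their expectations vanish; and to carefully book-keep the factors of $1/2$ and signs so that the coupling terms $\int\partial_s\bm{s}_n\cdot\bm{m}_n$ in (i) and $\int\partial_s\bm{m}_n\cdot\bm{s}_n$ in (ii) are consistent and can be summed later to close the full energy balance.
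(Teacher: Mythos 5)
Your plan for part (i) follows the paper's proof closely: multiplying the Galerkin Schr\"odinger equation by $\lambda_j\partial_t\bm{\psi}_{jn}^{\dag}$, discarding the purely imaginary piece, integrating by parts with the Dirichlet condition, using $-\Delta V_n=\rho_n$ for the Poisson contribution, and exploiting Hermiticity of $\bm{m}_n\!\cdot\!\hat{\bm{\sigma}}$ to collapse the coupling into $\tfrac12\bm{m}_n\!\cdot\!\partial_s\bm{s}_n$ are exactly the paper's manipulations.

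Your handling of part (ii) contains a genuine misconception about the origin of the $\int_D|G_n(\bm{m}_n(t))|^2\,d\bm{x}$ term. You propose to ``apply the It\^o/Kunita formula to $\tfrac12|\bm{m}_n|_{\mathbb{L}^2}^2$ (or the relevant energy functional)'' and then to let ``the quadratic variation of the Brownian martingale and the compensator of the Poisson measure combine with the Stratonovich correction to produce the $|G_n(\bm{m}_n)|^2$ contribution.'' That is not what happens, and the route you describe would not reproduce identity \eqref{M01}. If you apply It\^o/Kunita to $\tfrac12|\bm{m}_n|_{\mathbb{L}^2}^2$ you obtain the $\mathbb{L}^2$ balance of $\bm{m}_n$ (the estimate \eqref{MM030} in the next lemma), not the $\mathbb{H}^1$ energy estimate; and applying It\^o/Kunita to the full Landau--Lifshitz energy would produce a second-order correction proportional to the Hessian of that energy evaluated on $G_n$, as well as a jump correction coming from $F_n$ and $\mu$. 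Neither of those terms appears in \eqref{M01}. What the paper actually does is test the Gilbert-form equation against $\partial_t\bm{m}_n$ and then observe that the Stratonovich-to-It\^o correction \emph{integrand} is already an exact time derivative:
\begin{align*}
\tfrac12\,\partial_s\bm{m}_n\cdot G'_n(\bm{m}_n)[G_n(\bm{m}_n)]
=\tfrac12\,G'_n(\bm{m}_n)[\partial_s\bm{m}_n]\cdot G_n(\bm{m}_n)
=\tfrac12\,\partial_s\big|G_n(\bm{m}_n)\big|^2,
\end{align*}
an algebraic identity (tacitly using that $G'_n(\bm{m}_n)$ acts symmetrically). Integrating this in time yields the $\int_D|G_n(\bm{m}_n(t))|^2\,d\bm{x}-\int_D|G_n(\bm{m}_n(0))|^2\,d\bm{x}$ term directly; the residual It\^o integral $\int_0^t(\partial_s\bm{m}_n,G_n(\bm{m}_n))\,dW$ and the compensated Poisson integral $\int_0^t\int_B(\partial_s\bm{m}_n,F_n(\bm{m}_n,l))\,\tilde{\eta}(ds,dl)$ are then simply killed by taking expectation, with no extra quadratic variation or compensator terms entering the balance. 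So the obstacle you anticipate is real, but it is resolved not by It\^o/Kunita on a functional but by this chain-rule observation for the Stratonovich correction together with the expectation of the remaining martingales; if you pursue the functional It\^o route as written, you will end up with extraneous Hessian and jump corrections that cannot be reconciled with \eqref{M01}.
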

\begin{proof}
Multiplying the left-hand side of \eqref{S3} by $\sum_{j=1}^{\infty}\sum_{h=1}^{n}\lambda_j\frac{d\bm{\alpha}_{jh}^{\dag}}{dt}$, we can obtain
\begin{align}\label{PP2}
&\bigg(\sum\limits_{j=1}^{\infty}\sum\limits_{h=1}^{n}\lambda_j\frac{d\bm{\alpha}_{jh}^{\dag}}{dt}\theta_h,i\partial_t\bm{\psi}_{jn}
+\frac{1}{2}\Delta\bm{\psi}_{jn}-V_n\bm{\psi}_{jn}+\frac{1}{2}\bm{m}_n\cdot\hat{\bm{\sigma}}\bm{\psi}_{jn}\bigg)=0.
\end{align}
Integrating with respect to time $t$, we get
\begin{align*}
i\sum\limits_{j=1}^{\infty}\lambda_j\int_0^t\int_{K}\sum\limits_{h=1}^{n}\frac{d\bm{\alpha}_{jh}^{\dag}}{ds}\theta_h
\partial_s\bm{\psi}_{jn}d\bm{x}ds
=i\sum\limits_{j=1}^{\infty}\lambda_j\int_0^t\int_{K}\partial_s\bm{\psi}^{\dag}_{jn}\partial_s\bm{\psi}_{jn}d\bm{x}ds.
\end{align*}
By \eqref{P2}, we have $\partial_t\bm{\alpha}^{\dag}_{jh}\cdot\bm{\alpha}_{jh}=\frac{1}{2}\partial_t|\bm{\alpha}_{jh}|^2+{\rm Im}(\partial_t\bm{\alpha}^{\dag}_{jh}\cdot\bm{\alpha}_{jh})$, which implies that
\begin{align*}
&\frac{1}{2}\sum\limits_{j=1}^{\infty}\lambda_j
\int_0^t\int_{K}\sum\limits_{h=1}^{n}\frac{d\bm{\alpha}_{jh}^{\dag}}{ds}\theta_h\Delta\bm{\psi}_{jn}d\bm{x}ds
=\frac{1}{2}\sum\limits_{j=1}^{\infty}\lambda_j
\int_0^t\int_{K}\sum\limits_{h=1}^{n}\frac{d\bm{\alpha}_{jh}^{\dag}}{ds}\bm{\alpha}_{jh}\theta_h\Delta\theta_hd\bm{x}ds\\
&=-\frac{1}{4}\sum\limits_{j=1}^{\infty}\lambda_j\int_{K}|\nabla\bm{\psi}_{jn}(t)|^2d\bm{x}
+\frac{1}{4}\sum\limits_{j=1}^{\infty}\lambda_j\int_{K}|\nabla\bm{\psi}_{jn}(0)|^2d\bm{x}\\
&\quad -\frac{i}{2}\sum\limits_{j=1}^{\infty}\lambda_j\int_0^t\int_{K}\sum\limits_{h=1}^{n}{\rm Im}
(\partial_s\bm{\alpha}^{\dag}_{jh}\cdot\bm{\alpha}_{jh})|\nabla\theta_h|^2d\bm{x}ds,
\end{align*}
and
\begin{align*}
&-\sum\limits_{j=1}^{\infty}\lambda_j
\int_0^t\int_{K}\sum\limits_{h=1}^{n}\frac{d\bm{\alpha}_{jh}^{\dag}}{ds}\theta_hV_n\bm{\psi}_{jn}d\bm{x}ds
=-\sum\limits_{j=1}^{\infty}\lambda_j
\int_0^t\int_{K}\sum\limits_{h=1}^{n}\frac{d\bm{\alpha}_{jh}^{\dag}}{ds}\bm{\alpha}_{jh}\theta_h^2V_nd\bm{x}ds\\
&=-\frac{1}{2}\int_0^t\frac{d}{ds}\int_{K}V_n\sum\limits_{j=1}^{\infty}\lambda_j|\bm{\psi}_{jn}|^2d\bm{x}ds
+\frac{1}{2}\int_0^t\int_{K}\sum\limits_{j=1}^{\infty}\lambda_j|\bm{\psi}_{jn}|^2\partial_sV_nd\bm{x}ds\\
&\quad -i\sum\limits_{j=1}^{\infty}\lambda_j\int_0^t\int_{K}\sum\limits_{h=1}^{n}{\rm Im}
(\partial_s\bm{\alpha}^{\dag}_{jh}\cdot\bm{\alpha}_{jh})\theta_h^2V_nd\bm{x}ds\\
&=-\frac{1}{4}\int_{K}|\nabla V_n(t)|^2d\bm{x}
+\frac{1}{4}\int_{K}|\nabla V_n(0)|^2d\bm{x}
-i\sum\limits_{j=1}^{\infty}\lambda_j\int_0^t\int_{K}\sum\limits_{h=1}^{n}{\rm Im}
(\partial_s\bm{\alpha}^{\dag}_{jh}\cdot\bm{\alpha}_{jh})\theta_h^2V_nd\bm{x}ds.
\end{align*}
Furthermore, since
\begin{align*}
\bm{s}_n=&\sum_{j=1}^{\infty}\lambda_{j}{\rm{Tr}}_{{\mathbb{C}}^2}(\hat{\bm{\sigma}}{\bm{\psi}_{jn}}{\bm{\psi}^{\dag}_{jn}})
=\sum_{j=1}^{\infty}\lambda_{j}({\rm{Tr}}\sigma_1{\bm{\psi}_{jn}}{\bm{\psi}^{\dag}_{jn}},
{\rm{Tr}}\sigma_2{\bm{\psi}_{jn}}{\bm{\psi}^{\dag}_{jn}},{\rm{Tr}}\sigma_3{\bm{\psi}_{jn}}{\bm{\psi}^{\dag}_{jn}})\\
=&\sum_{j=1}^{\infty}\lambda_{j}\sum\limits_{h=1}^{n}
(\bm{\alpha}_{jh,-}\bar{\bm{\alpha}}_{jh,+}+\bm{\alpha}_{jh,+}\bar{\bm{\alpha}}_{jh,-},
i(\bm{\alpha}_{jh,+}\bar{\bm{\alpha}}_{jh,-}-\bm{\alpha}_{jh,-}\bar{\bm{\alpha}}_{jh,+}),
|\bm{\alpha}_{jh,+}|^2-|\bm{\alpha}_{jh,-}|^2)\theta_h^2,
\end{align*}
it is enough to show that
\begin{align*}
&\frac{1}{2}\sum\limits_{j=1}^{\infty}\lambda_j
\int_0^t\int_{K}\sum\limits_{h=1}^{n}\frac{d\bm{\alpha}_{jh}^{\dag}}{ds}\theta_h\bm{m}_n\cdot\hat{\bm{\sigma}}
\bm{\psi}_{jn}d\bm{x}ds\nonumber\\
&=\frac{1}{2}\sum\limits_{j=1}^{\infty}\lambda_j
\int_0^t\int_{K}\sum\limits_{h=1}^{n}\big((\partial_s\bar{\bm{\alpha}}_{jh,-}\bm{\alpha}_{jh,+}
+\partial_s\bar{\bm{\alpha}}_{jh,+}\bm{\alpha}_{jh,-})m_{n,1}\\
&\quad +(\partial_s\bar{\bm{\alpha}}_{jh,+}\bm{\alpha}_{jh,+}
-\partial_s\bar{\bm{\alpha}}_{jh,-}\bm{\alpha}_{jh,-})m_{n,3}
+i(\partial_s\bar{\bm{\alpha}}_{jh,-}\bm{\alpha}_{jh,+}
-\partial_s\bar{\bm{\alpha}}_{jh,+}\bm{\alpha}_{jh,-})m_{n,2}\big)\theta_h^2d\bm{x}ds\\
&=\frac{1}{4}\int_0^t\int_{K}\partial_s\bm{s}_{n}\cdot\bm{m}_nd\bm{x}ds
+\frac{i}{2}{\rm{Im}}\int_0^t\int_{K}\sum\limits_{j=1}^{\infty}\lambda_j
\partial_s\bm{\psi}^{\dag}_{jn}\bm{m}_n\cdot\hat{\bm{\sigma}}\bm{\psi}_{jn}d\bm{x}ds.
\end{align*}
Using the fact that the real part is 0, we obtain
\begin{align}\label{1PP1}
&\int_{K}\sum\limits_{j=1}^{\infty}\lambda_j|\nabla\bm{\psi}_{jn}(t)|^2d\bm{x}
+\int_{K}|\nabla V_n(t)|^2d\bm{x}\nonumber\\
&=\int_0^t\int_{K}\partial_s\bm{s}_{n}\cdot\bm{m}_nd\bm{x}ds
+\int_{K}\sum\limits_{j=1}^{\infty}\lambda_j|\nabla\bm{\psi}_{jn}(0)|^2d\bm{x}
+\int_{K}|\nabla V_n(0)|^2d\bm{x}.
\end{align}
Thus taking the expectation on the both sides of \eqref{1PP1}, we justify \eqref{PP1}.
Multiplying the left-hand side of \eqref{SLLG4} by $\sum_{h=1}^{n}\frac{d\bm{\beta}_{h}}{dt}$, we obtain
\begin{align}\label{M02}
\alpha(\partial_t\bm{m}_n,\partial_t\bm{m}_n)
&=-(\partial_t\bm{m}_n,\bm{m}_n\times\partial_t\bm{m}_n)+(\partial_t\bm{m}_n,\bm{H}_n)
-(\partial_t\bm{m}_n,k(|\bm{m}_n|^2-1)\bm{m}_n)\nonumber\\
&\quad -\bigg(\partial_t\bm{m}_n,G_n(\bm{m}_n)\circ\frac{dW(t)}{dt}\bigg)
-\bigg(\partial_t\bm{m}_n,F_n(\bm{m}_n,l)\frac{dL(t)}{dt}\bigg).
\end{align}
Integrating on $[0,t]$, we get
\begin{align*}
\alpha\int_0^t(\partial_s\bm{m}_n,\partial_s\bm{m}_n)ds
=\alpha\int_0^t|\partial_s\bm{m}_n|_{\mathbb{L}^2}^2ds.
\end{align*}
The identity $\langle a,a\times b\rangle=0$ leads to the equality
\begin{align*}
-\int_0^t(\partial_s\bm{m}_n,\bm{m}_n\times\partial_s\bm{m}_n)ds=0.
\end{align*}
Letting $\bm{u}_n=\nabla N\ast\cdot\bm{m}_n$, we can show that $\bm{u}_n$ satisfies the equation $\Delta \bm{u}=div(\bm{m}\chi_D)$ in the sense of distribution. Consequently, we have
\begin{align*}
-\int_{D}\partial_t\bm{m}_n\cdot\bm{H}_{sn}d\bm{x}
=\frac{1}{2}\frac{d}{dt}\int_{\mathbb{R}^3}|\bm{H}_{sn}|^2d\bm{x},
\end{align*}
therefore, we obtain
\begin{align*}
&\int_0^t(\partial_s\bm{m}_n,\bm{H}_n)ds
=\int_0^t\big(\partial_s\bm{m}_n,\Delta\bm{m}_n-w'(\bm{m}_n)+\bm{H}_{sn}+\frac{1}{2}\bm{s}_n\big)ds\\
&=-\int_0^t\int_{D}\partial_s(\nabla\bm{m}_n)\cdot\nabla\bm{m}_nd\bm{x}ds
-\int_0^t\frac{d}{ds}\int_{D}w(\bm{m}_n)d\bm{x}ds\\
&\quad -\frac{1}{2}\int_0^t\frac{d}{ds}\int_{\mathbb{R}^3}|\bm{H}_{sn}|^2d\bm{x}ds
+\frac{1}{2}\int_0^t\int_{D}\partial_s\bm{m}_n\cdot\bm{s}_nd\bm{x}ds\\
&=-\frac{1}{2}\int_{D}|\nabla\bm{m}_n(t)|^2d\bm{x}
-\int_{D}w(\bm{m}_n(t))d\bm{x}
-\frac{1}{2}\int_{\mathbb{R}^3}|\bm{H}_{sn}(t)|^2d\bm{x}\\
&\quad +\frac{1}{2}\int_0^t\int_{D}\partial_s\bm{m}_n\cdot\bm{s}_nd\bm{x}ds
+\frac{1}{2}\int_{D}|\nabla\bm{m}_n(0)|^2d\bm{x}\\
&\quad +\int_{D}w(\bm{m}_n(0))d\bm{x}
+\frac{1}{2}\int_{\mathbb{R}^3}|\bm{H}_{sn}(0)|^2d\bm{x}.
\end{align*}
According to the Newton-Leibniz formula, we infer that
\begin{align*}
&\int_0^t(\partial_s\bm{m}_n,k(|\bm{m}_n|^2-1)\bm{m}_n)ds
=\frac{k}{4}\int_0^t\frac{d}{ds}\int_{D}(|\bm{m}_n|^2-1)^2d\bm{x}ds\\
&=\frac{k}{4}\int_{D}(|\bm{m}_n(t)|^2-1)^2d\bm{x}-\frac{k}{4}\int_{D}(|\bm{m}_n(0)|^2-1)^2d\bm{x}.
\end{align*}
Using the definition of the Stratonovitch integral, we deduce that
\begin{align*}
&\int_0^t\bigg(\partial_s\bm{m}_n,G_n(\bm{m}_n)\circ\frac{dW(s)}{ds}\bigg)ds\\
&=\frac{1}{2}\int_0^t\int_D\partial_s\bm{m}_n\cdot G'_n(\bm{m}_n)[G_n(\bm{m}_n)]d\bm{x}ds
+\int_0^t(\partial_s\bm{m}_n,G_n(\bm{m}_n))dW(s)\\
&=\frac{1}{2}\int_0^t\int_D\partial_sG_n(\bm{m}_n)[G_n(\bm{m}_n)]d\bm{x}ds
+\int_0^t(\partial_s\bm{m}_n,G_n(\bm{m}_n))dW(s)\\
&=\frac{1}{2}\int_D|G_n(\bm{m}_n(t))|^2d\bm{x}
-\frac{1}{2}\int_D|G_n(\bm{m}_n(0))|^2d\bm{x}
+\int_0^t(\partial_s\bm{m}_n,G_n(\bm{m}_n))dW(s).
\end{align*}
Thanks to
\begin{align*}
\int_0^t\bigg(\partial_s\bm{m}_n,F_n(\bm{m}_n,l)\frac{dL(s)}{ds}\bigg)ds
=\int_0^t\int_B(\partial_s\bm{m}_n,F_n(\bm{m}_n,l))\tilde{\eta}(ds,dl),
\end{align*}
it holds that
\begin{align}\label{1M01}
&2\alpha\int_0^t\int_{D}|\partial_s\bm{m}_n|^2d\bm{x}ds
+\int_{D}|\nabla\bm{m}_n(t)|^2d\bm{x}
+2\int_{D}w(\bm{m}_n(t))d\bm{x}\nonumber\\
&\quad +\int_{\mathbb{R}^3}|\bm{H}_{sn}(t)|^2d\bm{x}
+\frac{k}{2}\int_{D}(|\bm{m}_n(t)|^2-1)^2d\bm{x}
+\int_D|G_n(\bm{m}_n(t))|^2d\bm{x}\nonumber\\
&\quad +2\int_0^t(\partial_s\bm{m}_n,G_n(\bm{m}_n))dW(s)
+2\int_0^t\int_B(\partial_s\bm{m}_n,F_n(\bm{m}_n,l))\tilde{\eta}(ds,dl)\nonumber\\
&=\int_0^t\int_{D}\partial_s\bm{m}_n\cdot\bm{s}_nd\bm{x}ds
+\int_{D}|\nabla\bm{m}_n(0)|^2d\bm{x}
+2\int_{D}w(\bm{m}_n(0))d\bm{x}\nonumber\\
&\quad +\frac{k}{2}\int_{D}(|\bm{m}_n(0)|^2-1)^2d\bm{x}
+\int_{\mathbb{R}^3}|\bm{H}_{sn}(0)|^2d\bm{x}
+\int_D|G_n(\bm{m}_n(0))|^2d\bm{x}.
\end{align}
Hence taking the expectation on the both sides of \eqref{1M01} yields \eqref{M01}.
\end{proof}
\begin{lemma}\label{SPMpci}
Given a Wiener process $W$ and a Poisson process $\eta$, and assume that $(\bm{\Psi}_n, \bm{m}_n)$ is the solution of \eqref{S3} and \eqref{SLLG4} with $T\in(0,\infty)$. For $1\leq r<\infty$ and every $n=1,2,\cdots$, there exists a constant $C>0$ such that the following inequalities hold:
{\small\begin{align}\label{MMpci2}
({\rm{i}})~&\frac{\alpha^{2r}}{4^{r}}\mathbb{E}\big[|\bm{m}_n(t)|_{\mathbb{L}^2}^{4r}\big]
+\mathbb{E}\bigg[\bigg(\int_0^t\int_{D}|\nabla\bm{m}_n|^2d\bm{x}ds\bigg)^{2r}\bigg]
+\mathbb{E}\bigg[\bigg(\int_0^t\int_{\mathbb{R}^3}|\bm{H}_{sn}|^2d\bm{x}ds\bigg)^{2r}\bigg]\nonumber\\
&+\mathbb{E}\bigg[\bigg(k\int_0^t\int_{D}|\bm{m}_n|^4d\bm{x}ds\bigg)^{2r}\bigg]
+\mathbb{E}\bigg[\bigg(\frac{\alpha}{2}\sup_{0\leq t\leq T}|\bm{m}_n(t)|_{\mathbb{L}^2}^2\bigg)^{2r}\bigg]
+\mathbb{E}\bigg[\bigg(k\sup_{0\leq t\leq T}\int_0^t\int_{D}|\bm{m}_n|^4d\bm{x}ds\bigg)^{2r}\bigg]\nonumber\\
&+\mathbb{E}\bigg[\bigg(\sup_{0\leq t\leq T}\int_0^t\int_{D}|\nabla\bm{m}_n|^2d\bm{x}ds\bigg)^{2r}\bigg]
+\mathbb{E}\bigg[\bigg(\sup_{0\leq t\leq T}\int_0^t\int_{\mathbb{R}^3}|\bm{H}_{sn}|^2d\bm{x}ds\bigg)^{2r}\bigg]\nonumber\\
&\leq C\mathbb{E}\big[|\bm{m}_n(0)|_{\mathbb{L}^2}^{4r}\big]
+C\mathbb{E}\bigg[\int_0^T|\bm{m}_n|_{\mathbb{H}^1}^{4r}dt\bigg]
+C\mathbb{E}\bigg[\int_0^T\bigg(\sum\limits_{j=1}^{\infty}\lambda_j
|\nabla\bm{\psi}_{jn}|_{\mathbb{L}^{2}}^2\bigg)^{2r}dt\bigg]+C,
\end{align}}
{\small
\begin{align}\label{PMpci2}
({\rm{ii}})~&\mathbb{E}\bigg[\bigg(\int_{K}\sum\limits_{j=1}^{\infty}\lambda_j
|\nabla\bm{\psi}_{jn}|^2d\bm{x}\bigg)^{2r}\bigg]
+\mathbb{E}\bigg[\bigg(\frac{k}{2}\int_{D}(|\bm{m}_n|^2-1)^2d\bm{x}\bigg)^{2r}\bigg]
+\mathbb{E}\bigg[\bigg(\int_{0}^{t}\int_{D}|\partial_s\bm{m}_n|^2d\bm{x}ds\bigg)^{2r}\bigg]\nonumber\\
&+\mathbb{E}\big[|\bm{m}_n|_{\mathbb{H}^1}^{4r}\big]
+\mathbb{E}\bigg[\bigg(\int_{\mathbb{R}^3}|\bm{H}_{sn}|^2d\bm{x}\bigg)^{2r}\bigg]
+\mathbb{E}\bigg[\bigg(\int_{D}w(\bm{m}_n)d\bm{x}\bigg)^{2r}\bigg]
+\mathbb{E}\bigg[\bigg(\int_{D}|G_n(\bm{m}_{n})|^2d\bm{x}\bigg)^{2r}\bigg]\nonumber\\
&+\mathbb{E}\bigg[\bigg(\int_{K}|\nabla V_{n}|^2d\bm{x}\bigg)^{2r}\bigg]
+\mathbb{E}\bigg[\bigg(\sup_{0\leq t\leq T}
\int_{K}\sum\limits_{j=1}^{\infty}\lambda_j|\nabla\bm{\psi}_{jn}|^2d\bm{x}\bigg)^{2r}\bigg]
+\mathbb{E}\bigg[\bigg(\sup_{0\leq t\leq T}|\bm{m}_n|_{\mathbb{H}^1}^2\bigg)^{2r}\bigg]\nonumber\\
&+\mathbb{E}\bigg[\bigg(\sup_{0\leq t\leq T}\int_{K}|\nabla V_{n}|^2d\bm{x}\bigg)^{2r}\bigg]
+\mathbb{E}\bigg[\bigg(\sup_{0\leq t\leq T}\int_{D}w(\bm{m}_n)d\bm{x}\bigg)^{2r}\bigg]
+\mathbb{E}\bigg[\bigg(\sup_{0\leq t\leq T}\int_{\mathbb{R}^3}|\bm{H}_{sn}|^2d\bm{x}\bigg)^{2r}\bigg]\nonumber\\
&+\mathbb{E}\bigg[\bigg(\frac{k}{2}\sup_{0\leq t\leq T}\int_{D}(|\bm{m}_n|^2-1)^2d\bm{x}\bigg)^{2r}\bigg]
+\mathbb{E}\bigg[\bigg(\sup_{0\leq t\leq T}\int_{D}|G_n(\bm{m}_{n})|^2d\bm{x}\bigg)^{2r}\bigg]
\leq C, ~0\leq t\leq T.
\end{align}}
\begin{align*}
({\rm{iii}})~&\mathbb{E}\bigg[\sup_{0\leq t\leq T}|\rho_n|_{\mathbb{L}^s(K)}^{2r}\bigg]\leq C,~
\mathbb{E}\bigg[\sup_{0\leq t\leq T}|\bm{s}_n|_{\mathbb{L}^s(K)}^{2r}\bigg]\leq C,~1\leq s\leq3,\\
&\mathbb{E}\bigg[\sup_{0\leq t\leq T}|\partial_t\bm{\psi}_{jn}|_{\mathbb{H}^{-1}(K)}\bigg]\leq C,~
\mathbb{E}\bigg[\sup_{0\leq t\leq T}\bigg(\int_{D}|w'(\bm{m}_n)|^sd\bm{x}\bigg)^{2r}\bigg]\leq C,~1\leq s\leq2.
\end{align*}
\end{lemma}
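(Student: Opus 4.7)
The strategy is to upgrade the pathwise energy identities \eqref{1PP1}--\eqref{1M01}, obtained in the proof of Lemma~\ref{PP} before taking expectation, to $2r$-th moment bounds via It\^o's formula, the Burkholder--Davis--Gundy (BDG) inequality, and a Gronwall argument, and to recover the $L^2$/$L^4$-type control of $\bm{m}_n$ through an auxiliary It\^o identity for $|\bm{m}_n|_{\mathbb{L}^2}^2$. The first move is to add the two pathwise identities: the cross terms $\int_0^t\!\int\partial_s\bm{s}_n\cdot\bm{m}_n\,d\bm{x}\,ds$ and $\int_0^t\!\int\partial_s\bm{m}_n\cdot\bm{s}_n\,d\bm{x}\,ds$ combine by integration by parts in time into $\int\bm{s}_n(t)\cdot\bm{m}_n(t)\,d\bm{x}-\int\bm{s}_n(0)\cdot\bm{m}_n(0)\,d\bm{x}$, and the boundary contribution at time $t$ is absorbed into small multiples of $|\bm{m}_n(t)|_{\mathbb{H}^1}^2$ and $\sum_j\lambda_j|\nabla\bm{\psi}_{jn}(t)|_{\mathbb{L}^2}^2$ via Cauchy--Schwarz and the 3D embedding $\mathbb{H}^1\hookrightarrow\mathbb{L}^6$. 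This yields a single pathwise identity carrying both the Schr\"odinger and the magnetic dissipation, up to the two martingale terms $M^W_t$ and $M^\eta_t$ generated by $(\partial_s\bm{m}_n,G_n(\bm{m}_n))$ and $(\partial_s\bm{m}_n,F_n(\bm{m}_n,l))$.

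For \eqref{MMpci2}, in parallel I apply It\^o's formula to $|\bm{m}_n(t)|_{\mathbb{L}^2}^2$ by testing \eqref{SLLG4} against $\bm{m}_n$. The gyroscopic term $\bm{m}_n\times\partial_t\bm{m}_n$ vanishes, the magnetostatic pairing reduces to $\int_D\bm{m}_n\cdot\bm{H}_{sn}=-\int_{\mathbb{R}^3}|\bm{H}_{sn}|^2$ via the distributional identity $\Delta u_n=\mathrm{div}(\bm{m}_n\chi_D)$ already used in Lemma~\ref{PP}, and the penalty produces the coercive piece $k\int(|\bm{m}_n|^4-|\bm{m}_n|^2)$; the Stratonovich correction and the Poisson compensator are $O(1+|\bm{m}_n|^2)$ by Assumption~\ref{ass1}. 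Raising to the $2r$-th power, taking $\sup_{t\le T}$, controlling the resulting martingale pieces by BDG and absorbing them via Young's inequality, a Gronwall argument closes \eqref{MMpci2}; the $\sum_j\lambda_j|\nabla\bm{\psi}_{jn}|^2_{\mathbb{L}^2}$ and $|\bm{m}_n|_{\mathbb{H}^1}^{4r}$ on the right-hand side are legitimate forcing terms because they are themselves bounded by \eqref{PMpci2}, proved next.

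For \eqref{PMpci2}, I raise the combined identity from the first step to the $2r$-th power and take $\sup_{[0,T]}$. The Brownian martingale obeys $\mathbb{E}\sup_t|M^W_t|^{2r}\le C\,\mathbb{E}\bigl(\int_0^T|\partial_s\bm{m}_n|_{\mathbb{L}^2}^2|G_n(\bm{m}_n)|_{\mathbb{L}^2}^2\,ds\bigr)^r$, which by Cauchy--Schwarz and Young is dominated by $\tfrac14\mathbb{E}(\int_0^T|\partial_s\bm{m}_n|^2\,ds)^{2r}+C\mathbb{E}\sup_{s\le T}|G_n(\bm{m}_n)|_{\mathbb{L}^2}^{4r}$; the first summand is absorbed into the coercive $\int|\partial_s\bm{m}_n|^2$ on the left, and the second is $O(1+\sup_s|\bm{m}_n|^{4r})$ by Assumption~\ref{ass1}. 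The compensated-Poisson martingale is handled identically. Inserting the $|\bm{m}_n|_{\mathbb{L}^2}^{4r}$-bound from step~(i), combined with Lemma~\ref{P} for $|\bm{\psi}_{jn}|_{\mathbb{L}^2}$, Gronwall delivers \eqref{PMpci2}. The $|\nabla V_n|_{\mathbb{L}^2}$-bound follows from $|\nabla V_n|_{\mathbb{L}^2}^2=\int V_n\rho_n\le|V_n|_{\mathbb{L}^6}|\rho_n|_{\mathbb{L}^{6/5}}\le C|\nabla V_n|_{\mathbb{L}^2}|\rho_n|_{\mathbb{L}^{6/5}}$ together with $\mathbb{H}^1\hookrightarrow\mathbb{L}^{12/5}$ applied to $\bm{\psi}_{jn}$.

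The estimate (iii) is then immediate: $\rho_n$ and $\bm{s}_n$ are quadratic in $\bm{\psi}_{jn}$, so for $1\le s\le 3$ the $\mathbb{L}^s$-norm is controlled by $\sum_j\lambda_j|\bm{\psi}_{jn}|_{\mathbb{H}^1}^2$ through $\mathbb{H}^1\hookrightarrow\mathbb{L}^6$; $w'(\bm{m}_n)$ is at most cubic in $\bm{m}_n$, giving the $\mathbb{L}^s$-bound for $1\le s\le 2$ from the same embedding; finally $|\partial_t\bm{\psi}_{jn}|_{\mathbb{H}^{-1}(K)}$ is read directly off \eqref{S3} using the already-obtained bounds on $\Delta\bm{\psi}_{jn}$, $V_n\bm{\psi}_{jn}$ and $\bm{m}_n\cdot\hat{\bm{\sigma}}\bm{\psi}_{jn}$. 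The principal obstacle is the nonlinear spin--magnetization coupling: the cancellation between $\partial_s\bm{s}_n\cdot\bm{m}_n$ and $\partial_s\bm{m}_n\cdot\bm{s}_n$ must be executed at the pathwise level \emph{before} any $2r$-th power is taken, and the Stratonovich--It\^o correction together with the jump compensator must be tracked carefully so that the $|G_n(\bm{m}_n)|^2$-term appears with the correct sign in \eqref{1M01} and the surviving stochastic integrals are genuine martingales, to which BDG can be legitimately applied at the $2r$-th moment level.
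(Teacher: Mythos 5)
Your proposal follows essentially the same route as the paper's proof: you add the two pathwise identities \eqref{1PP1} and \eqref{1M01}, use the integration-by-parts-in-time cancellation to turn the cross-coupling terms $\int_0^t\!\int\partial_s\bm{s}_n\cdot\bm{m}_n$ and $\int_0^t\!\int\partial_s\bm{m}_n\cdot\bm{s}_n$ into a boundary term $\int_D\bm{m}_n(t)\cdot\bm{s}_n(t)\,d\bm{x}$ absorbed by GNS/Young, derive the auxiliary $\mathbb{L}^2$-identity for $\bm{m}_n$ by testing \eqref{SLLG4} against $\bm{m}_n$, control the stochastic integrals by BDG plus Young, and close with Gronwall --- which is precisely what the paper does in \eqref{MM030}, \eqref{PM3}, \eqref{PMpci3}, \eqref{PMpci33}--\eqref{PMpci333} and \eqref{PMpci4}--\eqref{PMpci5}. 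The only cosmetic difference is that you sketch an alternative direct bound for $|\nabla V_n|_{\mathbb{L}^2}$ via the Poisson equation, whereas the paper keeps $\int_K|\nabla V_n|^2$ as a term in the combined identity; both are equivalent given the $\bm{\psi}_{jn}$-bounds.
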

\begin{proof}
Multiplying the left-hand side of \eqref{SLLG4} by $\sum_{h=1}^{n}\bm{\beta}_{h}$, we arrive at the expression
\begin{align}\label{MM03}
\alpha(\bm{m}_n,\partial_t\bm{m}_n)
&=-(\bm{m}_n,\bm{m}_n\times\partial_t\bm{m}_n)+(\bm{m}_n,\bm{H}_n)
-(\bm{m}_n,k(|\bm{m}_n|^2-1)\bm{m}_n)\nonumber\\
&\quad -\bigg(\bm{m}_n,G_n(\bm{m}_n)\circ\frac{dW}{dt}\bigg)
-\bigg(\bm{m}_n,F_n(\bm{m}_n,l)\frac{dL}{dt}\bigg).
\end{align}
Letting $\bm{u}_n=\nabla N\ast\cdot\bm{m}_n$, we find that $\bm{u}_n$ satisfies the equation $\Delta \bm{u}=div(\bm{m}\chi_D)$ in the
sense of distribution. From this we infer that
\begin{align*}
-\int_{D}\bm{m}_n\cdot\bm{H}_{sn}d\bm{x}
=\int_{\mathbb{R}^3}|\bm{H}_{sn}|^2d\bm{x}.
\end{align*}
Integrating \eqref{MM03} with respect to time $t$, we get
\begin{align}\label{MM030}
&\frac{\alpha}{2}|\bm{m}_n(t)|_{\mathbb{L}^2}^2+\int_0^t\int_{D}|\nabla\bm{m}_n|^2d\bm{x}ds
+\int_0^t\int_{\mathbb{R}^3}|\bm{H}_{sn}|^2d\bm{x}ds
+k\int_0^t\int_{D}(|\bm{m}_n|^2-1)|\bm{m}_n|^2d\bm{x}ds\nonumber\\
&=\frac{\alpha}{2}|\bm{m}_n(0)|_{\mathbb{L}^2}^2
-\int_0^t\int_{D}\bm{m}_n\cdot w'(\bm{m}_n)d\bm{x}ds
-\frac{1}{2}\int_0^t\int_D\bm{m}_n\cdot G'_n(\bm{m}_n)[G_n(\bm{m}_n)]d\bm{x}ds\\
&\quad +\frac{1}{2}\int_0^t\int_{D}\bm{m}_n\cdot\bm{s}_nd\bm{x}ds
-\int_0^t\int_D\bm{m}_n\cdot G_n(\bm{m}_n)d\bm{x}dW(s)
-\int_0^t\int_B(\bm{m}_n,F_n(\bm{m}_n,l))\tilde{\eta}(ds,dl).\nonumber
\end{align}
Notice that
\begin{align}\label{MM13}
|\bm{s}_n|
\leq&\sum_{j=1}^{\infty}\lambda_{j}\big|{\rm{Tr}}_{{\mathbb{C}}^2}\big(\hat{\bm{\sigma}}{\bm{\psi}_{jn}}
{\bm{\psi}^{\dag}_{jn}}\big)\big|
=\sum\limits_{j=1}^{\infty}\lambda_j(|\bm{\psi}_{jn,+}|^2+|\bm{\psi}_{jn,-}|^2)
=\sum\limits_{j=1}^{\infty}\lambda_j|\bm{\psi}_{jn}|^2,
\end{align}
by the Gagliardo-Nirenberg-Sobolev (GNS) inequality, the Young inequality with $\varepsilon$, and Lemma \ref{P}, we get
\begin{align}\label{MMpci3}
&\mathbb{E}\bigg[\bigg(\int_0^t\int_{D}|\bm{m}_n\cdot\bm{s}_n|d\bm{x}ds\bigg)^{2r}\bigg]
\leq\mathbb{E}\bigg[\bigg(\int_0^tC|\bm{m}_n|_{\mathbb{H}^1(D)}|\bm{s}_n|_{\mathbb{L}^{1}(\mathbb{R}^3)}^{\frac{3}{4}}
|\bm{s}_n|_{\mathbb{L}^{3}(\mathbb{R}^3)}^{\frac{1}{4}}ds\bigg)^{2r}\bigg]\nonumber\\
&\leq\mathbb{E}\bigg[\bigg(\int_0^t\varepsilon|\bm{m}_n|_{\mathbb{H}^1(D)}^2
+\varepsilon|\bm{s}_n|_{\mathbb{L}^{3}(\mathbb{R}^3)}
+C|\bm{s}_n|_{\mathbb{L}^{1}(\mathbb{R}^3)}^3ds\bigg)^{2r}\bigg]\nonumber\\
&\leq3^{2r-1}\varepsilon^{2r}\mathbb{E}\bigg[\bigg(\int_0^t|\bm{m}_n|_{\mathbb{H}^1}^2ds\bigg)^{2r}\bigg]
+3^{2r-1}\varepsilon^{2r}\mathbb{E}\bigg[\bigg(\int_0^t\sum\limits_{j=1}^{\infty}\lambda_j|\nabla\bm{\psi}_{jn}|
_{\mathbb{L}^{2}}^2ds\bigg)^{2r}\bigg]\nonumber\\
&\quad +C\mathbb{E}\bigg[\bigg(\int_0^t\bigg(\sum\limits_{j=1}^{\infty}\lambda_j
|\bm{\psi}_{jn}|_{\mathbb{L}^{2}}^2\bigg)^3ds\bigg)^{2r}\bigg]\nonumber\\
&\leq3^{2r-1}\varepsilon^{2r}\mathbb{E}\bigg[\int_0^t|\bm{m}_n|_{\mathbb{H}^1}^{4r}ds\bigg]
+3^{2r-1}\varepsilon^{2r}\mathbb{E}\bigg[\int_0^t\bigg(\sum\limits_{j=1}^{\infty}\lambda_j|\nabla\bm{\psi}_{jn}|
_{\mathbb{L}^{2}}^2\bigg)^{2r}ds\bigg]+C.
\end{align}
Moreover, applying the Burkholder-Davis-Gundy (BDG) inequality, the Young inequality, the H\"{o}lder inequality, and Assumption \ref{ass1}, we have
{\small\begin{align}\label{PMpci331}
&\mathbb{E}\bigg[\bigg|\int_0^t\int_D\bm{m}_n\cdot G_n(\bm{m}_n)d\bm{x}dW(s)\bigg|^{2r}\bigg]
\leq\mathbb{E}\bigg[\bigg(\sup_{0\leq t\leq T}\bigg|\int_0^t\int_D
\bm{m}_n\cdot G_n(\bm{m}_n)d\bm{x}dW(s)\bigg|\bigg)^{2r}\bigg]\nonumber\\
&=\mathbb{E}\bigg[\bigg(\sup_{0\leq t\leq T}\bigg|\int_{0}^{t}\sum\limits_{i\geq1}\int_D\bm{m}_n\cdot G_n(\bm{m}_n)d\bm{x}\tilde{e}_idW_i(s)\bigg|\bigg)^{2r}\bigg]
\leq C\mathbb{E}\bigg[\bigg|\int_0^T\bigg|\sum\limits_{i\geq1}\int_D\bm{m}_n\cdot G_n(\bm{m}_n)d\bm{x}\tilde{e}_i\bigg|^2dt\bigg|^r\bigg]\nonumber\\
&\leq C\mathbb{E}\bigg[\bigg(\int_0^T\big|\bm{m}_n\big|_{\mathbb{L}^2}^2
\bigg(\sum\limits_{i\geq1}\big|G_{in}(\bm{m}_n)\big|_{\mathbb{L}^2}\bigg)^2dt\bigg)^{r}\bigg]
\leq C\mathbb{E}\bigg[\int_0^T|\bm{m}_n|_{\mathbb{L}^2}^{4r}dt\bigg]+C,
\end{align}}
and
\begin{align}\label{PMpci3331}
\mathbb{E}\bigg[\bigg|\int_0^t\int_B\int_{D}\bm{m}_n\cdot F_n(\bm{m}_n,l)d\bm{x}\tilde{\eta}(ds,dl)\bigg|^{2r}\bigg]
\leq C\mathbb{E}\bigg[\int_0^T|\bm{m}_n|_{\mathbb{L}^2}^{4r}dt\bigg]+C.
\end{align}
Choosing $\varepsilon=\frac{1}{3}$ in \eqref{MMpci3}, using the properties of the martingale, we obtain that
{\small\begin{align*}
&\frac{\alpha^{2r}}{4^{r}}\mathbb{E}\big[|\bm{m}_n(t)|_{\mathbb{L}^2}^{4r}\big]
+\mathbb{E}\bigg[\bigg(\int_0^t\int_{D}|\nabla\bm{m}_n|^2d\bm{x}ds\bigg)^{2r}\bigg]
+\mathbb{E}\bigg[\bigg(\int_0^t\int_{\mathbb{R}^3}|\bm{H}_{sn}|^2d\bm{x}ds\bigg)^{2r}\bigg]\nonumber\\
&\quad +\mathbb{E}\bigg[\bigg(k\int_0^t\int_{D}|\bm{m}_n|^4d\bm{x}ds\bigg)^{2r}\bigg]
+\mathbb{E}\bigg[\bigg(\frac{\alpha}{2}\sup_{0\leq t\leq T}|\bm{m}_n(t)|_{\mathbb{L}^2}^2\bigg)^{2r}\bigg]
+\mathbb{E}\bigg[\bigg(k\sup_{0\leq t\leq T}\int_0^t\int_{D}|\bm{m}_n|^4d\bm{x}ds\bigg)^{2r}\bigg]\nonumber\\
&\quad +\mathbb{E}\bigg[\bigg(\sup_{0\leq t\leq T}\int_0^t\int_{D}|\nabla\bm{m}_n|^2d\bm{x}ds\bigg)^{2r}\bigg]
+\mathbb{E}\bigg[\bigg(\sup_{0\leq t\leq T}\int_0^t\int_{\mathbb{R}^3}|\bm{H}_{sn}|^2d\bm{x}ds\bigg)^{2r}\bigg]\\
&\leq\mathbb{E}\bigg[\bigg(\frac{\alpha}{2}|\bm{m}_n(0)|_{\mathbb{L}^2}^2
+\int_0^t\int_{D}|\bm{m}_n\cdot w'(\bm{m}_n)|d\bm{x}ds
+\frac{1}{2}\int_0^t\int_{D}|\bm{m}_n\cdot\bm{s}_n|d\bm{x}ds\\
&\quad +\frac{1}{2}\int_0^t\int_D|\bm{m}_n\cdot G'_n(\bm{m}_n)[G_n(\bm{m}_n)]|d\bm{x}ds
+\bigg|\int_0^t\int_D\bm{m}_n\cdot G_n(\bm{m}_n)d\bm{x}dW(s)\bigg|\\
&\quad +\bigg|\int_0^t\int_B(\bm{m}_n,F_n(\bm{m}_n,l))\tilde{\eta}(ds,dl)\bigg|
+k\int_0^t\int_{D}|\bm{m}_n|^2d\bm{x}ds\bigg)^{2r}\bigg]\\
&\quad +\mathbb{E}\bigg[\bigg(2\alpha|\bm{m}_n(0)|_{\mathbb{L}^2}^2
+4\sup_{0\leq t\leq T}\int_0^t\int_{D}|\bm{m}_n\cdot w'(\bm{m}_n)|d\bm{x}ds
+2\sup_{0\leq t\leq T}\int_0^t\int_{D}|\bm{m}_n\cdot\bm{s}_n|d\bm{x}ds\\
&\quad +4\sup_{0\leq t\leq T}\bigg|\int_0^t\int_D\bm{m}_n\cdot G_n(\bm{m}_n)d\bm{x}dW(s)\bigg|
+4\sup_{0\leq t\leq T}\bigg|\int_0^t\int_B(\bm{m}_n,F_n(\bm{m}_n,l))\tilde{\eta}(ds,dl)\bigg|\\
&\quad +2\sup_{0\leq t\leq T}\int_0^t\int_D|\bm{m}_n\cdot G'_n(\bm{m}_n)[G_n(\bm{m}_n)]|d\bm{x}ds
+4k\sup_{0\leq t\leq T}\int_0^t\int_{D}|\bm{m}_n|^2d\bm{x}ds\bigg)^{2r}\bigg]\\
&\leq C\mathbb{E}\big[|\bm{m}_n(0)|_{\mathbb{L}^2}^{4r}\big]
+C\mathbb{E}\bigg[\int_0^T|\bm{m}_n|_{\mathbb{H}^1}^{4r}dt\bigg]
+C\mathbb{E}\bigg[\int_0^T\bigg(\sum\limits_{j=1}^{\infty}\lambda_j
|\nabla\bm{\psi}_{jn}|_{\mathbb{L}^{2}}\bigg)^{2r}dt\bigg]+C,
\end{align*}}
which leads to  \eqref{MMpci2}.
 Combining \eqref{1PP1} and \eqref{1M01} implies that
\begin{align}\label{PM3}
&\int_{K}\sum\limits_{j=1}^{\infty}\lambda_j|\nabla\bm{\psi}_{jn}|^2d\bm{x}
+\int_{K}|\nabla V_{n}|^2d\bm{x}+2\alpha\int_0^t\int_{D}|\partial_s\bm{m}_n|^2d\bm{x}ds+|\bm{m}_n|_{\mathbb{H}^1}^2\nonumber\\
&\quad +\int_{\mathbb{R}^3}|\bm{H}_{sn}|^2d\bm{x}
+2\int_{D}w(\bm{m}_n)d\bm{x}+\frac{k}{2}\int_{D}(|\bm{m}_n|^2-1)^2d\bm{x}+\int_{D}|G_n(\bm{m}_{n})|^2d\bm{x}\nonumber\\
&\quad +2\int_0^t\int_D\partial_s\bm{m}_n\cdot G_n(\bm{m}_n)d\bm{x}dW(s)
+2\int_0^t\int_B(\partial_s\bm{m}_n,F_n(\bm{m}_n,l))\tilde{\eta}(ds,dl)\nonumber\\
&=\int_{D}\bm{m}_n\cdot\bm{s}_nd\bm{x}+\int_{D}|\bm{m}_n|^2d\bm{x}+I_n,
\end{align}
where
\begin{align*}
I_n=&\int_{K}\sum\limits_{j=1}^{\infty}\lambda_j|\nabla\bm{\psi}_{jn}(\bm{x},0)|^2d\bm{x}
+\int_{K}|\nabla V_{n}(\bm{x},0)|^2d\bm{x}
+\int_{D}|\nabla\bm{m}_n(\bm{x},0)|^2d\bm{x}
+\int_{\mathbb{R}^3}|\bm{H}_{sn}(\bm{x},0)|^2d\bm{x}\\
&+2\int_{D}w(\bm{m}_n(\bm{x},0))d\bm{x}
+\frac{k}{2}\int_{D}(|\bm{m}_n(\bm{x},0)|^2-1)^2d\bm{x}
+\int_{D}|G_n(\bm{m}_{n}(\bm{x},0))|^2d\bm{x}\\
&-\int_{D}\bm{m}_n(\bm{x},0)\cdot\bm{s}_n(\bm{x},0)d\bm{x}.
\end{align*}
By the GNS inequality, the Young inequality with $\varepsilon$, and Lemma \ref{P}, we have
\begin{align}\label{PMpci3}
&\mathbb{E}\bigg[\bigg(\int_{D}|\bm{m}_n\cdot\bm{s}_n|d\bm{x}\bigg)^{2r}\bigg]
\leq3^{2r-1}\varepsilon^{2r}\mathbb{E}\bigg[|\bm{m}_n|_{\mathbb{H}^1}^{4r}
+\bigg(\sum\limits_{j=1}^{\infty}\lambda_j|\nabla\bm{\psi}_{jn}|_{\mathbb{L}^{2}}^2\bigg)^{2r}\bigg]+C.
\end{align}
Employing the BDG inequality, the Young inequality with $\varepsilon$, the H\"{o}lder inequality, and Assumption \ref{ass1}, we can derive that
\begin{align}\label{PMpci33}
&\mathbb{E}\bigg[\bigg|\int_0^t\int_D\partial_s\bm{m}_n\cdot G_n(\bm{m}_n)d\bm{x}dW(s)\bigg|^{2r}\bigg]
\leq\mathbb{E}\bigg[\bigg(\sup_{0\leq t\leq T}\bigg|\int_{0}^{t}\sum\limits_{i\geq1}\int_D\partial_s\bm{m}_n\cdot G_n(\bm{m}_n)d\bm{x}\tilde{e}_idW_i(s)\bigg|\bigg)^{2r}\bigg]\nonumber\\
&\leq C\mathbb{E}\bigg[\bigg|\int_0^T\bigg|\sum\limits_{i\geq1}\int_D\partial_s\bm{m}_n\cdot G_n(\bm{m}_n)d\bm{x}\tilde{e}_i\bigg|^2dt\bigg|^r\bigg]
\leq C\mathbb{E}\bigg[\bigg(\int_0^T\big|\partial_t\bm{m}_n\big|_{\mathbb{L}^2}^2
\bigg(\sum\limits_{i\geq1}\big|G_{in}(\bm{m}_n)\big|_{\mathbb{L}^2}\bigg)^2dt\bigg)^{r}\bigg]\nonumber\\
&\leq \varepsilon_1\mathbb{E}\bigg[\bigg(\int_0^T|\partial_t\bm{m}_n|_{\mathbb{L}^2}^2dt\bigg)^{2r}\bigg]
+C\mathbb{E}\bigg[\bigg(\sup_{0\leq t\leq T}|\bm{m}_n|_{\mathbb{L}^2}^2\bigg)^{2r}\bigg]+C,
\end{align}
and
\begin{align}\label{PMpci333}
&\mathbb{E}\bigg[\bigg|\int_0^t\int_B\int_{D}\partial_s\bm{m}_n\cdot F_n(\bm{m}_n,l)d\bm{x}\tilde{\eta}(ds,dl)\bigg|^{2r}\bigg]\nonumber\\
&\leq\varepsilon_1\mathbb{E}\bigg[\bigg(\int_0^T|\partial_t\bm{m}_n|_{\mathbb{L}^2}^2dt\bigg)^{2r}\bigg]
+C\mathbb{E}\bigg[\bigg(\sup_{0\leq t\leq T}|\bm{m}_n|_{\mathbb{L}^2}^2\bigg)^{2r}\bigg]+C.
\end{align}
Taking $\varepsilon^{2r}=2^{-1}\cdot15^{1-2r}$ in \eqref{PMpci3}, and using \eqref{PM3}, we observe that for any $t\in[0,T]$,
{\small\begin{align*}
&\mathbb{E}\bigg[\bigg(\int_{K}\sum\limits_{j=1}^{\infty}\lambda_j|\nabla\bm{\psi}_{jn}|^2d\bm{x}\bigg)^{2r}\bigg]
+\mathbb{E}\bigg[\bigg(\int_{K}|\nabla V_{n}|^2d\bm{x}\bigg)^{2r}\bigg]
+\mathbb{E}\bigg[\bigg(2\alpha\int_{0}^{t}\int_{D}|\partial_s\bm{m}_n|^2d\bm{x}ds\bigg)^{2r}\bigg]\\
&\quad +\mathbb{E}\big[|\bm{m}_n|_{\mathbb{H}^1}^{4r}\big]
+\mathbb{E}\bigg[\bigg(\int_{\mathbb{R}^3}|\bm{H}_{sn}|^2d\bm{x}\bigg)^{2r}\bigg]
+\mathbb{E}\bigg[\bigg(2\int_{D}w(\bm{m}_n)d\bm{x}\bigg)^{2r}\bigg]
+\mathbb{E}\bigg[\bigg(\frac{k}{2}\int_{D}(|\bm{m}_n|^2-1)^2d\bm{x}\bigg)^{2r}\bigg]\\
&\quad +\mathbb{E}\bigg[\bigg(\int_{D}|G_n(\bm{m}_{n})|^2d\bm{x}\bigg)^{2r}\bigg]
+\mathbb{E}\bigg[\bigg(\sup_{0\leq t\leq T}
\int_{K}\sum\limits_{j=1}^{\infty}\lambda_j|\nabla\bm{\psi}_{jn}|^2d\bm{x}\bigg)^{2r}\bigg]
+\mathbb{E}\bigg[\bigg(\sup_{0\leq t\leq T}\int_{K}|\nabla V_{n}|^2d\bm{x}\bigg)^{2r}\bigg]\\
&\quad +\mathbb{E}\bigg[\bigg(\sup_{0\leq t\leq T}|\bm{m}_n|_{\mathbb{H}^1}^2\bigg)^{2r}\bigg]
+\mathbb{E}\bigg[\bigg(2\alpha\int_0^T\int_{D}|\partial_t\bm{m}_n|^2d\bm{x}dt\bigg)^{2r}\bigg]
+\mathbb{E}\bigg[\bigg(\frac{k}{2}\sup_{0\leq t\leq T}\int_{D}(|\bm{m}_n|^2-1)^2d\bm{x}\bigg)^{2r}\bigg]\\
&\quad +\mathbb{E}\bigg[\bigg(2\sup_{0\leq t\leq T}\int_{D}w(\bm{m}_n)d\bm{x}\bigg)^{2r}\bigg]
+\mathbb{E}\bigg[\bigg(\sup_{0\leq t\leq T}\int_{\mathbb{R}^3}|\bm{H}_{sn}|^2d\bm{x}\bigg)^{2r}\bigg]
+\mathbb{E}\bigg[\bigg(\sup_{0\leq t\leq T}\int_{D}|G_n(\bm{m}_{n})|^2d\bm{x}\bigg)^{2r}\bigg]\\
&\leq\mathbb{E}\bigg[\bigg(I_n+\int_{D}|\bm{m}_n\cdot\bm{s}_n|d\bm{x}
+\int_{D}|\bm{m}_n|^2d\bm{x}
+2\bigg|\int_0^t\int_B(\partial_s\bm{m}_n,F_n(\bm{m}_n,l))\tilde{\eta}(ds,dl)\bigg|\\
&\quad +2\bigg|\int_0^t\int_D\partial_s\bm{m}_n\cdot G_n(\bm{m}_n)d\bm{x}dW(s)\bigg|\bigg)^{2r}\bigg]
+C\mathbb{E}\bigg[\bigg(I_n+\sup_{0\leq t\leq T}\bigg|\int_0^t\int_D\partial_s\bm{m}_n\cdot G_n(\bm{m}_n)d\bm{x}dW(s)\bigg|\\
&\quad +\sup_{0\leq t\leq T}\bigg|\int_0^t\int_B(\partial_s\bm{m}_n,F_n(\bm{m}_n,l))\tilde{\eta}(ds,dl)\bigg|
+\sup_{0\leq t\leq T}\int_{D}|\bm{m}_n\cdot\bm{s}_n|d\bm{x}
+\sup_{0\leq t\leq T}\int_{D}|\bm{m}_n|^2d\bm{x}\bigg)^{2r}\bigg]\\
&\leq\frac{1}{2}\mathbb{E}\bigg[\bigg(\int_{K}\sum\limits_{j=1}^{\infty}\lambda_j|\nabla\bm{\psi}_{jn}|^2d\bm{x}\bigg)^{2r}\bigg]
+\frac{1}{2}\mathbb{E}\big[|\bm{m}_n|_{\mathbb{H}^1}^{4r}\big]
+C\mathbb{E}\bigg[\bigg(\int_{D}|\bm{m}_n|^2d\bm{x}\bigg)^{2r}\bigg]
+\frac{1}{2}\mathbb{E}\bigg[\bigg(\sup_{0\leq t\leq T}|\bm{m}_n|_{\mathbb{H}^1}^2\bigg)^{2r}\bigg]\\
&\quad +\frac{1}{2}\mathbb{E}\bigg[\bigg(\sup_{0\leq t\leq T}\sum\limits_{j=1}^{\infty}\lambda_j
|\nabla\bm{\psi}_{jn}|_{\mathbb{L}^{2}}^2\bigg)^{2r}\bigg]
+4\varepsilon_1\mathbb{E}\bigg[\bigg(\int_0^T|\partial_t\bm{m}_n|_{\mathbb{L}^2}^2dt\bigg)^{2r}\bigg]
+4C\mathbb{E}\bigg[\bigg(\sup_{0\leq t\leq T}|\bm{m}_n|_{\mathbb{L}^2}^2\bigg)^{2r}\bigg]\\
&\quad +C\mathbb{E}\bigg[\bigg(\sup_{0\leq t\leq T}\int_{D}|\bm{m}_n|^2d\bm{x}\bigg)^{2r}\bigg]
+C\mathbb{E}\big[(I_n)^{2r}\big]+C.
\end{align*}}
\!\!Choosing $0<\varepsilon_1=4^{r-1}\alpha^{2r}-(4C_1)^{-1}$ in \eqref{PMpci33} and \eqref{PMpci333}, where $C_1>0$, and employing \eqref{MMpci2}, we deduce that for any $t\in[0,T]$,
{\small\begin{align}\label{PMpci4}
&\frac{1}{2}\mathbb{E}\bigg[\bigg(\int_{K}\sum\limits_{j=1}^{\infty}\lambda_j|\nabla\bm{\psi}_{jn}|^2d\bm{x}\bigg)^{2r}\bigg]
+\mathbb{E}\bigg[\bigg(\int_{K}|\nabla V_{n}|^2d\bm{x}\bigg)^{2r}\bigg]
+\mathbb{E}\bigg[\bigg(2\alpha\int_{0}^{t}\int_{D}|\partial_s\bm{m}_n|^2d\bm{x}ds\bigg)^{2r}\bigg]\nonumber\\
&\quad +\frac{1}{2}\mathbb{E}\big[|\bm{m}_n|_{\mathbb{H}^1}^{4r}\big]
+\mathbb{E}\bigg[\bigg(\int_{\mathbb{R}^3}|\bm{H}_{sn}|^2d\bm{x}\bigg)^{2r}\bigg]
+\mathbb{E}\bigg[\bigg(2\int_{D}w(\bm{m}_n)d\bm{x}\bigg)^{2r}\bigg]
+\mathbb{E}\bigg[\bigg(\frac{k}{2}\int_{D}(|\bm{m}_n|^2-1)^2d\bm{x}\bigg)^{2r}\bigg]\nonumber\\
&\quad +\mathbb{E}\bigg[\bigg(\int_{D}|G_n(\bm{m}_{n})|^2d\bm{x}\bigg)^{2r}\bigg]
+\frac{1}{2}\mathbb{E}\bigg[\bigg(\sup_{0\leq t\leq T}
\int_{K}\sum\limits_{j=1}^{\infty}\lambda_j|\nabla\bm{\psi}_{jn}|^2d\bm{x}\bigg)^{2r}\bigg]
+\mathbb{E}\bigg[\bigg(\sup_{0\leq t\leq T}\int_{K}|\nabla V_{n}|^2d\bm{x}\bigg)^{2r}\bigg]\nonumber\\
&\quad +\frac{1}{2}\mathbb{E}\bigg[\bigg(\sup_{0\leq t\leq T}|\bm{m}_n|_{\mathbb{H}^1}^2\bigg)^{2r}\bigg]
+\frac{1}{C_1}\mathbb{E}\bigg[\bigg(\int_0^T\int_{D}|\partial_t\bm{m}_n|^2d\bm{x}dt\bigg)^{2r}\bigg]
+\mathbb{E}\bigg[\bigg(\frac{k}{2}\sup_{0\leq t\leq T}\int_{D}(|\bm{m}_n|^2-1)^2d\bm{x}\bigg)^{2r}\bigg]\nonumber\\
&\quad +\mathbb{E}\bigg[\bigg(2\sup_{0\leq t\leq T}\int_{D}w(\bm{m}_n)d\bm{x}\bigg)^{2r}\bigg]
+\mathbb{E}\bigg[\bigg(\sup_{0\leq t\leq T}\int_{\mathbb{R}^3}|\bm{H}_{sn}|^2d\bm{x}\bigg)^{2r}\bigg]
+\mathbb{E}\bigg[\bigg(\sup_{0\leq t\leq T}\int_{D}|G_n(\bm{m}_{n})|^2d\bm{x}\bigg)^{2r}\bigg]\nonumber\\
&\leq C\mathbb{E}\big[|\bm{m}_n(0)|_{\mathbb{L}^2}^{4r}\big]
+C\mathbb{E}\big[(I_n)^{2r}\big]
+C\mathbb{E}\bigg[\int_0^T|\bm{m}_n|_{\mathbb{H}^1}^{4r}dt\bigg]
+C\mathbb{E}\bigg[\int_0^T\bigg(\sum\limits_{j=1}^{\infty}\lambda_j
|\nabla\bm{\psi}_{jn}|_{\mathbb{L}^{2}}^2\bigg)^{2r}dt\bigg]+C.
\end{align}}
Employing the Gronwall inequality, we obtain
\begin{align}\label{PMpci5}
&\mathbb{E}\bigg[\bigg(\int_{K}\sum\limits_{j=1}^{\infty}\lambda_j|\nabla\bm{\psi}_{jn}|^2d\bm{x}\bigg)^{2r}\bigg]
+\mathbb{E}\big[|\bm{m}_n|_{\mathbb{H}^1}^{4r}\big]
\leq C,
\end{align}
where $C$ depends on the initial data and $T$. Substituting \eqref{PMpci5} into \eqref{PMpci4} leads to \eqref{PMpci2}.

As a consequence of $w'(\bm{m}_n)=(0,2m_{n,2},2m_{n,3})$ and $\mathbb{L}^{2}(D)\hookrightarrow\mathbb{L}^{s}(D)(1\leq s\leq2)$, we obtain
\begin{align*}
\mathbb{E}\bigg[\sup_{0\leq t\leq T}\bigg(\int_{D}|w'(\bm{m}_n)|^sd\bm{x}\bigg)^{2r}\bigg]\leq C,~ 1\leq s\leq2.
\end{align*}
According to the GNS inequality and the definitions of $\bm{s}_n$ and $\rho_n$, we deduce that for $1\leq s\leq3$,
\begin{align*}
&\mathbb{E}\bigg[\sup_{0\leq t\leq T}|\rho_n|_{\mathbb{L}^s(K)}^{2r}\bigg]
\leq\mathbb{E}\bigg[\sup_{0\leq t\leq T}\bigg(
\int_{K}\sum_{j=1}^{\infty}\lambda_j|\nabla\bm{\psi}_{jn}|^{2}d\bm{x}\bigg)^{2r}\bigg]
\leq C,\\
&\mathbb{E}\bigg[\sup_{0\leq t\leq T}|\bm{s}_n|_{\mathbb{L}^s(K)}^{2r}\bigg]
\leq\mathbb{E}\bigg[\sup_{0\leq t\leq T}\bigg(
\int_{K}\sum_{j=1}^{\infty}\lambda_j|\nabla\bm{\psi}_{jn}|^{2}d\bm{x}\bigg)^{2r}\bigg]
\leq C.
\end{align*}
Now, we observe that, by \eqref{S3},
\begin{align*}
\int_{K}\left(i\partial_t\bm{\psi}_{jn}+\frac{1}{2}\Delta\bm{\psi}_{jn}-V_n\bm{\psi}_{jn}
+\frac{1}{2}\bm{m}_n\cdot\hat{\bm{\sigma}}\bm{\psi}_{jn}\right)\theta_hd\bm{x}=0,
\end{align*}
which leads to
\begin{align*}
&\mathbb{E}\bigg[\sup_{0\leq t\leq T}\sup_{|\theta_h|}\bigg|\int_{K}i\partial_t\bm{\psi}_{jn}\theta_hd\bm{x}\bigg|\bigg]
\leq\frac{1}{2}\mathbb{E}\bigg[\sup_{0\leq t\leq T}\sup_{|\theta_h|}
\bigg|\int_{K}\Delta\bm{\psi}_{jn}\theta_hd\bm{x}\bigg|\bigg]\\
&\quad +\mathbb{E}\bigg[\sup_{0\leq t\leq T}\sup_{|\theta_h|}\bigg|\int_{K}V_n\bm{\psi}_{jn}\theta_hd\bm{x}\bigg|\bigg]
+\frac{1}{2}\mathbb{E}\bigg[\sup_{0\leq t\leq T}\sup_{|\theta_h|}\bigg|
\int_{K}\bm{m}_n\cdot\hat{\bm{\sigma}}\bm{\psi}_{jn}\theta_hd\bm{x}\bigg|\bigg].
\end{align*}
Integrating by parts, we know that
\begin{align*}
&\frac{1}{2}\mathbb{E}\bigg[\sup_{0\leq t\leq T}\sup_{|\theta_h|}\bigg|
\int_{K}\Delta\bm{\psi}_{jn}\theta_hd\bm{x}\bigg|\bigg]
\leq\frac{1}{2}\mathbb{E}\bigg[\sup_{0\leq t\leq T}\sup_{|\theta_h|}|\nabla\bm{\psi}_{jn}|_{\mathbb{L}^{2}(K)}\cdot
|\nabla\theta_h|_{\mathbb{L}^{2}(K)}\bigg]\\
&\leq\frac{1}{4}\mathbb{E}\bigg[\sup_{|\theta_h|}\sup_{0\leq t\leq T}|\nabla\bm{\psi}_{jn}|_{\mathbb{L}^{2}(K)}^2
+\sup_{0\leq t\leq T}\sup_{|\theta_h|}|\nabla\theta_h|_{\mathbb{L}^{2}(K)}^2\bigg]\\
&\leq C+\frac{T}{4}\mathbb{E}\bigg[\sup_{|\theta_h|}|\nabla\theta_h|_{\mathbb{L}^{2}(K)}^2\bigg].
\end{align*}
Using the Poincar\'{e} inequality, we have
\begin{align*}
&\mathbb{E}\bigg[\sup_{0\leq t\leq T}\sup_{|\theta_h|}\bigg|\int_{K}V_n\bm{\psi}_{jn}\theta_hd\bm{x}\bigg|\bigg]
\leq C\mathbb{E}\bigg[\sup_{0\leq t\leq T}\sup_{|\theta_h|}\big(|\nabla V_n|_{\mathbb{L}^{2}(K)}\cdot
|\nabla\bm{\psi}_{jn}|_{\mathbb{L}^{2}(K)}\cdot|\theta_h|_{\mathbb{L}^{2}(K)}\big)\bigg]\\
&\leq\frac{C}{4}\mathbb{E}\bigg[\sup_{|\theta_h|}\sup_{0\leq t\leq T}|\nabla V_n|_{\mathbb{L}^{2}(K)}^4\bigg]
+\frac{C}{4}\mathbb{E}\bigg[\sup_{|\theta_h|}\sup_{0\leq t\leq T}|\nabla\bm{\psi}_{jn}|_{\mathbb{L}^{2}(K)}^4\bigg]
+\frac{CT}{2}\mathbb{E}\bigg[\sup_{|\theta_h|}|\theta_h|_{\mathbb{L}^{2}(K)}^2\bigg]\\
&\leq C+\frac{CT}{2}\mathbb{E}\bigg[\sup_{|\theta_h|}|\theta_h|_{\mathbb{L}^{2}(K)}^2\bigg],
\end{align*}
and
\begin{align*}
&\frac{1}{2}\mathbb{E}\bigg[\sup_{0\leq t\leq T}\sup_{|\theta_h|}
\bigg|\int_{K}\bm{m}_n\cdot\hat{\bm{\sigma}}\bm{\psi}_{jn}\theta_hd\bm{x}\bigg|\bigg]
\leq\frac{C}{8}\mathbb{E}\bigg[\sup_{|\theta_h|}\sup_{0\leq t\leq T}|\bm{m}_n|_{\mathbb{H}^{1}(K)}^4\bigg]\\
&\quad +\frac{C}{8}\mathbb{E}\bigg[\sup_{|\theta_h|}\sup_{0\leq t\leq T}|\nabla\bm{\psi}_{jn}|_{\mathbb{L}^{2}(K)}^4\bigg]
+\frac{CT}{4}\mathbb{E}\bigg[\sup_{|\theta_h|}|\theta_h|_{\mathbb{L}^{2}(K)}^2\bigg]\\
&\leq C+\frac{CT}{4}\mathbb{E}\bigg[\sup_{|\theta_h|}|\theta_h|_{\mathbb{L}^{2}(K)}^2\bigg].
\end{align*}
Hence
\begin{align*}
\mathbb{E}\bigg[\sup_{0\leq t\leq T}\sup_{|\theta_h|}\bigg|\int_{K}i\partial_t\bm{\psi}_{jn}\theta_hd\bm{x}\bigg|\bigg]
\leq& C_1+C_2\mathbb{E}\bigg[\sup_{|\theta_h|}|\theta_h|_{\mathbb{H}^{1}(K)}^2\bigg], ~h=1,2,\cdots,n.
\end{align*}
Since $|\partial_t\bm{\psi}_{jn}|_{\mathbb{H}^{-1}(K)}=\sup\limits_{|v|_{\mathbb{H}_{0}^{1}(K)}\leq 1}
\big|\int_{K}\partial_t\bm{\psi}_{jn}\cdot vd\bm{x}\big|$, $v\in\mathbb{H}_{0}^{1}(K)$, and $\theta_k$ is an orthogonal basis for $\mathbb{H}_{0}^{1}(K)$, we deduce that $\mathbb{E}\big[\sup_{0\leq t\leq T}|\partial_t\bm{\psi}_{jn}|_{\mathbb{H}^{-1}(K)}\big]\leq C$.
\end{proof}
\section{Tightness of the first-layer approximation sequence}
Let $\mathcal{L}(\bm{m}_n,\bm{\psi}_{jn},W_n,\eta_n)
=\mathcal{L}(\bm{m}_n)\times\mathcal{L}(\bm{\psi}_{jn})\times\mathcal{L}(W_n)\times\mathcal{L}(\eta_n)$, where $\mathcal{L}(\bm{m}_n)$ is the law of $\bm{m}_n$ on $\mathscr{Z}_{T}:=L_w^2(0,T;\mathbb{H}^1)\cap\mathbb{D}([0,T];X^{-\beta})\cap L^p(0,T;\mathbb{L}^q)\cap\mathbb{D}([0,T];\mathbb{H}_w^1)$. $\mathcal{L}(\bm{\psi}_{jn})$ is the law of $\bm{\psi}_{jn}$ on $\mathscr{Z}_{T}:=L_w^2(0,T;\mathbb{H}^1)\cap \mathbb{D}([0,T];X^{-\beta})\cap L^p(0,T;\mathbb{L}^q)\cap
\mathbb{D}([0,T];\mathbb{H}_w^1)$. $\mathcal{L}(W_n)$ is the law of
$W_n$ on $\mathscr{Z}_{W_n,T}:=C([0,T];\mathbb{R}^{N})$.
$\mathcal{L}(\eta_n)$ is the law of $\eta_n$ on $\mathscr{Z}_{\eta_n,T}:=\mathbb{M}_{\bar{\mathbb{N}}}([0,T]\times B)$.
Here $\mathbb{M}_{\bar{\mathbb{N}}}(S)$ represents the set of all measurable measures taking values in $\mathbb{N}\cup\infty$ on the measurable space $(S,\mathscr{S})$.
\begin{lemma}\label{taijinxing7}
The set $\{\mathcal{L}(\bm{m}_n),n\in\mathbb{N}\}$ is tight on the space $(\mathscr{Z}_{T},\mathcal{J})$.
\end{lemma}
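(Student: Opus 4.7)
My strategy is to apply an intersection-type tightness criterion of Brze\'zniak--Motyl--Jakubowski flavor (presumably recorded among the lemmas of Section 8), which reduces tightness in the intersection space $\mathscr{Z}_T$ to two families of facts: (a) uniform moment bounds producing tightness in each of the factors $L_w^2(0,T;\mathbb{H}^1)$, $L^p(0,T;\mathbb{L}^q)$ and $\mathbb{D}([0,T];\mathbb{H}_w^1)$; and (b) the Aldous condition [A] in the weaker space $X^{-\beta}$ with $\beta=1/2$, which by the Proposition in Section 2 is the dual of $X^{1/2}=\mathbb{H}^1$ (so effectively $\mathbb{H}^{-1}$, since $2\beta=1<3/2$).

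Part (a) is essentially immediate from Lemma \ref{SPMpci}. The estimate $\sup_n \mathbb{E}[\sup_{t\in[0,T]}|\bm{m}_n|_{\mathbb{H}^1}^{4r}]\le C$ gives tightness in $L_w^2(0,T;\mathbb{H}^1)$ and in $\mathbb{D}([0,T];\mathbb{H}_w^1)$ by Chebyshev together with the weak-$\star$ compactness of balls in $\mathbb{H}^1$. The additional bound $\mathbb{E}\int_0^T|\partial_t\bm{m}_n|_{\mathbb{L}^2}^2\,dt\le C$ from (3.11), combined with the compact Sobolev embedding $\mathbb{H}^1(D)\hookrightarrow\mathbb{L}^q(D)$ for $q<6$ in three dimensions and an Aubin--Lions/Dubinsky-type compactness lemma, yields tightness in $L^p(0,T;\mathbb{L}^q)$.

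For part (b), given a sequence of $(\mathscr{F}_t)$-stopping times $\tau_n\le T$ and $\theta\in[0,\delta]$, I would use the integral representation (3.13) to split
\begin{equation*}
\bm{m}_n(\tau_n+\theta)-\bm{m}_n(\tau_n)=J_1+J_2+J_3+J_4,
\end{equation*}
where $J_1$ gathers the drift containing $\bm{H}_n$ and the penalization $k(|\bm{m}_n|^2-1)\bm{m}_n$, $J_2$ is the Stratonovich-to-It\^o corrector $-\tfrac12\int[\alpha \bm I+\bm M]^{-1}G_n'(\bm{m}_n)[G_n(\bm{m}_n)]\,ds$, $J_3$ is the It\^o integral against $W$, and $J_4$ is the compensated Poisson integral. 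Exploiting the pointwise bound $|[\alpha \bm I+\bm M]^{-1}|_{\mathbb{L}^\infty}\le 2/\alpha$ established in Section 3, I would pair each $J_i$ with a test function $\phi\in\mathbb{H}^1$ and integrate by parts the $-\Delta\bm{m}_n$ piece of $\bm{H}_n$; the resulting bound for $J_1$ is of order $\theta^{1/2}$ thanks to the uniform bounds on $\nabla\bm{m}_n$, $w'(\bm{m}_n)$, $\bm{s}_n$ and $\bm{H}_{sn}$ from Lemma \ref{SPMpci} and H\"older in time. The same kind of H\"older-in-time estimate controls $J_2$ via Assumption \ref{ass1}; the BDG inequality in $\mathbb{H}^{-1}$ combined with Assumption \ref{ass1} yields $\mathbb{E}|J_3|_{\mathbb{H}^{-1}}^2\le C\theta$; and the $\mathbb{L}^2$-isometry for compensated Poisson integrals together with the $\mu$-integrability in Assumption \ref{ass1} gives $\mathbb{E}|J_4|_{\mathbb{H}^{-1}}^2\le C\theta$. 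Markov's inequality then delivers [A].

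The main obstacle is the drift piece $\int_{\tau_n}^{\tau_n+\theta}[\alpha \bm I+\bm M]^{-1}\Delta\bm{m}_n\,ds$: because $[\alpha \bm I+\bm M]^{-1}$ is a nonlinear multiplier depending on $\bm{m}_n$, one cannot simply move the Laplacian onto a test function by plain duality. I would handle this by pairing against $\phi\in\mathbb{H}^1$ so that the multiplication by $[\alpha \bm I+\bm M]^{-1}$ lands on $\phi$ (which is admissible thanks to the pointwise $\mathbb{L}^\infty$-bound), and then using that $\bm{m}_n$ lives in the smooth Galerkin subspace $\mathbb{H}_n^D\subset C^\infty(\bar D)$ so that the integration by parts can be carried out with no loss of regularity. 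Simultaneously controlling the Wiener and compensated Poisson contributions in the same $\mathbb{H}^{-1}$-topology is technical but not conceptually hard once Assumption \ref{ass1} is in force.
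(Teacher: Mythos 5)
Your plan goes wrong at the exact point you flag as ``the main obstacle,'' and the proposed repair does not close the gap. You work from the representation \eqref{jSLLG4}, in which $\partial_t\bm{m}_n$ has been isolated by multiplying with $[\alpha \bm I+\bm M]^{-1}$, and you then try to move that matrix onto the test function $\phi$ before integrating the Laplacian by parts. But $[\alpha \bm I+\bm M]^{-1}$ is a rational function of $\bm{m}_n$, so $\nabla\bigl([\alpha \bm I+\bm M]^{-T}\phi\bigr)$ produces a term of the form $\nabla\bm{m}_n\cdot\phi$ (up to bounded multiplicative factors), and after integrating by parts you are stuck with $\int_D|\nabla\bm{m}_n|^2\,|\phi|\,d\bm{x}$. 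With $\phi\in\mathbb{H}^1(D)\hookrightarrow\mathbb{L}^6(D)$ this needs $\nabla\bm{m}_n\in\mathbb{L}^{12/5}$, and the only uniform-in-$n$ bound supplied by Lemma \ref{SPMpci} is $\nabla\bm{m}_n\in\mathbb{L}^2$; the smoothness of the Galerkin subspace does not help, because the constant relating $|\Delta\bm{m}_n|_{\mathbb{L}^2}$ to $|\bm{m}_n|_{\mathbb{L}^2}$ blows up with $n$ and the extra regularity of $\nabla\bm{m}_n$ is similarly $n$-dependent. So the Aldous estimate for the $\Delta\bm{m}_n$-piece would not be uniform in $n$, and the lemma would not follow.

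The paper avoids this problem entirely by never passing through $[\alpha \bm I+\bm M]^{-1}$ in the Aldous estimate. It uses the integrated form
\begin{align*}
\alpha\bm{m}_n(t)=\alpha\bm{m}_n(0)-\int_{0}^{t}\bigl(\bm{m}_n\times\partial_s\bm{m}_n
-\bm{H}_n+k(|\bm{m}_n|^2-1)\bm{m}_n\bigr)\,ds
-\tfrac12\int_{0}^{t}G'_n[G_n]\,ds-\int_0^tG_n\,dW-\int_0^t\!\!\int_BF_n\,\tilde\eta,
\end{align*}
i.e.\ it keeps $\alpha\bm{m}_n$ on the left and tolerates the extra term $\bm{m}_n\times\partial_t\bm{m}_n$ on the right. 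That term is harmless: by the embedding $\mathbb{L}^{3/2}\hookrightarrow X^{-1/2}$ one bounds it by $|\bm{m}_n|_{\mathbb{L}^6}|\partial_t\bm{m}_n|_{\mathbb{L}^2}$, and the uniform bound $\mathbb{E}\int_0^T|\partial_t\bm{m}_n|_{\mathbb{L}^2}^2\,dt\le C$ from Lemma \ref{SPMpci}(ii) is precisely what makes this work. So the bound on $\partial_t\bm{m}_n$ is not a side remark but the mechanism that allows you to dodge the matrix inverse. The $\bm{H}_n$-piece is then estimated directly in $X^{-\beta}$ without needing an integration by parts against $\phi$ at all, and the Wiener and Poisson pieces are handled as you indicate, via the It\^o(-L\'evy) isometry and Assumption \ref{ass1}. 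One further simplification: Proposition \ref{taijinxing4} already converts the $L^\infty(0,T;\mathbb{H}^1)$ moment bound plus the Aldous condition in $X^{-\beta}$ into tightness on the entire intersection $\mathscr{Z}_T$, so there is no need to prove tightness in $L^p(0,T;\mathbb{L}^q)$ separately via Aubin--Lions.
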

\begin{proof}
By Lemma \ref{SPMpci}, we infer that $\sup\limits_{n\in\mathbb{N}}\mathbb{E}\big[|\bm{m}_n|_{L^{\infty}(0,T;\mathbb{H}^1)}^2\big]\leq C$. Next, we need to prove that the sequence satisfies the Aldous condition in $X^{-\beta}$. Let $\{\tau_n\}_{n\in\mathbb{N}}$ be a stopping time sequence such that $0\leq\tau_n+\theta\leq T$, and according to \eqref{SLLG4}, we get that
\begin{align*}
\alpha\bm{m}_n(t)=&\alpha\bm{m}_n(0)-\int_{0}^{t}\left(\bm{m}_n(s)\times\partial_s\bm{m}_n(s)
-\bm{H}_n(s)+k(|\bm{m}_n(s)|^2-1)\bm{m}_n(s)\right)ds\\
&-\frac{1}{2}\int_{0}^{t}G'_{n}(\bm{m}_n(s))[G_{n}(\bm{m}_n(s))]ds
-\int_{0}^{t}G_{n}(\bm{m}_n(s))dW(s)\\
&-\int_{0}^{t}\int_BF_{n}(\bm{m}_n(s-),l)\tilde{\eta}(ds,dl)\\
=:&\sum\limits_{i=0}^{6}J_{n}^{i}(t),
\end{align*}
$\mathbb{P}$-a.s. in $\mathbb{H}_n$, for all $t\in[0,T]$. Let $\theta>0$. To prove that $\bm{m}_n$ satisfies the Aldous condition, it is sufficient to prove that each term $J_{n}^{i}, i=0,1,\cdots,6$ satisfies the same condition. Obviously, $J_{n}^{0}$ satisfies \eqref {taijinxing2.1}.
Using the embedding $\mathbb{L}^{\frac{3}{2}}\hookrightarrow X^{-\beta}$, the H\"{o}lder inequality and Lemma \ref{SPMpci}, we can show that
\begin{align*}
&\mathbb{E}\big[|J_{n}^{1}(\tau_n+\theta)-J_{n}^{1}(\tau_n)|_{X^{-\beta}}\big]
=\mathbb{E}\bigg[\bigg|\int_{\tau_n}^{\tau_n+\theta}\bm{m}_n\times\partial_s\bm{m}_nds\bigg|_{X^{-\beta}}\bigg]\\
&\leq\mathbb{E}\bigg[\int_{\tau_n}^{\tau_n+\theta}|\bm{m}_n\times\partial_s\bm{m}_n|_{\mathbb{L}^{3/2}}ds\bigg]
\leq\mathbb{E}\bigg[\int_{\tau_n}^{\tau_n+\theta}|\bm{m}_n|_{\mathbb{L}^6}|\partial_s\bm{m}_n|_{\mathbb{L}^2}ds\bigg]\\
&\leq C\theta^{\frac{1}{2}}\mathbb{E}\big[|\bm{m}_n|_{L^{\infty}(0,T;\mathbb{H}^1)}
|\partial_s\bm{m}_n|_{L^{2}(0,T;\mathbb{L}^2)}\big]
\leq C\theta^{\frac{1}{2}}.
\end{align*}
From \eqref{bzh2}, \eqref{n} and Lemma \ref{SPMpci}, we obtain
\begin{align*}
&\mathbb{E}\big[|J_{n}^{2}(\tau_n+\theta)-J_{n}^{2}(\tau_n)|_{X^{-\beta}}\big]
=\mathbb{E}\bigg[\bigg|\int_{\tau_n}^{\tau_n+\theta}\bm{H}_nds\bigg|_{X^{-\beta}}\bigg]
\leq\mathbb{E}\bigg[\int_{\tau_n}^{\tau_n+\theta}|\bm{H}_n|_{\mathbb{L}^2}ds\bigg]\\
&\leq C\mathbb{E}\bigg[\int_{\tau_n}^{\tau_n+\theta}|\bm{m}_n|_{\mathbb{L}^2}+|w'(\bm{m}_n)|_{\mathbb{L}^2}
+|\bm{H}_{sn}|_{\mathbb{L}^2}+\frac{1}{2}|\bm{s}_n|_{\mathbb{L}^2}ds\bigg]\\
&\leq C\theta^{\frac{1}{2}}\mathbb{E}\big[|\bm{m}_n|_{L^{\infty}(0,T;\mathbb{L}^2)}
+|w'(\bm{m}_n)|_{L^{\infty}(0,T;\mathbb{L}^2)}
+|\bm{H}_{sn}|_{L^{\infty}(0,T;\mathbb{L}^2)}
+\frac{1}{2}|\bm{s}_n|_{L^{\infty}(0,T;\mathbb{L}^2)}\big]\\
&\leq C\theta^{\frac{1}{2}}.
\end{align*}
We can apply the embedding $\mathbb{L}^{3/2}\hookrightarrow X^{-\beta}$, and Lemma \ref{SPMpci} to see that
\begin{align*}
&\mathbb{E}\big[|J_{n}^{3}(\tau_n+\theta)-J_{n}^{3}(\tau_n)|_{X^{-\beta}}\big]
=\mathbb{E}\bigg[\bigg|\int_{\tau_n}^{\tau_n+\theta}k(|\bm{m}_n|^2-1)\bm{m}_nds\bigg|_{X^{-\beta}}\bigg]\\
&\leq\mathbb{E}\bigg[\int_{\tau_n}^{\tau_n+\theta}k|(|\bm{m}_n|^2-1)\bm{m}_n|_{\mathbb{L}^{3/2}}ds\bigg]
\leq C\mathbb{E}\bigg[\int_{\tau_n}^{\tau_n+\theta}k||\bm{m}_n|^2-1|_{\mathbb{L}^2}|\bm{m}_n|_{\mathbb{L}^6}ds\bigg]\\
&\leq Ck\theta^{\frac{1}{2}}\mathbb{E}\big[||\bm{m}_n|^2-1|_{L^{\infty}(0,T;\mathbb{L}^2)}
|\bm{m}_n|_{L^{\infty}(0,T;\mathbb{H}^1)}\big]
\leq C\theta^{\frac{1}{2}}.
\end{align*}
Similarly,
\begin{align*}
&\bigg(\mathbb{E}\big[|J_{n}^{4}(\tau_n+\theta)-J_{n}^{4}(\tau_n)|_{X^{-\beta}}\big]\bigg)^2
\leq\mathbb{E}\bigg[\bigg|\int_{\tau_n}^{\tau_n+\theta}G'_n(\bm{m}_n)[G_n(\bm{m}_n)]ds\bigg|_{X^{-\beta}}^2\bigg]\\
&\leq C\mathbb{E}\bigg[\int_{\tau_n}^{\tau_n+\theta}|G_n'(\bm{m}_n)[G_n(\bm{m}_n)]|_{\mathbb{L}^2}^2ds\bigg]
\leq C\theta\mathbb{E}\big[1+|\bm{m}_n|_{L^{\infty}(0,T;\mathbb{L}^2)}^2\big]
\leq C\theta.
\end{align*}
In view of Assumption \ref {ass1}, the It\^{o}-L\'{e}vy isometry, the H\"{o}lder inequality and Lemma \ref{SPMpci},
we deduce that
\begin{align*}
&\bigg(\mathbb{E}\big[|J_{n}^{5}(\tau_n+\theta)-J_{n}^{5}(\tau_n)|_{X^{-\beta}}\big]\bigg)^2
\leq\mathbb{E}\bigg[\bigg|\int_{\tau_n}^{\tau_n+\theta}G_n(\bm{m}_n)dW(s)\bigg|_{X^{-\beta}}^2\bigg]\\
&\leq\mathbb{E}\bigg[\bigg|\int_{\tau_n}^{\tau_n+\theta}G_n(\bm{m}_n)dW(s)\bigg|_{\mathbb{L}^2}^2\bigg]
=\mathbb{E}\bigg[\bigg|\int_{\tau_n}^{\tau_n+\theta}\sum\limits_{i\geq1}G_n(\bm{m}_n)\tilde{e}_idW_i(s)\bigg|_{\mathbb{L}^2}^2\bigg]\\
&\leq C\mathbb{E}\bigg[\int_{\tau_n}^{\tau_n+\theta}\bigg|\sum\limits_{i\geq1}G_{in}(\bm{m}_n)\bigg|_{\mathbb{L}^2}^2ds\bigg]
\leq C\theta\mathbb{E}\big[1+|\bm{m}_n|_{L^{\infty}(0,T;\mathbb{L}^2)}^2\big]
\leq C\theta.
\end{align*}
Similarly, we have
\begin{align*}
&\bigg(\mathbb{E}\big[|J_{n}^{6}(\tau_n+\theta)-J_{n}^{6}(\tau_n)|_{X^{-\beta}}\big]\bigg)^2
\leq\mathbb{E}\bigg[\bigg|\int_{\tau_n}^{\tau_n+\theta}\int_BF_n(\bm{m}_n,l)\tilde{\eta}(ds,dl)\bigg|_{X^{-\beta}}^2\bigg]\\
&\leq\mathbb{E}\bigg[\bigg|\int_{\tau_n}^{\tau_n+\theta}\int_BF_n(\bm{m}_n,l)\tilde{\eta}(ds,dl)\bigg|_{\mathbb{L}^2}^2\bigg]
\leq C\mathbb{E}\bigg[\int_{\tau_n}^{\tau_n+\theta}\int_B|F_n(\bm{m}_n,l)|_{\mathbb{L}^2}^2\mu(dl)ds\bigg]\\
&\leq C\theta\mathbb{E}\big[1+|\bm{m}_n|_{L^{\infty}(0,T;\mathbb{L}^2)}^2\big]
\leq C\theta.
\end{align*}
Thus, when $\alpha=1$, $\gamma=\frac{1}{2}$, the terms $J_{n}^{0}$-$J_{n}^{6}$ satisfy \eqref{taijinxing2.1},
which yields the desired result of Lemma \ref{taijinxing7}.
\end{proof}
\begin{lemma}\label{taijinxing8}
The set $\{\mathcal{L}(\bm{\psi}_{jn}),n\in\mathbb{N}\}$ is tight on the space $(\mathscr{Z}_{T},\mathcal{J})$.
\end{lemma}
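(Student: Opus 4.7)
The plan is to mirror the proof of Lemma~\ref{taijinxing7} for the sequence $\{\bm{\psi}_{jn}\}$ by combining uniform boundedness in $L^{\infty}(0,T;\mathbb{H}^{1})$ with the Aldous condition in $X^{-\beta}$, $\beta=\tfrac{1}{2}$. The crucial structural simplification is that the Schr\"{o}dinger Galerkin equation \eqref{jS3} carries no stochastic driver, so each time increment can be estimated pathwise. First I record the uniform bound $\sup_{n}\mathbb{E}\bigl[|\bm{\psi}_{jn}|_{L^{\infty}(0,T;\mathbb{H}^{1})}^{2}\bigr]\le C$, which follows from Lemma~\ref{P} together with the control of $\sup_{t}\int_{K}\sum_{j}\lambda_{j}|\nabla\bm{\psi}_{jn}|^{2}d\bm{x}$ supplied by Lemma~\ref{SPMpci}(ii) (each mode being dominated by the weighted sum since $\lambda_{j}\ge 0$). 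This settles the $L_{w}^{2}(0,T;\mathbb{H}^{1})\cap\mathbb{D}([0,T];\mathbb{H}_{w}^{1})$ component of the compactness criterion.

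Next I verify the Aldous condition. Rewriting \eqref{jS3} as
\begin{align*}
\bm{\psi}_{jn}(t)=\bm{\psi}_{jn}(0)+\int_{0}^{t}\!\Bigl[\tfrac{i}{2}\Delta\bm{\psi}_{jn}(s)-iV_{n}(s)\bm{\psi}_{jn}(s)+\tfrac{i}{2}\bm{m}_{n}(s)\cdot\hat{\bm{\sigma}}\bm{\psi}_{jn}(s)\Bigr]ds=:\sum_{i=0}^{3}I_{n}^{i}(t),
\end{align*}
I fix a stopping-time sequence $\{\tau_{n}\}$ with $0\le\tau_{n}+\theta\le T$ and estimate each $I_{n}^{i}$ in $X^{-\beta}$ separately. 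For $I_{n}^{1}$, the boundedness of $\Delta:\mathbb{H}^{1}\to\mathbb{H}^{-1}$ and the embedding $\mathbb{H}^{-1}\hookrightarrow X^{-\beta}$ (with $\beta=\tfrac{1}{2}$) yield $\mathbb{E}\bigl[|I_{n}^{1}(\tau_{n}+\theta)-I_{n}^{1}(\tau_{n})|_{X^{-\beta}}\bigr]\le C\theta\,\mathbb{E}\bigl[|\bm{\psi}_{jn}|_{L^{\infty}(0,T;\mathbb{H}^{1})}\bigr]\le C\theta$.

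For $I_{n}^{2}$ and $I_{n}^{3}$, I combine the three-dimensional Sobolev embeddings $\mathbb{H}^{1}\hookrightarrow\mathbb{L}^{6}$ and $\mathbb{L}^{3/2}\hookrightarrow X^{-\beta}$ with the H\"{o}lder inequality to obtain
\begin{align*}
|V_{n}\bm{\psi}_{jn}|_{\mathbb{L}^{3/2}}\le C|\nabla V_{n}|_{\mathbb{L}^{2}}|\bm{\psi}_{jn}|_{\mathbb{L}^{2}},\qquad |\bm{m}_{n}\cdot\hat{\bm{\sigma}}\bm{\psi}_{jn}|_{\mathbb{L}^{3/2}}\le C|\bm{m}_{n}|_{\mathbb{H}^{1}}|\bm{\psi}_{jn}|_{\mathbb{L}^{2}},
\end{align*}
after which the uniform control of $|\nabla V_{n}|_{\mathbb{L}^{2}}$ and $|\bm{m}_{n}|_{\mathbb{H}^{1}}$ from Lemma~\ref{SPMpci}(ii) plus Lemma~\ref{P} deliver $\mathbb{E}\bigl[|I_{n}^{i}(\tau_{n}+\theta)-I_{n}^{i}(\tau_{n})|_{X^{-\beta}}\bigr]\le C\theta$ for $i=2,3$. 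With $\alpha=1$, $\gamma=1$ in \eqref{taijinxing2.1}, the Aldous--Jakubowski criterion invoked in Lemma~\ref{taijinxing7} then produces the claimed tightness on $(\mathscr{Z}_{T},\mathcal{J})$.

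The main obstacle is more technical than conceptual: the fractional scale $X^{-\beta}$ was introduced through the Neumann Laplacian on $D$, whereas $\bm{\psi}_{jn}$ lives on $K$ with Dirichlet data, so $X^{-\beta}$ must be reinterpreted for the Schr\"{o}dinger component as the dual of the domain of the Dirichlet fractional Laplacian on $K$. Fortunately the two embeddings actually used, $\mathbb{H}^{-1}\hookrightarrow X^{-\beta}$ and $\mathbb{L}^{3/2}\hookrightarrow X^{-\beta}$, are insensitive to the boundary condition (they are consequences of duality and the standard Sobolev theory in dimension three), so the argument proceeds verbatim once this identification is fixed at the start.
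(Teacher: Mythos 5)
Your proof is correct and follows the same overall strategy as the paper: uniform $\mathbb{H}^1$-moment bounds from Lemmas \ref{P} and \ref{SPMpci}(ii) combined with the Aldous condition in $X^{-\beta}$, verified term by term on the decomposition $i\bm{\psi}_{jn}(t)=\sum_{i=0}^{3}I_n^i(t)$. The genuine difference lies in the estimate of $I_n^1$. The paper goes through the embedding $\mathbb{L}^2\hookrightarrow X^{-\beta}$ and invokes $|\Delta\bm{\psi}_{jn}|_{\mathbb{L}^2}\le C|\bm{\psi}_{jn}|_{\mathbb{L}^2}$; in the finite-dimensional Galerkin space $\mathbb{H}_n^K$ this inequality does hold, but its constant is the largest Dirichlet eigenvalue $\tilde\lambda_n$ and so is not uniform in $n$. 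Your route — boundedness of $\Delta:\mathbb{H}^1\to\mathbb{H}^{-1}$ together with $\mathbb{H}^{-1}\hookrightarrow X^{-\beta}$ at $\beta=\tfrac{1}{2}$ — yields the $n$-uniform bound $|\Delta\bm{\psi}_{jn}|_{X^{-\beta}}\le|\nabla\bm{\psi}_{jn}|_{\mathbb{L}^2}$, and is in fact the same idea the paper itself adopts at the second approximation layer (Lemma \ref{taijinxing08}), where the $\Delta\bm{\psi}_j^k$ increment is controlled by $|\nabla\bm{\psi}_j^k|_{\mathbb{L}^2}$ rather than $|\bm{\psi}_j^k|_{\mathbb{L}^2}$; your $I_n^1$ estimate is therefore the cleaner one. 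Your bounds for $I_n^2$ and $I_n^3$ go through $\mathbb{L}^{3/2}\hookrightarrow X^{-\beta}$ rather than the paper's $\mathbb{L}^2\hookrightarrow X^{-\beta}$ and you obtain the exponent $\theta^1$ rather than $\theta^{1/2}$; both choices satisfy \eqref{taijinxing2.1}, so that difference is cosmetic. Your closing remark that $X^{-\beta}$ must be reinterpreted through the Dirichlet Laplacian on $K$ for the Schr\"{o}dinger component is a point the paper leaves implicit, and it is the correct reading.
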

\begin{proof}
By Lemma \ref{SPMpci}, we deduce $\sup\limits_{n\in\mathbb{N}}\mathbb{E}\big[|\bm{\psi}_{jn}|_{L^{\infty}(0,T;\mathbb{H}^1)}^2\big]\leq C$. Then it is sufficient to
show that the sequence satisfies the Aldous condition in $X^{-\beta}$.
Let $\{\tau_n\}_{n\in\mathbb{N}}$ be a stopping time sequence such that $0\leq\tau_n+\theta\leq T$, and according to \eqref{S3}, we can write
\begin{align*}
i\bm{\psi}_{jn}(t)
&=i\bm{\psi}_{jn}(0)-\frac{1}{2}\int_{0}^{t}\big(\Delta\bm{\psi}_{jn}(s)-V_n(s)\bm{\psi}_{jn}(s)
+\frac{1}{2}\bm{m}_n(s)\cdot\hat{\bm{\sigma}}\bm{\psi}_{jn}(s)\big)ds\\
&=:\sum\limits_{i=0}^{3}I_{n}^{i}(t),
\end{align*}
$\mathbb{P}$-a.s. in $\mathbb{H}_n$, for all $t\in[0,T]$. Suppose that $\theta>0$. To prove that $\bm{\psi}_{jn}$ satisfies the Aldous condition,
it is enough to prove that each term $I_{n}^{i}, i=0,1,\cdots,3$ satisfies the same condition. Obviously, $I_{n}^{0}$ satisfies \eqref{taijinxing2.1}.
Using the embedding $\mathbb{L}^2\hookrightarrow X^{-\beta}$, the H\"{o}lder inequality and Lemma \ref{SPMpci}, we have
\begin{align*}
&\bigg(\mathbb{E}\big[|I_{n}^{1}(\tau_n+\theta)-I_{n}^{1}(\tau_n)|_{X^{-\beta}}\big]\bigg)^2
=\mathbb{E}\bigg[\bigg|\int_{\tau_n}^{\tau_n+\theta}\Delta\bm{\psi}_{jn}ds\bigg|_{X^{-\beta}}^2\bigg]\\
&\leq\mathbb{E}\bigg[\int_{\tau_n}^{\tau_n+\theta}|\Delta\bm{\psi}_{jn}|_{\mathbb{L}^2}^2ds\bigg]
\leq C\theta\mathbb{E}\big[|\bm{\psi}_{jn}|_{L^{\infty}(0,T;\mathbb{L}^2)}^2\big]
\leq C\theta.
\end{align*}
\begin{align*}
&\mathbb{E}\big[|I_{n}^{2}(\tau_n+\theta)-I_{n}^{2}(\tau_n)|_{X^{-\beta}}\big]
=\mathbb{E}\bigg[\bigg|\int_{\tau_n}^{\tau_n+\theta}V_n\bm{\psi}_{jn}ds\bigg|_{X^{-\beta}}\bigg]\\
&\leq\mathbb{E}\bigg[\int_{\tau_n}^{\tau_n+\theta}|V_n\bm{\psi}_{jn}|_{\mathbb{L}^2}ds\bigg]
\leq C\theta^{\frac{1}{2}}\mathbb{E}\big[|V_n|_{L^2(0,T;\mathbb{L}^6)}
|\bm{\psi}_{jn}|_{L^{\infty}(0,T;\mathbb{H}^1)}\big]
\leq C\theta^{\frac{1}{2}}.
\end{align*}
\begin{align*}
&\mathbb{E}\big[|I_{n}^{3}(\tau_n+\theta)-I_{n}^{3}(\tau_n)|_{X^{-\beta}}\big]
=\mathbb{E}\bigg[\bigg|\int_{\tau_n}^{\tau_n+\theta}\bm{m}_n\cdot\hat{\bm{\sigma}}\bm{\psi}_{jn}ds\bigg|_{X^{-\beta}}\bigg]\\
&\leq\mathbb{E}\bigg[\int_{\tau_n}^{\tau_n+\theta}|\bm{m}_n\bm{\psi}_{jn}|_{\mathbb{L}^2}ds\bigg]
\leq C\theta^{\frac{1}{2}}\mathbb{E}\big[|\bm{m}_n|_{L^{\infty}(0,T;\mathbb{H}^1)}
|\bm{\psi}_{jn}|_{L^{\infty}(0,T;\mathbb{H}^1)}\big]
\leq C\theta^{\frac{1}{2}}.
\end{align*}
Thus, when $\alpha=1$, $\gamma=\frac{1}{2}$, the terms $I_{n}^{0}$-$I_{n}^{3}$ all satisfy \eqref{taijinxing2.1}. We complete the proof of Lemma \ref{taijinxing8}.
\end{proof}
\section{Existence of weak martingale solutions of the SPLLG system \eqref{SLLG3}}
\subsection{Construction of new probability space and processes as $n\rightarrow\infty$}
From Lemma \ref{taijinxing7}, it follows that the sequence $\{\mathcal{L}(\bm{m}_n),n\in\mathbb{N}\}$ is tight on $(\mathscr{Z}_{T},\mathcal{J})$. From Lemma \ref{taijinxing8}, it follows that the sequence $\{\mathcal{L}(\bm{\psi}_{jn}),n\in\mathbb{N}\}$ is tight on $(\mathscr{Z}_{T},\mathcal{J})$. Let $W_n:=W$, then $\mathcal{L}(W_n)$ is tight on $\mathscr{Z}_{W_n,T}$. Let $\eta_n:=\eta$, then $\mathcal{L}(\eta_n)$ is tight on $\mathscr{Z}_{\eta_n,T}$.
Therefore, $\{\mathcal{L}(\bm{m}_n, \bm{\psi}_{jn}, W_n,\eta_n),n\in\mathbb{N}\}$ is tight in $\mathscr{Z}_T\times\mathscr{Z}_T\times\mathscr{Z}_{W_n,T}\times\mathscr{Z}_{\eta_n,T}$, as shown in \cite{BM1, M}.
Here, we denote $\bm{\Psi}_{n}:=\bm{\Psi}_{n}^{k}$ and $\bm{m}_n:=\bm{m}_n^k$.
Therefore, by Theorem \ref{2SKO}, there exist a subsequence $\{n_i\}_{i\in\mathbb{N}}$, a filtered probability space $(\bar{\Omega},\bar{\mathscr{F}},(\bar{\mathscr{F}}_t)_{t\geq 0},\bar{\mathbb{P}})$ and on this space, $\mathscr{Z}_T\times\mathscr{Z}_T\times\mathscr{Z}_{W_n,T}\times\mathscr{Z}_{\eta_n,T}$-valued random variables $(\bm{m}^k,\bm{\psi}_j^k,W^k,\eta^k)$,
$(\bar{\bm{m}}_n,\bar{\bm{\psi}}_{jn},\bar{W}_n,\bar{\eta}_n), n\in\mathbb{N}$, such that\\
{\rm(1)} $\mathcal{L}((\bar{\bm{m}}_n, \bar{\bm{\psi}}_{jn},\bar{W}_n, \bar{\eta}_n))
=\mathcal{L}((\bm{m}_{n_i}, \bm{\psi}_{jn_i},W_{n_i}, \eta_{n_i}))$, for all $i\in\mathbb{N}$;\\
{\rm(2)} $(\bar{\bm{m}}_n,\bar{\bm{\psi}}_{jn},\bar{W}_n, \bar{\eta}_n)\rightarrow(\bm{m}^k, \bm{\psi}_j^k,W^k, \eta^k)$ in $\mathscr{Z}_T\times\mathscr{Z}_T\times\mathscr{Z}_{W_n,T}\times\mathscr{Z}_{\eta_n,T}$ as $n\rightarrow\infty$, for $\bar{\mathbb{P}}$-a.s. $\bar{\omega}$;\\
{\rm(3)} $(\bar{W}_n(\bar{\omega}),\bar{\eta}_n(\bar{\omega}))=(W^k(\bar{\omega}),\eta^k(\bar{\omega}))$, for all $\bar{\omega}\in\bar{\Omega}$.

For convenience, we still use $(\bm{m}_n, \bm{\psi}_{jn}, W_n, \eta_n)_{n\in\mathbb{N}}$ and $(\bar{\bm{m}}_n, \bar{\bm{\psi}}_{jn}, \bar{W}_n, \bar{\eta}_n)_{n\in\mathbb{N}}$ to represent these sequences.
In addition, $\bar{\eta}_n, n\in\mathbb{N}$ and $\eta^k$ are time-homogeneous Poisson random measure with the intensity measure $\mu$ on $(B, \mathscr{B}(B))$.
For $p\in[2,\infty)$, $q\in[2,6)$ and $\beta>\frac{1}{4}$, we have
\begin{align}\label{sltj}
\bar{\bm{m}}_n\rightarrow\bm{m}^k~\text{in}~\mathscr{Z}_T,~\bar{\mathbb{P}}\text{-}a.s., \text{ and }
\bar{\bm{\psi}}_{jn}\rightarrow\bm{\psi}_j^k~\text{in}~\mathscr{Z}_T,~\bar{\mathbb{P}}\text{-}a.s.
\end{align}

Next, we specifically consider the case of $p=4$, $q=4$, and $\beta=\frac{1}{2}$. Note that $\mathbb{D}([0,T];\mathbb{H}_n)$ is a Polish space, then $L_w^2(0,T;\mathbb{H}^1)\cap\mathbb{D}([0,T];X^{-\frac{1}{2}})\cap L^4(0,T;\mathbb{L}^4)\cap
\mathbb{D}([0,T];\mathbb{H}_w^1)$ is a separable metric space.
According to Kuratowski theorem(\!\!\!\cite{VTC}), the Borel subsets of $\mathbb{D}([0,T];\mathbb{H}_n)$ are the Borel subsets of $L_w^2(0,T;\mathbb{H}^1)\cap
\mathbb{D}([0,T];X^{-\frac{1}{2}})\cap L^4(0,T;\mathbb{L}^4)\cap\mathbb{D}([0,T];\mathbb{H}_w^1)$, and satisfy $\mathbb{P}\{\bm{\psi}_{jn}\in\mathbb{D}([0,T];\mathbb{H}_n^{K})\}=1$ and $\mathbb{P}\{\bm{m}_n\in\mathbb{D}([0,T];\mathbb{H}_n^{D})\}=1$. Therefore, we assume that $\bar{\bm{\psi}}_{jn}$, $\bar{\bm{m}}_n$ take values in $\mathbb{H}_n^{K}$ and $\mathbb{H}_n^D$, respectively, and the laws of $(\bm{m}_n,\bm{\psi}_{jn})$ and $(\bar{\bm{m}}_n,\bar{\bm{\psi}}_{jn})$ are same in $\mathbb{D}([0,T];\mathbb{H}_n^D)\times\mathbb{D}([0,T];\mathbb{H}_n^{K})$.
\subsection{Properties of the constructed new process and limiting process}
Due to the fact that the law of the sequences $(\bm{m}_n,\bm{\psi}_{jn})_{n\in\mathbb{N}}$ and $(\bar{\bm{m}}_n,\bar{\bm{\psi}}_{jn})_{n\in\mathbb{N}}$ is same, the estimates of $(\bar{\bm{m}}_n, \bar{\bm{\psi}}_{jn})$ are similar to the sequences $(\bm{m}_n, \bm{\psi}_{jn})$, satisfying Lemmas \ref{P}--\ref{SPMpci}. As a result, according to the weak$^{\ast}$ compactness, we find that as $n\rightarrow\infty$,
\begin{align}\label{sjdyxtj}
&\partial_t\bar{\bm{\Psi}}_n\rightarrow\partial_t\bm{\Psi}^k~\text{weak}^{\ast}~\text{in}~
L^{2r}(\bar{\Omega};L^{\infty}(\mathbb{R}^{+};\mathcal{H}_{\lambda}^{-1})),~
\partial_t\bar{\bm{m}}_n\rightarrow\partial_t\bm{m}^k~\text{weakly in}~
L^{2r}(\bar{\Omega};L^{2}(\mathbb{R}^{+};\mathbb{L}^{2})),\nonumber\\
&G_n(\bar{\bm{m}}_n)\rightarrow G^k(\bm{m}^k)~\text{weak}^{\ast}~\text{in}~L^{2r}(\bar{\Omega};L^{\infty}(\mathbb{R}^{+};\mathbb{H}^{1})),
\bar{V}_n\rightarrow V^k~\text{weak}^{\ast}~\text{in}~L^{2r}(\bar{\Omega};L^{\infty}(\mathbb{R}^{+};\mathbb{L}^{6})),\nonumber\\
&|\bar{\bm{m}}_n|^2-1\rightarrow|\bm{m}^k|^2-1~\text{weak}^{\ast}~\text{in}~L^{2r}(\bar{\Omega};L^{\infty}(\mathbb{R}^{+};\mathbb{L}^{2})),
\nabla \bar{V}_n\rightarrow\nabla V^k~\text{weak}^{\ast}~\text{in}~L^{2r}(\bar{\Omega};L^{\infty}(\mathbb{R}^{+};\mathbb{L}^{2})),\nonumber\\
&\bar{\bm{s}}_n\rightarrow\bm{s}^k~\text{weak}^{\ast}~\text{in}~
L^{2r}(\bar{\Omega};L^{\infty}(\mathbb{R}^{+};\mathbb{L}^{s})),~
\bar{\rho}_n\rightarrow\rho^k~\text{weak}^{\ast}~\text{in}~
L^{2r}(\bar{\Omega};L^{\infty}(\mathbb{R}^{+};\mathbb{L}^{s})),~1\leq s\leq3,\nonumber\\
&w'(\bar{\bm{m}}_n)\rightarrow w'(\bm{m}^k)~\text{weak}^{\ast}~\text{in}~
L^{2r}(\bar{\Omega};L^{\infty}(\mathbb{R}^{+};\mathbb{L}^{s})),~1\leq s\leq2.
\end{align}
Furthermore, since
\begin{align*}
&\bar{\mathbb{E}}\big[|\bar{\bm{H}}_{sn}-\bm{H}_{s}^k|_{ L^2(0,T;\mathbb{L}^2(\mathbb{R}^3))}^{2r}\big]
=\bar{\mathbb{E}}\bigg[\bigg(\int_{0}^{T}\int_{\mathbb{R}^3}|-\nabla_{\bm{x}}(\nabla N\ast\cdot\bar{\bm{m}}_n)
+\nabla_{\bm{x}}(\nabla N\ast\cdot\bm{m}^k)|^2d\bm{x}dt\bigg)^r\bigg]\\
&=\bar{\mathbb{E}}\bigg[\bigg(\int_{0}^{T}\big|\nabla_{\bm{x}}^2N(\bm{x})\ast
(\bm{m}^k(\bm{x},t)-\bar{\bm{m}}_n(\bm{x},t))\big|_{\mathbb{L}^2(\mathbb{R}^3)}^2dt\bigg)^r\bigg]\\
&\leq|\nabla_{\bm{x}}^2N(\bm{x})|_{\mathbb{L}^1(\mathbb{R}^3)}^{2r}
\bar{\mathbb{E}}\big[|\bar{\bm{m}}_n-\bm{m}^k|_{L^2(0,T;\mathbb{L}^2)}^{2r}\big]
\rightarrow 0, ~n\rightarrow\infty,
\end{align*}
we have
\begin{align}\label{Hnk}
\bar{\bm{H}}_{sn}\rightarrow\bm{H}_{s}^k~\text{in}~L^{2r}(\bar{\Omega};L^2(0,T;\mathbb{L}^2(\mathbb{R}^3))).
\end{align}
\begin{lemma}\label{ydsl}
For any $\xi\in L^4(\bar{\Omega};L^4(0,T;\mathbb{L}^4))$, it holds that
\begin{align*}
\lim\limits_{n\rightarrow\infty}\bar{\mathbb{E}}\bigg[\int_{0}^{t}\langle\bar{\bm{m}}_n\times\partial_s\bar{\bm{m}}_n,\xi\rangle
_{\mathbb{L}^2}ds\bigg]
=\bar{\mathbb{E}}\bigg[\int_{0}^{t}\langle\bm{m}^k\times\partial_s\bm{m}^k,\xi\rangle_{\mathbb{L}^2}ds\bigg].
\end{align*}
\end{lemma}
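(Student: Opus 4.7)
The plan is to transfer the time derivative onto the factor that converges strongly. Using the cyclic identity $\langle a\times b,c\rangle_{\mathbb{R}^3}=\langle b,c\times a\rangle_{\mathbb{R}^3}$ pointwise in $\bm x$, I would first rewrite
\[
\langle\bar{\bm m}_n\times\partial_s\bar{\bm m}_n,\xi\rangle_{\mathbb{L}^2}
=\langle\partial_s\bar{\bm m}_n,\xi\times\bar{\bm m}_n\rangle_{\mathbb{L}^2},
\]
which reduces the problem to a classical strong--weak duality pairing on the Hilbert space $L^2(\bar\Omega;L^2(0,T;\mathbb{L}^2))$. From \eqref{sjdyxtj} (taking $r=1$) one already has $\partial_s\bar{\bm m}_n\rightharpoonup\partial_s\bm m^k$ weakly in this space, so the task is reduced to proving that $\xi\times\bar{\bm m}_n\to\xi\times\bm m^k$ strongly in the same space.

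For the strong convergence, H\"older's inequality in $\bm x$ gives $|\xi\times(\bar{\bm m}_n-\bm m^k)|_{\mathbb{L}^2}\le|\xi|_{\mathbb{L}^4}|\bar{\bm m}_n-\bm m^k|_{\mathbb{L}^4}$, and applying Cauchy--Schwarz in $(s,\bar\omega)$ yields
\[
\bar{\mathbb E}\!\int_0^T\!|\xi\times(\bar{\bm m}_n-\bm m^k)|_{\mathbb{L}^2}^2\,ds
\le\|\xi\|_{L^4(\bar\Omega;L^4(0,T;\mathbb{L}^4))}^{2}\,\|\bar{\bm m}_n-\bm m^k\|_{L^4(\bar\Omega;L^4(0,T;\mathbb{L}^4))}^{2}.
\]
The first factor is finite by hypothesis on $\xi$. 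For the second, \eqref{sltj} supplies the $\bar{\mathbb P}$-a.s. convergence $\bar{\bm m}_n\to\bm m^k$ in $L^4(0,T;\mathbb{L}^4)$; combined with the uniform bound $\sup_n\bar{\mathbb E}\big[|\bar{\bm m}_n|_{L^\infty(0,T;\mathbb{H}^1)}^{4r}\big]\le C$ from Lemma \ref{SPMpci} (chosen with $r=2$) and the embedding $\mathbb{H}^1(D)\hookrightarrow\mathbb{L}^6(D)\hookrightarrow\mathbb{L}^4(D)$, this produces a uniform $L^2(\bar\Omega)$ bound on $|\bar{\bm m}_n-\bm m^k|_{L^4(0,T;\mathbb{L}^4)}^{4}$ and hence uniform integrability. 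Vitali's convergence theorem then upgrades the a.s.\ convergence to convergence in $L^4(\bar\Omega;L^4(0,T;\mathbb{L}^4))$.

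To conclude, I would set $\phi_n:=\mathbf{1}_{[0,t]}\,\xi\times\bar{\bm m}_n$ and $\phi:=\mathbf{1}_{[0,t]}\,\xi\times\bm m^k$. By the previous step $\phi_n\to\phi$ strongly, while $\partial_s\bar{\bm m}_n\rightharpoonup\partial_s\bm m^k$ weakly in $L^2(\bar\Omega;L^2(0,T;\mathbb{L}^2))$; the strong--weak pairing gives
\[
\bar{\mathbb E}\!\int_0^t\!\langle\bar{\bm m}_n\times\partial_s\bar{\bm m}_n,\xi\rangle_{\mathbb{L}^2}ds
=\bar{\mathbb E}\!\int_0^T\!\langle\partial_s\bar{\bm m}_n,\phi_n\rangle_{\mathbb{L}^2}ds
\longrightarrow\bar{\mathbb E}\!\int_0^T\!\langle\partial_s\bm m^k,\phi\rangle_{\mathbb{L}^2}ds,
\]
and reversing the triple-product step identifies this limit with $\bar{\mathbb E}\int_0^t\langle\bm m^k\times\partial_s\bm m^k,\xi\rangle_{\mathbb{L}^2}ds$. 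The main difficulty is precisely the upgrade from $\bar{\mathbb P}$-a.s. convergence of $\bar{\bm m}_n$ to strong convergence in $L^4(\bar\Omega;L^4(0,T;\mathbb{L}^4))$: this relies crucially on the uniform higher-moment $\mathbb{H}^1$-estimates of Lemma \ref{SPMpci}, and the integrability hypothesis $\xi\in L^4$ is precisely what is needed so that H\"older closes on both factors.
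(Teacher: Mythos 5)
Your proof is correct, and although it rests on the same ingredients as the paper's argument---the cyclic triple-product identity, the weak convergence of $\partial_s\bar{\bm m}_n$ in $L^{2}(\bar\Omega;L^{2}(0,T;\mathbb{L}^{2}))$, the $\bar{\mathbb{P}}$-a.s.\ convergence of $\bar{\bm m}_n$ in $L^{4}(0,T;\mathbb{L}^{4})$, the uniform moment bounds of Lemma~\ref{SPMpci}, and the Vitali convergence theorem---it organizes them in a genuinely cleaner way. The paper decomposes the difference of the two integrals and asserts that both pieces, including $\int_0^t\langle\partial_s\bar{\bm m}_n-\partial_s\bm m^k,\xi\times\bar{\bm m}_n\rangle_{\mathbb{L}^2}\,ds$, tend to zero $\bar{\mathbb{P}}$-a.s., and only then applies Vitali to the full expression. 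But that cross term is controlled only through the weak convergence of $\partial_s\bar{\bm m}_n$ in the product space $L^{2}(\bar\Omega;L^{2}(0,T;\mathbb{L}^{2}))$, which delivers convergence of the expectation against a fixed test function, not pathwise convergence against the $n$-dependent family $\xi\times\bar{\bm m}_n$. Your route bypasses this: you apply Vitali to $\bar{\bm m}_n$ alone, for which the a.s.\ convergence and the uniform higher-moment $\mathbb{H}^1$ bounds are exactly what is available, upgrade to strong convergence in $L^{4}(\bar\Omega;L^{4}(0,T;\mathbb{L}^{4}))$, and then close the argument with the standard strong-weak pairing $\langle u_n,v_n\rangle_H\to\langle u,v\rangle_H$ in $H=L^{2}(\bar\Omega;L^{2}(0,T;\mathbb{L}^{2}))$ (boundedness of $\|\partial_s\bar{\bm m}_n\|_H$ coming for free from weak convergence). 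What your version buys is that every convergence invoked is used at the level where it actually lives---the pathwise limit on the pathwise object $\bar{\bm m}_n$, the in-expectation limit on the in-expectation pairing---so the final step is a one-line Hilbert-space fact rather than an a.s.\ claim about a weakly convergent sequence. This is, in effect, the tightening needed to make the paper's sketch airtight.
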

\begin{proof}
Applying the H\"{o}lder inequality and Lemma \ref{SPMpci}, we infer that
\begin{align*}
\bar{\mathbb{E}}\bigg[\bigg|\int_{0}^{t}
\langle \bar{\bm{m}}_n\times\partial_s\bar{\bm{m}}_n,\xi\rangle_{\mathbb{L}^2}ds\bigg|^{2r}\bigg]
\leq C\bar{\mathbb{E}}\big[|\bar{\bm{m}}_n|_{L^4(0,T;\mathbb{L}^4)}^{2r}|\partial_s\bar{\bm{m}}_n|
_{L^{2}(0,T;\mathbb{L}^2)}^{2r}|\xi|_{L^4(0,T;\mathbb{L}^4)}^{2r}\big]
\leq C.
\end{align*}
We can use the facts that $\bar{\bm{m}}_n\rightarrow\bm{m}^k $ in $L^4(0,T;\mathbb{L}^4)$, $\bar{\mathbb{P}}$-a.s.,
$\partial_t\bar{\bm{m}}_n\rightarrow\partial_t\bm{m}^k$ weakly in $L^2(\bar{\Omega};L^{2}(\mathbb{R}^{+};\mathbb{L}^{2}))$ to obtain
\begin{align*}
&\bigg|\int_{0}^{t}\langle\bar{\bm{m}}_n\times\partial_s\bar{\bm{m}}_n-\bm{m}^k\times\partial_s\bm{m}^k,
\xi\rangle_{\mathbb{L}^2}ds\bigg|^2\\
&\leq\bigg|\int_{0}^{t}\langle\partial_s\bar{\bm{m}}_n-\partial_s\bm{m}^k,\xi\times\bar{\bm{m}}_n\rangle_{\mathbb{L}^2}ds\bigg|^2
+\bigg|\int_{0}^{t}\langle\bar{\bm{m}}_n-\bm{m}^k,\partial_s\bm{m}^k\times\xi\rangle_{\mathbb{L}^2}ds\bigg|^2\\
&\leq\bigg|\int_{0}^{t}\langle\partial_s\bar{\bm{m}}_n-\partial_s\bm{m}^k,\xi\times\bar{\bm{m}}_n\rangle_{\mathbb{L}^2}ds\bigg|^2
+|\bar{\bm{m}}_n-\bm{m}^k|_{L^4(0,T;\mathbb{L}^4)}^2|\partial_s\bm{m}^k|
_{L^{2}(0,T;\mathbb{L}^2)}^2|\xi|_{L^4(0,T;\mathbb{L}^4)}^2\\
&\rightarrow0,~\bar{\mathbb{P}}\text{-}a.s.
\end{align*}
for $\xi\times\bar{\bm{m}}_n\in L^2(\bar{\Omega};L^{2}(0,T;\mathbb{L}^{2}))$.
Next, we can get the desired result of Lemma \ref{ydsl} by employing the Vitali convergence theorem (for details see \cite{E}).
\end{proof}
\begin{lemma}\label{gdsl1}
For any $v\in L^4(\bar{\Omega};L^4(0,T;\mathbb{L}^2))$, and for all $t\in[0,T]$, we have
\begin{align*}
{\rm{(I)}}&\lim\limits_{n\rightarrow\infty}\bar{\mathbb{E}}\bigg[\bigg|\int_{0}^{t}
\langle G{^k}'(\bar{\bm{m}}_n)[G^k(\bar{\bm{m}}_n)]-G{^k}'(\bm{m}^k)[G^k(\bm{m}^k)],
v\rangle_{\mathbb{L}^2}ds\bigg|^2\bigg]=0,\\
{\rm{(II)}}&\lim\limits_{n\rightarrow\infty}\bar{\mathbb{E}}\bigg[\bigg|\int_{0}^{t}\langle G'_n(\bar{\bm{m}}_n)[G_n(\bar{\bm{m}}_n)]
-G{^k}'(\bm{m}^k)[G^k(\bm{m}^k)],v\rangle_{\mathbb{L}^2}ds\bigg|^2\bigg]=0,\\
{\rm{(III)}}&\lim\limits_{n\rightarrow\infty}\bar{\mathbb{E}}\bigg[\int_{0}^{t}\bigg|\sum\limits_{i\geq1}
\langle G_n(\bar{\bm{m}}_n)-G^k(\bm{m}^k),v\rangle_{\mathbb{L}^2}\tilde{e}_i\bigg|^2ds\bigg]=0,\\
{\rm{(IV)}}&\lim\limits_{n\rightarrow\infty}\bar{\mathbb{E}}\bigg[\int_{0}^{t}\int_B
\big|\langle F_n(l,\bar{\bm{m}}_n)-F^k(l,\bm{m}^k),v\rangle_{\mathbb{L}^2}\big|^2\mu(dl)ds\bigg]=0.
\end{align*}
\end{lemma}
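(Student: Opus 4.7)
The plan is to treat all four parts in parallel, using Cauchy--Schwarz to reduce the inner products to $\mathbb{L}^2$-norms, Assumption \ref{ass1} to absorb the nonlinear dependence into a Lipschitz bound on $\bar{\bm{m}}_n - \bm{m}^k$, and finally the Vitali convergence theorem to upgrade the $\bar{\mathbb{P}}$-a.s.\ convergence to convergence in $L^1(\bar{\Omega})$. The almost sure convergence $\bar{\bm{m}}_n \to \bm{m}^k$ in $L^4(0,T;\mathbb{L}^4)$ from \eqref{sltj} (which also gives strong convergence in $L^2(0,T;\mathbb{L}^2)$ since $D$ is bounded) will be the driving input, and the uniform moment bounds of Lemma \ref{SPMpci} will supply the equi-integrability.

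For part (I), I would begin by applying Cauchy--Schwarz in space and time:
\[
\bigg|\int_{0}^{t}\langle G{^k}'(\bar{\bm{m}}_n)[G^k(\bar{\bm{m}}_n)] - G{^k}'(\bm{m}^k)[G^k(\bm{m}^k)], v\rangle_{\mathbb{L}^2}\, ds\bigg|^2 \leq C\,|v|_{L^2(0,T;\mathbb{L}^2)}^2 \int_0^t |G{^k}'(\bar{\bm{m}}_n)[G^k(\bar{\bm{m}}_n)] - G{^k}'(\bm{m}^k)[G^k(\bm{m}^k)]|_{\mathbb{L}^2}^2\, ds,
\]
then Assumption \ref{ass1} bounds the last factor by $C K_1 \int_0^t |\bar{\bm{m}}_n - \bm{m}^k|_{\mathbb{L}^2}^2\, ds$, which tends to $0$ $\bar{\mathbb{P}}$-a.s.\ thanks to \eqref{sltj}. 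The Vitali step then requires a uniform $L^{1+\epsilon}(\bar{\Omega})$ bound on these integrals; this is secured by applying H\"older in $(s,\bm{x})$ together with the linear growth part of Assumption \ref{ass1} to pass to $C(1+|\bar{\bm{m}}_n|_{\mathbb{L}^2}^2 + |\bm{m}^k|_{\mathbb{L}^2}^2)$, and then using the hypothesis $v\in L^4(\bar{\Omega};L^4(0,T;\mathbb{L}^2))$ together with the $L^4$-moment bounds on $\bar{\bm{m}}_n$ (Lemma \ref{SPMpci}).

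Parts (III) and (IV) are of the same flavor: the Hilbert--Schmidt sum $\sum_i |\langle G_n(\bar{\bm{m}}_n) - G^k(\bm{m}^k), v\rangle\tilde e_i|^2$ collapses, by Cauchy--Schwarz and Assumption \ref{ass1}, to $|v|_{\mathbb{L}^2}^2 \cdot K_1 |\bar{\bm{m}}_n - \bm{m}^k|_{\mathbb{L}^2}^2$ modulo a projection error, and likewise for the Poisson part with the additional $\mu(dl)$-integration that already appears built into Assumption \ref{ass1}. Part (II) is the same as (I) except we replace $G^k$ by its Galerkin-truncated version $G_n$; the triangle inequality
\[
|G'_n(\bar{\bm{m}}_n)[G_n(\bar{\bm{m}}_n)] - G{^k}'(\bm{m}^k)[G^k(\bm{m}^k)]|_{\mathbb{L}^2} \leq |G'_n(\bar{\bm{m}}_n)[G_n(\bar{\bm{m}}_n)] - G{^k}'(\bar{\bm{m}}_n)[G^k(\bar{\bm{m}}_n)]|_{\mathbb{L}^2} + |G{^k}'(\bar{\bm{m}}_n)[G^k(\bar{\bm{m}}_n)] - G{^k}'(\bm{m}^k)[G^k(\bm{m}^k)]|_{\mathbb{L}^2}
\]
reduces matters to (I) plus a projection error $|(\pi_n^D - \mathrm{Id})G^k|$ (and the analogous object for $G'[G]$ and $F$), which tends to $0$ pointwise in $\bar{\omega}$ by strong convergence of $\pi_n^D$ on $\mathbb{L}^2$ and is dominated by the $\mathbb{L}^2$-sublinear bound in Assumption \ref{ass1} so that dominated convergence applies after taking expectations.

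The main obstacle I anticipate is not the Lipschitz/Cauchy--Schwarz manipulations, which are routine, but rather organizing the Vitali / dominated-convergence arguments cleanly in the three regimes (It\^o sum, compensated Poisson integral, and deterministic Stratonovich correction) so that all four convergences come out uniformly. In particular, the projection-error term in (II)--(IV) requires the qualitative fact that $\pi_n^D \to \mathrm{Id}$ strongly on $\mathbb{L}^2$, which is the only ingredient outside the Lipschitz bookkeeping; everything else is a bounded-convergence argument driven by the strong $L^2$-convergence of $\bar{\bm{m}}_n$ and the sublinear growth in Assumption \ref{ass1}.
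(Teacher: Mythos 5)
Your proposal follows essentially the same route as the paper: H\"older/Cauchy--Schwarz in $s$ and $\bm{x}$, the Lipschitz part of Assumption~\ref{ass1} to convert the integrand difference into $K_1|\bar{\bm{m}}_n-\bm{m}^k|_{\mathbb{L}^2}^2$ and obtain $\bar{\mathbb{P}}$-a.s.\ convergence from \eqref{sltj}, the linear-growth part of Assumption~\ref{ass1} together with the moment bounds of Lemma~\ref{SPMpci} for the uniform $L^r(\bar{\Omega})$, $r>1$, bound, and finally the Vitali convergence theorem. The only point where you go beyond what the paper actually writes is part (II), where the paper simply asserts ``similar'' while you explicitly split $G_n(\bar{\bm{m}}_n)-G^k(\bm{m}^k)$ into the term handled by (I) plus a Galerkin projection error governed by $\pi_n^D\to\mathrm{Id}$ strongly on $\mathbb{L}^2$---a slightly more careful bookkeeping of the same argument, not a different one.
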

\begin{proof}
According to Assumption \ref{ass1} and the H\"{o}lder inequality, we can infer that
\begin{align*}
&\bar{\mathbb{E}}\bigg[\bigg|\int_{0}^{t}
\langle G{^k}'(\bar{\bm{m}}_n)[G^k(\bar{\bm{m}}_n)],v\rangle_{\mathbb{L}^2}ds\bigg|^{2r}\bigg]
\leq C\bar{\mathbb{E}}\big[|v|_{L^4(0,T;\mathbb{L}^2)}^{2r}
|G{^k}'(\bar{\bm{m}}_n)[G^k(\bar{\bm{m}}_n)]|_{L^4(0,T;\mathbb{L}^2)}^{2r}\big]\\
&\leq C\big(\bar{\mathbb{E}}\big[|v|_{L^4(0,T;\mathbb{L}^2)}^{4}\big]\big)^{\frac{r}{2}}
\big(1+\bar{\mathbb{E}}\big[|\bar{\bm{m}}_n|_{L^4(0,T;\mathbb{L}^2)}^{\frac{4r}{2-r}}\big]\big)^{\frac{2-r}{2}}
\leq C
\end{align*}
for all $t\in[0,T]$, $r\in(1,2)$ and $n\in\mathbb{N}$, and then using \eqref{sltj}, Assumption \ref{ass1} and the H\"{o}lder inequality,
we proceed to get
\begin{align*}
&\bigg|\int_{0}^{t}\langle G{^k}'(\bar{\bm{m}}_n)[G^k(\bar{\bm{m}}_n)]
-G{^k}'(\bm{m}^k)[G^k(\bm{m}^k)],v\rangle_{\mathbb{L}^2}ds\bigg|^2\\
&\leq\int_{0}^{t}|v|_{\mathbb{L}^2}^{2}|G{^k}'(\bar{\bm{m}}_n)[G^k(\bar{\bm{m}}_n)]
-G{^k}'(\bm{m}^k)[G^k(\bm{m}^k)]|_{\mathbb{L}^2}^{2}ds\\
&\leq CK_1|v|_{L^4(0,T;\mathbb{L}^2)}^{2}|\bar{\bm{m}}_n-\bm{m}^k|_{L^4(0,T;\mathbb{L}^4)}^{2}
\rightarrow0,~\bar{\mathbb{P}}\text{-}a.s.
\end{align*}
Therefore, the proof of {\rm{(I)}} is complete by utilizing the Vitali convergence theorem.
{\rm{(II)}}, {\rm{(III)}} as well as {\rm{(IV)}} can be proved in a similar way.
\end{proof}
\subsection{Convergence of the new process to the corresponding limit process}
\begin{lemma}\label{psk}
The sequence $(\Omega, \mathscr{F}, (\mathscr{F}_t)_{t\geq0}, \mathbb{P}, \bm{\Psi}^k, \rho^k, \bm{s}^k, W^k,\eta^k)$ satisfies the first and second equation of \eqref{ss}, $\bar{\mathbb{P}}$-a.s.
\end{lemma}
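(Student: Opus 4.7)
The plan is to identify the weak$^\ast$ limits $\rho^k$ and $\bm{s}^k$ (obtained in \eqref{sjdyxtj}) with the quadratic series in $\bm{\Psi}^k$ appearing in \eqref{ss}, by combining the equality in law of the old and tilded sequences with the almost-sure convergence of the finite-$j$ marginals in $L^4(0,T;\mathbb{L}^4)$, and controlling the tail through the mass-conservation identity of Lemma \ref{P}.

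First I would use property {\rm(1)} of the Skorokhod representation: since $\mathcal{L}((\bar{\bm{m}}_n,\bar{\bm{\psi}}_{jn},\bar{W}_n,\bar{\eta}_n)) = \mathcal{L}((\bm{m}_n,\bm{\psi}_{jn},W_n,\eta_n))$ and the pre-tilded sequences satisfy \eqref{ss} pathwise by construction of the Galerkin scheme, the identities
\begin{align*}
\bar{\rho}_n=\sum_{j=1}^{\infty}\lambda_{j}|\bar{\bm{\psi}}_{jn}|^2,\qquad
\bar{\bm{s}}_n=\sum_{j=1}^{\infty}\lambda_{j}{\rm{Tr}}_{{\mathbb{C}}^2}\bigl(\hat{\bm{\sigma}}(\bar{\bm{\psi}}_{jn}\bar{\bm{\psi}}_{jn}^{\dag})\bigr)
\end{align*}
hold $\bar{\mathbb{P}}$-a.s. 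Then I would test both sides against an arbitrary $\varphi\in C_c^{\infty}([0,T]\times K)$ (or matrix-valued test functions for $\bm{s}_n$) and use the weak$^\ast$ convergences from \eqref{sjdyxtj} on the left: $\bar{\mathbb{E}}[\int_0^T\!\!\int_K \bar{\rho}_n\varphi\,d\bm{x}dt]\to \bar{\mathbb{E}}[\int_0^T\!\!\int_K \rho^k\varphi\,d\bm{x}dt]$, and similarly for $\bar{\bm{s}}_n$.

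Next I would split the right-hand series at some threshold $N$. For the head $\sum_{j\le N}\lambda_j\!\int|\bar{\bm{\psi}}_{jn}|^2\varphi$, the convergence $\bar{\bm{\psi}}_{jn}\to\bm{\psi}_j^k$ in $L^4(0,T;\mathbb{L}^4)$ $\bar{\mathbb{P}}$-a.s.\ from \eqref{sltj} implies $|\bar{\bm{\psi}}_{jn}|^2\to|\bm{\psi}_j^k|^2$ in $L^2(0,T;\mathbb{L}^2)$ $\bar{\mathbb{P}}$-a.s.; combined with the uniform bounds of Lemma \ref{SPMpci}, a Vitali argument (as already exploited in Lemma \ref{ydsl}) yields convergence of each term's expectation. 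For the tail, Lemma \ref{P} together with the projection inequality gives, $\bar{\mathbb{P}}$-a.s.,
\begin{align*}
\Bigl|\sum_{j>N}\lambda_j\!\int_0^T\!\!\int_K|\bar{\bm{\psi}}_{jn}|^2\varphi\,d\bm{x}dt\Bigr|
\le T|\varphi|_{\infty}\sum_{j>N}\lambda_j|\bar{\bm{\psi}}_{jn}(0)|_{\mathbb{L}^2}^2
\le T|\varphi|_{\infty}\sum_{j>N}\lambda_j|\bm{\varphi}_j|_{\mathbb{L}^2}^2,
\end{align*}
which tends to $0$ as $N\to\infty$ uniformly in $n$, since the assumption $\bar{\bm{\Psi}}_0\in\mathcal{H}_{\lambda}^1(\mathbb{R}^3)$ ensures summability of $\lambda_j|\bm{\varphi}_j|_{\mathbb{L}^2}^2$. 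Passing first $n\to\infty$ and then $N\to\infty$ yields the equality of the two limits tested against arbitrary $\varphi$; by density of $C_c^{\infty}$ in the predual $L^{s/(s-1)}$ of $\mathbb{L}^s$ (for the range of $s$ in which $\rho^k,\bm{s}^k$ live by \eqref{sjdyxtj}), this produces the pointwise-a.e.\ identity $\rho^k=\sum_j\lambda_j|\bm{\psi}_j^k|^2$ $\bar{\mathbb{P}}$-a.s.

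The argument for $\bm{s}^k$ is structurally identical: the Pauli traces
$\bar{\bm{\psi}}_{jn,-}\overline{\bar{\bm{\psi}}_{jn,+}}+\bar{\bm{\psi}}_{jn,+}\overline{\bar{\bm{\psi}}_{jn,-}}$, its imaginary counterpart, and $|\bar{\bm{\psi}}_{jn,+}|^2-|\bar{\bm{\psi}}_{jn,-}|^2$ are bilinear in the spinor components, so the $L^4$-a.s.\ convergence still passes through the product, and the tail is bounded by the same conserved quantity $\sum_{j>N}\lambda_j|\bm{\varphi}_j|_{\mathbb{L}^2}^2$ via $|\mathrm{Tr}_{\mathbb{C}^2}(\hat{\bm{\sigma}}(\bm{\psi}\bm{\psi}^{\dag}))|\le C|\bm{\psi}|^2$ (cf.\ \eqref{MM13}). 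The main technical obstacle is precisely this uniform-in-$n$ tail estimate for the infinite sum: without Lemma \ref{P}'s conservation identity and the summability built into $\mathcal{H}_{\lambda}^1$, one could not commute the weak$^\ast$ limit with the series $\sum_{j=1}^{\infty}\lambda_j(\cdot)$, and the identification of $\rho^k$ and $\bm{s}^k$ with their series representations would fail.
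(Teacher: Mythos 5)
Your proof is correct and follows the same overall scheme as the paper's: transfer the Galerkin-level algebraic identity $\bar{\rho}_n=\sum_j\lambda_j|\bar{\bm{\psi}}_{jn}|^2$ to the tilded sequence via equality of laws, pair against test functions, use the weak$^{\ast}$ convergence $\bar{\rho}_n\to\rho^k$ on one side and the $\bar{\mathbb{P}}$-a.s.\ $L^4(0,T;\mathbb{L}^4)$ convergence of the spinors plus uniform moment bounds (Vitali) on the other, and match the two limits. Where you genuinely differ — and improve — is in the handling of the infinite sum over $j$: the paper estimates the whole series at once via a Cauchy--Schwarz step that, as literally written, passes from $\bigl(\sum_j\lambda_j a_j\bigr)^2$ to $\sum_j\lambda_j^2 a_j^2$ (which is not an inequality in the right direction and tacitly needs $\sum_j\lambda_j<\infty$), whereas your head--tail split with the tail dominated deterministically by $T|\varphi|_\infty\sum_{j>N}\lambda_j|\bm{\varphi}_j|_{\mathbb{L}^2}^2$, using the pathwise mass conservation of Lemma~\ref{P} and the summability coming from $\bar{\bm{\Psi}}_0\in\mathcal{H}_\lambda^1(\mathbb{R}^3)$, is a cleaner and fully justified way to commute the limit with the series. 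The one thing you should add: your argument delivers $\rho^k=\sum_j\lambda_j|\bm{\psi}_j^k|^2$ for $m\otimes\bar{\mathbb{P}}$-a.e.\ $(t,\bar\omega)$, and the paper then upgrades this to \emph{every} $t\in[0,T]$ by invoking the weak c\`adl\`ag regularity of $\bm{\psi}_j^k$ (since $\bm{\psi}_j^k$ is a $\mathscr{Z}_T$-valued variable, $\bm{\psi}_j^k\in\mathbb{D}([0,T];\mathbb{H}_w^1)$, and two c\`adl\`ag functions agreeing a.e.\ agree everywhere); that step is needed for the downstream use of $\rho^k$ and $\bm{s}^k$ in Lemmas~\ref{Vpk}--\ref{dycsl} and should be stated explicitly. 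With that appended, your proof is complete.
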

\begin{proof}
Let $t\in[0,T]$, for $\zeta\in L^4(\bar{\Omega};L^4(0,T;\mathbb{L}^4))$, thanks to $\bar{\rho}_n\rightarrow\rho^k$ weak$^{\ast}$ in
$L^{2r}(\bar{\Omega};L^{\infty}(\mathbb{R}^{+};\mathbb{L}^{s})), 1\leq s\leq3$,  it follows that
\begin{align}\label{VVV1}
\lim\limits_{n\rightarrow\infty}\bar{\mathbb{E}}\bigg[\int_{0}^{T}
|\langle\bar{\rho}_n-\rho^k,\zeta\rangle_{\mathbb{L}^2}|^2dt\bigg]
=0.
\end{align}
By $\bar{\bm{\psi}}_{jn}\rightarrow\bm{\psi}_j^k$ in $L^4(0,T;\mathbb{L}^4)$, $\bar{\mathbb{P}}$-a.s., it follows from the H\"{o}lder inequality that
\begin{align*}
&\int_{0}^{T}\bigg|\sum\limits_{j=1}^{\infty}\lambda_j\langle|\bar{\bm{\psi}}_{jn}|^2-|\bm{\psi}_j^k|^2,
\zeta\rangle_{\mathbb{L}^2}\bigg|^2dt
\leq\int_0^T\bigg(\sum\limits_{j=1}^{\infty}\lambda_j\int_{K}|\bar{\bm{\psi}}_{jn}+\bm{\psi}_j^k|
|\bar{\bm{\psi}}_{jn}-\bm{\psi}_j^k||\zeta|d\bm{x}\bigg)^2dt\\
&\leq\int_0^T\sum\limits_{j=1}^{\infty}\lambda_j^2|\bar{\bm{\psi}}_{jn}+\bm{\psi}_j^k|_{\mathbb{L}^2}^2
|\bar{\bm{\psi}}_{jn}-\bm{\psi}_j^k|_{\mathbb{L}^4}^2|\zeta|_{\mathbb{L}^4}^2dt\\
&\leq\sum\limits_{j=1}^{\infty}\lambda_j^2|\bar{\bm{\psi}}_{jn}+\bm{\psi}_j^k|_{L^2(0,T;\mathbb{L}^2)}^2
|\bar{\bm{\psi}}_{jn}-\bm{\psi}_j^k|_{L^4(0,T;\mathbb{L}^4)}^2|\zeta|_{L^4(0,T;\mathbb{L}^4)}^2
\rightarrow 0,~\bar{\mathbb{P}}\text{-}a.s.
\end{align*}
Moreover, for all $t\in[0,T]$, $r\in(1,2)$, and all $n\in\mathbb{N}$, the following holds:
\begin{align*}
&\bar{\mathbb{E}}\bigg[\bigg|\int_{0}^{T}\bigg|\sum\limits_{j=1}^{\infty}\lambda_j\langle|\bar{\bm{\psi}}_{jn}|^2-|\bm{\psi}_j^k|^2,
\zeta\rangle_{\mathbb{L}^2}\bigg|^2dt\bigg|^r\bigg]\\
&\leq C\sum\limits_{j=1}^{\infty}\lambda_j^{2r}\bar{\mathbb{E}}
\big[|\bar{\bm{\psi}}_{jn}+\bm{\psi}_j^k|_{L^2(0,T;\mathbb{L}^2)}^{2r}
|\bar{\bm{\psi}}_{jn}-\bm{\psi}_j^k|_{L^4(0,T;\mathbb{L}^4)}^{2r}|\zeta|_{L^4(0,T;\mathbb{L}^4)}^{2r}\big]\\
&\leq C\big(\bar{\mathbb{E}}\big[|\zeta|_{L^4(0,T;\mathbb{L}^4)}^{4}\big]\big)^{\frac{r}{2}}
\big(\bar{\mathbb{E}}\big[|\bar{\bm{\psi}}_{jn}|_{L^4(0,T;\mathbb{L}^4)}^{\frac{8r}{2-r}}
+|\bm{\psi}_j^k|_{L^4(0,T;\mathbb{L}^4)}^{\frac{8r}{2-r}}\big]\big)^{\frac{2-r}{4}}
\leq C.
\end{align*}
Hence, using the Vitali convergence theorem, we deduce that
\begin{align}\label{VVVV1}
\lim\limits_{n\rightarrow\infty}\bar{\mathbb{E}}
\bigg[\int_{0}^{T}\bigg|\sum\limits_{j=1}^{\infty}\lambda_j\langle|\bar{\bm{\psi}}_{jn}|^2-|\bm{\psi}_j^k|^2,
\zeta\rangle_{\mathbb{L}^2}\bigg|^2dt\bigg]=0.
\end{align}
Recall that $\rho_n$ satisfies $\rho_n=\sum_{j=1}^{\infty}\lambda_j|\bm{\psi}_{jn}|^2$ for all $t\in[0,T]$. In particular,
\begin{align*}
\int_{0}^{T}\mathbb{E}\bigg[\bigg|\langle\rho_n,\zeta\rangle
-\sum_{j=1}^{\infty}\lambda_j\langle|\bm{\psi}_{jn}|^2,\zeta\rangle\bigg|^2\bigg]dt=0.
\end{align*}
From $\mathcal{L}(\bm{\psi}_{jn})=\mathcal{L}(\bar{\bm{\psi}}_{jn})$, we obtain that
\begin{align*}
\int_{0}^{T}\bar{\mathbb{E}}\bigg[\bigg|\langle\bar{\rho}_n,\zeta\rangle
-\sum_{j=1}^{\infty}\lambda_j\langle|\bar{\bm{\psi}}_{jn}|^2,\zeta\rangle\bigg|^2\bigg]dt=0.
\end{align*}
Thus, using \eqref{VVV1} and \eqref{VVVV1}, we get
\begin{align*}
\int_{0}^{T}\bar{\mathbb{E}}\bigg[\bigg|\langle\rho^k,\zeta\rangle
-\sum_{j=1}^{\infty}\lambda_j\langle|\bm{\psi}_{j}^k|^2,\zeta\rangle\bigg|^2\bigg]dt=0.
\end{align*}
Therefore, for m-almost all $t\in[0,T]$ and $\mathbb{P}$-almost $\omega\in\bar{\Omega}$, we have $\langle\rho^k,\zeta\rangle=\sum_{j=1}^{\infty}\lambda_j\langle|\bm{\psi}_{j}^k|^2,\zeta\rangle$. Since $\bm{\psi}_j^k$ is a $\mathscr{Z}_T$-valued random variable, $\bm{\psi}_j^k\in\mathbb{D}([0,T];\mathbb{H}_{w}^1)$ a.e., that is, $\bm{\psi}_j^k$ is weakly c\`{a}dl\`{a}g with respect to time. For all $t\in[0,T]$, notice that two c\`{a}dl\`{a}g functions are equal for m-almost all $t\in[0,T]$ must be equal for all $t\in[0,T]$, hence for all $t\in[0,T]$, $\bar{\mathbb{P}}$-a.s., and for each $\zeta\in L^4(\bar{\Omega};L^4(0,T;\mathbb{L}^4))$, $\langle\rho^k,\zeta\rangle=\sum_{j=1}^{\infty}\lambda_j\langle|\bm{\psi}_{j}^k|^2,\zeta\rangle$ holds. Thus for all $t\in[0,T]$, we have that $\rho^k=\rho[\bm{\Psi}^k]=\sum\limits_{j=1}^{\infty}\lambda_j|\bm{\psi}_j^k|^2$, $\bar{\mathbb{P}}$-a.s. Similarly, we can prove that for all $t\in[0,T]$, $\bm{s}^k=\sum\limits_{j=1}^{\infty}\lambda_j({\bm{\psi}_j^k}^{\dag}\hat{\bm{\sigma}}\bm{\psi}_j^k)$,
$\bar{\mathbb{P}}$-a.s. holds.
\end{proof}
\begin{lemma}\label{Vpk}
The sequence $(\Omega, \mathscr{F}, (\mathscr{F}_t)_{t\geq0}, \mathbb{P}, V^k, \rho^k, W^k,\eta^k)$ satisfies the third equation of \eqref{S2}, $\bar{\mathbb{P}}$-a.s.
\end{lemma}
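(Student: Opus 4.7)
The plan is to pass to the limit $n\to\infty$ in the weak formulation of the Poisson equation satisfied by the Galerkin approximation $\bar V_n$, using the weak$^{\ast}$ convergences already recorded in \eqref{sjdyxtj}. Since $V_n$ is the (deterministic) solution of $-\Delta V_n=\rho_n$ with $V_n|_{\partial K}=0$, the equality of laws between $(\bm{\psi}_{jn},V_n,\rho_n)$ and $(\bar{\bm{\psi}}_{jn},\bar V_n,\bar\rho_n)$ on the new probability space implies that, $\bar{\mathbb{P}}$-a.s. and for almost every $t\in[0,T]$,
\begin{align*}
\int_K \nabla\bar V_n(t)\cdot\nabla\varphi\,d\bm{x}=\int_K \bar\rho_n(t)\,\varphi\,d\bm{x},\qquad \varphi\in H_0^1(K),
\end{align*}
together with $\bar V_n(t)\in H_0^1(K)$.

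Next I would test this identity against arbitrary $\zeta\in L^{\infty}([0,T]\times\bar\Omega)$ and $\varphi\in C_c^{\infty}(K)$, obtaining
\begin{align*}
\bar{\mathbb{E}}\!\left[\int_0^T\!\!\int_K \zeta(t)\,\nabla\bar V_n\cdot\nabla\varphi\,d\bm{x}\,dt\right]
=\bar{\mathbb{E}}\!\left[\int_0^T\!\!\int_K \zeta(t)\,\bar\rho_n\,\varphi\,d\bm{x}\,dt\right].
\end{align*}
The linear functional $U\mapsto\bar{\mathbb{E}}[\int_0^T\!\int_K \zeta\,U\,\varphi\,d\bm{x}\,dt]$ is continuous on the predual of $L^{2r}(\bar\Omega;L^{\infty}(0,T;\mathbb{L}^s))$ whenever $\zeta\varphi$ lies in the appropriate dual space (which holds for bounded $\zeta$ and $\varphi\in C_c^{\infty}(K)$), and similarly for gradients. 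Applying the weak$^{\ast}$ convergences $\nabla\bar V_n\rightharpoonup^{\ast}\nabla V^k$ in $L^{2r}(\bar\Omega;L^{\infty}(\mathbb{R}^+;\mathbb{L}^2))$ and $\bar\rho_n\rightharpoonup^{\ast}\rho^k$ in $L^{2r}(\bar\Omega;L^{\infty}(\mathbb{R}^+;\mathbb{L}^s))$ from \eqref{sjdyxtj}, I pass to the limit on both sides:
\begin{align*}
\bar{\mathbb{E}}\!\left[\int_0^T\!\!\int_K \zeta(t)\,\nabla V^k\cdot\nabla\varphi\,d\bm{x}\,dt\right]
=\bar{\mathbb{E}}\!\left[\int_0^T\!\!\int_K \zeta(t)\,\rho^k\,\varphi\,d\bm{x}\,dt\right].
\end{align*}
By the arbitrariness of $\zeta$ and density of $C_c^{\infty}(K)$ in $H_0^1(K)$, this yields $\int_K \nabla V^k\cdot\nabla\varphi\,d\bm{x}=\int_K \rho^k\,\varphi\,d\bm{x}$ for all $\varphi\in H_0^1(K)$, for almost every $(t,\bar\omega)$, which is exactly $-\Delta V^k=\rho^k$ in the distributional sense.

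For the Dirichlet boundary condition, I would use that $\bar V_n(\cdot,t,\bar\omega)\in H_0^1(K)$ for a.e.\ $(t,\bar\omega)$ and that $H_0^1(K)$ is a (weakly) closed subspace of $H^1(K)$; the weak$^{\ast}$ convergence of $\nabla\bar V_n$ together with the uniform bound on $\bar V_n$ in $L^{2r}(\bar\Omega;L^{\infty}(\mathbb{R}^+;\mathbb{L}^6))$ identifies $V^k\in H_0^1(K)$ for a.e.\ $(t,\bar\omega)$. Finally, to promote the identity from almost every $t$ to every $t\in[0,T]$, I invoke the same c\`adl\`ag argument used at the end of Lemma \ref{psk}: both sides of the elliptic identity are instantaneous functionals of processes that are weakly c\`adl\`ag on $[0,T]$, so two versions that agree for a.e.\ $t$ must agree for every $t$. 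The main obstacle is the passage to the limit in the gradient term, since one only has weak$^{\ast}$ (not strong) convergence of $\nabla\bar V_n$; this is resolved by pairing only with smooth compactly supported $\varphi$, where strong convergence of the test side makes the weak$^{\ast}$ convergence sufficient.
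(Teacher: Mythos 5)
Your proposal is correct and follows the same overall route as the paper: you transfer the Galerkin-level Poisson identity $\langle-\Delta\bar V_n,\cdot\rangle=\langle\bar\rho_n,\cdot\rangle$ to the new probability space via equality of laws, pass to the limit using the weak$^{\ast}$ convergences of $\nabla\bar V_n$ and $\bar\rho_n$ from \eqref{sjdyxtj}, and then promote the resulting identity from ${\rm Leb}\otimes\bar{\mathbb{P}}$-a.e.\ $t$ to every $t\in[0,T]$ by the weak c\`adl\`ag argument, exactly as the paper does at the end of Lemma~\ref{psk}. Where you differ is in the mode of pairing and in the bookkeeping: the paper tests against a stochastic $\zeta$ and asserts the $L^2(\bar\Omega\times[0,T])$-norm of $\langle\nabla\bar V_n-\nabla V^k,\nabla\zeta\rangle$ and of $\langle\bar\rho_n-\rho^k,\zeta\rangle$ tends to zero — a quadratic-in-the-pairing statement which, strictly read, rests on the Vitali-convergence pattern established in the surrounding lemmas rather than on weak$^{\ast}$ convergence alone — whereas you pair linearly against a factored test functional $\zeta(t,\bar\omega)\,\varphi(\bm{x})$ with $\zeta$ bounded and $\varphi\in C_c^{\infty}(K)$, which is precisely what weak$^{\ast}$ convergence licenses and is the more economical argument. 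You also make explicit the recovery of the Dirichlet boundary condition $V^k(t,\cdot)\in H_0^1(K)$ via weak closedness of $H_0^1$, which the paper leaves implicit; conversely, the paper additionally records the uniform $\mathbb{H}^2(K)$ bound on $V^k$ by weak lower semicontinuity, which your argument omits but which is extra regularity rather than part of the stated conclusion. Both proofs inherit the same minor lacuna in the final c\`adl\`ag step — that $\rho^k$ (and hence the tested identity) be weakly c\`adl\`ag requires more than the weak c\`adl\`agness of $\bm{\psi}_j^k$ because of the quadratic map $\bm{\psi}\mapsto|\bm{\psi}|^2$ — but since you invoke the paper's own device from Lemma~\ref{psk}, that is a shared convention rather than a gap specific to your write-up.
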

\begin{proof}
For $\nabla\zeta\in L^2(\bar{\Omega};L^2(0,T;\mathbb{L}^2))$, according to $\nabla \bar{V}_n\rightarrow\nabla V^k$ weak$^{\ast}$ in $L^{2r}(\bar{\Omega};L^{\infty}(\mathbb{R}^{+};\mathbb{L}^{2}))$,
it remains to show that
\begin{align}\label{VVV}
\lim\limits_{n\rightarrow\infty}\bar{\mathbb{E}}\bigg[\int_{0}^{T}
|\langle\Delta\bar{V}_n-\Delta V^k,\zeta\rangle_{\mathbb{L}^2}|^2dt\bigg]
=\lim\limits_{n\rightarrow\infty}\bar{\mathbb{E}}\bigg[\int_{0}^{T}
|\langle\nabla\bar{V}_n-\nabla V^k,\nabla\zeta\rangle_{\mathbb{L}^2}|^2dt\bigg]
=0.
\end{align}
Using the fact that $\bar{\rho}_n\rightarrow\rho^k$ weak$^{\ast}$ in $L^{2r}(\bar{\Omega};L^{\infty}(\mathbb{R}^{+};\mathbb{L}^{s})),~1\leq s\leq3$, we get
\begin{align}\label{VVVV}
\lim\limits_{n\rightarrow\infty}\bar{\mathbb{E}}\bigg[\int_{0}^{T}
|\langle\bar{\rho}_n-\rho^k,\zeta\rangle_{\mathbb{L}^2}|^2dt\bigg]
=0.
\end{align}
Note that $V_n$ satisfies $-\Delta V_n=\rho_n$ for all $t\in[0,T]$, then we have
\begin{align*}
\int_{0}^{T}\mathbb{E}\big[\big|\langle-\Delta V_n,\zeta\rangle-\langle\rho_n,\zeta\rangle\big|^2\big]dt=0.
\end{align*}
From $\mathcal{L}(\bm{\psi}_{jn})=\mathcal{L}(\bar{\bm{\psi}}_{jn})$ we conclude that,
\begin{align*}
\int_{0}^{T}\bar{\mathbb{E}}\big[\big|\langle-\Delta\bar{V}_n,\zeta\rangle-\langle\bar{\rho}_n,\zeta\rangle\big|^2\big]dt=0.
\end{align*}
Thus, we can apply \eqref{VVV} and \eqref{VVVV} to deduce that
\begin{align*}
\int_{0}^{T}\bar{\mathbb{E}}\big[\big|\langle-\Delta V^k,\zeta\rangle-\langle\rho^k,\zeta\rangle\big|^2\big]dt=0.
\end{align*}
Hence, for m-almost all $t\in[0,T]$ and $\mathbb{P}$-almost all $\omega\in\bar{\Omega}$, we have $\langle-\Delta V^k,\zeta\rangle=\langle\rho^k,\zeta\rangle$. Since $\bm{\psi}_j^k$ is weakly c\`{a}dl\`{a}g with respect to time, for all $t\in[0,T]$, $\bar{\mathbb{P}}$-a.s., and for each $\nabla\zeta\in L^2(\bar{\Omega};L^2(0,T;\mathbb{L}^2))$, $\langle-\Delta V^k,\zeta\rangle=\langle\rho^k,\zeta\rangle$ holds.
According to Lemma \ref{SPMpci}, we can deduce  that $\bar{\mathbb{E}}\big[\sup_{0\leq t\leq T}|\bar{\rho}_n|_{\mathbb{L}^2(K)}\big]\leq C$, and since
\begin{align*}
|\bar{V}_n|_{\mathbb{H}^2(K)}^2=\int_{K}|\bar{V}_n|^2+|\nabla\bar{V}_n|^2+|\nabla^2\bar{V}_n|^2d\bm{x}\leq C,~
\bar{\mathbb{P}}\text{-}a.s.,
\end{align*}
it follows from the weak lower semi-continuity of norms that
\begin{align*}
|V^k|_{\mathbb{H}^2(K)}^2\leq\lim_{n\rightarrow\infty}\inf|V_n|_{\mathbb{H}^2(K)}^2<\infty,
~\forall t\in\mathbb{R}^{+}, ~\mathbb{P}\text{-}a.s.
\end{align*}
Then, $V^k$ satisfies the third equation of system \eqref{S2}.
\end{proof}
\begin{lemma}\label{dycsl}
The sequence $(\Omega, \mathscr{F}, (\mathscr{F}_t)_{t\geq0}, \mathbb{P}, \bm{\Psi}^k,\bm{m}^k, V^k, \rho^k,\bm{s}^k,\bm{H}_s^k,W^k,\eta^k)$ satisfies the equation \eqref{SLLG3} in the sense of distribution, $\bar{\mathbb{P}}$-a.s., with a constant $C>0$ such that
{\small\begin{align}\label{kdgj}
&\mathbb{E}\bigg[\bigg(\sup_{0\leq t\leq T}
\int_{K}\sum\limits_{j=1}^{\infty}\lambda_j|\nabla\bm{\psi}_{j}^k|^2d\bm{x}\bigg)^{2r}\bigg]
+\mathbb{E}\bigg[\bigg(\sup_{0\leq t\leq T}\int_{K}|\nabla V^k|^2d\bm{x}\bigg)^{2r}\bigg]
+\mathbb{E}\bigg[\bigg(2\alpha\int_0^T\int_{D}|\partial_t\bm{m}^k|^2d\bm{x}dt\bigg)^{2r}\bigg]\nonumber\\
&\quad +\mathbb{E}\bigg[\bigg(\sup_{0\leq t\leq T}|\bm{m}^k|_{\mathbb{H}^1}^2\bigg)^{2r}\bigg]
+\mathbb{E}\bigg[\bigg(\sup_{0\leq t\leq T}\int_{\mathbb{R}^3}|\bm{H}_{s}^k|^2d\bm{x}\bigg)^{2r}\bigg]
+\mathbb{E}\bigg[\bigg(2\sup_{0\leq t\leq T}\int_{D}w(\bm{m}^k)d\bm{x}\bigg)^{2r}\bigg]\nonumber\\
&\quad +\mathbb{E}\bigg[\bigg(\frac{k}{2}\sup_{0\leq t\leq T}\int_{D}(|\bm{m}^k|^2-1)^2d\bm{x}\bigg)^{2r}\bigg]
+\mathbb{E}\bigg[\bigg(\sup_{0\leq t\leq T}\int_{D}|G^k(\bm{m}^k)|^2d\bm{x}\bigg)^{2r}\bigg]
\leq C.
\end{align}}
\end{lemma}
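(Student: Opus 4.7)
The plan is to pass to the limit $n\to\infty$ in the Faedo-Galerkin identities \eqref{jS3}--\eqref{jSLLG4} on the new probability space $(\bar{\Omega},\bar{\mathscr{F}},(\bar{\mathscr{F}}_t)_{t\geq 0},\bar{\mathbb{P}})$, using the convergences established in \eqref{sltj}--\eqref{Hnk} together with Lemmas \ref{ydsl}--\ref{Vpk}. Since $(\bar{\bm{m}}_n,\bar{\bm{\psi}}_{jn},\bar{W}_n,\bar{\eta}_n)$ has the same law as $(\bm{m}_n,\bm{\psi}_{jn},W_n,\eta_n)$, the Galerkin identities hold verbatim on the new space, and the a priori estimates of Lemma \ref{SPMpci} transfer to the barred sequence.

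First I would fix test functions $\theta_h,e_h$ from the spectral bases \eqref{bzh1}--\eqref{bzh2} and test the Schr\"odinger identity against $\theta_h$ and the LLG identity against $e_h$. For the Schr\"odinger equation, the linear terms $\Delta\bar{\bm{\psi}}_{jn}$ and $\bar{V}_n\bar{\bm{\psi}}_{jn}$ pass to the limit by the weak-$\ast$ convergences in \eqref{sjdyxtj} combined with the strong $L^4(0,T;\mathbb{L}^4)$ convergence of $\bar{\bm{\psi}}_{jn}$, while the coupling $\bar{\bm{m}}_n\cdot\hat{\bm{\sigma}}\bar{\bm{\psi}}_{jn}$ passes by the strong convergences of both factors and the Vitali theorem (uniform integrability being supplied by Lemma \ref{SPMpci}). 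Lemmas \ref{psk} and \ref{Vpk} identify the limits $\rho^k,\bm{s}^k,V^k$, so the SP equation closes.

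For the LLG equation, the nonlinearity $\bar{\bm{m}}_n\times\partial_t\bar{\bm{m}}_n$ is handled by Lemma \ref{ydsl}; $\Delta\bar{\bm{m}}_n$ is integrated by parts against $\nabla e_h$ and passes by the weak $L^2(0,T;\mathbb{H}^1)$ convergence of $\bar{\bm{m}}_n$; the terms $w'(\bar{\bm{m}}_n)$, the penalty $k(|\bar{\bm{m}}_n|^2-1)\bar{\bm{m}}_n$, and the coupling $\bar{\bm{s}}_n$ all pass by combining the a.s.\ strong $L^4(0,T;\mathbb{L}^4)$ convergence, the uniform bounds of Lemma \ref{SPMpci}, and the Vitali theorem; $\bar{\bm{H}}_{sn}$ passes by \eqref{Hnk}. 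The It\^o correction $\tfrac{1}{2}\int_0^t G'_n(\bar{\bm{m}}_n)[G_n(\bar{\bm{m}}_n)]\,ds$ is handled by Lemma \ref{gdsl1}(II). For the genuinely stochastic terms, I test against $e_h$ and view the resulting integrals as scalar It\^o and compensated-Poisson integrals driven by $\bar{W}_n=W^k$ and $\bar{\eta}_n=\eta^k$ respectively; convergence in $L^2(\bar{\Omega})$ then follows from the It\^o isometry and the It\^o--L\'evy isometry in combination with Lemma \ref{gdsl1}(III)--(IV), identifying the limits as $\int_0^t\langle G^k(\bm{m}^k),e_h\rangle\,dW^k(s)$ and $\int_0^t\int_B\langle F^k(\bm{m}^k,l),e_h\rangle\,\tilde{\eta}^k(ds,dl)$.

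Having obtained the integrated equations tested against finite linear combinations of the bases, I would extend to general test functions in $L^4(\bar{\Omega};L^4(0,T;\mathbb{H}^1))$ by density of $\bigcup_n\mathbb{H}_n^{K}$ in $\mathbb{H}_0^1(K)$ and of $\bigcup_n\mathbb{H}_n^{D}$ in $\mathbb{H}^1(D)$, thereby recovering the distributional formulation of \eqref{SLLG3}. The uniform estimate \eqref{kdgj} then follows from the bounds \eqref{MMpci2}--\eqref{PMpci2} (which transfer to the barred sequence by equality of laws), combined with the $\bar{\mathbb{P}}$-a.s.\ weak convergences in $\mathbb{D}([0,T];\mathbb{H}^1_w)$ and the weak-$\ast$ convergences in \eqref{sjdyxtj}: Fatou's lemma together with the lower semi-continuity of the $\mathbb{H}^1$, $\mathbb{L}^2$ and energy norms under weak convergence delivers the bounds for $\bm{m}^k,\bm{\psi}_j^k,V^k,\bm{H}_s^k,G^k(\bm{m}^k)$, with the supremum inside the expectation retained via Fatou applied to the sup-functional. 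The main obstacle is the identification of the stochastic integrals: one must check that $\bm{m}^k$ is progressively measurable with respect to the filtration generated by $(W^k,\eta^k)$ so that the limit integrals against $W^k$ and $\tilde{\eta}^k$ are well defined; this is ensured by the fact that each Galerkin term is $\bar{\mathscr{F}}_t$-adapted and adaptedness is preserved under the $\mathbb{D}([0,T];X^{-1/2})$ convergence of Skorokhod type, after which the It\^o--L\'evy convergence is routine.
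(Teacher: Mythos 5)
Your proposal is correct and takes essentially the same route as the paper: pass to the limit in the Galerkin identities on the Skorokhod space using \eqref{sltj}--\eqref{Hnk} and Lemmas \ref{ydsl}--\ref{Vpk}, invoke the Vitali theorem for the nonlinear and stochastic terms, and obtain \eqref{kdgj} from the uniform bounds of Lemma \ref{SPMpci} via lower semi-continuity of norms under weak/weak-$\ast$ convergence. The only cosmetic difference is that you test against the spectral bases $\theta_h,e_h$ and then pass by density, whereas the paper works directly with general test functions $v,\xi\in L^4(\bar{\Omega};L^4(0,T;X^{1/2}))$, establishing $L^2(\bar{\Omega}\times[0,T])$-convergence of the functionals $M_n\to M$ and $L_n\to L$ and then using the weakly c\`adl\`ag property of $\bm{m}^k$, $\bm{\psi}_j^k$ to upgrade from $m$-a.e.\ to all $t$; both organizations lead to the same identifications.
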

\begin{proof}
Let $t\in[0,T]$, $v\in L^4(\bar{\Omega};L^4(0,T;X^{\beta}))$, where $\beta=\frac{1}{2}$. We write
\begin{align}\label{M1}
&M_n(\bar{\bm{m}}_n,\bar{\bm{\psi}}_{jn},\bar{W}_n,\bar{\eta}_n,v)(t):=\alpha_{X^{-\beta}}\langle\bar{\bm{m}}_n(0),v\rangle_{X^{\beta}}
-\int_{0}^{t}\langle\bar{\bm{m}}_n(s)\times\partial_s\bar{\bm{m}}_n(s),v\rangle_{\mathbb{L}^2}ds\nonumber\\
&\quad +\int_{0}^{t}\langle\bar{\bm{H}}_n(s),v\rangle ds
-\int_{0}^{t}\langle k(|\bar{\bm{m}}_n(s)|^2-1)\bar{\bm{m}}_n(s),v\rangle ds
-\int_{0}^{t}\langle G_n(\bar{\bm{m}}_n(s)),v\rangle d\bar{W}_n(s)\\
&\quad -\frac{1}{2}\int_{0}^{t}\langle G'_n(\bar{\bm{m}}_n(s))[G_n(\bar{\bm{m}}_n(s))],v\rangle ds
-\int_{0}^{t}\int_B\langle F_n(l,\bar{\bm{m}}_n(s-)),v\rangle\tilde{\bar{\eta}}_n(ds,dl),\nonumber
\end{align}
and
\begin{align}\label{M2}
&M(\bm{m}^k,\bm{\psi}_j^k,W^k,\eta^k,v)(t):=\alpha_{X^{-\beta}}\langle\bm{m}^k(0),v\rangle_{X^{\beta}}
-\int_{0}^{t}\langle\bm{m}^k(s)\times\partial_s\bm{m}^k(s),v\rangle_{\mathbb{L}^2}ds\nonumber\\
&\quad +\int_{0}^{t}\langle\bm{H}^k(s),v\rangle ds
-\int_{0}^{t}\langle k(|\bm{m}^k(s)|^2-1)\bm{m}^k(s),v\rangle ds
-\int_{0}^{t}\langle G^k(\bm{m}^k(s)),v\rangle dW^k(s)\\
&\quad -\frac{1}{2}\int_{0}^{t}\langle {G^k}'(\bm{m}^k(s))[G^k(\bm{m}^k(s))],v\rangle ds
-\int_{0}^{t}\int_B\langle F^k(l,\bm{m}^k(s-)),v\rangle\tilde{\eta}^k(ds,dl).\nonumber
\end{align}
Let $\xi\in L^4(\bar{\Omega};L^4(0,T;X^{\beta}))$, where $\beta=\frac{1}{2}$. Notice that
\begin{align}\label{L1}
L_n(\bar{\bm{\psi}}_{jn},\bar{\bm{m}}_n,\xi)(t)
:=&i_{X^{-\beta}}\langle\bar{\bm{\psi}}_{jn}(0),\xi\rangle_{X^{\beta}}
-\frac{1}{2}\int_{0}^{t}\langle\Delta\bar{\bm{\psi}}_{jn}(s),\xi\rangle_{\mathbb{L}^2}ds\nonumber\\
&+\int_{0}^{t}\langle\bar{V}_n\bar{\bm{\psi}}_{jn},\xi\rangle ds
-\frac{1}{2}\int_{0}^{t}\langle\bar{\bm{m}}_n\cdot\hat{\bm{\sigma}}\bar{\bm{\psi}}_{jn},\xi\rangle ds,
\end{align}
and
\begin{align}\label{L2}
L(\bm{\psi}_j^k,\bm{m}^k,\xi)(t)
:=&i_{X^{-\beta}}\langle\bm{\psi}_j^k(0),\xi\rangle_{X^{\beta}}
-\frac{1}{2}\int_{0}^{t}\langle\Delta\bm{\psi}_j^k(s),\xi\rangle_{\mathbb{L}^2}ds\nonumber\\
&+\int_{0}^{t}\langle V^k\bm{\psi}_j^k,\xi\rangle ds
-\frac{1}{2}\int_{0}^{t}\langle\bm{m}^k\cdot\hat{\bm{\sigma}}\bm{\psi}_j^k,\xi\rangle ds.
\end{align}
In order to demonstrate that $(\Omega, \mathscr{F}, (\mathscr{F}_t)_{t\geq0}, \mathbb{P}, \bm{\Psi}^k,\bm{m}^k, V^k, \rho^k,\bm{s}^k,\bm{H}_s^k,W^k,\eta^k)$ is a weak martingale solution of the system \eqref{SLLG3}, we divide the proof into the following four steps:

Step 1: We intend to prove the followings for all $v\in L^4(\bar{\Omega};L^4(0,T;X^{\beta}))$:
\begin{align}
&{\rm(a)}\ \lim\limits_{n\rightarrow\infty}\bar{\mathbb{E}}\bigg[\int_{0}^{T}|\langle\bar{\bm{m}}_n(t)-\bm{m}^k(t),v\rangle
_{\mathbb{L}^2}|^2dt\bigg]=0,\label{tj001}\\
&{\rm(b)}\ \lim\limits_{n\rightarrow\infty}\bar{\mathbb{E}}\bigg
[\int_{0}^{T}|M_n(\bar{\bm{m}}_n,\bar{\bm{\psi}}_{jn},\bar{W}_n,\bar{\eta}_n,v)(t)
-M(\bm{m}^k,\bm{\psi}_j^k,W^k,\eta^k,v)(t)|^2dt\bigg]=0.\label{tj002}
\end{align}
To prove \eqref{tj001}, it follows from \eqref{sltj} that $\bar{\bm{m}}_{n}\rightarrow\bm{m}^k$ in $L^4(0,T;\mathbb{L}^4)$, $\bar{\mathbb{P}}$-a.s., which implies
\begin{align*}
&\int_{0}^{T}|\langle\bar{\bm{m}}_n(t)-\bm{m}^k(t),v\rangle_{\mathbb{L}^2}|^2dt
\leq|v|_{L^4(0,T;\mathbb{L}^4)}^2|\bar{\bm{m}}_n(t)-\bm{m}^k(t)|_{L^4(0,T;\mathbb{L}^4)}^2
\rightarrow 0,~\bar{\mathbb{P}}\text{-}a.s.
\end{align*}
In addition, for all $t\in[0,T]$, $r\in(1,2)$ and all $n\in\mathbb{N}$, it holds that
\begin{align*}
&\bar{\mathbb{E}}\bigg[\bigg|\int_{0}^{T}|\langle\bar{\bm{m}}_n(t)-\bm{m}^k(t),v\rangle_{\mathbb{L}^2}|^2dt\bigg|^r\bigg]
\leq C\bar{\mathbb{E}}\big[|v|_{L^4(0,T;\mathbb{L}^2)}^{2r}|\bar{\bm{m}}_n(t)-\bm{m}^k(t)|_{L^4(0,T;\mathbb{L}^2)}^{2r}\big]\\
&\leq C\big(\bar{\mathbb{E}}\big[|v|_{L^4(0,T;\mathbb{L}^2)}^{4}\big]\big)^{\frac{r}{2}}
\big(\bar{\mathbb{E}}\big[|\bar{\bm{m}}_n(t)|_{L^4(0,T;\mathbb{L}^2)}^{\frac{4r}{2-r}}
+|\bm{m}^k(t)|_{L^4(0,T;\mathbb{L}^2)}^{\frac{4r}{2-r}}\big]\big)^{\frac{2-r}{2}}
\leq C.
\end{align*}
Therefore, by the Vitali convergence theorem, we have
\begin{align*}
\lim\limits_{n\rightarrow\infty}\bar{\mathbb{E}}
\bigg[\int_{0}^{T}|\langle\bar{\bm{m}}_n(t)-\bm{m}^k(t),v\rangle_{\mathbb{L}^2}|^2dt\bigg]=0.
\end{align*}
Next, we show the proof of \eqref{tj002}. According to the Fubini theorem, we infer that
\begin{align*}
&\bar{\mathbb{E}}\bigg[\int_{0}^{T}|M_n(\bar{\bm{m}}_n,\bar{\bm{\psi}}_{jn},\bar{W}_n,\bar{\eta}_n,v)(t)
-M(\bm{m}^k,\bm{\psi}_j^k,W^k,\eta^k,v)(t)|^2dt\bigg]\\
&=\int_{0}^{T}\bar{\mathbb{E}}\big[|M_n(\bar{\bm{m}}_n,\bar{\bm{\psi}}_{jn},\bar{W}_n,\bar{\eta}_n,v)(t)
-M(\bm{m}^k,\bm{\psi}_j^k,W^k,\eta^k,v)(t)|^2\big]dt.
\end{align*}
Hence, it suffices to show that each term on the right-hand side of \eqref{M1} converges to each term on the right-hand side of \eqref{M2} in $L^2(\bar{\Omega};L^2(0,T;\mathbb{L}^2))$.
By applying the fact that $\bar{\bm{m}}_n\rightarrow\bm{m}^k$ in $\mathbb{D}([0,T];\mathbb{H}_w^1)$, $\bar{\mathbb{P}}$-a.s., and the continuity of $\bm{m}^k$ at $t=0$, we have
\begin{align*}
\lim\limits_{n\rightarrow\infty}\bar{\mathbb{E}}[|_{X^{-\beta}}\langle\bar{\bm{m}}_n(0)-\bm{m}^k(0),v\rangle_{X^{\beta}}|^2]
=\lim\limits_{n\rightarrow\infty}\bar{\mathbb{E}}[|\langle\bar{\bm{m}}_n(0)-\bm{m}^k(0),v\rangle_{\mathbb{L}^2}|^2]=0.
\end{align*}
Using the facts that $\bar{\bm{m}}_n\rightarrow\bm{m}^k$ in $L_w^2(0,T;\mathbb{H}^1)$, $\bar{\mathbb{P}}$-a.s.,
$\bar{\bm{s}}_n\rightarrow\bm{s}^k$ weak$^{\ast}$ in $L^{2r}(\bar{\Omega};L^{\infty}(\mathbb{R}^{+};\mathbb{L}^{s}))$, $1\leq s\leq3$,
$w'(\bar{\bm{m}}_n)\rightarrow w'(\bm{m}^k)$ weak$^{\ast}$ in $L^{2r}(\bar{\Omega};L^{\infty}(\mathbb{R}^{+};\mathbb{L}^{s}))$, $1\leq s\leq2$, and $\bar{\bm{H}}_{sn}\rightarrow\bm{H}_{s}^k$ in $L^{2r}(\bar{\Omega};L^2(0,T;\mathbb{L}^2(\mathbb{R}^3)))$ yield that
\begin{align*}
\lim\limits_{n\rightarrow\infty}\bar{\mathbb{E}}\bigg[\bigg|\int_{0}^{t}
\langle\bar{\bm{H}}_n-\bm{H}^k,v\rangle_{\mathbb{L}^2}ds\bigg|^2\bigg]=0.
\end{align*}
Furthermore, we have
\begin{align*}
&\bar{\mathbb{E}}\bigg[\bigg|\int_{0}^{t}\langle\bar{\bm{H}}_n,v\rangle_{\mathbb{L}^2}ds\bigg|^2\bigg]
\leq\bar{\mathbb{E}}\bigg[\int_{0}^{t}|\bar{\bm{H}}_n|_{\mathbb{L}^2}^2|v|_{\mathbb{L}^2}^2ds\bigg]\\
&\leq\bar{\mathbb{E}}\bigg[\int_{0}^{t}\big(|\Delta\bar{\bm{m}}_n|_{\mathbb{L}^2}^2+|w'(\bar{\bm{m}}_n)|_{\mathbb{L}^2}^2
+|\bar{\bm{H}}_{sn}|_{\mathbb{L}^2}^2+\frac{1}{2}|\bar{\bm{s}}_n|_{\mathbb{L}^2}^2\big)
|v|_{\mathbb{L}^2}^2ds\bigg]\\
&\leq\bar{\mathbb{E}}\big[\big(|\bar{\bm{m}}_n|_{L^4(0,T;\mathbb{H}^1)}^2+|w'(\bar{\bm{m}}_n)|_{L^4(0,T;\mathbb{L}^2)}^2
+|\bar{\bm{H}}_{sn}|_{L^4(0,T;\mathbb{L}^2)}^2+\frac{1}{2}|\bar{\bm{s}}_n|_{L^4(0,T;\mathbb{L}^2)}^2\big)
|v|_{L^4(0,T;\mathbb{L}^2)}^2\big]\\
&\leq C,
\end{align*}
thus we can conclude from the Vitali convergence theorem that
\begin{align*}
\lim\limits_{n\rightarrow\infty}\int_{0}^{T}\bar{\mathbb{E}}\bigg[\bigg|\int_{0}^{t}
\langle\bar{\bm{H}}_n-\bm{H}^k,v\rangle_{\mathbb{L}^2}ds\bigg|^2\bigg]dt=0.
\end{align*}
Applying \eqref{sjdyxtj}, we can infer that $(|\bar{\bm{m}}_n|^2-1)\bar{\bm{m}}_n\rightarrow(|\bm{m}^k|^2-1)\bm{m}^k$ weak$^{\ast}$ in $L^2(\bar{\Omega},L^{\infty}(0,T;\mathbb{L}^2))$. Then,
\begin{align*}
\lim\limits_{n\rightarrow\infty}\bar{\mathbb{E}}\bigg[\bigg|\int_{0}^{t}
\langle k(|\bar{\bm{m}}_n|^2-1)\bar{\bm{m}}_n-k(|\bm{m}^k|^2-1)\bm{m}^k,v\rangle_{\mathbb{L}^2}ds\bigg|^2\bigg]=0.
\end{align*}
Since
\begin{align*}
&\bar{\mathbb{E}}\bigg[\bigg|\int_{0}^{t}\langle(|\bar{\bm{m}}_n|^2-1)\bar{\bm{m}}_n,v\rangle_{\mathbb{L}^2}ds\bigg|^2\bigg]
\leq C\bar{\mathbb{E}}\big[|\bar{\bm{m}}_n|_{L^4(0,T;\mathbb{L}^4)}^2||\bar{\bm{m}}_n|^2-1|
_{L^{2}(0,T;\mathbb{L}^2)}^2|v|_{L^4(0,T;\mathbb{L}^4)}^2\big]\leq C,
\end{align*}
applying the Vitali convergence theorem, we derive that
\begin{align*}
\lim\limits_{n\rightarrow\infty}\int_{0}^{T}\bar{\mathbb{E}}\bigg[\bigg|\int_{0}^{t}
\langle k(|\bar{\bm{m}}_n|^2-1)\bar{\bm{m}}_n-k(|\bm{m}^k|^2-1)\bm{m}^k,v\rangle_{\mathbb{L}^2}ds\bigg|^2\bigg]dt=0.
\end{align*}
Next, employing Lemma \ref{ydsl}, we get
\begin{align*}
\lim\limits_{n\rightarrow\infty}\bar{\mathbb{E}}\bigg[\bigg|\int_{0}^{t}
\langle\bar{\bm{m}}_n\times\partial_s\bar{\bm{m}}_n-\bm{m}^k\times\partial_s\bm{m}^k,v\rangle_{\mathbb{L}^2}ds
\bigg|^2\bigg]=0,
\end{align*}
and then we make use of the H\"{o}lder inequality and Lemma \ref{SPMpci} to obtain
\begin{align*}
&\bar{\mathbb{E}}\bigg[\bigg|\int_{0}^{t}\langle\bar{\bm{m}}_n\times\partial_s\bar{\bm{m}}_n,v\rangle_{\mathbb{L}^2}ds\bigg|^2\bigg]
\leq C\bar{\mathbb{E}}\big[|\bar{\bm{m}}_n|_{L^4(0,T;\mathbb{L}^4)}^2|\partial_s\bar{\bm{m}}_n|
_{L^{2}(0,T;\mathbb{L}^2)}^2|v|_{L^4(0,T;\mathbb{L}^4)}^2\big]\leq C.
\end{align*}
Thus, the Vitali convergence theorem implies that
\begin{align*}
\lim\limits_{n\rightarrow\infty}\int_{0}^{T}\bar{\mathbb{E}}\bigg[\bigg|\int_{0}^{t}
\langle\bar{\bm{m}}_n\times\partial_s\bar{\bm{m}}_n-\bm{m}^k\times\partial_s\bm{m}^k,v\rangle_{\mathbb{L}^2}ds
\bigg|^2\bigg]dt=0.
\end{align*}
According to Lemma \ref{gdsl1}, for any $v\in L^4(\bar{\Omega};L^4(0,T;X^{\beta}))\subset L^4(\bar{\Omega};L^4(0,T;\mathbb{L}^2))$, and $t\in[0,T]$, we have
\begin{align*}
\lim\limits_{n\rightarrow\infty}\bar{\mathbb{E}}\bigg[\bigg|\int_{0}^{t}
\langle G'_n(\bar{\bm{m}}_n)[G_n(\bar{\bm{m}}_n)]
-G{^k}'(\bm{m}^k)[G^k(\bm{m}^k)],v\rangle_{\mathbb{L}^2}ds\bigg|^2\bigg]=0.
\end{align*}
Using Assumption \ref{ass1}, the H\"{o}lder inequality and Lemma \ref{SPMpci}, we get
\begin{align*}
&\bar{\mathbb{E}}\bigg[\bigg|\int_{0}^{t}
\langle G'_n(\bar{\bm{m}}_n)[G_n(\bar{\bm{m}}_n)]-G{^k}'(\bm{m}^k)[G^k(\bm{m}^k)],
v\rangle_{\mathbb{L}^2}ds\bigg|^2\bigg]\\
&\leq\bar{\mathbb{E}}\bigg[\int_{0}^{t}|G'_n(\bar{\bm{m}}_n)[G_n(\bar{\bm{m}}_n)]-G{^k}'(\bm{m}^k)[G^k(\bm{m}^k)]|
_{\mathbb{L}^2}^2|v|_{\mathbb{L}^2}^2ds\bigg]\\
&\leq\frac{C}{2}\bar{\mathbb{E}}\bigg[\int_{0}^{t}|v|_{\mathbb{L}^2}^4ds
+K_2^2\int_{0}^{t}|\bar{\bm{m}}_n-\bm{m}^k|_{\mathbb{L}^2}^4ds\bigg]\\
&\leq\frac{C}{2}\bar{\mathbb{E}}[|v|_{L^4(0,T;\mathbb{L}^2)}^4]
+\frac{CK_2^2}{2}\bar{\mathbb{E}}[|\bar{\bm{m}}_n|_{L^4(0,T;\mathbb{L}^2)}^4+|\bm{m}^k|_{L^4(0,T;\mathbb{L}^2)}^4]\\
&\leq C.
\end{align*}
Thus, by the Vitali convergence theorem, we find that
\begin{align*}
\lim\limits_{n\rightarrow\infty}\int_{0}^{T}\bar{\mathbb{E}}\bigg[\bigg|\int_{0}^{t}
\langle G'_n(\bar{\bm{m}}_n)[G_n(\bar{\bm{m}}_n)]-G{^k}'(\bm{m}^k)[G^k(\bm{m}^k)],
v\rangle_{\mathbb{L}^2}ds\bigg|^2\bigg]dt=0.
\end{align*}
According to Lemma \ref{gdsl1} and the It\^{o}-L\'{e}vy isometry, we know that for any $v\in L^4(\bar{\Omega};L^4(0,T;X^{\beta})$ and $t\in[0,T]$,
\begin{align*}
&\lim\limits_{n\rightarrow\infty}\bar{\mathbb{E}}\bigg[\bigg|\int_{0}^{t}
\langle G_n(\bar{\bm{m}}_n)-G^k(\bm{m}^k),v\rangle_{\mathbb{L}^2}dW^{k}(s)\bigg|^2\bigg]\\
&=\lim\limits_{n\rightarrow\infty}\bar{\mathbb{E}}\bigg[\bigg|\int_{0}^{t}\sum\limits_{i\geq1}
\langle G_n(\bar{\bm{m}}_n)-G^k(\bm{m}^k),v\rangle_{\mathbb{L}^2}\tilde{e}_idW_i^{k}(s)\bigg|^2\bigg]\\
&=\lim\limits_{n\rightarrow\infty}\bar{\mathbb{E}}\bigg[\int_{0}^{t}\bigg|\sum\limits_{i\geq1}
\langle G_{in}(\bar{\bm{m}}_{n})-G_i^k(\bm{m}^k),v\rangle_{\mathbb{L}^2}\bigg|^2ds\bigg]
=0.
\end{align*}
We can also apply the It\^{o}-L\'{e}vy isometry, Assumption \ref{ass1}, the H\"{o}lder inequality, and Lemma \ref{SPMpci} to see that
{\small\begin{align*}
&\bar{\mathbb{E}}\bigg[\bigg|\int_{0}^{t}
\langle G_n(\bar{\bm{m}}_n)-G^k(\bm{m}^k),v\rangle_{\mathbb{L}^2}dW^k(s)\bigg|^2\bigg]
=\bar{\mathbb{E}}\bigg[\bigg|\int_{0}^{t}\sum\limits_{i\geq1}
\langle G_n(\bar{\bm{m}}_n)-G^k(\bm{m}^k),v\rangle_{\mathbb{L}^2}\tilde{e}_idW_i^k(s)\bigg|^2\bigg]\\
&\leq\bar{\mathbb{E}}\bigg[\int_{0}^{t}\bigg|\sum\limits_{i\geq1}\langle G_n(\bar{\bm{m}}_n)-G^k(\bm{m}^k),v\rangle_{\mathbb{L}^2}\tilde{e}_i\bigg|^2ds\bigg]
\leq\bar{\mathbb{E}}\bigg[\int_{0}^{t}\bigg(\sum\limits_{i\geq1}|G_{in}(\bar{\bm{m}}_n)-G_i^k(\bm{m}^k)|_{\mathbb{L}^2}\bigg)^2
|v|_{\mathbb{L}^2}^2ds\bigg]\\
&\leq\frac{C}{2}\bar{\mathbb{E}}\bigg[\int_{0}^{t}|v|_{\mathbb{L}^2}^4ds
+K_2^2\int_{0}^{t}|\bar{\bm{m}}_n-\bm{m}^k|_{\mathbb{L}^2}^4ds\bigg]
\leq\frac{C}{2}\bar{\mathbb{E}}[|v|_{L^4(0,T;\mathbb{L}^2)}^4]
+\frac{CK_2^2}{2}\bar{\mathbb{E}}[|\bar{\bm{m}}_n|_{L^4(0,T;\mathbb{L}^2)}^4
+|\bm{m}^k|_{L^4(0,T;\mathbb{L}^2)}^4]\\
&\leq C.
\end{align*}}
Similarly, we have
\begin{align*}
\lim\limits_{n\rightarrow\infty}\int_{0}^{T}\bar{\mathbb{E}}\bigg[\bigg|\int_{0}^{t}
\langle G_n(\bar{\bm{m}}_n)-G^k(\bm{m}^k),v\rangle_{\mathbb{L}^2}dW^k(s)\bigg|^2\bigg]dt=0.
\end{align*}
According to Lemma \ref{gdsl1} and the It\^{o}-L\'{e}vy isometry, for any $v\in L^4(\bar{\Omega};L^4(0,T;X^{\beta}))$ and $t\in[0,T]$, we arrive at
\begin{align*}
&\lim\limits_{n\rightarrow\infty}\bar{\mathbb{E}}\bigg[\bigg|\int_{0}^{t}\int_B
\langle F_n(l,\bar{\bm{m}}_n)-F^k(l,\bm{m}^k),v\rangle_{\mathbb{L}^2}\tilde{\eta}^k(ds,dl)\bigg|^2\bigg]\\
&=\lim\limits_{n\rightarrow\infty}\bar{\mathbb{E}}\bigg[\int_{0}^{t}\int_B
\big|\langle F_n(l,\bar{\bm{m}}_n)-F^k(l,\bm{m}^k),v\rangle_{\mathbb{L}^2}\big|^2\mu(dl)ds\bigg]=0.
\end{align*}
We use the It\^{o}-L\'{e}vy isometry, Assumption \ref{ass1}, the H\"{o}lder inequality, as well as Lemma \ref{SPMpci} to
deduce that
\begin{align*}
&\bar{\mathbb{E}}\bigg[\bigg|\int_{0}^{t}\int_B
\langle F_n(l,\bar{\bm{m}}_n)-F^k(l,\bm{m}^k),v\rangle_{\mathbb{L}^2}\tilde{\eta}^k(ds,dl)\bigg|^2\bigg]\\
&=\bar{\mathbb{E}}\bigg[\int_{0}^{t}\int_B\big|\langle F_n(l,\bar{\bm{m}}_n)-F^k(l,\bm{m}^k),
v\rangle_{\mathbb{L}^2}\big|^2\mu(dl)ds\bigg]\\
&\leq\frac{C}{2}\bar{\mathbb{E}}\bigg[\int_{0}^{t}|v|_{\mathbb{L}^2}^4ds
+K_2^2\int_{0}^{t}|\bar{\bm{m}}_n-\bm{m}^k|_{\mathbb{L}^2}^4ds\bigg]\\
&\leq\frac{C}{2}\bar{\mathbb{E}}[|v|_{L^4(0,T;\mathbb{L}^2)}^4]
+\frac{CK_2^2}{2}\bar{\mathbb{E}}[|\bar{\bm{m}}_n|_{L^4(0,T;\mathbb{L}^2)}^4
+|\bm{m}^k|_{L^4(0,T;\mathbb{L}^2)}^4]\leq C.
\end{align*}
Now, by the Vitali convergence theorem, we observe that
\begin{align*}
\lim\limits_{n\rightarrow\infty}\int_{0}^{T}\bar{\mathbb{E}}\bigg[\bigg|\int_{0}^{t}
\langle F_n(l,\bar{\bm{m}}_n)-F^k(l,\bm{m}^k),v\rangle_{\mathbb{L}^2}\tilde{\eta}^k(ds,dl)\bigg|^2\bigg]dt=0.
\end{align*}
Hence the proof of \eqref{tj002} is complete.

Step 2: we prove that for all $\xi\in L^4(\bar{\Omega};L^4(0,T;X^{\beta}))$, the followings hold:
\begin{align}
&{\rm(a)}\ \lim\limits_{n\rightarrow\infty}\bar{\mathbb{E}}\bigg[\int_{0}^{T}|\langle\bar{\bm{\psi}}_{jn}(t)
-\bm{\psi}_j^k(t),\xi\rangle
_{\mathbb{L}^2}|^2dt\bigg]=0,\label{11tj001}\\
&{\rm(b)}\ \lim\limits_{n\rightarrow\infty}\bar{\mathbb{E}}\bigg
[\int_{0}^{T}|L_n(\bar{\bm{\psi}}_{jn},\bar{\bm{m}}_n,\xi)(t)-L(\bm{\psi}_j^k,\bm{m}^k,\xi)(t)|^2dt\bigg]=0.\label{11tj002}
\end{align}
To prove \eqref{11tj001},  we observe that, according to \eqref{sltj}, the fact that $\bar{\bm{\psi}}_{jn}\rightarrow\bm{\psi}_j^k$ in $L^4(0,T;\mathbb{L}^4)$, $\bar{\mathbb{P}}$-a.s., there holds that
\begin{align*}
&\int_{0}^{T}|\langle\bar{\bm{\psi}}_{jn}(t)-\bm{\psi}_j^k(t),\xi\rangle_{\mathbb{L}^2}|^2dt
\leq|\xi|_{L^4(0,T;\mathbb{L}^4)}^2|\bar{\bm{\psi}}_{jn}(t)-\bm{\psi}_j^k(t)|_{L^4(0,T;\mathbb{L}^4)}^2
\rightarrow 0,~\bar{\mathbb{P}}\text{-}a.s.,
\end{align*}
Moreover, for all $t\in[0,T]$, $r\in(1,2)$ and all $n\in\mathbb{N}$,
\begin{align*}
&\bar{\mathbb{E}}\bigg[\bigg|\int_{0}^{T}|\langle\bar{\bm{\psi}}_{jn}(t)-\bm{\psi}_j^k(t),
\xi\rangle_{\mathbb{L}^2}|^2dt\bigg|^r\bigg]
\leq C\bar{\mathbb{E}}\big[|\xi|_{L^4(0,T;\mathbb{L}^2)}^{2r}|\bar{\bm{\psi}}_{jn}(t)-\bm{\psi}_j^k(t)|
_{L^4(0,T;\mathbb{L}^2)}^{2r}\big]\\
&\leq C\big(\bar{\mathbb{E}}\big[|\xi|_{L^4(0,T;\mathbb{L}^2)}^{4}\big]\big)^{\frac{r}{2}}
\bigg(\bar{\mathbb{E}}\bigg[|\bar{\bm{\psi}}_{jn}(t)|_{L^4(0,T;\mathbb{L}^2)}^{\frac{4r}{2-r}}
+|\bm{\psi}_j^k(t)|_{L^4(0,T;\mathbb{L}^2)}^{\frac{4r}{2-r}}\bigg]\bigg)^{\frac{2-r}{2}}
\leq C,
\end{align*}
thus by the Vitali convergence theorem, we deduce that
\begin{align*}
\lim\limits_{n\rightarrow\infty}\bar{\mathbb{E}}
\bigg[\int_{0}^{T}|\langle\bar{\bm{\psi}}_{jn}(t)-\bm{\psi}_j^k(t),\xi\rangle_{\mathbb{L}^2}|^2dt\bigg]=0.
\end{align*}
Next, we intend to prove \eqref{11tj002}. Using the Fubini theorem, we can show that
\begin{align*}
&\bar{\mathbb{E}}\bigg[\int_{0}^{T}|L_n(\bar{\bm{\psi}}_{jn},\bar{\bm{m}}_n,\xi)(t)
-L(\bm{\psi}_j^k,\bm{m}^k,\xi)(t)|^2dt\bigg]\\
&=\int_{0}^{T}\bar{\mathbb{E}}\big[|L_n(\bar{\bm{\psi}}_{jn},\bar{\bm{m}}_n,\xi)(t)
-L(\bm{\psi}_j^k,\bm{m}^k,\xi)(t)|^2\big]dt.
\end{align*}
Then, it is enough to prove that each term on the right-hand side of \eqref{L1} converges to each term on the right-hand side of \eqref{L2} in $L^2(\bar{\Omega};L^2(0,T;\mathbb{L}^2))$.

Utilizing \eqref{sltj}, the fact that $\bar{\bm{\psi}}_{jn}\rightarrow\bm{\psi}_j^k$ in $\mathbb{D}([0,T];\mathbb{H}_w^1)$,
$\bar{\mathbb{P}}$-a.s., and the continuity of $\bm{\psi}_j^k$ at $t=0$, we infer that
\begin{align*}
\lim\limits_{n\rightarrow\infty}\bar{\mathbb{E}}[|_{X^{-\beta}}\langle\bar{\bm{\psi}}_{jn}(0)
-\bm{\psi}_j^k(0),\xi\rangle_{X^{\beta}}|^2]=0.
\end{align*}
Here, we can apply \eqref{sltj}, the fact that $\bar{\bm{\psi}}_{jn}\rightarrow\bm{\psi}_j^k$ in $L_w^2(0,T;\mathbb{H}^1)$, $\bar{\mathbb{P}}$-a.s. to find that for $\nabla\xi\in L^2(\bar{\Omega};L^2(0,T,\mathbb{L}^2))$,
\begin{align*}
\lim\limits_{n\rightarrow\infty}\bar{\mathbb{E}}\bigg[\bigg|\int_{0}^{t}
\langle\Delta\bar{\bm{\psi}}_{jn}(s)-\Delta\bm{\psi}_j^k(s),\xi\rangle_{\mathbb{L}^2}ds\bigg|^2\bigg]
=\lim\limits_{n\rightarrow\infty}\bar{\mathbb{E}}\bigg[\bigg|\int_{0}^{t}
\langle\nabla\bar{\bm{\psi}}_{jn}(s)-\nabla\bm{\psi}_j^k(s),\nabla\xi\rangle_{\mathbb{L}^2}ds\bigg|^2\bigg]
=0.
\end{align*}
In addition, since
\begin{align*}
\sup\limits_{n\in\mathbb{N}}\bar{\mathbb{E}}\bigg[\int_{0}^{t}|\Delta\bar{\bm{\psi}}_{jn}|_{\mathbb{L}^2}^2ds\bigg]
\leq C\sup\limits_{n\in\mathbb{N}}\bar{\mathbb{E}}\bigg[\int_{0}^{t}|\bar{\bm{\psi}}_{jn}|_{\mathbb{L}^2}^2ds\bigg]\leq C,
\end{align*}
we can use the Vitali convergence theorem to obtain
\begin{align*}
\lim\limits_{n\rightarrow\infty}\int_{0}^{T}\bar{\mathbb{E}}\bigg[\bigg|\int_{0}^{t}
\langle\Delta\bar{\bm{\psi}}_{jn}(s)-\Delta\bm{\psi}_j^k(s),\xi\rangle_{\mathbb{L}^2}ds
\bigg|^2\bigg]dt=0.
\end{align*}
It follows from \eqref{sltj}, the fact that $\bar{\bm{\psi}}_{jn}\rightarrow\bm{\psi}_j^k$ in $L^4(0,T;\mathbb{L}^4)$, $\bar{\mathbb{P}}$-a.s.,
and $\bar{V}_n\rightarrow V^k$ weak$^{\ast}$ in $L^2(\bar{\Omega};L^{\infty}(\mathbb{R}^{+};\mathbb{L}^{6}))$ that for $\xi\in L^4(\bar{\Omega};L^4(0,T,X^{\beta}))$,
\begin{align*}
&\lim\limits_{n\rightarrow\infty}\bar{\mathbb{E}}\bigg[\bigg|\int_{0}^{t}
\langle\bar{V}_n\bar{\bm{\psi}}_{jn}(s)-V^k\bm{\psi}_{j}^k(s),\xi\rangle_{\mathbb{L}^2}ds\bigg|^2\bigg]\\
&\leq\lim\limits_{n\rightarrow\infty}\bar{\mathbb{E}}\big[|\bar{\bm{\psi}}_{jn}-\bm{\psi}_{j}^k|_{L^2(0,T;\mathbb{L}^2)}^2
|\bar{V}_n|_{L^4(0,T;\mathbb{L}^6)}^2|\xi|_{L^4(0,T;\mathbb{L}^3)}^2\big]\\
&\quad +\lim\limits_{n\rightarrow\infty}\bar{\mathbb{E}}\bigg[\bigg|\int_{0}^{t}\int_{K}(\bar{V}_n-V^k)\bm{\psi}_{j}^k\xi
d\bm{x}ds\bigg|^2\bigg]=0.
\end{align*}
The H\"{o}lder inequality and Lemma \ref{SPMpci} imply that
\begin{align*}
\bar{\mathbb{E}}\bigg[\bigg|\int_{0}^{t}\langle\bar{V}_n\bar{\bm{\psi}}_{jn},\xi\rangle_{\mathbb{L}^2}ds\bigg|^2\bigg]
\leq C\bar{\mathbb{E}}\big[|\bar{\bm{\psi}}_{jn}|_{L^4(0,T;\mathbb{L}^4)}^2|\bar{V}_n|
_{L^2(0,T;\mathbb{L}^2)}^2|\xi|_{L^4(0,T;\mathbb{L}^4)}^2\big]\leq C,
\end{align*}
therefore, we have
\begin{align*}
\lim\limits_{n\rightarrow\infty}\int_{0}^{T}\bar{\mathbb{E}}\bigg[\bigg|\int_{0}^{t}
\langle\bar{V}_n\bar{\bm{\psi}}_{jn}(s)-V^k\bm{\psi}_{j}^k(s),\xi\rangle_{\mathbb{L}^2}ds\bigg|^2\bigg]dt=0.
\end{align*}
Employing \eqref{sltj}, the facts that $\bar{\bm{m}}_n\rightarrow\bm{m}^k$ in $L^4(0,T;\mathbb{L}^4)$, $\bar{\mathbb{P}}$-a.s., and
 $\bar{\bm{\psi}}_{jn}\rightarrow\bm{\psi}_j^k$ in $L^4(0,T;\mathbb{L}^4)$, $\bar{\mathbb{P}}$-a.s. yield that
\begin{align*}
&\lim\limits_{n\rightarrow\infty}\bar{\mathbb{E}}\bigg[\bigg|\int_{0}^{t}
\langle\bar{\bm{m}}_n\cdot\hat{\bm{\sigma}}\bar{\bm{\psi}}_{jn}(s)
-\bm{m}^k\cdot\hat{\bm{\sigma}}\bm{\psi}_{j}^k(s),\xi\rangle_{\mathbb{L}^2}ds\bigg|^2\bigg]\\
&\leq\lim\limits_{n\rightarrow\infty}\bar{\mathbb{E}}\big[|\bar{\bm{\psi}}_{jn}-\bm{\psi}_{j}^k|_{L^4(0,T;\mathbb{L}^4)}^2
|\bar{\bm{m}}_n|_{L^2(0,T;\mathbb{L}^2)}^2
|\xi|_{L^4(0,T;\mathbb{L}^4)}^2\big]\\
&+\lim\limits_{n\rightarrow\infty}\bar{\mathbb{E}}\big[|\bar{\bm{m}}_n-\bm{m}^k|_{L^4(0,T;\mathbb{L}^4)}^2
|\bm{\psi}_{j}^k|_{L^2(0,T;\mathbb{L}^2)}^2
|\xi|_{L^4(0,T;\mathbb{L}^4)}^2\big]=0.
\end{align*}
Utilizing the H\"{o}lder inequality and Lemma \ref{SPMpci}, we have
\begin{align*}
\bar{\mathbb{E}}\bigg[\bigg|\int_{0}^{t}\langle\bar{\bm{m}}_n\cdot\hat{\bm{\sigma}}\bar{\bm{\psi}}_{jn},
\xi\rangle_{\mathbb{L}^2}ds\bigg|^2\bigg]
\leq C\bar{\mathbb{E}}\big[|\bar{\bm{\psi}}_{jn}|_{L^4(0,T;\mathbb{L}^4)}^2|\bar{\bm{m}}_n|
_{L^2(0,T;\mathbb{L}^2)}^2|\xi|_{L^4(0,T;\mathbb{L}^4)}^2\big]\leq C,
\end{align*}
thus, the Vitali convergence theorem leads to
\begin{align*}
\lim\limits_{n\rightarrow\infty}\int_{0}^{T}\bar{\mathbb{E}}\bigg[\bigg|\int_{0}^{t}
\langle\bar{\bm{m}}_n\cdot\hat{\bm{\sigma}}\bar{\bm{\psi}}_{jn}(s)
-\bm{m}^k\cdot\hat{\bm{\sigma}}\bm{\psi}_{j}^k(s),\xi\rangle_{\mathbb{L}^2}ds
\bigg|^2\bigg]dt=0.
\end{align*}
Hence, the proof of \eqref{11tj002} is completed.

Step 3: Since $(\bm{m}_n,\bm{\psi}_{jn})$ is a Galerkin approximation solution, for all $t\in[0,T]$, we have
\begin{align*}
\alpha{_{X^{-\beta}}}\langle\bm{m}_n(t),v\rangle_{X^{\beta}}=M_n(\bm{m}_n,\bm{\psi}_{jn},W_n,\eta_n,v)(t),~\mathbb{P}\text{-}a.s.,
\end{align*}
in particular,
\begin{align*}
\int_{0}^{T}\mathbb{E}\big[\big|\alpha{_{X^{-\beta}}}\langle\bm{m}_n(t),v\rangle_{X^{\beta}}
-M_n(\bm{m}_n,\bm{\psi}_{jn},W_n,\eta_n,v)(t)\big|^2\big]dt=0.
\end{align*}
From $\mathcal{L}(\bm{m}_n,\bm{\psi}_{jn},W_n,\eta_n)=\mathcal{L}(\bar{\bm{m}}_n,\bar{\bm{\psi}}_{jn},\bar{W}_n,\bar{\eta}_n)$, we can infer that
\begin{align*}
\int_{0}^{T}\bar{\mathbb{E}}\big[\big|\alpha{_{X^{-\beta}}}\langle\bar{\bm{m}}_n(t),v\rangle_{X^{\beta}}
-M_n(\bar{\bm{m}}_n,\bar{\bm{\psi}}_{jn},\bar{W}_n,\bar{\eta}_n,v)(t)\big|^2\big]dt=0.
\end{align*}
Therefore, from \eqref{tj001} and \eqref{tj002}, we obtain that
\begin{align*}
\int_{0}^{T}\bar{\mathbb{E}}\big[\big|\alpha{_{X^{-\beta}}}\langle\bm{m}^k(t),v\rangle_{X^{\beta}}
-M(\bm{m}^k,\bm{\psi}_j^k,W^k,\eta^k,v)(t)\big|^2\big]dt=0.
\end{align*}
Thus, for m-almost all $t\in[0,T]$ and $\bar{\mathbb{P}}$-almost all $\omega\in\bar{\Omega}$, it holds that
\begin{align}\label{MM2}
&\alpha{_{X^{-\beta}}}\langle\bm{m}^k(t),v\rangle_{X^{\beta}}-\alpha_{X^{-\beta}}\langle\bm{m}^k(0),v\rangle_{X^{\beta}}
+\int_{0}^{t}\langle\bm{m}^k(s)\times\partial_s\bm{m}^k(s),v\rangle_{\mathbb{L}^2}ds\nonumber\\
&~-\int_{0}^{t}\langle\bm{H}^k(s),v\rangle ds
+\int_{0}^{t}\langle k(|\bm{m}^k(s)|^2-1)\bm{m}^k(s),v\rangle ds
+\int_{0}^{t}\langle G^k(\bm{m}^k(s)),v\rangle dW^k(s)\\
&~+\frac{1}{2}\int_{0}^{t}\langle G{^k}'(\bm{m}^k(s))[G^k(\bm{m}^k(s))],v\rangle ds
+\int_{0}^{t}\int_B\langle F^k(\bm{m}^k(s-),l),v\rangle\tilde{\eta}^k(ds,dl)=0.\nonumber
\end{align}
Note that $\bm{m}^k$ is a $\mathscr{Z}_T$-valued random variable, $\bm{m}^k\in\mathbb{D}([0,T];\mathbb{H}_{w}^1)$ a.e., then
$\bm{m}^k$ is weakly c\`{a}dl\`{a}g with respect to time. Thus, the function on the left-hand side of \eqref{MM2} is c\`{a}dl\`{a}g in time. Therefore, for all $t\in[0,T]$ and $v\in L^4(\bar{\Omega};L^4(0,T;X^{\beta}))$, \eqref{MM2} holds.

Step 4: since $(\bm{m}_n,\bm{\psi}_{jn})$ is the Galerkin approximation solution, then for all $t\in[0,T]$, we have
\begin{align*}
i{_{X^{-\beta}}}\langle\bm{\psi}_{jn}(t),\xi\rangle_{X^{\beta}}=L_n(\bm{\psi}_{jn},\bm{m}_n,\xi)(t),~\mathbb{P}\text{-}a.s.
\end{align*}
In particular,
\begin{align*}
\int_{0}^{T}\mathbb{E}\big[\big|i{_{X^{-\beta}}}\langle\bm{\psi}_{jn}(t),\xi\rangle_{X^{\beta}}
-L_n(\bm{\psi}_{jn},\bm{m}_n,\xi)(t)\big|^2\big]dt=0.
\end{align*}
We further obtain using $\mathcal{L}(\bm{\psi}_{jn},\bm{m}_n)=\mathcal{L}(\bar{\bm{\psi}}_{jn},\bar{\bm{m}}_n)$ that
\begin{align*}
\int_{0}^{T}\bar{\mathbb{E}}\big[\big|i{_{X^{-\beta}}}\langle\bar{\bm{\psi}}_{jn}(t),\xi\rangle_{X^{\beta}}
-L_n(\bar{\bm{\psi}}_{jn},\bar{\bm{m}}_n,\xi)(t)\big|^2\big]dt=0.
\end{align*}
Note that, from \eqref{11tj001} and \eqref{11tj002}, we get
\begin{align*}
\int_{0}^{T}\bar{\mathbb{E}}\big[\big|\alpha{_{X^{-\beta}}}\langle\bm{\psi}_j^k(t),\xi\rangle_{X^{\beta}}
-L(\bm{\psi}_j^k,\bm{m}^k,\xi)(t)\big|^2\big]dt=0.
\end{align*}
Thus, for m-almost all $t\in[0,T]$ and $\bar{\mathbb{P}}$-almost all $\omega\in\bar{\Omega}$, we obtain
\begin{align}\label{LL2}
&i_{X^{-\beta}}\langle\bm{\psi}_j^k(t),\xi\rangle_{X^{\beta}}
-i_{X^{-\beta}}\langle\bm{\psi}_j^k(0),\xi\rangle_{X^{\beta}}
+\frac{1}{2}\int_{0}^{t}\langle\Delta\bm{\psi}_j^k(s),\xi\rangle_{\mathbb{L}^2}ds\nonumber\\
&-\int_{0}^{t}\langle V^k\bm{\psi}_j^k,\xi\rangle ds
+\frac{1}{2}\int_{0}^{t}\langle\bm{m}^k\cdot\hat{\bm{\sigma}}\bm{\psi}_j^k,\xi\rangle ds=0.
\end{align}
Since $\bm{\psi}_j^k$ is weakly c\`{a}dl\`{a}g respect to time, we deduce that the function on the left-hand side of \eqref{LL2} is c\`{a}dl\`{a}g in time. Therefore, for all $t\in[0,T]$ and $\xi\in L^4(\bar{\Omega};L^4(0,T;X^{\beta}))$, \eqref{LL2} holds. In conclusion, for all $t\in[0,T]$, $\bar{\mathbb{P}}$-a.s., and each $v\in L^4(\bar{\Omega};L^4(0,T;X^{\beta}))$ and each $\xi\in L^4(\bar{\Omega};L^4(0,T;X^{\beta}))$, $(\Omega, \mathscr{F}, (\mathscr{F}_t)_{t\geq0}, \mathbb{P}, \bm{\Psi}^k,\bm{m}^k, V^k, \rho^k,\bm{s}^k,\bm{H}_s^k,W^k,\eta^k)$ is a weak martingale solution of the system \eqref{SLLG3}.

By \eqref{sjdyxtj} and the weak lower semi-continuity of norms, we have
\begin{align*}
|\nabla\bm{\Psi}^k|_{L^{2r}(\bar{\Omega};L^{\infty}(\mathbb{R}^{+},\mathcal{L}_{\lambda}^{2}))}
\leq&\lim_{n\rightarrow\infty}\inf|\nabla\bar{\bm{\Psi}}_{n}|_{L^{2r}(\bar{\Omega};L^{\infty}(\mathbb{R}^{+},\mathcal{L}_{\lambda}^{2}))}
\leq C,\\
|\nabla V^k|_{L^{2r}(\bar{\Omega};L^{\infty}(\mathbb{R}^{+},\mathbb{L}^{2}))}
\leq&\lim_{n\rightarrow\infty}\inf|\nabla\bar{V}_{n}|_{L^{2r}(\bar{\Omega};L^{\infty}(\mathbb{R}^{+},\mathbb{L}^{2}))}
\leq C,\\
|\partial_t\bm{m}^k|_{L^{2r}(\bar{\Omega};L^2(\mathbb{R}^{+},\mathbb{L}^{2}))}
\leq&\lim_{n\rightarrow\infty}\inf|\partial_t\bm{m}_n|_{L^{2r}(\bar{\Omega};L^2(\mathbb{R}^{+},\mathbb{L}^{2}))}
\leq C,\\
|\bm{m}^k|_{L^{2r}(\bar{\Omega};L^{\infty}(\mathbb{R}^{+},\mathbb{H}^{1}))}
\leq&\lim_{n\rightarrow\infty}\inf|\nabla\bar{\bm{m}}_{n}|_{L^{2r}(\bar{\Omega};L^{\infty}(\mathbb{R}^{+},\mathbb{H}^{1}))}
\leq C,\\
|\bm{H}_{s}^k|_{L^{2r}(\bar{\Omega};L^{\infty}(\mathbb{R}^{+},\mathbb{L}^{2}))}
\leq&\lim_{n\rightarrow\infty}\inf|\bar{\bm{H}}_{sn}|_{L^{2r}(\bar{\Omega};L^{\infty}(\mathbb{R}^{+},\mathbb{L}^{2}))}
\leq C,\\
|w'(\bm{m}^k)|_{L^{2r}(\bar{\Omega};L^{\infty}(\mathbb{R}^{+},\mathbb{L}^{2}))}
\leq&\lim_{n\rightarrow\infty}\inf|w'(\bar{\bm{m}}_{n})|_{L^{2r}(\bar{\Omega};L^{\infty}(\mathbb{R}^{+},\mathbb{L}^{2}))}
\leq C,\\
||\bm{m}^k|^2-1|_{L^{2r}(\bar{\Omega};L^{\infty}(\mathbb{R}^{+},\mathbb{L}^{2}))}
\leq&\lim_{n\rightarrow\infty}\inf||\bar{\bm{m}}_n|^2-1|_{L^{2r}(\bar{\Omega};L^{\infty}(\mathbb{R}^{+},\mathbb{L}^{2}))}
\leq C,\\
|G^k(\bm{m}^k)|_{L^{2r}(\bar{\Omega};L^{\infty}(\mathbb{R}^{+},\mathbb{L}^{2}))}
\leq&\lim_{n\rightarrow\infty}\inf|G_n(\bar{\bm{m}}_n)|_{L^{2r}(\bar{\Omega};L^{\infty}(\mathbb{R}^{+},\mathbb{L}^{2}))}
\leq C
\end{align*}
which conclude \eqref{kdgj}.
\end{proof}
\section{Tightness of the second-layer approximation sequence}
Define the probability measure $\mathcal{L}(\bm{m}^k,\bm{\psi}_j^k,W^k,\eta^k)
=\mathcal{L}(\bm{m}^k)\times\mathcal{L}(\bm{\psi}_j^k)\times\mathcal{L}(W^k)\times\mathcal{L}(\eta^k)$,
where
$\mathcal{L}(\bm{m}^k)$ is the law of $\bm{m}^k$ on $\mathscr{Z}_{T}$,
$\mathcal{L}(\bm{\psi}_j^k)$ is the law of $\bm{\psi}_j^k$ on $\mathscr{Z}_{T}$,
$\mathcal{L}(W^k)$ is the law of $W^k$ on $\mathscr{Z}_{W^k,T}$,
$\mathcal{L}(\eta^k)$ is the law of $\eta^k$ on $\mathscr{Z}_{\eta^k,T}$.
\begin{lemma}\label{taijinxing07}
The sequence $\{\mathcal{L}(\bm{m}^k),k>0\}$ is tight on the space $(\mathscr{Z}_{T},\mathcal{J})$.
\end{lemma}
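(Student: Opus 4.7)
The plan is to mirror the proof of Lemma \ref{taijinxing7}, keeping every constant independent of $k$. From \eqref{kdgj} in Lemma \ref{dycsl} we already have
\begin{align*}
\sup_{k>0}\bar{\mathbb E}\big[|\bm{m}^k|_{L^\infty(0,T;\mathbb H^1)}^{4r}\big]\le C,\qquad
\sup_{k>0}\bar{\mathbb E}\bigg[\bigg(\int_0^T|\partial_t\bm{m}^k|_{\mathbb L^2}^2\,dt\bigg)^{2r}\bigg]\le C,
\end{align*}
which supplies the boundedness component of tightness in the weak topology of $L^\infty(0,T;\mathbb H^1)$; what remains is the Aldous condition for $\bm{m}^k$ in $X^{-\beta}$ with $\beta=\tfrac12$. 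Let $\{\tau_k\}$ be stopping times with $\tau_k+\theta\le T$. Decompose the integral form of \eqref{SLLG3} as $\alpha\bm{m}^k(t)=\alpha\bm{m}^k(0)+\sum_{i=1}^6\tilde J_k^i(t)$, whose summands $\tilde J_k^1,\ldots,\tilde J_k^6$ correspond, respectively, to the integrals of $-\bm{m}^k\times\partial_s\bm{m}^k$, $\bm H^k$, $-k(|\bm{m}^k|^2-1)\bm{m}^k$, $-\tfrac12 G{^k}'[G^k]$, $-G^k\,dW^k$, and $-\int_B F^k\,d\tilde\eta^k$. I verify \eqref{taijinxing2.1} for each.

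For $\tilde J_k^1$, $\tilde J_k^4$, $\tilde J_k^5$, $\tilde J_k^6$ the analysis repeats that of Lemma \ref{taijinxing7}: the embedding $\mathbb L^{3/2}\hookrightarrow X^{-1/2}$, H\"older's inequality, Assumption \ref{ass1}, and the It\^o/It\^o-L\'evy isometries produce $\theta^{\gamma_i}$-bounds, with the uniform-in-$k$ estimates of \eqref{kdgj} in place of the $n$-dependent ones of Lemma \ref{SPMpci}. For $\tilde J_k^2=\int_0^t\bm H^k\,ds$ a small modification is needed because $\Delta\bm{m}^k$ is no longer uniformly $\mathbb L^2$-bounded: using $X^{-1/2}\simeq\mathbb H^{-1}$ and duality one gets $|\Delta\bm{m}^k|_{X^{-1/2}}\le|\nabla\bm{m}^k|_{\mathbb L^2}$, while $w'(\bm{m}^k)$, $\bm H_s^k$, and $\bm s^k$ remain $L^\infty(0,T;\mathbb L^2)$-bounded uniformly in $k$ by \eqref{kdgj}; this produces a $\theta$-bound independent of $k$.

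The main obstacle is $\tilde J_k^3$: a direct Lemma \ref{taijinxing7}-type estimate yields a bound of order $k\theta^{1/2}$, which diverges as $k\to\infty$ because \eqref{kdgj} provides only $\sqrt k\,||\bm{m}^k|^2-1|_{L^\infty(0,T;\mathbb L^2)}\le C$ and not the factor $k$ itself. I avoid estimating $\tilde J_k^3$ in isolation; since the Aldous property is one of $\bm{m}^k$, I bound $|\bm{m}^k(\tau_k+\theta)-\bm{m}^k(\tau_k)|_{X^{-1/2}}$ directly. Splitting $\bm{m}^k(\tau_k+\theta)-\bm{m}^k(\tau_k)$ into a drift and two martingale pieces via the integral form of \eqref{SLLG3}, the Cauchy-Schwarz inequality together with the uniform $L^2(0,T;\mathbb L^2)$-bound on $\partial_t\bm{m}^k$ (which, through the energy identity \eqref{M01}, already absorbs the $k$-dependence of the penalization) yields
\begin{align*}
\bar{\mathbb E}\bigg[\bigg|\alpha\int_{\tau_k}^{\tau_k+\theta}\partial_s\bm{m}^k\,ds\bigg|_{\mathbb L^2}\bigg]\le\alpha\theta^{1/2}\bar{\mathbb E}\big[|\partial_t\bm{m}^k|_{L^2(0,T;\mathbb L^2)}\big]\le C\theta^{1/2},
\end{align*}
while the Wiener and compensated-Poisson increments are $O(\theta^{1/2})$ by the It\^o and It\^o-L\'evy isometries combined with Assumption \ref{ass1}. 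Therefore $\bar{\mathbb E}[|\bm{m}^k(\tau_k+\theta)-\bm{m}^k(\tau_k)|_{\mathbb L^2}]\le C\theta^{1/2}$ uniformly in $k$, and the embedding $\mathbb L^2\hookrightarrow X^{-1/2}$ closes the verification of \eqref{taijinxing2.1} with $\alpha=1$, $\gamma=\tfrac12$, yielding the tightness of $\{\mathcal L(\bm{m}^k)\}_{k>0}$ on $(\mathscr Z_T,\mathcal J)$.
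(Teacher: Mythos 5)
Your decomposition and treatment of $\tilde J_k^1$, $\tilde J_k^2$, $\tilde J_k^4$, $\tilde J_k^5$, $\tilde J_k^6$ mirror the paper's proof of Lemma \ref{taijinxing07}, and the duality estimate $|\Delta\bm m^k|_{X^{-1/2}}\le |\nabla\bm m^k|_{\mathbb L^2}$ is a legitimate alternative to the paper's extension-to-$\mathbb R^3$ argument for the exchange field in $\tilde J_k^2$. Your flag on $\tilde J_k^3$ is also well placed: after Cauchy--Schwarz in time and in probability the penalization term is bounded by $Ck\theta^{1/2}\bar{\mathbb E}\big[\||\bm m^k|^2-1\|_{L^\infty_t\mathbb L^2}\,|\bm m^k|_{L^\infty_t\mathbb H^1}\big]$, whereas \eqref{kdgj} only controls $\sqrt k\,\||\bm m^k|^2-1\|_{L^\infty_t\mathbb L^2}$ in moments, so this bound is $O(\sqrt k\,\theta^{1/2})$ rather than $O(\theta^{1/2})$; the paper's final inequality in its $J_k^3$ display is therefore not a $k$-uniform conclusion of \eqref{kdgj} as written.

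Your proposed repair, however, does not close the gap as stated. You decompose $\alpha\big(\bm m^k(\tau_k+\theta)-\bm m^k(\tau_k)\big)$ into $\alpha\int_{\tau_k}^{\tau_k+\theta}\partial_s\bm m^k\,ds$ plus separate Wiener and compensated-Poisson increments. But the quantity $\partial_t\bm m^k$ whose $L^2(0,T;\mathbb L^2)$-moment appears in \eqref{kdgj} is the formal time derivative in the Gilbert form \eqref{SLLG3}, arising as a weak limit of $\partial_t\bar{\bm m}_n$ in \eqref{sjdyxtj}; that derivative already encodes the martingale contributions, so adding the Wiener and Poisson increments on top double-counts. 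If instead you intend $\partial_t\bm m^k$ to denote the drift of the semimartingale decomposition of $\bm m^k$, namely $b^k=[\alpha\bm I+\bm M]^{-1}\big(\bm H^k-k(|\bm m^k|^2-1)\bm m^k-\tfrac12 G'[G]\big)$, then \eqref{kdgj} is not the bound on $b^k$, and $b^k$ is in any case not controlled in $L^2(0,T;\mathbb L^2)$ uniformly in $k$: the term $\Delta\bm m^k$ is only $H^{-1}$-bounded, and the penalization re-enters with exactly the $\sqrt k$ loss you were trying to avoid. Either way, the passage from your displayed inequality to $\bar{\mathbb E}\big[|\bm m^k(\tau_k+\theta)-\bm m^k(\tau_k)|_{\mathbb L^2}\big]\le C\theta^{1/2}$ is not justified; you would need to identify precisely which component of the semimartingale increment the energy bound \eqref{M01}/\eqref{kdgj} controls and check that the pieces you combine are non-overlapping.
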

\begin{proof}
According to \eqref{kdgj}, we can conclude that $\sup\limits_{k>0}\mathbb{E}\big[|\bm{m}^k|_{L^{\infty}(0,T;\mathbb{H}^1)}^2\big]\leq C$.
Next, we prove that the sequence satisfies the Aldous condition in $X^{-\beta}$.
Let $\{\tau_k\}_{k>0}$ be a stopping time sequence such that $0\leq\tau_k+\theta\leq T$. By Lemma \ref{dycsl}, we find that
\begin{align*}
\alpha\bm{m}^k(t)=&\alpha\bm{m}^k(0)-\int_{0}^{t}\left(\bm{m}^k(s)\times\partial_s\bm{m}^k(s)
-\bm{H}^k(s)+k(|\bm{m}^k(s)|^2-1)\bm{m}^k(s)\right)ds\\
&-\frac{1}{2}\int_{0}^{t}G{^k}'(\bm{m}^k(s))[G^k(\bm{m}^k(s))]ds
-\int_{0}^{t}G^k(\bm{m}^k(s))dW(s)\\
&-\int_{0}^{t}\int_BF^k(\bm{m}^k(s-),l)\tilde{\eta}(ds,dl)\\
=:&\sum\limits_{i=0}^{6}J_{k}^{i}(t),~\mathbb{P}\text{-}a.s., \text{for all}~t\in[0,T].
\end{align*}
Let $\theta>0$. In order to prove that $\bm{m}^k$ satisfies the Aldous condition,
it is sufficient to prove that each term $J_{k}^{i}, i=0,1,\cdots,6$ satisfies the same condition.
Obviously, $J_{k}^{0}$ satisfies \eqref{taijinxing2.1}. Using the embedding $\mathbb{L}^{\frac{3}{2}}\hookrightarrow X^{-\beta}$,
the H\"{o}lder inequality and \eqref{kdgj}, we deduce that
\begin{align*}
&\mathbb{E}\big[|J_k^{1}(\tau_k+\theta)-J_k^{1}(\tau_k)|_{X^{-\beta}}\big]
=\mathbb{E}\bigg[\bigg|\int_{\tau_k}^{\tau_k+\theta}\bm{m}^k\times\partial_s\bm{m}^kds\bigg|_{X^{-\beta}}\bigg]\\
&\leq\mathbb{E}\bigg[\int_{\tau_k}^{\tau_k+\theta}|\bm{m}^k\times\partial_s\bm{m}^k|_{\mathbb{L}^{3/2}}ds\bigg]
\leq\mathbb{E}\bigg[\int_{\tau_k}^{\tau_k+\theta}|\bm{m}^k|_{\mathbb{L}^6}|\partial_s\bm{m}^k|_{\mathbb{L}^2}ds\bigg]\\
&\leq C\theta^{\frac{1}{2}}\mathbb{E}\big[|\bm{m}^k|_{L^{\infty}(0,T;\mathbb{H}^1)}
|\partial_s\bm{m}^k|_{L^{2}(0,T;\mathbb{L}^2)}\big]
\leq C\theta^{\frac{1}{2}}.
\end{align*}
Then extending the domain $D$, taking $\beta=\frac{1}{2}$, using Definition \ref{def}, \eqref{kdgj} and the extension theorem,
we obtain that
\begin{align*}
&\mathbb{E}\bigg[\int_{\tau_k}^{\tau_k+\theta}\big|\Delta\bm{m}^k\big|_{X^{-\beta}(D)}ds\bigg]
=\mathbb{E}\bigg[\int_{\tau_k}^{\tau_k+\theta}\big|A_1^{-\beta}(\Delta\bm{m}^k)\big|_{\mathbb{L}^2(D)}ds\bigg]\\
&=\mathbb{E}\bigg[\int_{\tau_k}^{\tau_k+\theta}\big|(I-\Delta)^{-\frac{1}{2}}\Delta\bm{m}^k\big|_{\mathbb{L}^2(D)}ds\bigg]
\leq\mathbb{E}\bigg[\int_{\tau_k}^{\tau_k+\theta}\big|(-\Delta)^{-\frac{1}{2}}\Delta\bm{m}^k\big|_{\mathbb{L}^2(\mathbb{R}^3)}ds\bigg]\\
&\leq\mathbb{E}\bigg[\int_{\tau_k}^{\tau_k+\theta}\big|\nabla^{-1}\Delta\bm{m}^k\big|_{\mathbb{L}^2(\mathbb{R}^3)}ds\bigg]
\leq\mathbb{E}\bigg[\int_{\tau_k}^{\tau_k+\theta}\big|\nabla\bm{m}^k\big|_{\mathbb{L}^2(\mathbb{R}^3)}ds\bigg]\\
&\leq C\theta^{\frac{1}{2}}\mathbb{E}\big[|\bm{m}^k|_{L^2(0,T;\mathbb{H}^1)}\big]
\leq C\theta^{\frac{1}{2}}.
\end{align*}
According to the embedding $\mathbb{L}^2\hookrightarrow X^{-\beta}$, the H\"{o}lder inequality
and \eqref{kdgj}, it holds that
\begin{align*}
&\mathbb{E}\bigg[\int_{\tau_k}^{\tau_k+\theta}\big|w'(\bm{m}^k)+\bm{H}_{s}^k
+\frac{1}{2}\bm{s}^k\big|_{X^{-\beta}}ds\bigg]\\
&\leq C\mathbb{E}\bigg[\int_{\tau_k}^{\tau_k+\theta}|w'(\bm{m}^k)|_{\mathbb{L}^2}
+|\bm{H}_{s}^k|_{\mathbb{L}^2}+\frac{1}{2}|\bm{s}^k|_{\mathbb{L}^2}ds\bigg]\\
&\leq C\theta^{\frac{1}{2}}\mathbb{E}\big[
|w'(\bm{m}^k)|_{L^{\infty}(0,T;\mathbb{L}^2)}
+|\bm{H}_{s}^k|_{L^{\infty}(0,T;\mathbb{L}^2)}
+\frac{1}{2}|\bm{s}^k|_{L^{\infty}(0,T;\mathbb{L}^2)}\big]\\
&\leq C\theta^{\frac{1}{2}}.
\end{align*}
Thus, we have
\begin{align*}
&\mathbb{E}\big[|J_{k}^{2}(\tau_k+\theta)-J_{k}^{2}(\tau_k)|_{X^{-\beta}}\big]
=\mathbb{E}\bigg[\bigg|\int_{\tau_k}^{\tau_k+\theta}\bm{H}^kds\bigg|_{X^{-\beta}}\bigg]\\
&=\mathbb{E}\bigg[\bigg|\int_{\tau_k}^{\tau_k+\theta}\big(\Delta\bm{m}^k+w'(\bm{m}^k)+\bm{H}_{s}^k
+\frac{1}{2}\bm{s}^k\big)ds\bigg|_{X^{-\beta}}\bigg]\\
&\leq\mathbb{E}\bigg[\int_{\tau_k}^{\tau_k+\theta}\big|\Delta\bm{m}^k\big|_{X^{-\beta}}ds\bigg]
+\mathbb{E}\bigg[\int_{\tau_k}^{\tau_k+\theta}\big|w'(\bm{m}^k)+\bm{H}_{s}^k
+\frac{1}{2}\bm{s}^k\big|_{X^{-\beta}}ds\bigg]\\
&\leq C\theta^{\frac{1}{2}}.
\end{align*}
By the embedding $\mathbb{L}^{3/2}\hookrightarrow X^{-\beta}$, we conclude that
\begin{align*}
&\mathbb{E}\big[|J_{k}^{3}(\tau_k+\theta)-J_{k}^{3}(\tau_k)|_{X^{-\beta}}\big]
=\mathbb{E}\bigg[\bigg|\int_{\tau_k}^{\tau_k+\theta}k(|\bm{m}^k|^2-1)\bm{m}^kds\bigg|_{X^{-\beta}}\bigg]\\
&\leq\mathbb{E}\bigg[\int_{\tau_k}^{\tau_k+\theta}k|(|\bm{m}^k|^2-1)\bm{m}^k|_{\mathbb{L}^{3/2}}ds\bigg]
\leq C\mathbb{E}\bigg[\int_{\tau_k}^{\tau_k+\theta}k||\bm{m}^k|^2-1|_{\mathbb{L}^2}|\bm{m}^k|_{\mathbb{L}^6}ds\bigg]\\
&\leq Ck\theta^{\frac{1}{2}}\mathbb{E}\big[||\bm{m}^k|^2-1|_{L^{\infty}(0,T;\mathbb{L}^2)}
|\bm{m}^k|_{L^{\infty}(0,T;\mathbb{H}^1)}\big]
\leq C\theta^{\frac{1}{2}},
\end{align*}
Similarly,
\begin{align*}
&\bigg(\mathbb{E}\big[|J_{k}^{4}(\tau_k+\theta)-J_{k}^{4}(\tau_k)|_{X^{-\beta}}\big]\bigg)^2
\leq\mathbb{E}\bigg[\bigg|\int_{\tau_k}^{\tau_k+\theta}G{^k}'(\bm{m}^k)[G^k(\bm{m}^k)]ds\bigg|_{X^{-\beta}}^2\bigg]\\
&\leq C\mathbb{E}\bigg[\int_{\tau_k}^{\tau_k+\theta}|G{^k}'(\bm{m}^k)[G^k(\bm{m}^k)]|_{\mathbb{L}^2}^2ds\bigg]
\leq C\theta\mathbb{E}\big[1+|\bm{m}^k|_{L^{\infty}(0,T;\mathbb{L}^2)}^2\big]
\leq C\theta.
\end{align*}
Thanks to Assumption \ref{ass1}, the It\^{o}-L\'{e}vy isometry and the H\"{o}lder inequality, we have
\begin{align*}
&\bigg(\mathbb{E}\big[|J_{k}^{5}(\tau_k+\theta)-J_{k}^{5}(\tau_k)|_{X^{-\beta}}\big]\bigg)^2
\leq\mathbb{E}\bigg[\bigg|\int_{\tau_k}^{\tau_k+\theta}G^k(\bm{m}^k)dW(s)\bigg|_{X^{-\beta}}^2\bigg]\\
&\leq\mathbb{E}\bigg[\bigg|\int_{\tau_k}^{\tau_k+\theta}G^k(\bm{m}^k)dW(s)\bigg|_{\mathbb{L}^2}^2\bigg]
=\mathbb{E}\bigg[\bigg|\int_{\tau_n}^{\tau_n+\theta}\sum\limits_{i\geq1}G^k(\bm{m}^k)\tilde{e}_idW_i(s)\bigg|_{\mathbb{L}^2}^2\bigg]\\
&\leq C\mathbb{E}\bigg[\int_{\tau_k}^{\tau_k+\theta}\bigg|\sum\limits_{i\geq1}G_i^k(\bm{m}^k)\bigg|_{\mathbb{L}^2}^2ds\bigg]
\leq C\theta\mathbb{E}\big[1+|\bm{m}^k|_{L^{\infty}(0,T;\mathbb{L}^2)}^2\big]
\leq C\theta.
\end{align*}
Similarly, we get
\begin{align*}
&\bigg(\mathbb{E}\big[|J_{k}^{6}(\tau_k+\theta)-J_{k}^{6}(\tau_k)|_{X^{-\beta}}\big]\bigg)^2
\leq\mathbb{E}\bigg[\bigg|\int_{\tau_k}^{\tau_k+\theta}\int_BF^k(l,\bm{m}^k)\tilde{\eta}(ds,dl)\bigg|_{X^{-\beta}}^2\bigg]\\
&\leq\mathbb{E}\bigg[\bigg|\int_{\tau_k}^{\tau_k+\theta}\int_BF^k(l,\bm{m}^k)\tilde{\eta}(ds,dl)\bigg|_{\mathbb{L}^2}^2\bigg]
\leq C\mathbb{E}\bigg[\int_{\tau_k}^{\tau_k+\theta}\int_B|F^k(l,\bm{m}^k)|_{\mathbb{L}^2}^2\mu(dl)ds\bigg]\\
&\leq C\theta\mathbb{E}\big[1+|\bm{m}^k|_{L^{\infty}(0,T;\mathbb{L}^2)}^2\big]
\leq C\theta.
\end{align*}
Thus, when $\alpha=1$, $\gamma=\frac{1}{2}$, the terms $J_{k}^{0}$-$J_{k}^{6}$ satisfy \eqref{taijinxing2.1}, which yields the desired result of Lemma \ref{taijinxing07}.
\end{proof}
\begin{lemma}\label{taijinxing08}
The sequence $\{\mathcal{L}(\bm{\psi}_{j}^k),k>0\}$ is tight on the space $(\mathscr{Z}_{T},\mathcal{J})$.
\end{lemma}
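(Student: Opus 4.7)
The plan is to mirror the two-step strategy of Lemma~\ref{taijinxing8}: first extract the uniform bound $\sup_{k>0}\mathbb{E}\bigl[|\bm{\psi}_j^k|_{L^{\infty}(0,T;\mathbb{H}^1)}^{2}\bigr]\leq C$ directly from the energy estimate \eqref{kdgj} established in Lemma~\ref{dycsl}, and then verify the Aldous condition in $X^{-\beta}$. For the latter, fix a sequence of stopping times $\{\tau_k\}_{k>0}$ with $0\leq\tau_k+\theta\leq T$ and use the integrated Schr\"odinger equation satisfied by $\bm{\psi}_j^k$ (cf.\ Step~4 of Lemma~\ref{dycsl}) to decompose
\[
i\bm{\psi}_j^k(t)=i\bm{\psi}_j^k(0)-\frac{1}{2}\int_{0}^{t}\Delta\bm{\psi}_j^k(s)\,ds+\int_{0}^{t}V^k(s)\bm{\psi}_j^k(s)\,ds-\frac{1}{2}\int_{0}^{t}\bm{m}^k(s)\cdot\hat{\bm{\sigma}}\bm{\psi}_j^k(s)\,ds=:\sum_{i=0}^{3}I_k^{i}(t).
\]

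For the terms $I_k^{2}$ and $I_k^{3}$ I intend to follow verbatim the computations used in Lemma~\ref{taijinxing8}: the embedding $\mathbb{L}^{2}\hookrightarrow X^{-\beta}$ together with H\"older in time, the Poincar\'e inequality to pass from $|V^k|_{\mathbb{L}^{6}}$ bounds (which are uniform by \eqref{kdgj}) to a control of $|V^k\bm{\psi}_j^k|_{\mathbb{L}^{2}}$, and the Sobolev embedding $\mathbb{H}^1\hookrightarrow\mathbb{L}^{6}$ to handle $|\bm{m}^k\cdot\hat{\bm{\sigma}}\bm{\psi}_j^k|_{\mathbb{L}^{2}}\leq C|\bm{m}^k|_{\mathbb{H}^1}|\bm{\psi}_j^k|_{\mathbb{H}^1}$. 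Each of these will yield an estimate of order $\theta^{1/2}$, which is \eqref{taijinxing2.1} with $\alpha=1$, $\gamma=\tfrac12$. The initial-condition term $I_k^{0}$ is constant in time and trivially satisfies the Aldous criterion.

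The main obstacle is the Laplacian term $I_k^{1}$. In the first-layer Lemma~\ref{taijinxing8} one could bound $|\Delta\bm{\psi}_{jn}|_{\mathbb{L}^{2}}\leq C|\bm{\psi}_{jn}|_{\mathbb{L}^{2}}$ using equivalence of norms on the finite-dimensional space $\mathbb{H}_n^{K}$, but after the limit $n\to\infty$ this crutch is gone and only $\mathbb{H}^1$-control of $\bm{\psi}_j^k$ is available through \eqref{kdgj}. To circumvent this I will imitate the fractional-Laplacian argument used for $\Delta\bm{m}^k$ in Lemma~\ref{taijinxing07}: extending $\bm{\psi}_j^k$ by zero from $K$ to $\mathbb{R}^3$ (which is admissible because of the Dirichlet boundary condition $\bm{\psi}_j^k|_{\partial K}=0$ and preserves the $\mathbb{H}^1$-norm), Definition~\ref{def} together with the identification $(-\Delta)^{-1/2}\Delta=-(-\Delta)^{1/2}$ gives
\[
\bigl|A_{1}^{-\beta}\Delta\bm{\psi}_j^k\bigr|_{\mathbb{L}^{2}(K)}\leq \bigl|(-\Delta)^{-1/2}\Delta\bm{\psi}_j^k\bigr|_{\mathbb{L}^{2}(\mathbb{R}^3)}\leq\bigl|\nabla\bm{\psi}_j^k\bigr|_{\mathbb{L}^{2}(\mathbb{R}^3)},
\]
so that by H\"older in time and \eqref{kdgj},
\[
\mathbb{E}\bigl[|I_k^{1}(\tau_k+\theta)-I_k^{1}(\tau_k)|_{X^{-\beta}}\bigr]\leq \mathbb{E}\!\int_{\tau_k}^{\tau_k+\theta}\!|\nabla\bm{\psi}_j^k|_{\mathbb{L}^{2}}\,ds\leq C\theta^{1/2}\mathbb{E}\bigl[|\bm{\psi}_j^k|_{L^{2}(0,T;\mathbb{H}^1)}\bigr]\leq C\theta^{1/2}.
\]
With all four pieces $I_k^{0}$--$I_k^{3}$ verifying \eqref{taijinxing2.1} with $\alpha=1$, $\gamma=\tfrac12$, the tightness of $\{\mathcal{L}(\bm{\psi}_j^k)\}_{k>0}$ on $(\mathscr{Z}_T,\mathcal{J})$ follows from the same compactness criterion invoked in Lemmas~\ref{taijinxing7}, \ref{taijinxing8} and \ref{taijinxing07}.
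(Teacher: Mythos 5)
Your proposal is correct and follows essentially the same strategy as the paper: uniform $\mathbb{H}^1$-bound from \eqref{kdgj}, the same decomposition into $I_k^0,\dots,I_k^3$, the $\mathbb{L}^2\hookrightarrow X^{-\beta}$ embedding with H\"older in time for $I_k^2,I_k^3$, and the extension-plus-fractional-Laplacian argument for $I_k^1$. The only cosmetic difference is in the $I_k^1$ estimate: the paper squares the expectation and applies the same $L^2$-in-time trick used in Lemma~\ref{taijinxing07}, whereas you estimate linearly with Cauchy--Schwarz in time, which is marginally cleaner and reaches $\theta^{1/2}$ directly; you also spell out explicitly, which the paper leaves implicit, why zero-extension across $\partial K$ is $\mathbb{H}^1$-preserving here, namely the Dirichlet boundary condition on $\bm{\psi}_j^k$.
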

\begin{proof}
According to \eqref{kdgj}, we know that $\sup\limits_{k>0}\mathbb{E}\big[|\bm{\psi}_j^k|_{L^{\infty}(0,T;\mathbb{H}^1)}^2\big]\leq C$.
Next, we will prove that the sequence satisfies Aldous condition in $X^{-\beta}$. To this end,
let $\{\tau_k\}_{k>0}$ be a stopping time sequence such that $0\leq\tau_k+\theta\leq T$, then we gain by Lemma \ref{dycsl} that
\begin{align*}
i\bm{\psi}_j^k(t)
&=i\bm{\psi}_j^k(0)-\frac{1}{2}\int_{0}^{t}\big(\Delta\bm{\psi}_j^k(s)-V^k(s)\bm{\psi}_j^k(s)
+\frac{1}{2}\bm{m}^k(s)\cdot\hat{\bm{\sigma}}\bm{\psi}_j^k(s)\big)ds\\
&=:\sum\limits_{i=0}^{3}I_{k}^{i}(t),~\mathbb{P}\text{-}a.s., \text{for all}~t\in[0,T].
\end{align*}
Let $\theta>0$, in order to prove that $\bm{\psi}_j^k$ satisfies the Aldous condition,
it is sufficient to prove that each term $I_{k}^{i}, i=0,1,\cdots,3$ satisfies the same condition.
Obviously, $I_{k}^{0}$ satisfies \eqref{taijinxing2.1}. Then extending the domain $K$ and setting $\beta=\frac{1}{2}$,
using Definition  \ref{def}, \eqref{kdgj} and the extension theorem, we deduce that
\begin{align*}
&\bigg(\mathbb{E}\big[|I_{k}^{1}(\tau_k+\theta)-I_{k}^{1}(\tau_k)|_{X^{-\beta}(K)}\big]\bigg)^2
=\mathbb{E}\bigg[\bigg|\int_{\tau_k}^{\tau_k+\theta}\Delta\bm{\psi}_j^kds\bigg|_{X^{-\beta}(K)}^2\bigg]\\
&\leq\mathbb{E}\bigg[\int_{\tau_k}^{\tau_k+\theta}|\nabla\bm{\psi}_j^k|_{\mathbb{L}^2(\mathbb{R}^3)}^2ds\bigg]
\leq C\theta\mathbb{E}\big[|\bm{\psi}_j^k|_{L^{\infty}(0,T;\mathbb{H}^1)}^2\big]
\leq C\theta.
\end{align*}
Hence, $\mathbb{E}[|I_{k}^{1}(\tau_k+\theta)-I_{k}^{1}(\tau_k)|_{X^{-\beta}}]\leq C\theta^{\frac{1}{2}}$.
By the embedding $\mathbb{L}^2\hookrightarrow X^{-\beta}$, the H\"{o}lder inequality and \eqref{kdgj}, we obtain
\begin{align*}
&\mathbb{E}\big[|I_{k}^{2}(\tau_k+\theta)-I_{k}^{2}(\tau_k)|_{X^{-\beta}}\big]
=\mathbb{E}\bigg[\bigg|\int_{\tau_k}^{\tau_k+\theta}V^k\bm{\psi}_j^kds\bigg|_{X^{-\beta}}\bigg]\\
&\leq\mathbb{E}\bigg[\int_{\tau_k}^{\tau_k+\theta}|V^k\bm{\psi}_j^k|_{\mathbb{L}^2}ds\bigg]
\leq C\theta^{\frac{1}{2}}\mathbb{E}\big[|V^k|_{L^2(0,T;\mathbb{L}^6)}
|\bm{\psi}_j^k|_{L^{\infty}(0,T;\mathbb{H}^1)}\big]
\leq C\theta^{\frac{1}{2}}.
\end{align*}
\begin{align*}
&\mathbb{E}\big[|I_{k}^{3}(\tau_k+\theta)-I_{k}^{3}(\tau_k)|_{X^{-\beta}}\big]
=\mathbb{E}\bigg[\bigg|\int_{\tau_k}^{\tau_k+\theta}\bm{m}^k\cdot\hat{\bm{\sigma}}\bm{\psi}_j^kds\bigg|_{X^{-\beta}}\bigg]\\
&\leq\mathbb{E}\bigg[\int_{\tau_k}^{\tau_k+\theta}|\bm{m}^k\bm{\psi}_j^k|_{\mathbb{L}^2}ds\bigg]
\leq C\theta^{\frac{1}{2}}\mathbb{E}\big[|\bm{m}^k|_{L^{\infty}(0,T;\mathbb{H}^1)}
|\bm{\psi}_j^k|_{L^{\infty}(0,T;\mathbb{H}^1)}\big]
\leq C\theta^{\frac{1}{2}}.
\end{align*}
Thus, when $\alpha=1$, $\gamma=\frac{1}{2}$, the terms $I_{k}^{0}$-$I_{k}^{3}$ satisfy \eqref{taijinxing2.1}. we complete the proof of Lemma \ref{taijinxing08}.
\end{proof}
\section{Existence of weak martingale solutions to the SPLLG system \eqref{S2}}
\subsection{Construction of new probability space and processes as $k\rightarrow\infty$}
By Lemma \ref{taijinxing07}, we know that the sequence $\{\mathcal{L}(\bm{m}^k),k>0\}$ is tight on $(\mathscr{Z}_{T},\mathcal{J})$.
By Lemma \ref{taijinxing08}, we know that the sequence $\{\mathcal{L}(\bm{\psi}_j^k),k>0\}$ is tight on $(\mathscr{Z}_{T},\mathcal{J})$. Let $W^k:=W$, then $\mathcal{L}(W^k)$ is tight on $\mathscr{Z}_{W^k,T}$. Let $\eta^k:=\eta$, then $\mathcal{L}(\eta^k)$ is tight on $\mathscr{Z}_{\eta^k,T}$. Therefore, $\{\mathcal{L}(\bm{m}^k, \bm{\psi}_j^k, W^k, \eta^k), k>0\}$ is tight on $\mathscr{Z}_T\times\mathscr{Z}_T\times\mathscr{Z}_{W^k,T}\times\mathscr{Z}_{\eta^k,T}$, by using the result in\cite{BM1, M}.

By Theorem \ref{2SKO}, we know that there exists a subsequence $\{k_i\}_{i\in\mathbb{N}}$, a filtered probability space $(\bar{\Omega},\bar{\mathscr{F}},(\bar{\mathscr{F}}_t)_{t\geq 0},\bar{\mathbb{P}})$ and, on this space, $\mathscr{Z}_T\times\mathscr{Z}_T\times\mathscr{Z}_{W^k,T}\times\mathscr{Z}_{\eta^k,T}$-valued random variable $(\bar{\bm{m}},\bar{\bm{\psi}}_j,\bar{W},\bar{\eta})$,
$(\bar{\bm{m}}^k,\bar{\bm{\psi}}_j^k,\bar{W}^k,\bar{\eta}^k)_{k>0}$ such that:\\
{\rm(1)} $\mathcal{L}((\bar{\bm{m}}^k, \bar{\bm{\psi}}_j^k,\bar{W}^k, \bar{\eta}^k))
=\mathcal{L}((\bm{m}^{k_i}, \bm{\psi}_j^{k_i},W^{k_i}, \eta^{k_i}))$, for all $i\in\mathbb{N}$;\\
{\rm(2)} $(\bar{\bm{m}}^k,\bar{\bm{\psi}}_j^k,\bar{W}^k, \bar{\eta}^k)\rightarrow
(\bar{\bm{m}}, \bar{\bm{\psi}}_j, \bar{W}, \bar{\eta})$ in $\mathscr{Z}_T\times\mathscr{Z}_T\times\mathscr{Z}_{W^k,T}\times\mathscr{Z}_{\eta^k,T}$ as $k\rightarrow\infty$, for $\bar{\mathbb{P}}$-a.s. $\bar{\omega}$;\\
{\rm(3)} $(\bar{W}^k(\bar{\omega}),\bar{\eta}^k(\bar{\omega}))=(\bar{W}(\bar{\omega}),\bar{\eta}(\bar{\omega}))$,
for all $\bar{\omega}\in\bar{\Omega}$.

For convenience, we still use $(\bm{m}^k, \bm{\psi}_j^k, W^k, \eta^k)_{k>0}$ and $(\bar{\bm{m}}^k, \bar{\bm{\psi}}_j^k, \bar{W}^k, \bar{\eta}^k)_{k>0}$ to represent these sequences. In addition, $\bar{\eta}^k$ and $\bar{\eta}$ are time-homogeneous Poisson random measures on $(B, \mathscr{B}(B))$ with intensity $\mu$. For $p\in[2,\infty)$, $q\in[2,6)$, and $\beta>\frac{1}{4}$, we get
\begin{align}\label{11sltj}
\bar{\bm{m}}^k\rightarrow\bar{\bm{m}}~\text{in}~\mathscr{Z}_T,~\bar{\mathbb{P}}\text{-}a.s., \text{ and }
\bar{\bm{\psi}}_j^k\rightarrow\bar{\bm{\psi}}_j~\text{in}~\mathscr{Z}_T,~\bar{\mathbb{P}}\text{-}a.s.
\end{align}
Next, we still consider the specific case of $p=4$, $q=4$, and $\beta=\frac{1}{2}$, Notice that $\mathbb{P}\{\bm{\psi}_j^k\in\mathscr{Z}_T\}=1$ and $\mathbb{P}\{\bm{m}^k\in\mathscr{Z}_T\}=1$, then the laws of $(\bm{m}^k,\bm{\psi}_j^k)$ and $(\bar{\bm{m}}^k,\bar{\bm{\psi}}_j^k)$ are equal on $\mathscr{Z}_T$.
\subsection{Properties of the constructed new process and limiting process}
Since the laws of $(\bm{m}^k,\bm{\psi}_j^k)$ and $(\bar{\bm{m}}^k,\bar{\bm{\psi}}_j^k)$ are equal, $(\bar{\bm{m}}^k, \bar{\bm{\psi}}_j^k)$ satisfies the same estimates of the sequence $(\bm{m}^k, \bm{\psi}_j^k)$, that is, they satisfy Lemma \ref{P}--Lemma \ref{SPMpci}. Furthermore, according to \eqref{kdgj}, we have $$\bar{\mathbb{E}}\bigg[\sup_{0\leq t\leq T}\int_{D}(|\bar{\bm{m}}^k|^2-1)^2d\bm{x}\bigg]\leq\frac{C}{k},$$ so as $k\rightarrow\infty$,
\begin{align*}
\bar{\bm{m}}^k\rightarrow\bar{\bm{m}}~p.a.e., \text{ and }~|\bar{\bm{m}}|=1,~\mathbb{P}\text{-}a.s.,
\end{align*}
where p.a.e. is the abbreviation for pointwise almost everywhere. Then using the weak${^\ast}$ compactness, as $k\rightarrow\infty$, we get
\begin{align}\label{11sjdyxtj}
&\partial_t\bar{\bm{\Psi}}^k\rightarrow\partial_t\bar{\bm{\Psi}}~\text{weak}^{\ast}~\text{in}~
L^{2r}(\bar{\Omega};L^{\infty}(\mathbb{R}^{+};\mathcal{H}_{\lambda}^{-1})),~
\partial_t\bar{\bm{m}}^k\rightarrow\partial_t\bar{\bm{m}}~\text{weakly in}~L^{2r}(\bar{\Omega};L^{2}(\mathbb{R}^{+};\mathbb{L}^{2})),\nonumber\\
&G^k(\bar{\bm{m}}^k)\rightarrow G(\bar{\bm{m}})~\text{weak}^{\ast}~
\text{in}~L^{2r}(\bar{\Omega};L^{\infty}(\mathbb{R}^{+};\mathbb{H}^{1})),~
\bar{\bm{H}}_s^k\rightarrow\bar{\bm{H}}_{s}~\text{in}~L^{2r}(\bar{\Omega};L^2(0,T;\mathbb{L}^2(\mathbb{R}^3))),\nonumber\\
&\bar{V}^k\rightarrow \bar{V}~\text{weak}^{\ast}~\text{in}~L^{2r}(\bar{\Omega};L^{\infty}(\mathbb{R}^{+};\mathbb{L}^{6})),~
\nabla\bar{V}^k\rightarrow\nabla \bar{V}~\text{weak}^{\ast}~\text{in}~L^{2r}(\bar{\Omega};L^{\infty}(\mathbb{R}^{+};\mathbb{L}^{2})),\nonumber\\
&\bar{\bm{s}}^k\rightarrow\bar{\bm{s}}~\text{weak}^{\ast}~\text{in}~L^{2r}(\bar{\Omega};L^{\infty}(\mathbb{R}^{+};\mathbb{L}^{s})),~
\bar{\rho}^k\rightarrow\bar{\rho}~\text{weak}^{\ast}~\text{in}~
L^{2r}(\bar{\Omega};L^{\infty}(\mathbb{R}^{+};\mathbb{L}^{s})),~ 1\leq s\leq3,\nonumber\\
&w'(\bar{\bm{m}}^k)\rightarrow w'(\bar{\bm{m}})~\text{weak}^{\ast}~
\text{in}~L^{2r}(\bar{\Omega};L^{\infty}(\mathbb{R}^{+};\mathbb{L}^{s})),~ 1\leq s\leq2,\nonumber\\
&|\bar{\bm{m}}^k|^2-1\rightarrow0~\text{weakly and a.e. in}~L^{2r}(\bar{\Omega};L^2(0,T;\mathbb{L}^{2})).
\end{align}
In summary, $\bar{\bm{m}}^k$ and $\bar{\bm{m}}$ still satisfy Lemma \ref{ydsl} and Lemma \ref{gdsl1}, and we will omit the proof.

\subsection{Convergence of the new process to the corresponding limit process} Now, we are in position to show
Theorem \ref{them1} and Theorem \ref{youjieyudejie}.
For $\chi\in L^4(\bar{\Omega};L^4(0,T;\mathbb{H}^1))$, and for all $t\in[0,T]$, we have
\begin{align*}
&\alpha\langle\bar{\bm{m}}^k(t),\chi\rangle=\alpha\langle\bar{\bm{m}}^k(0),\chi\rangle
-\int_{0}^{t}\langle\bar{\bm{m}}^k(s)\times\partial_s\bar{\bm{m}}^k(s),\chi\rangle ds
+\int_{0}^{t}\langle\bar{\bm{H}}^k(s),\chi\rangle ds\\
&\quad -\int_{0}^{t}\langle k(|\bar{\bm{m}}^k(s)|^2-1)\bar{\bm{m}}^k(s),\chi\rangle ds
-\frac{1}{2}\int_{0}^{t}\langle {G^k}'(\bar{\bm{m}}^k(s))[G^k(\bar{\bm{m}}^k(s))],\chi\rangle ds\\
&\quad -\int_{0}^{t}\langle G^k(\bar{\bm{m}}^k(s)),\chi\rangle d\bar{W}^k(s)
-\int_{0}^{t}\int_B\langle F^k(\bar{\bm{m}}^k(s-),l),\chi\rangle\tilde{\bar{\eta}}^k(ds,dl),~~\bar{\mathbb{P}}\text{-}a.s.
\end{align*}
Therefore,  letting $\xi\in L^8(\bar{\Omega};C^{\infty}([0,T]\times D))$ and $\chi=\bar{\bm{m}}^k\times\xi$, we obtain
\begin{align*}
&\int_{0}^{t}\int_{D}\alpha\partial_s\bar{\bm{m}}^k(\bar{\bm{m}}^k\times\xi)d\bm{x}ds
=-\int_{0}^{t}\int_{D}(\bar{\bm{m}}^k\times\partial_s\bar{\bm{m}}^k)(\bar{\bm{m}}^k\times\xi)d\bm{x}ds\\
&\quad -\int_{0}^{t}\int_{D}(-\bar{\bm{H}}_s^k-\frac{1}{2}\bar{\bm{s}}^k
+w'(\bar{\bm{m}}^k))(\bar{\bm{m}}^k\times\xi)d\bm{x}ds
-\int_{0}^{t}\int_{D}\nabla\bar{\bm{m}}^k\nabla(\bar{\bm{m}}^k\times\xi)d\bm{x}ds\\
&\quad -\int_{0}^{t}\int_{D}k(|\bar{\bm{m}}^k|^2-1)\bar{\bm{m}}^k(\bar{\bm{m}}^k\times\xi)d\bm{x}ds
-\int_{0}^{t}\int_{D}G^k(\bar{\bm{m}}^k)(\bar{\bm{m}}^k\times\xi)d\bm{x}d\bar{W}^k(s)\\
&\quad -\frac{1}{2}\int_{0}^{t}\int_{D}G{^k}'(\bar{\bm{m}}^k)[G^k(\bar{\bm{m}}^k)](\bar{\bm{m}}^k\times\xi)d\bm{x}ds\\
&\quad -\int_{0}^{t}\int_B\int_{D}F^k(\bar{\bm{m}}^k,l)(\bar{\bm{m}}^k\times\xi)d\bm{x}\tilde{\bar{\eta}}^k(ds,dl).
\end{align*}
From $\langle a\times b,a\times c\rangle=-\langle b,\langle a,c\rangle a-|a|^2c\rangle$, we have
\begin{align*}
-\int_{0}^{t}\int_{D}(\bar{\bm{m}}^k\times\partial_s\bar{\bm{m}}^k)(\bar{\bm{m}}^k\times\xi)d\bm{x}ds
=\int_{0}^{t}\int_{D}\partial_s\bar{\bm{m}}^k\bar{\bm{m}}^k\bar{\bm{m}}^k\xi d\bm{x}ds
-\int_{0}^{t}\int_{D}\partial_s\bar{\bm{m}}^k|\bar{\bm{m}}^k|^2\xi d\bm{x}ds.
\end{align*}
In virtue of $\langle a,a\times b\rangle=0$, we obtain
\begin{align*}
&-\int_{0}^{t}\int_{D}\nabla\bar{\bm{m}}^k\nabla(\bar{\bm{m}}^k\times\xi)d\bm{x}ds
=\int_{0}^{t}\int_{D}(\bar{\bm{m}}^k\times\nabla\bar{\bm{m}}^k)\nabla\xi d\bm{x}ds,\\
&-\int_{0}^{t}\int_{D}k(|\bar{\bm{m}}^k|^2-1)\bar{\bm{m}}^k(\bar{\bm{m}}^k\times\xi)d\bm{x}ds=0.
\end{align*}
Using the fact that $\langle a,b\times c\rangle=-\langle c,b\times a\rangle$, we derive
\begin{align*}
&\alpha\int_{0}^{t}\int_{D}\partial_s\bar{\bm{m}}^k(\bar{\bm{m}}^k\times\xi)d\bm{x}ds
=-\alpha\int_{0}^{t}\int_{D}(\bar{\bm{m}}^k\times\partial_s\bar{\bm{m}}^k)\xi d\bm{x}ds.\\
&-\int_{0}^{t}\int_{D}(-\bar{\bm{H}}_s^k-\frac{1}{2}\bar{\bm{s}}^k+w'(\bar{\bm{m}}^k))(\bar{\bm{m}}^k\times\xi)d\bm{x}ds
=-\int_{0}^{t}\int_{D}\bar{\bm{m}}^k\times(\bar{\bm{H}}_s^k+\frac{1}{2}\bar{\bm{s}}^k-w'(\bar{\bm{m}}^k))\xi d\bm{x}ds.\\
&-\frac{1}{2}\int_{0}^{t}\int_{D}(G{^k}'(\bar{\bm{m}}^k)[G^k(\bar{\bm{m}}^k)])(\bar{\bm{m}}^k\times\xi)d\bm{x}ds
=\frac{1}{2}\int_{0}^{t}\int_{D}(\bar{\bm{m}}^k\times (G{^k}'(\bar{\bm{m}}^k)[G^k(\bar{\bm{m}}^k)]))\xi d\bm{x}ds.\\
&-\int_{0}^{t}\int_{D}G^k(\bar{\bm{m}}^k)(\bar{\bm{m}}^k\times\xi)d\bm{x}d\bar{W}^k(s)
=\int_{0}^{t}\int_{D}(\bar{\bm{m}}^k\times G^k(\bar{\bm{m}}^k))\xi d\bm{x}d\bar{W}^k(s).\\
&-\int_{0}^{t}\int_B\int_{D}F^k(\bar{\bm{m}}^k,l)(\bar{\bm{m}}^k\times\xi)d\bm{x}\tilde{\bar{\eta}}^k(ds,dl)
=\int_{0}^{t}\int_B\int_{D}(\bar{\bm{m}}^k\times F^k(\bar{\bm{m}}^k,l))\xi d\bm{x}\tilde{\bar{\eta}}^k(ds,dl).
\end{align*}
Then, it is sufficient to show the following identity:
\begin{align*}
&-\int_{0}^{t}\int_{D}\alpha(\bar{\bm{m}}^k\times\partial_s\bar{\bm{m}}^k)\xi d\bm{x}ds
+\int_{0}^{t}\int_{D}|\bar{\bm{m}}^k|^2\partial_s\bar{\bm{m}}^k\xi d\bm{x}ds
-\int_{0}^{t}\int_{D}(\partial_s\bar{\bm{m}}^k\cdot\bar{\bm{m}}^k)\bar{\bm{m}}^k\xi d\bm{x}ds\nonumber\\
&=-\int_{0}^{t}\int_{D}\bar{\bm{m}}^k\times(\bar{\bm{H}}_s^k+\frac{1}{2}\bar{\bm{s}}^k-w'(\bar{\bm{m}}^k))\xi d\bm{x}ds
+\int_{0}^{t}\int_{D}(\bar{\bm{m}}^k\times\nabla\bar{\bm{m}}^k)\nabla\xi d\bm{x}ds\nonumber\\
&\quad +\frac{1}{2}\int_{0}^{t}\int_{D}(\bar{\bm{m}}^k\times (G{^k}'(\bar{\bm{m}}^k)[G^k(\bar{\bm{m}}^k)]))\xi d\bm{x}ds
+\int_{0}^{t}\int_{D}(\bar{\bm{m}}^k\times G^k(\bar{\bm{m}}^k))\xi d\bm{x}d\bar{W}^k(s)\nonumber\\
&\quad +\int_{0}^{t}\int_B\int_{D}(\bar{\bm{m}}^k\times F^k(\bar{\bm{m}}^k,l))\xi d\bm{x}\tilde{\bar{\eta}}^k(ds,dl).
\end{align*}
Next, we are going to get the convergence of each term in the above equation.
By the facts that $\bar{\bm{m}}^k\rightarrow\bar{\bm{m}}$ in $L^4(0,T;\mathbb{L}^4)$, $\bar{\mathbb{P}}$-a.s.,
and $\partial_t\bar{\bm{m}}^k\rightarrow\partial_t\bar{\bm{m}}$ weakly in $L^{2r}(\bar{\Omega};L^{2}(\mathbb{R}^{+};\mathbb{L}^{2}))$,
we can infer that
\begin{align*}
&\lim\limits_{k\rightarrow\infty}\bar{\mathbb{E}}\bigg[\bigg|\int_{0}^{t}\int_{D}(\bar{\bm{m}}^k\times\partial_s\bar{\bm{m}}^k
-\bar{\bm{m}}\times\partial_s\bar{\bm{m}})\xi d\bm{x}ds\bigg|^2\bigg]\\
&\leq\lim\limits_{k\rightarrow\infty}\bar{\mathbb{E}}\bigg[\bigg(\int_{0}^{t}\int_{D}|(\bar{\bm{m}}^k
-\bar{\bm{m}})(\partial_s\bar{\bm{m}}^k\times\xi)|d\bm{x}ds\bigg)^2\bigg]\\
&\quad +\lim\limits_{k\rightarrow\infty}\bar{\mathbb{E}}\bigg[\bigg|\int_{0}^{t}\int_{D}(\partial_s\bar{\bm{m}}^k
-\partial_s\bar{\bm{m}})(\bar{\bm{m}}\times\xi)d\bm{x}ds\bigg|^2\bigg]\\
&\leq\lim\limits_{k\rightarrow\infty}\bar{\mathbb{E}}\big[|\bar{\bm{m}}^k-\bar{\bm{m}}|_{L^4(0,T;\mathbb{L}^4)}^2
|\partial_s\bar{\bm{m}}^k|_{L^2(0,T;\mathbb{L}^2)}^2|\xi|_{L^4(0,T;\mathbb{L}^4)}^2\big]\\
&\quad +\lim\limits_{k\rightarrow\infty}\bar{\mathbb{E}}\bigg[\bigg|\int_{0}^{t}\int_{D}(\partial_s\bar{\bm{m}}^k
-\partial_s\bar{\bm{m}})(\bar{\bm{m}}\times\xi)d\bm{x}ds\bigg|^2\bigg]
=0.
\end{align*}
Applying the H\"{o}lder inequality and \eqref{kdgj}, we conclude that
\begin{align*}
&\bar{\mathbb{E}}\bigg[\bigg|\int_{0}^{t}\langle\bar{\bm{m}}^k\times\partial_s\bar{\bm{m}}^k,\xi\rangle_{\mathbb{L}^2}ds\bigg|^2\bigg]
\leq C\bar{\mathbb{E}}\bigg[\bigg(\int_{0}^{t}\int_D|(\bar{\bm{m}}^k\times\xi)\partial_s\bar{\bm{m}}^k|d\bm{x}ds\bigg)^2\bigg]\\
&\leq C\bar{\mathbb{E}}\big[|\bar{\bm{m}}^k|_{L^4(0,T;\mathbb{L}^4)}^2
|\partial_s\bar{\bm{m}}^k|_{L^2(0,T;\mathbb{L}^2)}^2|\xi|_{L^4(0,T;\mathbb{L}^4)}^2\big]
\leq C,
\end{align*}
thus, we use the Vitali convergence theorem to deduce that
\begin{align*}
\lim\limits_{k\rightarrow\infty}\int_{0}^{T}\bar{\mathbb{E}}\bigg[\bigg|\int_{0}^{t}
\langle\bar{\bm{m}}^k\times\partial_s\bar{\bm{m}}^k
-\bar{\bm{m}}\times\partial_s\bar{\bm{m}},\xi\rangle_{\mathbb{L}^2}ds\bigg|\bigg]dt=0.
\end{align*}
Since
\begin{align*}
\int_{0}^{t}\int_{D}|\bar{\bm{m}}^k|^2\partial_s\bar{\bm{m}}^k\xi d\bm{x}ds
=\int_{0}^{t}\int_{D}(|\bar{\bm{m}}^k|^2-1)\partial_s\bar{\bm{m}}^k\xi d\bm{x}ds
+\int_{0}^{t}\int_{D}\partial_s\bar{\bm{m}}^k\xi d\bm{x}ds,
\end{align*}
and $|\bar{\bm{m}}^k|^2-1\rightarrow0$ a.e. in $L^{2r}(\bar{\Omega};L^2(0,T;\mathbb{L}^{2}))$, we know that
\begin{align*}
&\lim\limits_{k\rightarrow\infty}\bar{\mathbb{E}}\bigg[\bigg|\int_{0}^{t}\int_{D}(|\bar{\bm{m}}^k|^2-1)
\partial_s\bar{\bm{m}}^k\xi d\bm{x}ds\bigg|^2\bigg]
\leq\lim\limits_{k\rightarrow\infty}\bar{\mathbb{E}}\bigg[\bigg(\int_{0}^{t}\int_{D}|(|\bar{\bm{m}}^k|^2-1)
\partial_s\bar{\bm{m}}^k\xi|d\bm{x}ds\bigg)^2\bigg]\\
&\leq\lim\limits_{k\rightarrow\infty}\bar{\mathbb{E}}\big[||\bar{\bm{m}}^k|^2-1|_{L^2(0,T;\mathbb{L}^2)}^2
|\partial_s\bar{\bm{m}}^k\xi|_{L^2(0,T;\mathbb{L}^2)}^2\big]=0.
\end{align*}
Using the fact that $\partial_t\bar{\bm{m}}^k\rightarrow\partial_t\bar{\bm{m}}$ weakly in
$L^{2r}(\bar{\Omega};L^{2}(\mathbb{R}^{+};\mathbb{L}^{2}))$, we obtain
\begin{align*}
\lim\limits_{k\rightarrow\infty}\bar{\mathbb{E}}\bigg[\bigg|\int_{0}^{t}\int_{D}(\partial_s\bar{\bm{m}}^k
-\partial_s\bar{\bm{m}})\xi d\bm{x}ds\bigg|^2\bigg]=0.
\end{align*}
Hence,
\begin{align*}
\lim\limits_{k\rightarrow\infty}\bar{\mathbb{E}}\bigg[\bigg|\int_{0}^{t}\int_{D}\big(|\bar{\bm{m}}^k|^2
\partial_s\bar{\bm{m}}^k-\partial_s\bar{\bm{m}}\big)\xi d\bm{x}ds\bigg|^2\bigg]
=0.
\end{align*}
By the H\"{o}lder inequality and \eqref{kdgj}, we have
\begin{align*}
&\bar{\mathbb{E}}\bigg[\bigg|\int_{0}^{t}\langle|\bar{\bm{m}}^k|^2\partial_s\bar{\bm{m}}^k,\xi\rangle_{\mathbb{L}^2}ds\bigg|^2\bigg]
\leq C\bar{\mathbb{E}}\bigg[\bigg(\int_{0}^{t}|\bar{\bm{m}}^k|_{\mathbb{L}^4}^2
|\partial_s\bar{\bm{m}}^k\xi|_{\mathbb{L}^2}ds\bigg)^2\bigg]\\
&\leq C\bar{\mathbb{E}}\big[|\bar{\bm{m}}^k|_{L^4(0,T;\mathbb{L}^4)}^4
|\partial_s\bar{\bm{m}}^k\xi|_{L^2(0,T;\mathbb{L}^2)}^2\big]
\leq C,
\end{align*}
therefore, applying the Vitali convergence theorem gives
\begin{align*}
\lim\limits_{k\rightarrow\infty}\int_{0}^{T}\bar{\mathbb{E}}\bigg[\bigg|\int_{0}^{t}
\langle|\bar{\bm{m}}^k|^2\partial_s\bar{\bm{m}}^k-\partial_s\bar{\bm{m}},\xi\rangle_{\mathbb{L}^2}ds\bigg|^2\bigg]dt=0.
\end{align*}
Using \eqref{11sltj}, the fact that $\bar{\bm{m}}^k\rightarrow\bar{\bm{m}}$ in $L^4(0,T;\mathbb{L}^4)$, $\bar{\mathbb{P}}$-a.s.,
and $\partial_t\bar{\bm{m}}^k\rightharpoonup\partial_t\bar{\bm{m}}$ in $L^{2r}(\bar{\Omega};L^{2}(\mathbb{R}^{+};\mathbb{L}^{2}))$,
we obtain
\begin{align*}
&\lim\limits_{k\rightarrow\infty}\bar{\mathbb{E}}\bigg[\bigg|\int_{0}^{t}
\int_{D}(\partial_s\bar{\bm{m}}^k\cdot\bar{\bm{m}}^k)\bar{\bm{m}}^k\cdot\xi
-(\partial_s\bar{\bm{m}}\cdot\bar{\bm{m}})\bar{\bm{m}}\cdot\xi d\bm{x}ds\bigg|^2\bigg]\\
&\leq\lim\limits_{k\rightarrow\infty}\bar{\mathbb{E}}\bigg[\bigg|\int_{0}^{t}
\int_{D}(\bar{\bm{m}}^k-\bar{\bm{m}})\partial_s\bar{\bm{m}}^k\cdot\bar{\bm{m}}^k\xi d\bm{x}ds\bigg|^2\bigg]\\
&\quad +\lim\limits_{k\rightarrow\infty}\bar{\mathbb{E}}\bigg[\bigg|\int_{0}^{t}
\int_{D}(\partial_s\bar{\bm{m}}^k-\partial_s\bar{\bm{m}})\bar{\bm{m}}\cdot\bar{\bm{m}}^k\xi d\bm{x}ds\bigg|^2\bigg]\\
&\quad +\lim\limits_{k\rightarrow\infty}\bar{\mathbb{E}}\bigg[\bigg|\int_{0}^{t}
\int_{D}(\bar{\bm{m}}^k-\bar{\bm{m}})\partial_s\bar{\bm{m}}\cdot\bar{\bm{m}}\xi d\bm{x}ds\bigg|^2\bigg]\\
&\leq\lim\limits_{k\rightarrow\infty}\bar{\mathbb{E}}\big[|\bar{\bm{m}}^k-\bar{\bm{m}}|_{L^4(0,T;\mathbb{L}^4)}^2
|\partial_s\bar{\bm{m}}^k\xi|_{L^2(0,T;\mathbb{L}^2)}^2|\bar{\bm{m}}^k|_{L^4(0,T;\mathbb{L}^4)}^2\big]\\
&\quad +\lim\limits_{k\rightarrow\infty}\bar{\mathbb{E}}\big[|\bar{\bm{m}}^k-\bar{\bm{m}}|_{L^4(0,T;\mathbb{L}^4)}^2
|\partial_s\bar{\bm{m}}\xi|_{L^2(0,T;\mathbb{L}^2)}^2|\bar{\bm{m}}|_{L^4(0,T;\mathbb{L}^4)}^2\big]\\
&\quad +\lim\limits_{k\rightarrow\infty}\bar{\mathbb{E}}\bigg[\bigg|\int_{0}^{t}
\int_{D}(\partial_s\bar{\bm{m}}^k-\partial_s\bar{\bm{m}})\bar{\bm{m}}\cdot\bar{\bm{m}}^k\xi d\bm{x}ds\bigg|^2\bigg]
=0.
\end{align*}
We use the H\"{o}lder inequality and \eqref{kdgj} to obtain
\begin{align*}
&\bar{\mathbb{E}}\bigg[\bigg|\int_{0}^{t}\langle(\partial_s\bar{\bm{m}}^k\cdot\bar{\bm{m}}^k)\bar{\bm{m}}^k,
\xi\rangle_{\mathbb{L}^2}ds\bigg|^2\bigg]
\leq C\bar{\mathbb{E}}\bigg[\bigg(\int_{0}^{t}|\bar{\bm{m}}^k|_{\mathbb{L}^4}^2
|\partial_s\bar{\bm{m}}^k\xi|_{\mathbb{L}^2}ds\bigg)^2\bigg]\\
&\leq C\bar{\mathbb{E}}\big[|\bar{\bm{m}}^k|_{L^4(0,T;\mathbb{L}^4)}^4|\partial_s\bar{\bm{m}}^k\xi|_{L^2(0,T;\mathbb{L}^2)}^2\big]
\leq C.
\end{align*}
Thus,
\begin{align*}
\lim\limits_{k\rightarrow\infty}\int_{0}^{T}\bar{\mathbb{E}}\bigg[\bigg|\int_{0}^{t}
\langle|\bar{\bm{m}}^k|^2\partial_s\bar{\bm{m}}^k-\partial_s\bar{\bm{m}},\xi\rangle_{\mathbb{L}^2}ds\bigg|^2\bigg]dt=0.
\end{align*}
Recalling \eqref{11sltj}, the facts that $\bar{\bm{m}}^k\rightarrow\bar{\bm{m}}$ in $L^4(0,T;\mathbb{L}^4)$, $\bar{\mathbb{P}}$-a.s.,
$\bar{\bm{H}}_s^k\rightarrow\bar{\bm{H}}_{s}$ in $L^{2r}(\bar{\Omega};L^2(0,T;\mathbb{L}^2(\mathbb{R}^3)))$,
$\bar{\bm{s}}^k\rightarrow\bar{\bm{s}}$ weak$^{\ast}$ in $L^{2r}(\bar{\Omega};L^{\infty}(\mathbb{R}^{+};\mathbb{L}^{s})), 1\leq s\leq3$,
$w'(\bar{\bm{m}}^k)\rightarrow w'(\bar{\bm{m}})$ weak$^{\ast}$ in $L^{2r}(\bar{\Omega};L^{\infty}(\mathbb{R}^{+};\mathbb{L}^{s})), 1\leq s\leq2$, we can see
\begin{align*}
&\lim\limits_{k\rightarrow\infty}\bar{\mathbb{E}}\bigg[\bigg|\int_{0}^{t}\int_{D}\bar{\bm{m}}^k\times(\bar{\bm{H}}_s^k
+\frac{1}{2}\bar{\bm{s}}^k-w'(\bar{\bm{m}}^k))\xi-\bar{\bm{m}}\times(\bar{\bm{H}}_s
+\frac{1}{2}\bar{\bm{s}}-w'(\bar{\bm{m}}))\xi d\bm{x}ds\bigg|^2\bigg]\\
&\leq\lim\limits_{k\rightarrow\infty}\bar{\mathbb{E}}\bigg[\bigg|\int_{0}^{t}
\int_{D}(\bar{\bm{H}}_s^k-\bar{\bm{H}}_s)\cdot\bar{\bm{m}}^k\times\xi d\bm{x}ds\bigg|^2\bigg]
+\lim\limits_{k\rightarrow\infty}\bar{\mathbb{E}}\bigg[\bigg|\int_{0}^{t}
\int_{D}(\bar{\bm{m}}^k-\bar{\bm{m}})\cdot\bar{\bm{H}}_s\times\xi d\bm{x}ds\bigg|^2\bigg]\\
&\quad +\frac{1}{2}\lim\limits_{k\rightarrow\infty}\bar{\mathbb{E}}\bigg[\bigg|\int_{0}^{t}
\int_{D}(\bar{\bm{s}}^k-\bar{\bm{s}})\cdot\bar{\bm{m}}^k\times\xi d\bm{x}ds\bigg|^2\bigg]
+\frac{1}{2}\lim\limits_{k\rightarrow\infty}\bar{\mathbb{E}}\bigg[\bigg|\int_{0}^{t}
\int_{D}(\bar{\bm{m}}^k-\bar{\bm{m}})\cdot\bar{\bm{s}}\times\xi d\bm{x}ds\bigg|^2\bigg]\\
&\quad +\lim\limits_{k\rightarrow\infty}\bar{\mathbb{E}}\bigg[\bigg|\int_{0}^{t}
\int_{D}(w'(\bar{\bm{m}}^k)-w'(\bar{\bm{m}}))\cdot\bar{\bm{m}}^k\times\xi d\bm{x}ds\bigg|^2\bigg]\\
&\quad +\lim\limits_{k\rightarrow\infty}\bar{\mathbb{E}}\bigg[\bigg|\int_{0}^{t}
\int_{D}(\bar{\bm{m}}^k-\bar{\bm{m}})\cdot w'(\bar{\bm{m}})\times \xi d\bm{x}ds\bigg|^2\bigg]\\
&\leq C\lim\limits_{k\rightarrow\infty}\bar{\mathbb{E}}\big[|\bar{\bm{H}}_s^k-\bar{\bm{H}}_s|_{L^2(0,T;\mathbb{L}^2)}^2
|\bar{\bm{m}}^k|_{L^4(0,T;\mathbb{L}^4)}^2|\xi|_{L^4(0,T;\mathbb{L}^4)}^2\big]\\
&\quad +C\lim\limits_{k\rightarrow\infty}\bar{\mathbb{E}}\big[|\bar{\bm{m}}^k-\bar{\bm{m}}|_{L^4(0,T;\mathbb{L}^4)}^2
|\bar{\bm{H}}_s|_{L^2(0,T;\mathbb{L}^2)}^2|\xi|_{L^4(0,T;\mathbb{L}^4)}^2\big]\\
&\quad +C\lim\limits_{k\rightarrow\infty}\bar{\mathbb{E}}\big[|\bar{\bm{m}}^k-\bar{\bm{m}}|_{L^4(0,T;\mathbb{L}^4)}^2
|\bar{\bm{s}}|_{L^2(0,T;\mathbb{L}^2)}^2|\xi|_{L^4(0,T;\mathbb{L}^4)}^2\big]\\
&\quad +C\lim\limits_{k\rightarrow\infty}\bar{\mathbb{E}}\big[|\bar{\bm{m}}^k-\bar{\bm{m}}|_{L^4(0,T;\mathbb{L}^4)}^2
|w'(\bar{\bm{m}})|_{L^2(0,T;\mathbb{L}^2)}^2|\xi|_{L^4(0,T;\mathbb{L}^4)}^2\big]\\
&\quad +\frac{1}{2}\lim\limits_{k\rightarrow\infty}\bar{\mathbb{E}}\bigg[\bigg|\int_{0}^{t}
\int_{D}(\bar{\bm{s}}^k-\bar{\bm{s}})\cdot\bar{\bm{m}}^k\times\xi d\bm{x}ds\bigg|^2\bigg]\\
&\quad +\lim\limits_{k\rightarrow\infty}\bar{\mathbb{E}}\bigg[\bigg|\int_{0}^{t}
\int_{D}(w'(\bar{\bm{m}}^k)-w'(\bar{\bm{m}}))\cdot\bar{\bm{m}}^k\times\xi d\bm{x}ds\bigg|^2\bigg]
=0.
\end{align*}
Using the H\"{o}lder inequality and \eqref{kdgj}, we obtain
\begin{align*}
&\bar{\mathbb{E}}\bigg[\bigg|\int_{0}^{t}\langle\bar{\bm{m}}^k\times(\bar{\bm{H}}_s^k
+\frac{1}{2}\bar{\bm{s}}^k-w'(\bar{\bm{m}}^k),\xi\rangle_{\mathbb{L}^2}ds\bigg|^2\bigg]\\
&\leq C\bar{\mathbb{E}}\bigg[\bigg(\int_{0}^{t}|\bar{\bm{H}}_s^k
+\frac{1}{2}\bar{\bm{s}}^k-w'(\bar{\bm{m}}^k)|_{\mathbb{L}^2}|\bar{\bm{m}}^k\times\xi|_{\mathbb{L}^2}ds\bigg)^2\bigg]\\
&\leq C\bar{\mathbb{E}}\big[\big(|\bar{\bm{H}}_s^k|_{L^2(0,T;\mathbb{L}^2)}^2
+\frac{1}{4}|\bar{\bm{s}}^k|_{L^{2}(0,T;\mathbb{L}^2)}^2
+|w'(\bar{\bm{m}}^k)|_{L^{2}(0,T;\mathbb{L}^2)}^2\big)\\
&\quad \cdot|\bar{\bm{m}}^k|_{L^4(0,T;\mathbb{L}^4))}^2|\xi|_{L^4(0,T;\mathbb{L}^4)}^2\big]
\leq C,
\end{align*}
which together with the Vitali convergence theorem yield that
\begin{align*}
\lim\limits_{k\rightarrow\infty}\int_{0}^{T}\bar{\mathbb{E}}\bigg[\bigg|\int_{0}^{t}\langle\bar{\bm{m}}^k\times(\bar{\bm{H}}_s^k
+\frac{1}{2}\bar{\bm{s}}^k-w'(\bar{\bm{m}}^k))-\bar{\bm{m}}\times(\bar{\bm{H}}_s
+\frac{1}{2}\bar{\bm{s}}-w'(\bar{\bm{m}})),\xi\rangle_{\mathbb{L}^2}ds\bigg|^2\bigg]dt=0.
\end{align*}
Let $\nabla\xi\in L^8(\bar{\Omega};L^4(0,T;\mathbb{L}^4))$.
By using \eqref{11sltj}, the facts that $\bar{\bm{m}}^k\rightarrow\bar{\bm{m}}$ in $L^4(0,T;\mathbb{L}^4)$, $\bar{\bm{m}}^k\rightarrow\bar{\bm{m}}$ in $L_w^2(0,T;\mathbb{H}^1)$, $\bar{\mathbb{P}}$-a.s., we derive that
\begin{align*}
&\lim\limits_{k\rightarrow\infty}\bar{\mathbb{E}}\bigg[\bigg|\int_{0}^{t}\int_{D}(\bar{\bm{m}}^k\times\nabla\bar{\bm{m}}^k)\nabla\xi
-(\bar{\bm{m}}\times\nabla\bar{\bm{m}})\nabla\xi d\bm{x}ds\bigg|^2\bigg]\\
&\leq\lim\limits_{k\rightarrow\infty}\bar{\mathbb{E}}\bigg[\bigg|\int_{0}^{t}\int_{D}\bar{\bm{m}}^k\times(\nabla\bar{\bm{m}}^k
-\nabla\bar{\bm{m}})\cdot\nabla\xi d\bm{x}ds\bigg|^2\bigg]\\
&\quad +\lim\limits_{k\rightarrow\infty}\bar{\mathbb{E}}\bigg[\bigg|\int_{0}^{t}
\int_{D}(\bar{\bm{m}}^k-\bar{\bm{m}})\times\nabla\bar{\bm{m}}\cdot\nabla\xi d\bm{x}ds\bigg|^2\bigg]\\
&\leq C\lim\limits_{k\rightarrow\infty}\bar{\mathbb{E}}\big[|\bar{\bm{m}}^k-\bar{\bm{m}}|_{L^4(0,T;\mathbb{L}^4)}^2
|\nabla\bar{\bm{m}}|_{L^2(0,T;\mathbb{L}^2)}^2|\nabla\xi|_{L^4(0,T;\mathbb{L}^4)}^2\big]\\
&\quad +\lim\limits_{k\rightarrow\infty}\bar{\mathbb{E}}\bigg[\bigg|\int_{0}^{t}\int_{D}(\nabla\bar{\bm{m}}^k-\nabla\bar{\bm{m}})
\cdot\bar{\bm{m}}^k\times\nabla\xi d\bm{x}ds\bigg|^2\bigg]
=0,
\end{align*}
and using the H\"{o}lder inequality and \eqref{kdgj}, we get
\begin{align*}
&\bar{\mathbb{E}}\bigg[\bigg|\int_{0}^{t}\langle\bar{\bm{m}}^k\times\nabla\bar{\bm{m}}^k,
\nabla\xi\rangle_{\mathbb{L}^2}ds\bigg|^2\bigg]
\leq \bar{\mathbb{E}}\bigg[\bigg(\int_{0}^{t}\int_D|\nabla\bar{\bm{m}}^k
(\bar{\bm{m}}^k\times\nabla\xi)|d\bm{x}ds\bigg)^2\bigg]\\
&\leq C\bar{\mathbb{E}}\big[|\nabla\bar{\bm{m}}^k|_{L^2(0,T;\mathbb{L}^2)}^2
|\bar{\bm{m}}^k|_{L^4(0,T;\mathbb{L}^4)}^2|\nabla\xi|_{L^4(0,T;\mathbb{L}^4)}^2\big]\leq C.
\end{align*}
By the Vitali convergence theorem, we infer that
\begin{align*}
\lim\limits_{k\rightarrow\infty}\int_{0}^{T}\bar{\mathbb{E}}\bigg[\bigg|\int_{0}^{t}
\langle\bar{\bm{m}}^k\times\nabla\bar{\bm{m}}^k
-\bar{\bm{m}}\times\nabla\bar{\bm{m}},\nabla\xi\rangle_{\mathbb{L}^2}ds\bigg|^2\bigg]dt=0.
\end{align*}
Let $\xi\in L^8(\bar{\Omega};L^4(0,T;\mathbb{L}^4))$; according to Assumption \ref{ass1} and \eqref{11sltj}, the fact that $\bar{\bm{m}}^k\rightarrow\bar{\bm{m}}$ in $L^4(0,T;\mathbb{L}^4)$, $\bar{\mathbb{P}}$-a.s., we deduce that
\begin{align*}
&\lim\limits_{k\rightarrow\infty}\bar{\mathbb{E}}\bigg[\bigg|\int_{0}^{t}\int_{D}\bar{\bm{m}}^k
\times({G^k}'(\bar{\bm{m}}^k)[G^k(\bar{\bm{m}}^k)])\xi
-\bar{\bm{m}}\times (G'(\bar{\bm{m}})[G(\bar{\bm{m}})])\xi d\bm{x}ds\bigg|^2\bigg]\\
&\leq\lim\limits_{k\rightarrow\infty}\bar{\mathbb{E}}\bigg[\bigg|\int_{0}^{t}\int_{D}({G^k}'(\bar{\bm{m}}^k)[G^k(\bar{\bm{m}}^k)]
-G'(\bar{\bm{m}})[G(\bar{\bm{m}})])\cdot\bar{\bm{m}}^k\times\xi d\bm{x}ds\bigg|^2\bigg]\\
&\quad +\lim\limits_{k\rightarrow\infty}\bar{\mathbb{E}}\bigg[\bigg|\int_{0}^{t}\int_{D}(\bar{\bm{m}}^k-\bar{\bm{m}})
\cdot(G'(\bar{\bm{m}})[G(\bar{\bm{m}})])\times\xi d\bm{x}ds\bigg|^2\bigg]\\
&\leq CK_1\lim\limits_{k\rightarrow\infty}\bar{\mathbb{E}}\big[|\bar{\bm{m}}^k-\bar{\bm{m}}|_{L^4(0,T;\mathbb{L}^4)}^2
|\bar{\bm{m}}^k|_{L^2(0,T;\mathbb{L}^2)}^2|\xi|_{L^4(0,T;\mathbb{L}^4)}^2\big]\\
&\quad +CK_2\lim\limits_{k\rightarrow\infty}\bar{\mathbb{E}}\big[|\bar{\bm{m}}^k-\bar{\bm{m}}|_{L^4(0,T;\mathbb{L}^4)}^2
(1+|\bar{\bm{m}}|_{L^2(0,T;\mathbb{L}^2)}^2)|\xi|_{L^4(0,T;\mathbb{L}^4)}^2\big]\\
&=0.
\end{align*}
Using the H\"{o}lder inequality and \eqref{kdgj}, we obtain
\begin{align*}
&\bar{\mathbb{E}}\bigg[\bigg|\int_{0}^{t}\langle\bar{\bm{m}}^k
\times({G^k}'(\bar{\bm{m}}^k)[G^k(\bar{\bm{m}}^k)]),\xi\rangle_{\mathbb{L}^2}ds\bigg|^2\bigg]\\
&\leq C\bar{\mathbb{E}}\bigg[\bigg(\int_{0}^{t}\int_D|{G^k}'(\bar{\bm{m}}^k)[G^k(\bar{\bm{m}}^k)]
(\bar{\bm{m}}^k\times\xi)|d\bm{x}ds\bigg)^2\bigg]\\
&\leq C\bar{\mathbb{E}}\big[\big(1+|\bar{\bm{m}}^k|_{L^{2}(0,T;\mathbb{L}^2)}^2\big)
|\bar{\bm{m}}^k|_{L^{4}(0,T;\mathbb{L}^4)}^2|\xi|_{L^{4}(0,T;\mathbb{L}^4)}^2\big]
\leq C,
\end{align*}
which together with the Vitali convergence theorem imply that
\begin{align*}
\lim\limits_{k\rightarrow\infty}\int_{0}^{T}\bar{\mathbb{E}}\bigg[\bigg|\int_{0}^{t}
\langle\bar{\bm{m}}^k\times({G^k}'(\bar{\bm{m}}^k)[G^k(\bar{\bm{m}}^k)])
-\bar{\bm{m}}\times(G'(\bar{\bm{m}})[G(\bar{\bm{m}})]),\xi\rangle_{\mathbb{L}^2}ds\bigg|^2\bigg]dt=0.
\end{align*}
Next, we use \eqref{11sltj}, the fact that $\bar{\bm{m}}^k\rightarrow\bar{\bm{m}}$ in $L^4(0,T;\mathbb{L}^4)$,
$\bar{\mathbb{P}}$-a.s., Assumption \ref{ass1} and the BDG inequality to achieve the following equation:
\begin{align*}
&\lim\limits_{k\rightarrow\infty}\bar{\mathbb{E}}\bigg[\bigg|\int_{0}^{t}
\langle\bar{\bm{m}}^k\times G^k(\bar{\bm{m}}^k)
-\bar{\bm{m}}\times G(\bar{\bm{m}}),\xi\rangle_{\mathbb{L}^2}d\bar{W}(s)\bigg|^2\bigg]\\
&=\lim\limits_{k\rightarrow\infty}\bar{\mathbb{E}}\bigg[\bigg|\int_{0}^{t}\sum\limits_{i\geq1}
\langle\bar{\bm{m}}^k\times G^k(\bar{\bm{m}}^k)
-\bar{\bm{m}}\times G(\bar{\bm{m}}),\xi\rangle_{\mathbb{L}^2}\tilde{e}_id\bar{W}_i(s)\bigg|^2\bigg]\\
&\leq C\lim\limits_{k\rightarrow\infty}\bar{\mathbb{E}}\bigg[\int_{0}^{T}\bigg|\sum\limits_{i\geq1}\int_{D}\bar{\bm{m}}^k\times G_i^k(\bar{\bm{m}}^k)\xi-\bar{\bm{m}}\times G_i(\bar{\bm{m}})\xi d\bm{x}\bigg|^2ds\bigg]\\
&\leq CK_1\lim\limits_{k\rightarrow\infty}\bar{\mathbb{E}}\big[|\bar{\bm{m}}^k-\bar{\bm{m}}|_{L^4(0,T;\mathbb{L}^4)}^2
|\bar{\bm{m}}^k|_{L^2(0,T;\mathbb{L}^2)}^2|\xi|_{L^4(0,T;\mathbb{L}^4)}^2\big]\\
&\quad +CK_2\lim\limits_{k\rightarrow\infty}\bar{\mathbb{E}}\big[|\bar{\bm{m}}^k-\bar{\bm{m}}|_{L^4(0,T;\mathbb{L}^4)}^2
(1+|\bar{\bm{m}}|_{L^2(0,T;\mathbb{L}^2)}^2)|\xi|_{L^4(0,T;\mathbb{L}^4)}^2\big]
=0,
\end{align*}
and using the BDG inequality, the H\"{o}lder inequality and \eqref{kdgj}, we get
\begin{align*}
&\bar{\mathbb{E}}\bigg[\bigg|\int_{0}^{t}\langle\bar{\bm{m}}^k
\times G^k(\bar{\bm{m}}^k),\xi\rangle_{\mathbb{L}^2}d\bar{W}(s)\bigg|^2\bigg]
=\bar{\mathbb{E}}\bigg[\bigg|\int_{0}^{t}\sum\limits_{i\geq1}\langle\bar{\bm{m}}^k
\times G^k(\bar{\bm{m}}^k),\xi\rangle_{\mathbb{L}^2}\tilde{e}_id\bar{W}_i(s)\bigg|^2\bigg]\\
&\leq C\bar{\mathbb{E}}\bigg[\int_{0}^{T}\bigg|\sum\limits_{i\geq1}\int_DG^k(\bar{\bm{m}}^k)(\bar{\bm{m}}^k\times\xi)
d\bm{x}\tilde{e}_i\bigg|^2dt\bigg]\\
&\leq C\bar{\mathbb{E}}\big[\big(1+|\bar{\bm{m}}^k|_{L^{2}(0,T;\mathbb{L}^2)}^2\big)
|\bar{\bm{m}}^k|_{L^4(0,T;\mathbb{L}^4)}^2|\xi|_{L^{4}(0,T;\mathbb{L}^4)}^2\big]
\leq C.
\end{align*}
Then by the Vitali convergence theorem, we derive that
\begin{align*}
\lim\limits_{k\rightarrow\infty}\int_{0}^{T}\bar{\mathbb{E}}\bigg[\bigg|\int_{0}^{t}\langle\bar{\bm{m}}^k\times G^k(\bar{\bm{m}}^k)
-\bar{\bm{m}}\times G(\bar{\bm{m}}),\xi\rangle_{\mathbb{L}^2}d\bar{W}(s)\bigg|^2\bigg]dt=0.
\end{align*}
Similarly, we obtain
\begin{align*}
&\lim\limits_{k\rightarrow\infty}\bar{\mathbb{E}}\bigg[\bigg|\int_{0}^{t}\int_B
\langle\bar{\bm{m}}^k\times F^k(l,\bar{\bm{m}}^k)-\bar{\bm{m}}\times F(l,\bar{\bm{m}}),\xi\rangle_{\mathbb{L}^2}
\tilde{\bar{\eta}}(ds,dl)\bigg|^2\bigg]\\
&\leq C\lim\limits_{k\rightarrow\infty}\bar{\mathbb{E}}\bigg[\int_{0}^{T}\int_B
\big|\langle\bar{\bm{m}}^k\times F^k(l,\bar{\bm{m}}^k)-\bar{\bm{m}}\times F(l,\bar{\bm{m}}),
\xi\rangle_{\mathbb{L}^2}\big|^2\mu(dl)dt\bigg]\\
&\leq CK_1\lim\limits_{k\rightarrow\infty}\bar{\mathbb{E}}\big[|\bar{\bm{m}}^k-\bar{\bm{m}}|_{L^4(0,T;\mathbb{L}^4)}^2
|\bar{\bm{m}}^k|_{L^2(0,T;\mathbb{L}^2)}^2|\xi|_{L^4(0,T;\mathbb{L}^4)}^2\big]\\
&\quad +CK_2\lim\limits_{k\rightarrow\infty}\bar{\mathbb{E}}\big[|\bar{\bm{m}}^k-\bar{\bm{m}}|_{L^4(0,T;\mathbb{L}^4)}^2
(1+|\bar{\bm{m}}|_{L^2(0,T;\mathbb{L}^2)}^2)|\xi|_{L^4(0,T;\mathbb{L}^4)}^2\big]
=0,
\end{align*}
and
\begin{align*}
&\bar{\mathbb{E}}\bigg[\bigg|\int_{0}^{t}\int_B\langle\bar{\bm{m}}^k\times F^k(\bar{\bm{m}}^k,l),\xi\rangle_{\mathbb{L}^2}
\tilde{\bar{\eta}}(ds,dl)\bigg|^2\bigg]\\
&\leq C\bar{\mathbb{E}}\bigg[\int_{0}^{T}\int_B\bigg|\int_DF^k(\bar{\bm{m}}^k,l)(\bar{\bm{m}}^k\times\xi)d\bm{x}\bigg|^2\mu(dl)dt\bigg]\\
&\leq C\bar{\mathbb{E}}\big[\big(1+|\bar{\bm{m}}^k|_{L^{2}(0,T;\mathbb{L}^2)}^2\big)
|\bar{\bm{m}}^k|_{L^4(0,T;\mathbb{L}^4)}^2|\xi|_{L^{4}(0,T;\mathbb{L}^4)}^2\big]
\leq C.
\end{align*}
So we can apply the Vitali convergence theorem to deduce that
\begin{align*}
\lim\limits_{k\rightarrow\infty}\int_{0}^{T}\bar{\mathbb{E}}\bigg[\bigg|\int_{0}^{t}\int_B
\langle\bar{\bm{m}}^k\times F^k(l,\bar{\bm{m}}^k))-\bar{\bm{m}}\times F(l,\bar{\bm{m}}),\xi\rangle_{\mathbb{L}^2}
\tilde{\bar{\eta}}(ds,dl)\bigg|^2\bigg]dt=0.
\end{align*}
Hence, we can conclude that
\begin{align*}
&\int_{0}^{t}\int_{D}\partial_s\bar{\bm{m}}\cdot\xi d\bm{x}ds
=\int_{0}^{t}\int_{D}\alpha(\bar{\bm{m}}\times\partial_s\bar{\bm{m}})\xi d\bm{x}ds
+\int_{0}^{t}\int_{D}(\bar{\bm{m}}\times\nabla\bar{\bm{m}})\nabla\xi d\bm{x}ds\\
&\quad -\int_{0}^{t}\int_{D}\bar{\bm{m}}\times(\bar{\bm{H}}_s+\frac{1}{2}\bar{\bm{s}}-w'(\bar{\bm{m}}))\xi d\bm{x}ds
+\frac{1}{2}\int_{0}^{t}\int_{D}(\bar{\bm{m}}\times G'(\bar{\bm{m}})[G(\bar{\bm{m}})])\xi d\bm{x}ds\\
&\quad +\int_{0}^{t}\int_{D}(\bar{\bm{m}}\times G(\bar{\bm{m}}))\xi d\bm{x}d\bar{W}(s)
+\int_{0}^{t}\int_B\int_{D}(\bar{\bm{m}}\times F(l,\bar{\bm{m}}))\xi d\bm{x}\tilde{\bar{\eta}}(ds,dl).
\end{align*}
Moreover,  Recalling that
\begin{align*}
i\int_{0}^{T}\int_{K}\partial_t\bm{\psi}_j^k\vartheta d\bm{x}dt
=&\frac{1}{2}\int_{0}^{T}\int_{K}\nabla\bm{\psi}_j^k\nabla\vartheta d\bm{x}dt
+\int_{0}^{T}\int_{K}V^k\bm{\psi}_j^k\vartheta d\bm{x}dt\\
&-\frac{1}{2}\int_{0}^{T}\int_{K}\bm{m}^k\cdot\hat{\bm{\sigma}}\bm{\psi}_j^k\vartheta d\bm{x}dt,
\end{align*}
letting $k\rightarrow\infty$ on the both sides of the above equation, and using the same argument of Lemma \ref{dycsl}, we have
\begin{align*}
i\int_{0}^{T}\int_{K}\partial_t\bar{\bm{\psi}}_j\vartheta d\bm{x}dt
=&\frac{1}{2}\int_{0}^{T}\int_{K}\nabla\bar{\bm{\psi}}_j\nabla\vartheta d\bm{x}dt
+\int_{0}^{T}\int_{K}\bar{V}\bar{\bm{\psi}}_j\vartheta d\bm{x}dt\\
&-\frac{1}{2}\int_{0}^{T}\int_{K}\bar{\bm{m}}\cdot\hat{\bm{\sigma}}\bar{\bm{\psi}}_j\vartheta d\bm{x}dt,
\end{align*}
for all $\vartheta\in L^4(\bar{\Omega};L^4(0,T;X^{\beta}))$. Thus, we complete the proof of Theorem \ref{youjieyudejie}.

\noindent\underline{\bf Proof of Theorem \ref{them1}.} In view of $K=\{\bm{x}\in\mathbb{R}^3,\, |\bm{x}|<R\}$, taking $R\rightarrow\infty$ (the third-layer approximation), we can get the global existence of \eqref{S1} in the whole space by applying the domain expansion method and the stochastic compactness, that is, for  all $\vartheta\in L^4(\bar{\Omega};L^4(0,T;X^{\beta}))$, it holds that
\begin{align*}
i\int_{0}^{T}\int_{\mathbb{R}^3}\partial_t\bar{\bm{\psi}}_j\vartheta d\bm{x}dt
=&\frac{1}{2}\int_{0}^{T}\int_{\mathbb{R}^3}\nabla\bar{\bm{\psi}}_j\nabla\vartheta d\bm{x}dt
+\int_{0}^{T}\int_{\mathbb{R}^3}\bar{V}\bar{\bm{\psi}}_j\vartheta d\bm{x}dt\\
&-\frac{1}{2}\int_{0}^{T}\int_{\mathbb{R}^3}\bar{\bm{m}}\cdot\hat{\bm{\sigma}}\bar{\bm{\psi}}_j\vartheta d\bm{x}dt.
\end{align*}
This completes the proof of Theorem \ref{them1}. In addition, the weak martingale solutions have the following properties:
\begin{proposition}\label{Prop}
Let $(\Omega, \mathscr{F}, (\mathscr{F}_t)_{t\geq0}, \mathbb{P},\bm{\Psi},\bm{m},V,\rho,\bm{s},\bm{H}_s,W,\eta)$ be a weak martingale solution of the system \eqref{S2}, then
\begin{align*}
|\bm{\psi}_j(t)|_{\mathbb{L}^2(K)}=|\bm{\varphi}_j|_{\mathbb{L}^2(K)},\quad
|\bm{m}(t)|_{\mathbb{L}^2(D)}=|\bm{m}_0|_{\mathbb{L}^2(D)}
\end{align*}
hold $\mathbb{P}$-a.s., and
\begin{align*}
&\frac{d}{dt}\int_{K}\sum\limits_{j=1}^{\infty}\lambda_j|\nabla\bm{\psi}_{j}|^2d\bm{x}
+\frac{d}{dt}\int_{K}|\nabla V|^2d\bm{x}
+2\alpha\int_{D}|\partial_t\bm{m}|^2d\bm{x}
+\frac{d}{dt}\int_{D}|\nabla\bm{m}|^2d\bm{x}\\
&+\frac{d}{dt}\int_{\mathbb{R}^3}|\bm{H}_{s}|^2d\bm{x}
+2\frac{d}{dt}\int_{D}w(\bm{m})d\bm{x}
+\frac{d}{dt}\int_{D}|G(\bm{m})|^2d\bm{x}
-\frac{d}{dt}\int_{D}\bm{m}\cdot\bm{s}d\bm{x}\\
&+2\int_D\partial_t\bm{m}\cdot G(\bm{m})\frac{dW(t)}{dt}d\bm{x}
+2\int_B(\partial_t\bm{m},F(\bm{m},l))\tilde{\eta}(ds,dl)=0.
\end{align*}
Moreover, there exists a constant $C$ such that for all $t>0$,
\begin{align*}
&\frac{1}{2}\mathbb{E}\bigg[\sup_{0\leq t\leq T}
\int_{K}\sum\limits_{j=1}^{\infty}\lambda_j|\nabla\bm{\psi}_{j}|^2d\bm{x}\bigg]
+\mathbb{E}\bigg[\sup_{0\leq t\leq T}\int_{K}|\nabla V|^2d\bm{x}\bigg]
+2\alpha\mathbb{E}\bigg[\int_0^T\int_{D}|\partial_t\bm{m}|^2d\bm{x}dt\bigg]\nonumber\\
&+\frac{1}{2}\mathbb{E}\bigg[\sup_{0\leq t\leq T}|\bm{m}|_{\mathbb{H}^1}^2\bigg]
+\mathbb{E}\bigg[\sup_{0\leq t\leq T}\int_{\mathbb{R}^3}|\bm{H}_{s}|^2d\bm{x}\bigg]
+2\mathbb{E}\bigg[\sup_{0\leq t\leq T}\int_{D}w(\bm{m})d\bm{x}\bigg]\\
&+\mathbb{E}\bigg[\sup_{0\leq t\leq T}\int_{D}|G(\bm{m})|^2d\bm{x}\bigg]
\leq C.\nonumber
\end{align*}
\end{proposition}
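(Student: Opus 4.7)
The plan is to obtain all four assertions by combining the energy identities already available at the Faedo--Galerkin level (Lemma~\ref{PP}, parts (i) and (ii)) and then transporting them through the three successive limits $n\to\infty$, $k\to\infty$, and $R\to\infty$ that were used to construct the solution. The first step is to observe that the two mass/energy identities in Lemma~\ref{PP} are not independent at the continuous level: adding them and invoking the pointwise identity $\partial_s(\bm{m}\cdot\bm{s})=\partial_s\bm{m}\cdot\bm{s}+\bm{m}\cdot\partial_s\bm{s}$ cancels the coupling term $\int_0^t\!\int_D \partial_s\bm{s}_n\cdot\bm{m}_n\,d\bm{x}\,ds$ on the right-hand sides, leaving a single identity in which the only uncontrolled boundary term is $-\int_D\bm{m}(t)\cdot\bm{s}(t)\,d\bm{x}$; this is exactly the structure appearing in Proposition~\ref{Prop}. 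I would therefore first carry out this combination at the Galerkin level (keeping the stochastic remainders $\int_0^t(\partial_s\bm{m}_n,G_n(\bm{m}_n))dW_n$ and $\int_0^t\!\int_B(\partial_s\bm{m}_n,F_n(\bm{m}_n,l))\tilde{\eta}_n(ds,dl)$ since we no longer take expectation), and verify that the penalization term $\frac{k}{2}(|\bm{m}_n|^2-1)^2$ remains non-negative throughout, so that it can be dropped after using the constraint $|\bm{m}|=1$ that holds in the $k\to\infty$ limit.

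The two conservation laws will be proved directly. For $|\bm{\psi}_j(t)|_{\mathbb{L}^2(K)}=|\bm{\varphi}_j|_{\mathbb{L}^2(K)}$, I start from the Galerkin identity of Lemma~\ref{P} (which is pathwise) and use the lower semi-continuity $\liminf_n |\bar{\bm{\psi}}_{jn}(t)|_{\mathbb{L}^2}\ge|\bm{\psi}_j(t)|_{\mathbb{L}^2}$ together with an upper bound obtained from testing the weak limit equation for $\bm{\psi}_j$ against $\bm{\psi}_j$ itself (whose imaginary part formally yields $\frac{d}{dt}|\bm{\psi}_j|_{\mathbb{L}^2}^2=0$, since the potential term and the s-d interaction both vanish after taking the real/imaginary part). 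For $|\bm{m}(t)|_{\mathbb{L}^2(D)}=|\bm{m}_0|_{\mathbb{L}^2(D)}$ there is nothing to do beyond invoking property (d) of the weak martingale solution: both quantities equal $\sqrt{|D|}$ because $|\bm{m}(t,\bm{x})|=1$ and $|\bm{m}_0(\bm{x})|=1$ a.e.

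For the energy identity, after the combination described above and after passing to the limit $n\to\infty$ using the convergences listed in \eqref{sjdyxtj}--\eqref{Hnk} and Lemmas~\ref{ydsl}--\ref{gdsl1} (and similarly \eqref{11sjdyxtj} at the $k\to\infty$ stage), each deterministic term passes by weak lower semi-continuity in the appropriate $L^{2r}(\bar{\Omega};L^\infty)$ norm, while the coupling term $\int_D\bar{\bm{m}}^k\cdot\bar{\bm{s}}^k\,d\bm{x}$ converges by the strong convergence of $\bar{\bm{m}}^k$ in $L^4(0,T;\mathbb{L}^4)$ combined with the GNS bound on $\bar{\bm{s}}^k$ in $L^\infty(0,T;\mathbb{L}^s)$. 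The final step $R\to\infty$ is straightforward since the energy norms are all localized in $D$ or in $\mathbb{R}^3$ via the convolution defining $\bm{H}_s$, and $|\nabla V|$ is controlled uniformly in $R$ through the Poisson equation together with Calder\'on--Zygmund.

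The uniform estimate is obtained by taking $\bar{\mathbb{E}}\sup_{0\le t\le T}$ on the combined identity. The GNS inequality, Young's inequality with $\varepsilon$, and Lemma~\ref{P} handle the $\bm{m}\cdot\bm{s}$ absorption exactly as in the proof of Lemma~\ref{SPMpci}; the BDG inequality and Assumption~\ref{ass1} handle $\sup_t|\int_0^t(\partial_s\bm{m},G(\bm{m}))dW|$ and the analogous jump term, yielding $\varepsilon\,\bar{\mathbb{E}}[\int_0^T|\partial_s\bm{m}|_{\mathbb{L}^2}^2ds]+C$ which is absorbed into the $2\alpha$-term on the left. The main obstacle, as always in this scheme, is the rigorous identification of the limit inside the stochastic integrals: $\partial_t\bar{\bm{m}}^k$ converges only weakly in $L^{2r}(\bar{\Omega};L^2(0,T;\mathbb{L}^2))$, whereas the stochastic integrand is bilinear in $(\partial_t\bm{m},G(\bm{m}))$, so one needs the joint-convergence/martingale-representation argument already used in Step~2 of Lemma~\ref{dycsl} (together with the It\^{o}--L\'{e}vy isometry and Vitali) to pass to the limit in the expected quadratic variation. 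Once this is accomplished, the Gronwall-type closure is routine.
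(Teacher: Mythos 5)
The paper never actually proves Proposition \ref{Prop}; it is stated at the very end of Section 7 as a corollary of the construction, with no argument given. So there is no ``paper's proof'' to compare against: the evaluation must be on the merits of your proposal.

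Two parts of your outline are clean and essentially forced. First, the cancellation of the coupling term: adding \eqref{1PP1} and \eqref{1M01}, recognising that $\bm{m}_n$ is supported in $D\subset K$, and using $\partial_s(\bm{m}_n\cdot\bm{s}_n)=\partial_s\bm{m}_n\cdot\bm{s}_n+\bm{m}_n\cdot\partial_s\bm{s}_n$ does turn the two one-sided coupling terms into the boundary term $\int_D\bm{m}_n(t)\cdot\bm{s}_n(t)\,d\bm{x}$; this is exactly the manipulation the paper performs on the way to \eqref{PM3}, and differentiating in $t$ produces the stated pathwise identity with the penalisation term vanishing once $|\bm{m}|=1$ holds in the limit. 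Second, the observation that $|\bm{m}(t)|_{\mathbb{L}^2(D)}=|\bm{m}_0|_{\mathbb{L}^2(D)}$ is an immediate consequence of property (d) of the definition and of the hypothesis $|\bm{m}_0|\equiv 1$ (both sides equal $|D|^{1/2}$): this is necessary and slick, because the Galerkin-level identity \eqref{MM030} shows the $\mathbb{L}^2$ norm of $\bm{m}_n$ is \emph{not} conserved and the constraint is the only mechanism that produces the conservation law.

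The genuine gap is in the passage to the limit of the stochastic terms in the pathwise energy identity. You correctly identify the obstacle --- the stochastic integrals
\begin{align*}
\int_0^t\int_D\partial_s\bar{\bm{m}}^k\cdot G^k(\bar{\bm{m}}^k)\,d\bm{x}\,d\bar{W}^k(s),\qquad
\int_0^t\int_B\int_D\partial_s\bar{\bm{m}}^k\cdot F^k(\bar{\bm{m}}^k,l)\,d\bm{x}\,\tilde{\bar{\eta}}^k(ds,dl)
\end{align*}
have $\partial_s\bar{\bm{m}}^k$ inside, and $\partial_t\bar{\bm{m}}^k$ converges only weakly --- but the resolution you offer does not apply. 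Step 2 of the proof of Lemma \ref{dycsl} (and likewise the analogous arguments in Section 7) passes stochastic integrals $\int_0^t\langle G_n(\bar{\bm{m}}_n),v\rangle\,dW$ to the limit for a \emph{fixed} test function $v\in L^4(\bar\Omega;L^4(0,T;X^\beta))$, using Lemma \ref{gdsl1} and the It\^o--L\'evy isometry. Here the ``test function'' $\partial_s\bar{\bm{m}}^k$ is itself part of the sequence and only weakly convergent, so the isometry does not let you conclude $L^2(\bar\Omega)$-convergence of the integral: the map $h\mapsto\int_0^t h\,dW$ is continuous for strongly convergent $L^2$-integrands but is not sequentially weak-to-weak continuous in general, and the bilinearity you note compounds the difficulty. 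One would need a genuinely different device: for instance, identifying $\int_0^t(\partial_s\bar{\bm{m}},G(\bar{\bm{m}}))\,dW$ by difference from the remaining (deterministic) terms --- which requires knowing a priori that the limit of the stochastic integrals is again a stochastic integral with the expected integrand --- or an It\^o-type argument performed directly on the limit equation using the higher regularity $\partial_t\bm{m}\in L^2(\Omega;L^2(0,T;\mathbb{L}^2))$ and the structure of \eqref{S2}, neither of which the paper carries out. Your proposal flags the issue, which is appropriate, but the citation of Lemma \ref{dycsl} Step 2 overstates what that lemma delivers, and this is the step that most needs a concrete argument rather than a pointer.

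A minor additional remark: the Proposition is phrased for an arbitrary weak martingale solution, while your derivation (and the paper's machinery) applies to the solution constructed via the three-layer approximation. The argument for $|\bm{m}(t)|_{\mathbb{L}^2}$ and (with the test-function argument you sketch) for $|\bm{\psi}_j(t)|_{\mathbb{L}^2}$ do extend to arbitrary solutions, but the pathwise energy identity and the uniform estimate do not follow from the definition alone; this should at least be stated explicitly.
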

\section{Analytic tools}
Let $p\in[2,\infty), q\in[2,6), \beta>\frac{1}{4}$, and consider the following function space:
$\mathbb{D}([0,T];X^{-\beta})$ is the space of c\`{a}dl\`{a}g functions $\bm{m}:[0,T]\rightarrow X^{-\beta}$ with the topology $\mathcal{J}_1$ induced by the Skorokhod metric $\delta_{T,X^{-\beta}}$; $\mathbb{L}_w^2(0,T;\mathbb{H}^1)$ is the space of measurable functions $\bm{m}:[0,T]\rightarrow \mathbb{H}^1$ with the weak topology $\mathcal{J}_2$; $\mathbb{L}^p(0,T;\mathbb{L}^q)$ is the space of measurable functions $\bm{m}:[0,T]\rightarrow \mathbb{L}^q$ with the strong topology $\mathcal{J}_3$; $\mathbb{D}([0,T];\mathbb{H}_w^1)$ is the space of weak c\`{a}dl\`{a}g functions $\bm{m}:[0,T]\rightarrow X^{-\beta}$ with the topology $\mathcal{J}_4$, where $\mathcal{J}_4$ is the weakest topology making the map $\mathbb{D}([0,T];\mathbb{H}_w^1)\ni\bm{m}\mapsto\langle\bm{m}(\cdot),h\rangle_{\mathbb{H}^1}\in\mathbb{D}([0,T];\mathbb{R})$ continuous for all $h\in\mathbb{H}^1$; $\mathbb{D}([0,T];\mathbb{B}_{\mathbb{H}_w^1}(0,r)):=\{\bm{m}\in\mathbb{D}([0,T];\mathbb{H}_w^1):
\sup_{t\in[0,T]}|\bm{m}(t)|_{\mathbb{H}^1}\leq r\}$ is the space of weak c\`{a}dl\`{a}g functions $\bm{m}:[0,T]\rightarrow X^{-\beta}$ with the topology induced by $\mathcal{J}_4$.
Let $q_r$ be a metric on $\mathbb{B}_{\mathbb{H}^1}(0,r)$ that is uniformly equivalent to the weak topology. Define
\begin{align*}
\delta_{T,r}(\bm{m}_1,\bm{m}_2):=\inf_{\gamma\in\Gamma_T}\bigg[\sup_{t\in[0,T]}q_r(\bm{m}_1(t),\bm{m}_2(\gamma(t)))
+\sup_{t\in[0,T]}|t-\gamma(t)|
+\sup_{s<t}\bigg|{\rm{log}}\bigg(\frac{\gamma(t)-\gamma(s)}{t-s}\bigg)\bigg|\bigg],
\end{align*}
where $\Gamma_T$ is the set of all increasing homeomorphisms on $[0,T]$. By the Banach-Alaoglu theorem, the topological space $\mathbb{B}_{\mathbb{H}_w^1}(0,r)$ is compact, hence the space $\mathbb{D}([0,T];\mathbb{B}_{\mathbb{H}_w^1}(0,r),\delta_{T,r})$ is a compact metric space (for details, see\cite{BM1, M}).

Next, we will recall the compactness and tightness criteria, and detailed proofs can be found in \cite{BGJ, BHW, BL, BM1, M}.
\subsection{Compactness criterion}
\begin{proposition}\label{jinxing2}
Define $\mathscr{Z}_T:=L_w^2(0,T;\mathbb{H}^1)\cap\mathbb{D}([0,T];X^{-\beta})\cap L^p(0,T;\mathbb{L}^q)\cap
\mathbb{D}([0,T];\mathbb{H}_w^1)$, and let $\mathcal{J}$ be the supremum of the corresponding topology. For a set $\mathscr{K}\subset\mathscr{Z}_T$, and $r>0$, then $\mathscr{K}$ is relatively compact in $\mathscr{Z}_T$ if it satisfies the following conditions: \\
{\rm(a)} $\sup\limits_{\bm{z}\in \mathscr{K}}|\bm{z}|_{L^{\infty}(0,T;\mathbb{H}^1)}\leq r$;\\
{\rm(b)} $\lim\limits_{\delta\rightarrow0}\sup\limits_{\bm{z}\in \mathscr{K}}\sup\limits_{|t-s|\leq\delta}
|\bm{z}(t)-\bm{z}(s)|_{X^{-\beta}}=0$, i.e. $\mathscr{K}$ is equi-continuous in $\mathbb{D}([0,T];X^{-\beta})$.
\end{proposition}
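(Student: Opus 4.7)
\textbf{Proof proposal for Proposition \ref{jinxing2}.} Since $\mathcal{J}$ is the supremum of the four component topologies, a set $\mathscr{K}$ is relatively compact in $(\mathscr{Z}_T,\mathcal{J})$ if and only if it is relatively compact in each of $\mathbb{D}([0,T];X^{-\beta})$, $L_w^2(0,T;\mathbb{H}^1)$, $L^p(0,T;\mathbb{L}^q)$ and $\mathbb{D}([0,T];\mathbb{H}_w^1)$. The plan is therefore to verify each of these four pieces using (a) and (b), exploiting the chain of embeddings $\mathbb{H}^1 \hookrightarrow\hookrightarrow \mathbb{L}^q \hookrightarrow\hookrightarrow X^{-\beta}$ for $q\in[2,6)$ and $\beta>\tfrac14$ (the first being the Rellich--Kondrachov compact embedding, the second coming from Definition \ref{def} and the dual compact embedding). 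The result is then essentially the analogue, in our functional framework, of the deterministic compactness criterion used in \cite{BGJ,BHW,BL,BM1,M}.

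First I would handle the Skorokhod piece $\mathbb{D}([0,T];X^{-\beta})$ via the standard Arzel\`a--Ascoli criterion in Skorokhod space. By (a), for each $t$ the set $\{z(t):z\in\mathscr{K}\}$ lies in $\mathbb{B}_{\mathbb{H}^1}(0,r)$ which is relatively compact in $X^{-\beta}$; condition (b) provides the modulus-of-continuity assumption, which in fact implies that every element of $\mathscr{K}$ is continuous in $X^{-\beta}$ and that this continuity is uniform across $\mathscr{K}$. Arzel\`a--Ascoli then yields relative compactness. For $L_w^2(0,T;\mathbb{H}^1)$, condition (a) gives $\sup_{z\in\mathscr{K}}|z|_{L^2(0,T;\mathbb{H}^1)}\le r\sqrt{T}$, so Banach--Alaoglu plus reflexivity of $L^2(0,T;\mathbb{H}^1)$ provides weak relative compactness immediately.

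Next I would deal with $L^p(0,T;\mathbb{L}^q)$. The idea is to combine the Aubin--Lions--Simon scheme with the two compact embeddings above: given any sequence $(z_n)\subset\mathscr{K}$, after passing to a subsequence (using the first step) we may assume $z_n\to z$ in $\mathbb{D}([0,T];X^{-\beta})$, while (a) bounds $z_n$ uniformly in $L^\infty(0,T;\mathbb{H}^1)$. Interpolating between $\mathbb{H}^1$ and $X^{-\beta}$ and using the compact embedding $\mathbb{H}^1\hookrightarrow\hookrightarrow\mathbb{L}^q$ (for $q<6$), one converts $X^{-\beta}$-convergence into $\mathbb{L}^q$-convergence for almost every $t$; the uniform $\mathbb{H}^1$-bound together with the embedding $\mathbb{H}^1\hookrightarrow\mathbb{L}^q$ supplies the dominating function, so dominated convergence upgrades this to convergence in $L^p(0,T;\mathbb{L}^q)$. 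The subtlety here — and what I expect to be the main obstacle — is that $z_n$ may have jumps, so one cannot simply pass to pointwise limits at every $t$; this must be controlled via the Skorokhod time-change together with (b), which ensures that jumps of the limit are matched by jumps of $z_n$ of comparable size, so that convergence does hold off an at-most-countable set and Lebesgue measure is preserved under the time-change.

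Finally, for $\mathbb{D}([0,T];\mathbb{H}_w^1)$, condition (a) confines $\mathscr{K}$ to $\mathbb{D}([0,T];\mathbb{B}_{\mathbb{H}_w^1}(0,r))$, which is a compact metric space under $\delta_{T,r}$ as recalled before the proposition; equicontinuity in $(X^{-\beta},|\cdot|_{X^{-\beta}})$ from (b), combined with the uniform equivalence of the weak topology on $\mathbb{B}_{\mathbb{H}^1}(0,r)$ with the metric $q_r$ and the density of $\mathbb{H}^1$-duality pairings, gives equicontinuity with respect to $q_r$, so Arzel\`a--Ascoli in the metric space $(\mathbb{D}([0,T];\mathbb{B}_{\mathbb{H}_w^1}(0,r)),\delta_{T,r})$ closes this piece. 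Putting the four conclusions together proves that $\mathscr{K}$ is relatively compact in $(\mathscr{Z}_T,\mathcal{J})$.
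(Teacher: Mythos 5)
The paper does not prove Proposition~\ref{jinxing2}; it cites \cite{BGJ, BHW, BL, BM1, M} for the details, so there is no in-paper proof to compare against. Treated on its own terms, your argument has a genuine gap in the opening sentence. Relative compactness in the supremum topology $\mathcal{J}$ is \emph{not} equivalent to relative compactness in each of the four component topologies separately: the forward implication is automatic, but the converse is not, because extracting a subsequence in each topology in turn may produce four different limit points. The actual proof in the cited references is sequential and arranged so that a single subsequence and a single limit serve for all four spaces: given $(z_n)\subset\mathscr{K}$, one uses (a) and (b) to extract a subsequence $z_{n_k}\to z$ in $\mathbb{D}([0,T];X^{-\beta})$, and then shows that the \emph{same} subsequence converges to the \emph{same} $z$ weakly in $L^2(0,T;\mathbb{H}^1)$, strongly in $L^p(0,T;\mathbb{L}^q)$, and in $\mathbb{D}([0,T];\mathbb{H}_w^1)$. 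Your parallel scheme can be repaired by adding the missing identification-of-limits step (for instance, $\mathbb{D}([0,T];X^{-\beta})$-convergence and weak $L^2(0,T;\mathbb{H}^1)$-convergence both imply convergence in $L^2(0,T;X^{-\beta})$ along a further subsequence, forcing the two limits to agree), but as written the final sentence ``Putting the four conclusions together\dots'' does not follow.

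Two smaller issues. In the $L^p(0,T;\mathbb{L}^q)$ piece, the Skorokhod time-change discussion is a red herring and obscures an argument that is actually simpler: convergence $z_{n_k}\to z$ in $\mathbb{D}([0,T];X^{-\beta})$ already gives $z_{n_k}(t)\to z(t)$ in $X^{-\beta}$ at every continuity point of $z$, hence for Lebesgue-a.e.\ $t$; the Ehrling inequality $|v|_{\mathbb{L}^q}\le\varepsilon|v|_{\mathbb{H}^1}+C_\varepsilon|v|_{X^{-\beta}}$ (valid because $\mathbb{H}^1\hookrightarrow\hookrightarrow\mathbb{L}^q\hookrightarrow X^{-\beta}$) together with (a) promotes this to $z_{n_k}(t)\to z(t)$ in $\mathbb{L}^q$ for a.e.\ $t$, and the constant dominating function supplied by (a) finishes via dominated convergence, no time-change required. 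Finally, you observe that (b) as literally written forces every element of $\mathscr{K}$ to be continuous; this is correct, but it means no set of genuinely discontinuous c\`{a}dl\`{a}g paths can satisfy (b) as stated. In the references the condition is phrased with the Skorokhod modulus $w_{[0,T],X^{-\beta}}(z,\delta)$ rather than the uniform modulus, which is the version compatible with the jump processes the proposition is later applied to.
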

\subsection{Aldous condition}
\begin{definition}\label{taijinxing1}
Let $\{X_n\}_{n\in\mathbb{N}}$ be a stochastic process sequence on the Banach space $E$. Assuming for each $\varepsilon>0$ and $\eta>0$, there exists $\delta>0$ such that for each $(\mathscr{F}_t)_{t\geq0}$ stopping time sequence $\{\tau_n\}_{n\in\mathbb{N}}$, and $\tau_n+\theta\leq T$, it holds that
\begin{align*}
\sup\limits_{n\in\mathbb{N}}\sup\limits_{0<\theta\leq\delta}\mathbb{P}\{|X_n(\tau_n+\theta)-X_n(\tau_n)|_{E}\geq\eta\}
\leq\varepsilon,
\end{align*}
then the sequence $\{X_n\}_{n\in\mathbb{N}}$ satisfies the Aldous condition.
\end{definition}
\begin{lemma}\label{taijinxing2}
Let $\{X_n\}_{n\in\mathbb{N}}$ be a stochastic process sequence on a separable Banach space $E$. Assuming for each $(\mathscr{F}_t)_{t\geq0}$ stopping time sequence $\{\tau_n\}_{n\in\mathbb{N}}$, and $\tau_n+\theta\leq T$, and for every $n\in\mathbb{N}$, $\theta\geq0$, it holds that
\begin{align}\label{taijinxing2.1}
\mathbb{E}\{|X_n(\tau_n+\theta)-X_n(\tau_n)|_{E}^{\alpha}\}\leq C\theta^{\gamma},
\end{align}
where $\alpha, \gamma>0$, constant $C>0$, then the sequence $\{X_n\}_{n\in\mathbb{N}}$ satisfies the Aldous condition.
\end{lemma}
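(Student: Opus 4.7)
My plan is to verify the hypothesis of Definition \ref{taijinxing1} by a single, direct application of the Chebyshev inequality to the moment bound \eqref{taijinxing2.1}. Fix $\varepsilon>0$ and $\eta>0$, and let $\{\tau_n\}_{n\in\mathbb{N}}$ be an arbitrary sequence of $(\mathscr{F}_t)_{t\geq 0}$-stopping times with $\tau_n+\theta\leq T$. For any $\theta>0$, the Chebyshev inequality combined with the hypothesis gives
\begin{align*}
\mathbb{P}\{|X_n(\tau_n+\theta)-X_n(\tau_n)|_{E}\geq\eta\}
\leq \frac{\mathbb{E}\big[|X_n(\tau_n+\theta)-X_n(\tau_n)|_{E}^{\alpha}\big]}{\eta^{\alpha}}
\leq \frac{C\theta^{\gamma}}{\eta^{\alpha}}.
\end{align*}
The right-hand side is monotone in $\theta$ and, crucially, the constant $C$ depends neither on $n$ nor on the stopping time sequence.

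Given this uniform bound, the second step is simply to choose $\delta>0$ small enough that $C\delta^{\gamma}/\eta^{\alpha}\leq\varepsilon$, for example $\delta:=\big(\varepsilon\eta^{\alpha}/C\big)^{1/\gamma}$. Then for all $\theta\in(0,\delta]$ and all $n\in\mathbb{N}$,
\begin{align*}
\mathbb{P}\{|X_n(\tau_n+\theta)-X_n(\tau_n)|_{E}\geq\eta\}
\leq \frac{C\delta^{\gamma}}{\eta^{\alpha}}\leq\varepsilon,
\end{align*}
and taking the suprema over $n$ and over $\theta\in(0,\delta]$ preserves the inequality. This is exactly the condition in Definition \ref{taijinxing1}, so the sequence $\{X_n\}_{n\in\mathbb{N}}$ satisfies the Aldous condition.

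There is essentially no obstacle in this argument; it is a textbook Chebyshev-type estimate. The only point worth flagging is that the uniformity of $\delta$ in $n$ and in the choice of $\{\tau_n\}$ relies entirely on the fact that the constant $C$ in \eqref{taijinxing2.1} is likewise uniform, which is precisely what is assumed. This is why, in each of the applications of this lemma in the previous sections (Lemmas \ref{taijinxing7}, \ref{taijinxing8}, \ref{taijinxing07}, and \ref{taijinxing08}), the work is spent producing estimates of the form $\mathbb{E}[|J^{i}(\tau+\theta)-J^{i}(\tau)|_{X^{-\beta}}^{\alpha}]\leq C\theta^{\gamma}$ with $C$ independent of the approximation index and of the stopping time, so that Lemma \ref{taijinxing2} can be invoked term-by-term.
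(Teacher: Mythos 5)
Your proof is correct and is exactly the standard Chebyshev (Markov) inequality argument for this Aldous-type criterion; the paper itself omits the proof and merely cites the literature (Brze\'{z}niak et al., Motyl), where the same one-line Chebyshev estimate followed by the uniform choice of $\delta$ appears.
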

\subsection{Tightness}
\begin{proposition}\label{taijinxing4}
Let $\{X_n\}_{n\in\mathbb{N}}$ be an adapted $X^{-\beta}$-valued process sequence, and it satisfies\\
{\rm(a)} $\sup\limits_{n\in\mathbb{N}}\mathbb{E}\big[|X_n|_{L^{\infty}(0,T;\mathbb{H}^1)}^2\big]<\infty$,\\
{\rm(b)} The Aldous condition is satisfied in $X^{-\beta}$,\\
then the law $\{\mathbb{P}^{X_n}\}_{n\in\mathbb{N}}$ is tight in $\mathscr{Z}_T$, i.e. for each $\varepsilon>0$, for all $n\in\mathbb{N}$, there exists a compact set $K_{\varepsilon}\subset\mathscr{Z}_T$ such that
\begin{align*}
\mathbb{P}^{X_n}(K_{\varepsilon})\geq1-\varepsilon.
\end{align*}
\end{proposition}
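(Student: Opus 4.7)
The plan is to deduce tightness from the compactness criterion in Proposition \ref{jinxing2} by constructing, for each $\varepsilon>0$, a relatively compact set $K_\varepsilon\subset\mathscr{Z}_T$ that captures $X_n$ with probability at least $1-\varepsilon$ uniformly in $n$. The two sufficient conditions in Proposition \ref{jinxing2} (a uniform bound in $L^\infty(0,T;\mathbb{H}^1)$ and equi-continuity in $\mathbb{D}([0,T];X^{-\beta})$) correspond precisely to the two hypotheses of the present proposition, so the task is to convert each of the probabilistic/expectation bounds into a deterministic set inclusion up to a small-probability exceptional set.

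First, using hypothesis (a), Chebyshev's inequality yields
\[
\mathbb{P}\{|X_n|_{L^\infty(0,T;\mathbb{H}^1)} > R\}\leq \frac{1}{R^2}\,\sup_{m\in\mathbb{N}}\mathbb{E}\big[|X_m|_{L^\infty(0,T;\mathbb{H}^1)}^2\big],
\]
so one may choose $R=R_\varepsilon$ large enough that this probability is at most $\varepsilon/2$ for every $n$. Define $B_1:=\{z\in\mathscr{Z}_T:|z|_{L^\infty(0,T;\mathbb{H}^1)}\leq R_\varepsilon\}$, which handles condition (a) of Proposition \ref{jinxing2}.

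Next I must exploit hypothesis (b) to produce, for each $k\in\mathbb{N}$, numbers $\delta_k>0$ such that the set
\[
B_2^k:=\Big\{z\in\mathscr{Z}_T:\sup_{|t-s|\leq\delta_k}|z(t)-z(s)|_{X^{-\beta}}\leq \tfrac{1}{k}\Big\}
\]
satisfies $\mathbb{P}\{X_n\notin B_2^k\}\leq \varepsilon/2^{k+1}$ uniformly in $n$. The Aldous condition in $X^{-\beta}$ (Definition \ref{taijinxing1}) gives exactly this control at every stopping time, and via the standard Aldous-type partitioning argument (covering $[0,T]$ by a sufficiently fine grid and comparing increments over each subinterval) one upgrades the stopping-time estimate into a bound on the deterministic modulus $\sup_{|t-s|\leq\delta}|X_n(t)-X_n(s)|_{X^{-\beta}}$. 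Setting $K_\varepsilon$ to be the closure of $B_1\cap\bigcap_{k\geq 1}B_2^k$ in $\mathscr{Z}_T$, Proposition \ref{jinxing2} guarantees relative compactness, while a countable union bound yields
\[
\mathbb{P}\{X_n\notin K_\varepsilon\}\leq \mathbb{P}\{X_n\notin B_1\}+\sum_{k\geq 1}\mathbb{P}\{X_n\notin B_2^k\}\leq \tfrac{\varepsilon}{2}+\sum_{k\geq 1}\tfrac{\varepsilon}{2^{k+1}}=\varepsilon.
\]

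The principal obstacle is the passage from the Aldous stopping-time inequality to the deterministic modulus bound that Proposition \ref{jinxing2}(b) requires, since in a c\`{a}dl\`{a}g setting one cannot naively push a stopping-time estimate to all pairs $(s,t)$ without losing a logarithmic factor or handling jumps. The standard resolution is to work with the Skorokhod modulus compatible with the topology $\mathcal{J}_1$ and invoke the classical criterion that uniform integrability of the second moment together with the Aldous property imply tightness in $\mathbb{D}([0,T];X^{-\beta})$; the additional topologies $L_w^2(0,T;\mathbb{H}^1)$, $L^p(0,T;\mathbb{L}^q)$, and $\mathbb{D}([0,T];\mathbb{H}_w^1)$ are then handled automatically because Proposition \ref{jinxing2} shows that boundedness in $L^\infty(0,T;\mathbb{H}^1)$ together with equi-continuity in $X^{-\beta}$ already yields relative compactness in the whole intersection $\mathscr{Z}_T$. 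This reduction closes the argument without further analytic input.
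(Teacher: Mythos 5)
The paper does not prove Proposition~\ref{taijinxing4} itself but simply refers to \cite{BGJ, BHW, BL, BM1, M}; your blind proposal reproduces the standard Chebyshev--plus--Aldous strategy used in those references (especially Motyl and Brze\'{z}niak--Manna), so the high-level route is the right one.

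There is, however, a genuine gap in the way you write the second half, and it is worth being precise about it since you flag the issue yourself and then wave it away. Your sets
\[
B_2^k=\Big\{z:\sup_{|t-s|\le\delta_k}|z(t)-z(s)|_{X^{-\beta}}\le\tfrac1k\Big\}
\]
are defined through the \emph{uniform} modulus of continuity, i.e.\ you want equi-continuity of paths. But the processes $\bm m_n$ in this paper are genuinely c\`{a}dl\`{a}g with jumps driven by the compensated Poisson random measure $\tilde\eta$, and the Aldous condition is designed precisely to be compatible with jumps: it gives control on $|X_n(\tau_n+\theta)-X_n(\tau_n)|$ at stopping times, which translates into control of the \emph{Skorokhod} modulus $w'_\delta$ (the modulus that allows a single jump per subinterval), not the uniform modulus. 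For a sequence that actually jumps, $\mathbb P\{\sup_{|t-s|\le\delta}|X_n(t)-X_n(s)|>1/k\}$ need not tend to zero as $\delta\to0$, so the probabilities $\mathbb P\{X_n\notin B_2^k\}$ cannot be driven uniformly below $\varepsilon/2^{k+1}$ by the Aldous condition alone. Your remark that ``the standard resolution is to work with the Skorokhod modulus'' is the correct fix, but it is not optional: the sets $B_2^k$ have to be \emph{redefined} with $w'_{\delta_k}$ in place of the uniform modulus, and correspondingly Proposition~\ref{jinxing2}(b) has to be read with that same c\`{a}dl\`{a}g modulus (as it is in the cited sources; the statement in the paper is an abbreviated transcription). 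Once $B_2^k$ is defined through $w'_{\delta_k}$, the classical Aldous-to-modulus argument (e.g.\ Billingsley, or the version in \cite{M}) does give $\mathbb P\{X_n\notin B_2^k\}\le\varepsilon/2^{k+1}$ uniformly in $n$, and your construction of $K_\varepsilon$ and the union bound then close the proof. So the skeleton of the argument is correct and matches the references, but the explicit compact set you build is wrong as written; the Skorokhod modulus must replace the uniform one throughout, not merely be mentioned as a possible escape.
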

\subsection{Skorokhod-Jakubowski Theorem}
Now, we recall the Skorokhod-Jakubowski theorem. For more details, we refer to\cite{BH, BM2, Jaku, M}.
\begin{theorem}\label{2SKO}
Let $\mathscr{X}_1$ be a complete separable metric space, $\mathscr{X}_2$ be a topological space, then there exists a sequence of continuous functions $f_l:\mathcal{X}_2\rightarrow\mathbb{R}$ separating points in $\mathcal{X}_2$. Suppose $\mathcal{X}:=\mathcal{X}_1\times \mathcal{X}_2$ with the Tykhonoff topology induced by projections as
\begin{align*}
\pi_i: \mathcal{X}_1\times \mathcal{X}_2\rightarrow\mathcal{X}_i,~i=1,2.
\end{align*}
Let $(\Omega,\mathscr{F},\mathbb{P})$ be a probability space, $\chi_n:\Omega\rightarrow\mathcal{X}_1\times \mathcal{X}_2, n\in\mathbb{N}$ be a family of random variables, and the sequence $\{\mathcal{L}(\chi_n),n\in\mathbb{N}\}$ is tight on $\mathcal{X}_1\times \mathcal{X}_2$. A random variable $\rho: \Omega\rightarrow\mathcal{X}_1$ such that $\mathcal{L}(\pi_1\circ\chi_n)=\mathcal{L}(\rho), n\in\mathbb{N}$, then there exists a probability space $(\bar{\Omega},\bar{\mathscr{F}},\bar{\mathbb{P}})$, a subsequence $\{\chi_{n_k}\}_{k\in\mathbb{N}}$, and a family of $\mathcal{X}_1\times \mathcal{X}_2$-valued random variables $\{\bar{\chi}_k, k\in\mathbb{N}\}$ on $(\bar{\Omega},\bar{\mathscr{F}},\bar{\mathbb{P}})$ and a random variable $\chi_\ast:\Omega\rightarrow\mathcal{X}_1\times \mathcal{X}_2$ such that

{\rm{(i)}} $\mathcal{L}(\bar{\chi}_n)=\mathcal{L}(\chi_n)$, for all $k\in\mathbb{N}$;

{\rm{(ii)}} $\bar{\chi}_n\rightarrow\chi_\ast$, a.s., in $\mathcal{X}_1\times \mathcal{X}_2$, as $k\rightarrow\infty$;

{\rm{(iii)}} $\pi_1\circ\bar{\chi}_k(\bar{\omega})=\pi_1\circ\chi_\ast(\bar{\omega})$, for all $\bar{\omega}\in\bar{\Omega}$.
\end{theorem}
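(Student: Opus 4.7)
The strategy is a two-step construction: first invoke the classical Skorokhod--Jakubowski representation theorem on the product space $\mathcal{X}_1\times\mathcal{X}_2$ to obtain an almost surely convergent copy of a subsequence, and then enhance this copy by a disintegration argument over $\mathcal{X}_1$ in order to force the first-coordinate matching property (iii). The hypotheses support this: the Polishness of $\mathcal{X}_1$ gives a countable family of bounded continuous point-separating functions (e.g., distances to a countable dense subset), which together with the pullbacks $f_l\circ\pi_2$ of the separating family on $\mathcal{X}_2$ yield a countable separating family of continuous real-valued functions on the Tychonoff product. This puts us within the scope of the Jakubowski representation theorem.

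For the Jakubowski step, the tightness of $\{\mathcal{L}(\chi_n)\}$ on $\mathcal{X}_1\times\mathcal{X}_2$ produces a subsequence $\{n_k\}$, an auxiliary probability space $(\tilde\Omega,\tilde{\mathscr{F}},\tilde{\mathbb{P}})$, and random variables $\tilde\chi_k, \tilde\chi_\ast$ on it with $\mathcal{L}(\tilde\chi_k)=\mathcal{L}(\chi_{n_k})$ and $\tilde\chi_k\to\tilde\chi_\ast$ almost surely in the product topology. Continuity of $\pi_1$ in that topology gives $\pi_1\circ\tilde\chi_k\to\pi_1\circ\tilde\chi_\ast$ a.s.\ in $\mathcal{X}_1$, and the standing assumption $\mathcal{L}(\pi_1\circ\chi_n)=\mathcal{L}(\rho)$ yields $\mathcal{L}(\pi_1\circ\tilde\chi_\ast)=\mathcal{L}(\rho)$ by weak convergence.

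For the gluing step, because $\mathcal{X}_1$ is Polish, the regular conditional distributions $K_k(x_1,\cdot)$ of $\pi_2\circ\tilde\chi_k$ given $\pi_1\circ\tilde\chi_k=x_1$ exist as Borel kernels from $\mathcal{X}_1$ to $\mathcal{X}_2$, and similarly $K_\ast(x_1,\cdot)$ for $\pi_2\circ\tilde\chi_\ast$. On the enlargement $\bar\Omega:=\tilde\Omega\times[0,1]^{\mathbb{N}}$ endowed with the product of $\tilde{\mathbb{P}}$ and Lebesgue measure, I would define
\[
\chi_\ast(\bar\omega):=\big(\pi_1\circ\tilde\chi_\ast(\bar\omega),\,\Phi_\ast(\pi_1\circ\tilde\chi_\ast(\bar\omega),u_\ast)\big),\qquad \bar\chi_k(\bar\omega):=\big(\pi_1\circ\tilde\chi_\ast(\bar\omega),\,\Phi_k(\pi_1\circ\tilde\chi_\ast(\bar\omega),u_k)\big),
\]
where $\Phi_k(x_1,\cdot)$ and $\Phi_\ast(x_1,\cdot)$ are measurable realisations of $K_k(x_1,\cdot)$ and $K_\ast(x_1,\cdot)$ (obtained through the separating family $\{f_l\}$ via an inverse-CDF-type construction), and $u_k,u_\ast$ are coordinates of the auxiliary uniform randomness. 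Property (iii) is immediate from the common first coordinate; property (i) follows from $\mathcal{L}(\bar\chi_k)=\mathcal{L}(\pi_1\circ\tilde\chi_\ast)\otimes K_k=\mathcal{L}(\pi_1\circ\tilde\chi_k)\otimes K_k=\mathcal{L}(\tilde\chi_k)=\mathcal{L}(\chi_{n_k})$.

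The main obstacle is preserving the almost-sure convergence (ii) under this re-assembly, since naively independent sampling through the kernels $K_k$ would break the convergence inherited from Jakubowski. The remedy is a careful coupling of the auxiliary uniform sequences $\{u_k\}$ and $u_\ast$ so that the realisations $\Phi_k(\pi_1\circ\tilde\chi_\ast,u_k)$ are forced to track $\pi_2\circ\tilde\chi_k$ via the same driving noise used to realise $\Phi_\ast(\pi_1\circ\tilde\chi_\ast,u_\ast)$ against $\pi_2\circ\tilde\chi_\ast$. Combining the a.s.\ convergence $\pi_1\circ\tilde\chi_k\to\pi_1\circ\tilde\chi_\ast$ with the continuity in the first argument of the joint kernel realisation, together with the separating property of $\{f_l\}$ which reduces convergence in $\mathcal{X}_2$ to convergence of the countable family $\{f_l(\pi_2\circ\bar\chi_k)\}$, then transfers the Jakubowski a.s.\ convergence to $\bar\chi_k\to\chi_\ast$ almost surely on $\bar\Omega$. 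This coupling-plus-transfer argument is the technical heart of the result and mirrors the construction in the Brze\'{z}niak--Hausenblas and Brze\'{z}niak--Motyl references cited earlier; once it is in place, (i)--(iii) are verified and the proof is complete.
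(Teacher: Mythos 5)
This statement is not actually proved in the paper: Theorem \ref{2SKO} is stated in the ``Analytic tools'' section as a recalled result with a citation to \cite{BH, BM2, Jaku, M}, so there is no in-paper proof to compare against. Your task is therefore to assess whether your proposed argument would constitute a valid proof, and there it falls short at the step you yourself flag as ``the technical heart of the result.''

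The gap is concrete. After applying the plain Jakubowski theorem you obtain $\tilde\chi_k\to\tilde\chi_\ast$ a.s. on some space; the a.s.\ convergence is a feature of the \emph{particular joint coupling} that the Skorokhod--Jakubowski construction produces between $\pi_1\circ\tilde\chi_k$ and $\pi_2\circ\tilde\chi_k$. When you then discard $\pi_2\circ\tilde\chi_k$ and resample a new second coordinate through the conditional kernel $K_k(\pi_1\circ\tilde\chi_\ast,\cdot)$, you destroy exactly that coupling. Nothing forces the resampled variables to converge: the kernels $K_k$ do not converge in any pointwise or continuous sense merely because the laws of $\tilde\chi_k$ converge weakly (regular conditional distributions are not weakly continuous in the conditioning variable or stable under weak convergence of joint laws), and the measurable selectors $\Phi_k$ built by an inverse-CDF device carry no $k$-uniformity whatsoever. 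Consequently the ``careful coupling of $\{u_k\}$ and $u_\ast$'' you invoke is not a routine technicality to be filled in later --- it is an unproved claim, and there is no obvious device that simultaneously (a) keeps the conditional law of the second coordinate equal to $K_k$ given the first (needed for (i)), and (b) inherits the a.s.\ convergence from $\pi_2\circ\tilde\chi_k\to\pi_2\circ\tilde\chi_\ast$ (needed for (ii)); setting $\bar\chi_k=(\pi_1\circ\tilde\chi_\ast,\pi_2\circ\tilde\chi_k)$ gives (ii) and (iii) but wrecks (i), while independent resampling gives (i) and (iii) but wrecks (ii). The proofs in the cited references do not post-process a vanilla Jakubowski representation; they modify the Skorokhod partition construction itself (after the usual embedding into $[0,1]^{\mathbb N}$ via the separating family), choosing the cells in the $\mathcal X_1$-direction according to the common law $\mathcal L(\rho)$ so that the same refining partition of $\mathcal X_1$ works for every $n$, which is what makes $\pi_1\circ\bar\chi_k$ \emph{identical} for all $k$ without any resampling. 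Your outline would need to be replaced by an argument of that kind, or by a genuinely new lemma supplying the coupling you sketch.
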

\subsection{Some Auxilliary Results}
\begin{lemma}[Strauss lemma, see\cite{Stra}]
Let $X, Y$ be Banach spaces, and $X\hookrightarrow Y$, $T>0$. Then
\begin{align*}
L^{\infty}(0,T;X)\cap C_{w}([0,T],Y)\subset C_{w}([0,T],X).
\end{align*}
\end{lemma}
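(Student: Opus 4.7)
The plan is to show that for each $t_0 \in [0,T]$ and each continuous linear functional $x^\ast \in X^\ast$, the map $t \mapsto \langle x^\ast, f(t)\rangle$ is continuous at $t_0$, where $f \in L^\infty(0,T;X) \cap C_w([0,T],Y)$. The core idea is to leverage the uniform bound $M := \|f\|_{L^\infty(0,T;X)}$ together with the weak continuity into $Y$ to upgrade pointwise membership and weak continuity from $Y$ to $X$, using reflexivity of $X$ (which is automatic in the Hilbert-space settings where this lemma is invoked elsewhere in the paper, e.g.\ $X = \mathbb{H}^1$).

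First I would establish that $f(t)\in X$ with $\|f(t)\|_X \leq M$ for \emph{every} $t\in[0,T]$, not merely for almost every $t$. Fix $t_0\in[0,T]$; since the set $E := \{t : f(t)\in X,\ \|f(t)\|_X\leq M\}$ has full measure in $[0,T]$, I can pick a sequence $t_n\to t_0$ with $t_n\in E$. By reflexivity of $X$ and the Banach--Alaoglu theorem, some subsequence $\{f(t_{n_k})\}$ converges weakly in $X$ to a limit $g\in X$ with $\|g\|_X\leq M$. The continuous embedding $X\hookrightarrow Y$ preserves weak convergence, so $f(t_{n_k})\rightharpoonup g$ weakly in $Y$ as well; combining this with $f(t_n)\to f(t_0)$ weakly in $Y$ (from the hypothesis $f\in C_w([0,T],Y)$) and using uniqueness of weak limits in $Y$, I identify $g = f(t_0)$ in $Y$, whence $f(t_0)\in X$ with $\|f(t_0)\|_X\leq M$ by weak lower semicontinuity of the norm.

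Next I would prove weak continuity at $t_0$ via a subsequence/subsubsequence argument. Take any sequence $t_n\to t_0$; by the previous step $\{f(t_n)\}\subset X$ is bounded by $M$. For any subsequence $\{t_{n_k}\}$, reflexivity supplies a further weakly convergent subsubsequence with some limit $h\in X$, and the identification argument above forces $h = f(t_0)$. Since every subsequence of $\{f(t_n)\}$ admits a further subsequence converging weakly in $X$ to the common limit $f(t_0)$, the full sequence converges weakly in $X$ to $f(t_0)$, delivering $\langle x^\ast, f(t_n)\rangle \to \langle x^\ast, f(t_0)\rangle$ for every $x^\ast\in X^\ast$ and hence $f\in C_w([0,T],X)$.

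The main obstacle will be handling the case when $X$ is not reflexive, where Banach--Alaoglu in $X$ must be replaced by weak-$\ast$ compactness in a suitable predual, or one exploits density of the restrictions $Y^\ast|_X \subset X^\ast$ to test weak convergence against a separating family on bounded sets. A secondary technicality is ensuring that the selected times $t_n \in E$ can be taken with $t_n \to t_0$, which is immediate since $E$ has full Lebesgue measure in $[0,T]$ and so is dense. In the applications of this paper $X$ is always a reflexive Hilbert space, so the reflexive argument sketched above is entirely adequate.
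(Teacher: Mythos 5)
Your argument is the standard one for this result and is correct as far as it goes; since the paper merely cites Strauss without giving a proof, there is nothing in-paper to compare against. Your key observations — reducing to showing that every sequence $t_n\to t_0$ yields $f(t_n)\rightharpoonup f(t_0)$ weakly in $X$, first upgrading the almost-everywhere bound to an everywhere bound and pointwise membership in $X$ by extracting a weakly convergent subsequence and identifying its limit through the weak continuity into $Y$, then running the subsequence/subsubsequence argument — are exactly the proof one finds in Strauss \cite{Stra} and in standard references.

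You are also right to flag reflexivity. As literally written in the paper the lemma is stated for arbitrary Banach spaces, but the Banach--Alaoglu extraction you perform requires bounded sets in $X$ to be weakly sequentially precompact, i.e.\ $X$ reflexive (or, in Strauss's formulation, a dual space so that one can use weak-$\ast$ compactness instead). Without such a hypothesis the statement fails; this is a genuine imprecision in the paper's phrasing rather than in your proof. Since the paper only invokes the lemma with $X$ a Sobolev--Hilbert space such as $\mathbb{H}^1$ (hence reflexive and separable), your argument covers every actual use. Two minor points worth making explicit if you write this up: the norm bound $\lVert f(t_0)\rVert_X\le M$ comes from the weak lower semicontinuity of the $X$-norm applied to the extracted subsequence, and the identification $g=f(t_0)$ uses that the embedding $X\hookrightarrow Y$ is injective, which is what $\hookrightarrow$ is meant to encode.
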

\begin{lemma}\label{fl2}
Let $r>0$, $\bm{m}_n, \bm{m}\in C_w([0,T],X)$ be sequences with the properties $\sup\limits_{t\in[0,T]}|\bm{m}_n(t)|\leq r$ and $\bm{m}_n\rightarrow\bm{m}$ in $C_w([0,T],X)$ as $n\rightarrow\infty$, then $\bm{m}_n\rightarrow\bm{m}$ in $C_w([0,T],\mathbb{B}_X(0,r))$ for $n\rightarrow\infty$.
\end{lemma}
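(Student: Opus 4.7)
The plan is to exploit the well-known fact that on a norm-bounded subset of a separable (reflexive) Banach space $X$, the weak topology is metrizable, and then to show that the uniform-in-$t$ weak convergence hypothesis together with the uniform bound $\sup_t|\bm{m}_n(t)|\le r$ upgrades to convergence in the metric topology on $C([0,T];\mathbb{B}_{X}(0,r))$.

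First I would verify that the weak limit $\bm{m}$ also lies in $\mathbb{B}_{X}(0,r)$ pointwise: for each fixed $t\in[0,T]$ the convergence $\bm{m}_n(t)\rightharpoonup \bm{m}(t)$ in $X$ together with the weak lower semicontinuity of the norm yields $|\bm{m}(t)|\le\liminf_n|\bm{m}_n(t)|\le r$. Hence both $\bm{m}_n$ and $\bm{m}$ take values in the norm ball $\mathbb{B}_{X}(0,r)$, so it makes sense to speak of convergence in $C_w([0,T],\mathbb{B}_X(0,r))$.

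Next, fix a countable set $\{\phi_k\}_{k\in\mathbb{N}}\subset X^{\ast}$ with $\|\phi_k\|_{X^{\ast}}\le 1$ that is dense in the closed unit ball of $X^{\ast}$ (such a set exists because in the applications here $X$ is separable, so the unit ball of $X^{\ast}$ is weak$^\ast$ metrizable and separable in that topology; alternatively one may use an orthonormal basis in the Hilbert case). Define
\[
q_r(x,y):=\sum_{k=1}^{\infty}2^{-k}\,\frac{|\phi_k(x-y)|}{1+|\phi_k(x-y)|},\qquad x,y\in\mathbb{B}_X(0,r).
\]
A standard argument shows that $q_r$ is a metric on $\mathbb{B}_X(0,r)$ inducing the (restricted) weak topology. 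Convergence in $C_w([0,T],\mathbb{B}_X(0,r))$ then amounts to uniform convergence with respect to $q_r$.

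To prove this uniform convergence, given $\varepsilon>0$ pick $N$ so large that $\sum_{k>N}2^{-k}<\varepsilon/2$; this tail bound is valid because $|\phi_k(\bm{m}_n(t)-\bm{m}(t))|/(1+|\phi_k(\bm{m}_n(t)-\bm{m}(t))|)\le 1$. For each $k\le N$, the hypothesis $\bm{m}_n\to\bm{m}$ in $C_w([0,T],X)$ means that the real-valued function $t\mapsto\phi_k(\bm{m}_n(t)-\bm{m}(t))$ converges to zero uniformly on $[0,T]$. Choosing $n_0$ large enough so that, for all $n\ge n_0$ and all $k=1,\dots,N$, one has $\sup_{t\in[0,T]}|\phi_k(\bm{m}_n(t)-\bm{m}(t))|<\varepsilon/2$, we obtain
\[
\sup_{t\in[0,T]}q_r(\bm{m}_n(t),\bm{m}(t))\le\sum_{k=1}^{N}2^{-k}\cdot\frac{\varepsilon/2}{1+\varepsilon/2}+\frac{\varepsilon}{2}\le\varepsilon
\]
for all $n\ge n_0$, which is the desired conclusion. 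The only subtle point is the existence of a countable family $\{\phi_k\}$ that both separates points on $\mathbb{B}_X(0,r)$ and generates the weak topology there; in the concrete settings of the paper ($X=\mathbb{H}^1$ or $\mathcal{H}^1_\lambda$) this is immediate from separability, so this step should pose no real obstacle.
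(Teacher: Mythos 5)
The paper does not prove this lemma: it appears in Section 8 (``Analytic tools'') as a recalled auxiliary result, with the general statement at the head of the section that ``detailed proofs can be found in \cite{BGJ,BHW,BL,BM1,M}.'' So there is no in-paper argument to compare against; I can only assess your proof on its own terms.

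Your argument is the standard one and is essentially correct for the setting in which the lemma is actually used ($X=\mathbb{H}^1$, a separable Hilbert space). One small imprecision is worth flagging in the construction of $q_r$. You take $\{\phi_k\}\subset X^\ast$ to be a countable set that is dense in the closed unit ball of $X^\ast$, and you justify its existence by observing that $B_{X^\ast}$ is \emph{weak}$^\ast$ metrizable and separable when $X$ is separable. That is true, but weak$^\ast$ density of $\{\phi_k\}$ is not enough to guarantee that $q_r$ induces the weak topology on $\mathbb{B}_X(0,r)$: it guarantees that $q_r$ separates points, but a basic weak neighbourhood $\{x:|\phi(x-x_0)|<\varepsilon\}$ for arbitrary $\phi\in X^\ast$ need not contain a $q_r$-ball (think of $X=\ell^1$, $\phi_k$ the finitely supported rational sequences and $\phi=(1,1,1,\dots)\in\ell^\infty$). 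What you actually need is that $\{\phi_k\}$ be \emph{norm}-dense (or at least total) in $B_{X^\ast}$, which holds precisely when $X^\ast$ is norm separable. In the Hilbert case this is automatic, and as you note one may simply take $\phi_k=\langle\cdot\,,e_k\rangle$ for an orthonormal basis $\{e_k\}$, in which case the verification that $q_r$ generates the weak topology on a ball uses the Bessel/Parseval tail estimate rather than density of the $\phi_k$. With that clarification made, the rest of the proof --- the pointwise bound $|\bm{m}(t)|\le r$ by weak lower semicontinuity, the truncation of the series, and the uniform-in-$t$ control of the finitely many $\phi_k(\bm{m}_n(\cdot)-\bm{m}(\cdot))$ --- is clean and complete, and it also contains the implicit converse (uniform $q_r$-convergence implies convergence in $C_w([0,T];X)$ restricted to the ball), which is what justifies your statement that convergence in $C_w([0,T];\mathbb{B}_X(0,r))$ ``amounts to'' uniform $q_r$-convergence.
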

\begin{theorem}[Burkholder-Davis-Gundy inequalities, see \cite{P04,DM05}]
Let $M$ be a martingale with c\`{a}dl\`{a}g paths, and let $[M, M]_{t}$ be the quadratic variation of $M$, fixed $p\geq1$, let $M_{t}^{\ast}={\sup}_{s\leq t}|M_{s}|$, then there exist constants $c_{p}$ and $C_{p}$ such that for any such M
\begin{align*}
\mathbb{E}\big\{[M,M]_{t}^{\frac{p}{2}}\big\}^{\frac{1}{p}}
 \leq
c_{p}\mathbb{E}\big\{(M_{t}^{\ast})^{p}\big\}^{\frac{1}{p}}
 \leq
C_{p}\mathbb{E}\big\{[M,M]_{t}^{\frac{p}{2}}\big\}^{\frac{1}{p}},
\end{align*}
for all $0\leq t\leq\infty$. The constants do not depend on the change of $M$.
\end{theorem}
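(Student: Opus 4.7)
The plan is to establish the two-sided estimate for a general càdlàg martingale $M$ by splitting into cases based on $p$, following the classical Burkholder-Davis-Gundy strategy. I would first normalize: by a standard stopping argument (replacing $t$ by $t\wedge\tau_n$ for a localizing sequence), I can assume $M$ is bounded and both quantities are finite, then pass to the limit using Fatou and monotone convergence. The anchor case is $p=2$, where both inequalities are essentially equalities: for the right-hand inequality, Doob's $L^2$-maximal inequality gives $\mathbb{E}(M_t^\ast)^2 \le 4\,\mathbb{E} M_t^2 = 4\,\mathbb{E}[M,M]_t$; for the left-hand inequality, $\mathbb{E}[M,M]_t = \mathbb{E} M_t^2 \le \mathbb{E}(M_t^\ast)^2$. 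This fixes $c_2$ and $C_2$.

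For $p>2$, I would apply the Itô-Meyer formula to $x\mapsto |x|^p$ (twice differentiable for $p\ge 2$) and obtain, after taking expectations and using that $M$ is a local martingale,
\begin{equation*}
\mathbb{E}|M_t|^p \;\le\; C_p\,\mathbb{E}\!\int_0^t |M_{s-}|^{p-2}\,d[M,M]_s \;+\; (\text{jump correction}).
\end{equation*}
The jump correction is controlled by $|M_{s-}|^{p-2}\,(\Delta M_s)^2$ up to a constant depending only on $p$, so after bounding $|M_{s-}|^{p-2}\le (M_t^\ast)^{p-2}$, Hölder's inequality with exponents $p/(p-2)$ and $p/2$ yields $\mathbb{E}(M_t^\ast)^p \le C_p\,\mathbb{E}[M,M]_t^{p/2}$ once combined with Doob's $L^p$-maximal inequality. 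The reverse direction is obtained analogously by applying Itô to $[M,M]_t^{p/2}$ or, more conveniently, by duality from the good-$\lambda$ argument below.

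For $0<p<2$ (and to unify the treatment), I would use Burkholder's good-$\lambda$ inequality: for each $\beta>1$ there exists $\delta=\delta(\beta)>0$ such that for all $\lambda>0$,
\begin{equation*}
\mathbb{P}\!\left(M_t^\ast>\beta\lambda,\, [M,M]_t^{1/2}\le\delta\lambda\right)\;\le\;\varepsilon(\beta,\delta)\,\mathbb{P}(M_t^\ast>\lambda),
\end{equation*}
with $\varepsilon\to 0$ as $\delta\to 0$. Integrating $\int_0^\infty p\lambda^{p-1}(\cdot)\,d\lambda$ and choosing $\beta,\delta$ so that $\beta^p\varepsilon<1$ then gives $\mathbb{E}(M_t^\ast)^p\le C_p\mathbb{E}[M,M]_t^{p/2}$; the symmetric good-$\lambda$ estimate with the roles exchanged yields the reverse inequality. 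The main obstacle, and the step requiring the most care, is the jump estimate in the good-$\lambda$ argument: one must decompose $M$ into its continuous and purely discontinuous parts, use pre-stopping at the first time $M^\ast$ crosses $\lambda$, and handle the single uncontrolled jump at that time via $|\Delta M_s|\le M_t^\ast+M_{t-}^\ast$. This is where the càdlàg (as opposed to continuous) nature of $M$ forces constants depending on $p$ but not on $M$, delivering the universal $c_p,C_p$ in the stated inequality.
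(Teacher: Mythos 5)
The paper does not prove this result; it is stated as a cited auxiliary fact (Protter, Revuz--Yor), so there is no internal argument to compare with. Your sketch follows the classical Burkholder route and is mostly sound: the $p=2$ anchor via Doob and the isometry (implicitly assuming $M_0=0$, or equivalently replacing $M$ by $M-M_0$), the It\^o--Meyer expansion of $|x|^p$ for $p>2$ followed by H\"older and Doob, and the good-$\lambda$ estimate for the remaining range $1\le p<2$ are the standard ingredients. One small imprecision in the $p>2$ step: the second-order Taylor remainder of $|x|^p$ contributes both an $|M_{s-}|^{p-2}(\Delta M_s)^2$ term and an $|\Delta M_s|^p$ term; the latter must be absorbed separately (e.g.\ via $|\Delta M_s|\le 2M_t^\ast$ before H\"older, or via $\sum_s|\Delta M_s|^p\le[M,M]_t^{p/2}$).

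The more substantive gap is in your description of how the jump at the crossing time is tamed in the good-$\lambda$ argument. The bound $|\Delta M_s|\le M_t^\ast+M_{t-}^\ast$ is circular -- you are trying to estimate $M_t^\ast$ -- and, in any case, too weak to make the good-$\lambda$ inequality close. What actually makes the argument work for c\`adl\`ag $M$ is that on the event $\{[M,M]_t^{1/2}\le\delta\lambda\}$ every jump of $M$ on $[0,t]$, in particular the jump at $\tau=\inf\{s:|M_s|>\lambda\}$, is automatically bounded by $[M,M]_t^{1/2}\le\delta\lambda$. One then stops between $\tau$ and $\sigma=\inf\{s:|M_s|>\beta\lambda\}$, applies Chebyshev to the stopped martingale (whose quadratic variation is again controlled by $(\delta\lambda)^2$ on the same event), and picks $\beta$ and $\delta$ so that the resulting factor $\delta^2/(\beta-1-\delta)^2$ beats $\beta^{-p}$. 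Without invoking the quadratic-variation constraint to control the overshoot, the good-$\lambda$ inequality fails for genuinely discontinuous martingales, and this is precisely the step where the c\`adl\`ag case differs from the continuous one.
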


\section*{Acknowledgments}
H. Wang's research is supported by the National Natural Science Foundation of China (Grant No.~11901066), the
Natural Science Foundation of Chongqing (No.CSTB2023NSCQ-MSX0396).

\noindent {\bf Statements and Declarations}

\noindent Conflicts of interest/Competing interests: The authors declare that they have no
competing interests.

\noindent {\bf Data availability}

\noindent Data sharing not applicable to this article as no datasets were generated or analysed during the current study.

%\end{CJK*}
\end{document}